\theoremstyle{plain}
\newtheorem{theorem}[equation]{Theorem}
\newtheorem{lemma}[equation]{Lemma}
\newtheorem{corollary}[equation]{Corollary}
\newtheorem{proposition}[equation]{Proposition}
\theoremstyle{definition}
\newtheorem{definition}[equation]{Definition}
\theoremstyle{remark}
\newtheorem{remark}[equation]{Remark}
\numberwithin{equation}{section}
\newcommand{\RR}{{\mathbb{R}}}
\newcommand{\eps}{\varepsilon}
\newcommand{\dist}{\operatorname{dist}}
\newcommand{\dv}{\operatorname{div}}
\newcommand{\re}{\mathbb{R}}
\newcommand{\ree}{\mathbb{R}^{n+1}}
\newcommand{\dd}{\mathbb{D}}
\newcommand{\Z}{\mathbb{Z}}
\newcommand{\C}{\mathcal{C}}
\newcommand{\Ac}{\mathcal{A}}
\newcommand{\om}{\Omega}
\newcommand{\F}{\mathcal{F}}
\renewcommand{\H}{\mathcal{H}}
\newcommand{\W}{\mathcal{W}}
\newcommand{\Lc}{\mathcal{L}}
\newcommand{\Sc}{\mathcal{S}}
\newcommand{\Rc}{\mathcal{R}}
\newcommand{\Yc}{\mathcal{Y}}
\newcommand{\sbf}{{\bf S}}
\newcommand{\oo}{\mathcal{O}}
\newcommand{\pom}{\partial\Omega}
\newcommand{\hm}{\omega}
\newcommand{\bb}[1]{\mathbb{#1}}
\renewcommand{\emptyset}{\mbox{\textup{\O}}}
\DeclareMathOperator{\diam}{diam}
\DeclareMathOperator{\interior}{int}
\DeclareMathOperator{\BMO}{BMO}
\DeclareMathOperator{\BV}{BV}
\DeclareMathOperator{\smallc}{c}
\DeclareMathOperator{\bigR}{R}
\DeclareMathOperator{\bigSB}{SB}
\DeclareMathOperator{\bigY}{Y}
\DeclareMathOperator{\bigO}{O}
\DeclareMathOperator{\loc}{loc}
\DeclareMathOperator{\nt}{n.t.}
\def\Xint#1{\mathchoice
   {\XXint\displaystyle\textstyle{#1}}
   {\XXint\textstyle\scriptstyle{#1}}
   {\XXint\scriptstyle\scriptscriptstyle{#1}}
   {\XXint\scriptscriptstyle\scriptscriptstyle{#1}}
   \!\iint}
\def\XXint#1#2#3{{\setbox0=\hbox{$#1{#2#3}{\iint}$}
     \vcenter{\hbox{$#2#3$}}\kern-.5\wd0}}
\def\longminus{\raisebox{-2ex}{\rotatebox[origin=c]{20}{${-}\mkern-2mu{-}$}}}
\def\dashiint{\Xint\longminus}
\def\div{\mathop{\operatorname{div}}\nolimits}
\newcommand{\ra}{\rightarrow}
\newcommand{\Cfk}{\mathfrak{C}}
\title[The $A_{\infty}$ condition and $\eps$-approximators in uniform domains]{The $A_\infty$ condition, $\eps$-approximators, and Varopoulos extensions in uniform domains} 
\author{S. Bortz, B. Poggi, O. Tapiola, and X. Tolsa}
\address{Simon Bortz, Department of Mathematics
\\
University of Alabama
\\
Tuscaloosa, AL, 35487, USA}
\email{sbortz@ua.edu}
\address{Bruno Poggi, Departament de Matemàtiques
\\
Universitat Autònoma de Barcelona
\\
Bar\-ce\-lo\-na, Catalonia.}
\email{bgpoggi.math@gmail.com}
\address{Olli Tapiola, Departament de Matemàtiques
\\
Universitat Autònoma de Barcelona
\\
Bar\-ce\-lo\-na, Catalonia.}
\email{olli.m.tapiola@gmail.com}
\address{Xavier Tolsa, ICREA
\\
Barcelona
\\
Departament de Matemàtiques
\\
Universitat Autònoma de Barcelona and Centre de Recerca Matemàtica
\\
Barcelona
\\
Catalonia.}
\email{xtolsa@mat.uab.cat}
\date{\today}
\keywords{}
\subjclass[2010]{}
\thanks{S.B. was  supported by the Simons foundation grant ``Travel support for Mathematicians'' (grant number 959861). B.P., O.T. and X.T. were supported by the European Research Council (ERC) under the European Union's Horizon 2020 research and innovation programme (grant agreement 101018680). X.T. is also partially supported by MICINN (Spain) under the grant PID2020-114167GB-I00, the María de Maeztu Program for units of excellence (Spain) (CEX2020-001084-M), and 2021-SGR-00071 (Catalonia).}
\begin{document}
\allowdisplaybreaks

\begin{abstract}
Suppose that $\Omega \subset \ree$, $n\geq1$, is a uniform domain with $n$-Ahlfors regular boundary and $L$ is a (not necessarily symmetric) divergence form elliptic, real, bounded operator in $\Omega$. We show that the corresponding elliptic measure $\omega_L$ is quantitatively absolutely continuous with respect to surface measure of $\pom$ in the sense that $\omega_L \in A_\infty(\sigma)$ if and only if any bounded solution $u$ to $Lu = 0$ in $\Omega$ is $\eps$-approximable for any $\eps \in (0,1)$. By $\eps$-approximability of $u$ we mean that there exists a function $\Phi = \Phi^\eps$ such that $\|u-\Phi\|_{L^\infty(\Omega)} \le \eps \|u\|_{L^\infty(\Omega)}$ and the measure $\widetilde{\mu}_\Phi$ with $d\widetilde{\mu} = |\nabla \Phi(Y)| \, dY$ is a Carleson measure with $L^\infty$ control over the Carleson norm. 

As a consequence of this approximability result, we show that boundary $\BMO$ functions with compact support can have Varopoulos-type extensions even in some sets with unrectifiable boundaries, that is, smooth extensions that converge non-tangentially back to the original data and that satisfy $L^1$-type Carleson measure estimates with $\BMO$ control over the Carleson norm. Our result complements the recent work of Hofmann and the third named author who showed the existence of these types of extensions in the presence of a quantitative rectifiability hypothesis.
\end{abstract}
 
\maketitle 
\tableofcontents

\section{Introduction}

Carleson measures (see Definition \ref{CME.def}) and their connections to geometry, harmonic analysis and partial differential equations (PDE) have been studied actively over the previous decades; see \cite{mattila} for a recent survey related to many key developments and other relevant research. Carleson-type  estimates are particularly powerful for measures $\mu_F$ such that $d\mu_F = |\nabla F(Y)|^2 \dist(Y,\pom) \, dY$ and $F$ is a solution to an elliptic PDE in the set $\Omega$: they can be used to, for example, characterize functions of bounded mean oscillation (BMO) \cite{FefSt}, quantitative boundary geometry \cite{HMM,GMT} and absolute continuity properties of harmonic measure \cite{HofPLe}. As it is discussed in \cite[Chapters VI and VIII]{Gar-BAF}, Carleson-type estimates for measures $\widetilde{\mu}_F$ such that $d\widetilde{\mu}_F = |\nabla F(Y)| \, dY$ would be very powerful but they fail even for harmonic functions in the unit disk. To circumvent this problem, Varopoulos \cite{Var2} introduced a way to approximate harmonic functions in $L^\infty$ sense by other functions satisfying these estimates. This \emph{$\eps$-approximability} theory has been studied from many points of view in the past years \cite{Gar-BAF,dahlberg_approximation,KKoPT,HKMP,HMM,hyt-r,GMT,AGMT,HT1,BT,BH_fatou,Gar-eps}.

The initial motivation behind $\eps$-approximability theory was to prove an extension theorem for BMO functions inspired by Carleson's Corona Theorem \cite{carleson}. Varopoulos \cite{Var1,Var2} showed that any compactly supported BMO function $f$ in $\RR^n$ has a smooth extension $V_f$ to the upper half-space such that the extension converges non-tangentially (see Definition \ref{defin:non-tangential}) back to $f$ and $|\nabla V_f(Y)| \, dY$ defines a Carleson measure. Inspired by the power of these estimates, Hofmann and the third named author \cite{HT2} recently showed that \emph{uniform rectifiability} (see Definition \ref{UR.def}) of the boundary is enough to guarantee the existence of these Varopoulos-type extensions. Since Carleson-type estimates for harmonic functions can be used to characterize uniform rectifiability \cite{HMM,GMT} or even stronger geometric properties \cite{AHMMT}, it is natural to ask if the existence of Varopoulos-type extensions (which satisfy better Carleson-type estimates than harmonic functions) characterizes some quantitative geometric properties for the boundary.

In this paper, we show that the existence of Varopoulos-type extensions does not characterize uniform rectifiability but they can exist even in sets with unrectifiable boundaries. Our main result is the following $\eps$-approximability result which can be used to build Varopoulos-type extensions. Throughout, set $\sigma \coloneqq \H^n|_{\pom}$.

\begin{theorem}\label{theorem:approximability}
Let $\Omega \subset \ree$, $n\geq1$, be a uniform domain (see Definition \ref{unifdom.def}) with $n$-Ahlfors regular boundary (see Definition \ref{AR.def}). Let $L=-\dv A\nabla$ be a real, not necessarily symmetric, bounded elliptic operator in $\Omega$ such that the corresponding elliptic measure $\hm_L$ satisfies $\omega_L\in A_\infty(\sigma)$ (see Definition \ref{defin:a_infty}),  and let $\eps \in (0,1)$. Then any solution  $u \in W^{1,2}_{\loc}(\Omega) \cap L^\infty(\Omega)$ to $Lu = 0$ in $\Omega$ is \emph{$\eps$-approximable}: there exists a constant $C_\eps$ and a function $\Phi = \Phi^{\eps} \in C^\infty(\Omega)$ such that 
\begin{enumerate}
\item[i)] $\|u - \Phi\|_{L^\infty(\Omega)} \le \eps \|u\|_{L^\infty(\Omega)}$,
\item[ii)] $\Phi$ satisfies a quantitative $L^1$-type Carleson measure estimate
\begin{align*}
\sup_{x \in \pom, r > 0} \frac{1}{r^n} \iint_{B(x,r) \cap \Omega} |\nabla \Phi(Y)|\, dY \le C_\eps \|u\|_{L^{\infty}(\Omega)},
\end{align*}
\item[iii)] $|\nabla \Phi(X)| \le \tfrac{C_\eps \|u\|_{L^\infty(\Omega)}}{\delta(X)}$ for every $X \in \Omega$,
\item[iv)] if $|X-Y| \ll \dist(X,\pom)$, then $|\Phi(X) - \Phi(Y)| \le \tfrac{C_\eps \|u\|_{L^\infty(\Omega)}}{\delta(X)}|X-Y|$,
\item[v)] there exists a function $\varphi \in L^\infty(\pom)$ such that
\begin{align*}
\lim_{Y \to x, \, \nt} \Phi(Y) = \varphi(x) \text{ for } \sigma\text{-a.e. } x \in \pom.
\end{align*}
\end{enumerate}
The notation $\lim_{Y \to x, \, \nt}$ means non-tangential convergence (see Definition \ref{defin:non-tangential}) and $\delta(\cdot) \coloneqq \dist(\cdot,\pom)$. Here, $C_\eps$ depends on $\eps$, the structural constants related to $\Omega$ and $\pom$, ellipticity and the $\omega_L\in A_\infty(\sigma)$ constants.
\end{theorem}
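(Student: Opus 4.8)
The plan is to reduce the $\eps$-approximability of a bounded solution $u$ to a stopping-time decomposition of $\Omega$ driven by the oscillation of $u$, following the classical Garnett-type scheme but in the non-smooth setting enabled by the hypotheses. Fix a bounded solution $u$ with $\|u\|_{L^\infty(\Omega)} = 1$ (by scaling). First I would recall the dyadic structure: since $\pom$ is $n$-Ahlfors regular, there is a dyadic lattice $\mathbb{D}(\pom)$, and since $\Omega$ is a uniform domain with Ahlfors regular boundary one has a Whitney decomposition of $\Omega$ and an associated family of ``Whitney-dyadic'' regions $\W_Q$, $Q \in \mathbb{D}(\pom)$, together with Carleson boxes $T_Q$. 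The key PDE input is that $\omega_L \in A_\infty(\sigma)$; in this setting this is equivalent (by the results quoted in the introduction, e.g.\ the theory behind \cite{HMM,AGMT}) to the square-function/Carleson estimate for solutions: for any bounded solution $v$,
\[
\iint_{T_Q} |\nabla v(Y)|^2 \delta(Y)\, dY \le C \|v\|_{L^\infty}^2 \, \sigma(Q),
\]
and also to the quantitative non-tangential maximal function $\lesssim L^\infty$ bound and non-tangential boundedness a.e. These are the two analytic facts I would take off the shelf.

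Next I would run the stopping-time argument. Starting from a cube $Q_0$, and a small parameter $\eta = \eta(\eps)$, I select maximal stopping cubes $Q' \subset Q_0$ where the average of $u$ over the Whitney region associated to $Q'$ first deviates from its value near $Q_0$ by more than $\eta$; iterating gives a corona decomposition of $\mathbb{D}(\pom)$ into stopping trees $\{\mathcal{Q}_k\}$ with coherent top cubes $\{Q_k\}$. On each stopping region, $u$ oscillates by at most $O(\eta)$ along the ``central'' Whitney chain, so one defines $\Phi$ to be (a smoothed version of) the piecewise-constant function equal to the value $a_k$ of $u$ at the top of the $k$-th region, interpolating smoothly across Whitney cubes using a partition of unity subordinate to the Whitney decomposition. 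Property (i) is then immediate on the bulk, provided one also handles the ``bad'' set where non-tangential limits fail or where $\delta(Y)$-Carleson mass concentrates — this set has $\sigma$-measure zero and, more importantly for the $L^\infty$ bound, one uses that on each Whitney cube $u$ is close to its average by interior estimates plus the stopping condition. Properties (iii) and (iv) are built into the construction: a smoothed partition-of-unity interpolant of a function that is constant at scale $\delta(X)$ automatically satisfies $|\nabla\Phi| \lesssim \delta^{-1}$ and the Lipschitz-at-small-scales bound (iv), with constants depending on the oscillation $\eta$, hence on $\eps$.

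The heart of the matter — and the main obstacle — is property (ii), the $L^1$ Carleson estimate $\iint_{B(x,r)\cap\Omega} |\nabla\Phi| \lesssim r^n$. Here $\nabla\Phi$ is supported (roughly) on the Whitney cubes that lie over stopping cubes $Q'$, and on such a cube $|\nabla\Phi| \lesssim \eta/\delta$ with the jump happening across the boundary between adjacent stopping regions. Thus
\[
\iint_{T_{Q_0}} |\nabla\Phi(Y)|\, dY \lesssim \sum_{Q' \text{ stopping}} \eta \cdot \ell(Q')^n \cdot \tfrac{1}{\ell(Q')} \cdot \ell(Q') = \eta \sum_{Q'} \ell(Q')^n \approx \eta \sum_k \sigma(Q_k),
\]
so everything comes down to the \emph{packing estimate} $\sum_{Q_k \subset Q_0} \sigma(Q_k) \lesssim \sigma(Q_0)$ for the top cubes of the corona. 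This packing is exactly where $\omega_L \in A_\infty(\sigma)$ enters decisively: at each stopping cube the oscillation of $u$ dropped by $\gtrsim\eta$, and the $L^2$ square-function Carleson estimate above converts a sum of squared oscillations into $\sigma(Q_0)$ — concretely, $\eta^2 \sum_k \sigma(Q_k) \lesssim \sum_k \iint_{T_{Q_k}\setminus(\bigcup_{j}T_{Q_j})} |\nabla u|^2\delta \lesssim \iint_{T_{Q_0}}|\nabla u|^2\delta \lesssim \sigma(Q_0)$, giving $\sum_k\sigma(Q_k)\lesssim \eta^{-2}\sigma(Q_0)$ and hence (ii) with $C_\eps \sim \eta^{-1} \sim \eps^{-1}$ after summing the geometric-in-generation series over all of $\pom$ by the Ahlfors regularity and a standard good-lambda/iteration over scales. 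One has to be careful that the square-function estimate is available on truncated Carleson boxes and that the stopping cubes at a given generation are disjoint so the regions $T_{Q_k}$ have bounded overlap — both follow from the uniform-domain geometry. Finally, property (v): since $\Phi$ agrees with a genuine solution $u$ up to $\eps$ and is a bounded Lipschitz-at-interior-scale function whose $\nabla\Phi$-mass is Carleson, its non-tangential limits exist $\sigma$-a.e.\ by a Fatou-type argument (the Carleson condition on $|\nabla\Phi|\,dY$ together with the $A_\infty$ hypothesis controlling the relevant maximal functions), defining $\varphi \in L^\infty(\pom)$ with $\|\varphi\|_\infty \le \|u\|_\infty + \eps \le 1+\eps$.
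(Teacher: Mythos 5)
Your overall architecture (stopping-time corona driven by the oscillation of $u$, piecewise-constant-then-mollified approximator, reduction of property (ii) to a packing condition for the tops of the stopping trees) matches the paper's strategy, and properties (i), (iii), (iv), (v) are handled essentially as in the paper (mollification at scale $\delta$, plus a Fatou-type argument for the trace). However, there is a genuine gap at exactly the crux of the proof: the displayed chain
$\eta^2 \sum_k \sigma(Q_k) \lesssim \sum_k \iint_{T_{Q_k}\setminus(\cup_j T_{Q_j})} |\nabla u|^2\delta\, dY \lesssim \sigma(Q_0)$
is asserted, not proved, and the first inequality does not follow from the stopping condition plus the $L^2$ Carleson estimate by any elementary argument. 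The stopping condition only tells you that at each stopping cube $Q'$ the value of $u$ at a corkscrew point of $Q'$ differs by $\gtrsim\eta$ from its value at the top of the tree; this oscillation is realized along a Whitney chain spanning all scales from $\ell(Q')$ up to $\ell(Q_k)$, and the gradient mass responsible for it may sit at the top scales, in regions shared by essentially all stopping cubes of the tree. Summing the chain estimates therefore loses at least a logarithm of the scale ratio and, worse, has unbounded overlap counts; this is precisely the ``$N\lesssim S$'' obstruction that in \cite{HMM} is resolved using uniform rectifiability and approximating chord-arc domains, tools unavailable here since $\pom$ may be purely unrectifiable. The paper's stated main novelty is a replacement for this step, and your proposal contains no substitute for it.

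Concretely, the paper proves the packing of the stopping tops (Lemmas \ref{lemma:packing_for_type_4}--\ref{lotsofosc.cl}) by a completely different mechanism: it first invokes the corona decomposition of the \emph{elliptic measure} (Lemma \ref{Coronaellipmeasure.lem}, from \cite{AGMT}), which yields regimes on which $\omega_L$ behaves like $\sigma$ and hence $G_{L,\Omega}(X_{Q},\cdot)\,\sigma(Q)\approx\delta(\cdot)$ on the associated sawtooths (Lemma \ref{GFdeltacomp.lem}); it then uses domain monotonicity of Green functions (Lemma \ref{lem5.lem}) and the integration-by-parts identity $\iint_{\Omega_*}|\nabla u|^2 G_{L,\Omega_*}(X_*,\cdot)\,dY\approx\int_{\partial\Omega_*}(u-u(X_*))^2\,d\omega_*$ (Lemma \ref{prelimlem1.lem}) to convert the desired lower bound on $\iint_{\Omega_*}|\nabla u|^2\delta$ into a lower bound on the boundary oscillation with respect to the elliptic measure of the sawtooth; finally, that lower bound ($\gtrsim\eps^2$) is obtained by a covering of a surface ball of $\partial\Omega_*$ by dilated balls attached to the stopping cubes, Bourgain's estimate, doubling, CFMS, and the splitting of regimes into Types 1--4 with a small parameter $\lambda$ so that the cubes carrying no oscillation have small $\omega_*$-mass. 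Your proof would need this (or an equivalent) argument; also note that the paper additionally separates red cubes (large oscillation within a single Whitney region, where $\Phi\equiv u$ and packing comes from Caccioppoli plus the $L^2$ CME) and yellow/bad cubes coming from the elliptic-measure corona, which your single oscillation-stopping scheme does not account for, and the resulting constant is of order $\eps^{-2}$ (not $\eps^{-1}$).
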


The proof of Theorem \ref{theorem:approximability} borrows some ideas from the proof of \cite[Theorem 1.3]{HMM} (which is an adaptation of the classical construction in \cite[Chapter VIII, Theorem 6.1]{Gar-BAF}), but very quickly our argument must differ significantly. In \cite{HMM}, the authors constructed the approximators for harmonic functions in the presence of a quantitative rectifiability hypothesis. This allowed them to construct approximating chord-arc domains which, in turn, allowed them to use an ``$N \lesssim S$'' estimate for harmonic functions. This was the most delicate part of their argument and our main challenges are strongly related to overcoming the fact that we cannot use the same tools due to our geometry (our boundary may be purely unrectifiable) and our operator $L$ (no control on its structure or smoothness). 
 
We also obtain the following converse to Theorem \ref{theorem:approximability}.

\begin{theorem}\label{theorem:converse}
  Let $\Omega \subset \ree$, $n\geq1$, be a uniform domain (see Definition \ref{unifdom.def}) with $n$-Ahlfors regular boundary (see Definition \ref{AR.def}), and let $L=-\dv A\nabla$ be a real, not necessarily symmetric, bounded elliptic operator in $\Omega$. Suppose also that every solution   $u \in W^{1,2}_{\loc}(\Omega) \cap L^\infty(\Omega)$ to $Lu = 0$ is $\eps$-approximable for every $\eps \in (0,1)$ in the sense of Theorem \ref{theorem:approximability}. Then $\omega_L \in A_\infty(\sigma)$ (see Definition \ref{defin:a_infty}).
\end{theorem}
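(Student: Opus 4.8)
The plan is to derive $\omega_L \in A_\infty(\sigma)$ from $\eps$-approximability by the standard route: $\eps$-approximability of all bounded solutions implies a Carleson-measure estimate for the square function of bounded solutions, and then one invokes a known characterization of $A_\infty$ in terms of such Carleson estimates in this geometric setting. Concretely, I would first observe that since we are in a uniform domain with Ahlfors regular boundary and $L$ is a real elliptic operator, the elliptic measure is well-defined, doubling is not automatic but the relevant machinery (interior estimates, boundary Hölder continuity, the comparison principle on Carleson boxes, existence of non-tangential limits) is available. The crucial external input I would cite is the result that, in this class of domains, $\omega_L \in A_\infty(\sigma)$ is \emph{equivalent} to the Carleson measure estimate
\begin{align*}
\sup_{x\in\pom,\,r>0}\frac{1}{r^n}\iint_{B(x,r)\cap\Omega}|\nabla u(Y)|^2\,\delta(Y)\,dY \le C\|u\|_{L^\infty(\Omega)}^2
\end{align*}
for all bounded solutions $u$ of $Lu=0$; this type of equivalence is exactly the kind of statement referenced via \cite{HofPLe} and its successors, and here one uses the direction ``square function Carleson estimate $\Rightarrow A_\infty$.'' So the real content of the proof is to pass from $\eps$-approximability to this quadratic Carleson estimate.

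The key step is therefore the following: fix a bounded solution $u$ with $\|u\|_{L^\infty(\Omega)}\le 1$, fix $\eps\in(0,1)$ small (to be chosen), and let $\Phi=\Phi^\eps$ be the approximator from the hypothesis. Write $u=\Phi+(u-\Phi)$ and estimate the square function of $u$ on a Carleson box $B(x,r)\cap\Omega$ in two pieces. For the $\Phi$ piece, I would use a Cauchy–Schwarz / Caccioppoli-type argument combined with properties (ii) and (iii): on a Whitney cube $I$ with sidelength $\ell(I)\approx\delta(Y)$ one has $\iint_I|\nabla\Phi|^2\delta\,dY \lesssim \ell(I)\,\|\nabla\Phi\|_{L^\infty(I)}\iint_I|\nabla\Phi|\,dY \lesssim \iint_I|\nabla\Phi|\,dY$ using (iii) to bound $\ell(I)\|\nabla\Phi\|_{L^\infty(I)}\lesssim C_\eps$; summing the Whitney cubes in $B(x,r)\cap\Omega$ and applying (ii) gives $\iint_{B(x,r)\cap\Omega}|\nabla\Phi|^2\delta\,dY\lesssim C_\eps^2 r^n$. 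For the $v\coloneqq u-\Phi$ piece, the point is that $v$ is a bounded function with $\|v\|_{L^\infty}\le\eps$, but $v$ need not be a solution; however, $Lv=-L\Phi$, and one runs a standard integration-by-parts / good-$\lambda$ or telescoping argument (as in the proof that $\eps$-approximability implies square-function bounds in the flat and UR settings, e.g. \cite{HMM,HKMP}) controlling $\iint|\nabla v|^2\delta$ by $\eps^2 r^n$ plus a term that is reabsorbed using the bound on $\iint|\nabla\Phi|$. This yields $\iint_{B(x,r)\cap\Omega}|\nabla u|^2\delta\,dY \lesssim (C_\eps^2 + \text{const})\, r^n$ — importantly, a \emph{finite} bound (the constant is allowed to depend on $\eps$, which we have now fixed), which is all that is needed: the square-function Carleson estimate with \emph{any} finite constant (uniform over $u$ with $\|u\|_\infty\le1$) suffices to conclude $\omega_L\in A_\infty(\sigma)$ via the cited equivalence.

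The main obstacle I anticipate is the $v=u-\Phi$ term: since $\Phi$ is merely smooth and not a solution, $|\nabla v|^2$ cannot be handled by Caccioppoli for $v$ directly, so one must carefully combine the sawtooth/stopping-time decomposition with the pointwise gradient bound (iii) and the Lipschitz-type bound (iv) to absorb the non-solution error, exactly the place where the flat-space arguments of Garnett and of \cite{HMM} use the explicit structure; transplanting this to a uniform domain with a general real elliptic operator requires that the square-function-to-$A_\infty$ mechanism be available in this generality, which is why the correct formulation leans on \cite{HofPLe} and the non-tangential-maximal-function/square-function theory for such operators. A secondary technical point is that property (v) and the non-tangential convergence are needed to make sense of boundary traces when invoking the characterization, but these are provided directly by the hypothesis. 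Once the quadratic Carleson estimate is in hand with a uniform (in $u$) constant, the conclusion $\omega_L\in A_\infty(\sigma)$ is immediate from the known theory.
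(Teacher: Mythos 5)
Your second step (quoting the equivalence of the $L^2$ Carleson measure estimate with $\omega_L\in A_\infty(\sigma)$ in uniform domains with Ahlfors regular boundary) is a legitimate external input --- this is precisely ``(c) $\implies$ (a)'' of Corollary \ref{corollary:characterization}, due to \cite{CHMT} (and \cite{fp} for $n=1$; note \cite{HofPLe} is about $L^p$ solvability, not this implication). The genuine gap is in your first and central step: deriving the estimate $\sup_{x,r} r^{-n}\iint_{B(x,r)\cap\Omega}|\nabla u|^2\delta\,dY\lesssim_\eps 1$ from $\eps$-approximability via the splitting $u=\Phi+v$. The $\Phi$ piece is fine (property iii) gives $|\nabla\Phi|^2\delta\le C_\eps|\nabla\Phi|$ pointwise, then use ii)), but the $v=u-\Phi$ piece cannot be handled as you suggest: $v$ is small only in $L^\infty$, and this gives no information on $\nabla v=\nabla u-\nabla\Phi$. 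If you try Caccioppoli for $u$ at Whitney scale with $\Phi$-based constants, the smallness term is $\eps^2\sum_I\ell(I)^n\approx\eps^2 N r^n$, where $N$ is the number of scales in $B(x,r)$ --- this diverges (equivalently $\iint_{B\cap\Omega}\delta^{-1}\,dY=\infty$). If instead you integrate by parts using $Lv=-L\Phi$, the error terms you must ``reabsorb'' are of the form $\eps\iint_{B\cap\Omega}|\nabla u|\,dY$, and a bound $\iint|\nabla u|\lesssim r^n$ is exactly the $L^1$ Carleson property that bounded solutions do \emph{not} satisfy (this is the whole reason approximators exist). There is no standard ``approximability $\Rightarrow$ CME'' argument in \cite{HMM} or \cite{HKMP} to lean on; in those works, and in \cite{KKoPT}, the converse direction is proved by a different mechanism. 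The $L^\infty$ smallness of $u-\Phi$ can only be exploited by playing a fixed amount of oscillation per scale against the \emph{number} of scales, and that requires tying the construction to the dyadic structure of $\omega_L$ itself rather than to an arbitrary bounded solution.

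That is what the paper does (Section \ref{section:proof_of_converse}), adapting the good $\epsilon_0$-cover machinery of \cite{KKoPT}/\cite{CHMT}: given $Q_0$ and $F\subset Q_0$ with $\omega_L^{X_{Q_0}}(F)\le\alpha\,\omega_L^{X_{Q_0}}(Q_0)$, one builds a good $\epsilon_0$-cover of $F$ of length $k\approx\log\alpha^{-1}$ (Lemma \ref{lemma:existence_eps_covers}) and a single special solution $u(X)=\omega_L^X(S)$ with indicator-type data which oscillates by a fixed $c_0$ at each of the $k$ generations along every non-tangential approach region over $F$ (Lemma \ref{lemma:oscillation_lower_bound}). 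Any $\eps$-approximator $\Phi$ with $\eps=c_0/4$ must then reproduce this oscillation, and a Harnack chain/Poincar\'e argument gives $\iint_{\Gamma^\eta_{Q_0}(y)}|\nabla\Phi|\,\delta^{-n}\,dY\gtrsim\log\alpha^{-1}$ for every $y\in F$ (Lemma \ref{lemma:lower_bound_sq}); integrating over $F$, Fubini and the $L^1$ Carleson bound ii) yield $\sigma(F)\log\alpha^{-1}\lesssim\sigma(Q_0)$, which is the dyadic form of $A_\infty$. Note this route needs approximability only for the special solutions $u(X)=\omega_L^X(S)$ (Theorem \ref{theorem:converse_indicator_data}), never a square-function bound for general bounded solutions. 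As written, your proposal does not close without essentially reconstructing this scale-counting argument.
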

In particular, by combining Theorem \ref{theorem:approximability} and Theorem \ref{theorem:converse} with \cite[Theorem 1.1]{CHMT}, we get a new characterization of the $A_\infty$ property of elliptic measure on uniform domains.

\begin{corollary}\label{corollary:characterization}
  Let $\Omega \subset \ree$, $n\geq1$, be a uniform domain (see Definition \ref{unifdom.def}) with $n$-Ahlfors regular boundary (see Definition \ref{AR.def}), and let $L=-\dv A\nabla$ be a real, not necessarily symmetric, bounded elliptic operator in $\Omega$. The following conditions are equivalent:
  \begin{enumerate}
    \item[(a)] $\omega_L \in A_\infty(\sigma)$,
    
    \item[(b)] every solution $u \in W^{1,2}_{\loc}(\Omega) \cap L^\infty(\Omega)$ to $Lu = 0$ in $\Omega$ is $\eps$-approximable for any $\eps \in (0,1)$ in the sense of Theorem \ref{theorem:approximability},
    
    \item[(c)] every solution $u \in W^{1,2}_{\loc}(\Omega) \cap L^\infty(\Omega)$ to $Lu = 0$ in $\Omega$ satisfies an $L^2$-type Carleson measure estimate with $L^\infty$ control over the Carleson norm: there exists $C \ge 1$ such that
    \begin{align*}
      \sup_{\substack{x \in \pom \\ r \in (0,\diam(\pom))}} \frac{1}{r^n} \iint_{B(x,r) \cap \Omega} |\nabla u(X)|^2 \dist(X,\partial\Omega) \, dX \le C\|u\|_{L^\infty{\Omega}}^2.
    \end{align*}
  \end{enumerate}
\end{corollary}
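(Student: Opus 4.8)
The plan is to establish the three-way equivalence by closing a short logical loop, using the two main theorems of the paper together with the known characterization of $A_\infty$ via the classical $L^2$-type Carleson measure estimate. Concretely, I would prove $(a)\Rightarrow(b)$, $(b)\Rightarrow(a)$, and $(a)\Leftrightarrow(c)$, from which $(a)\Leftrightarrow(b)\Leftrightarrow(c)$ follows at once.

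The implication $(a)\Rightarrow(b)$ is nothing other than Theorem \ref{theorem:approximability}: once $\omega_L\in A_\infty(\sigma)$ is assumed, every bounded solution $u\in W^{1,2}_{\loc}(\Omega)$ of $Lu=0$ is $\eps$-approximable for each $\eps\in(0,1)$ with the full package of quantitative estimates (i)--(v), in particular the $L^1$-type Carleson bound appearing in (b). Conversely, $(b)\Rightarrow(a)$ is Theorem \ref{theorem:converse}: $\eps$-approximability of all bounded solutions for all $\eps\in(0,1)$ returns $\omega_L\in A_\infty(\sigma)$. Hence $(a)\Leftrightarrow(b)$ is immediate, and the only thing to check is that the hypotheses of the two theorems --- a uniform domain with $n$-Ahlfors regular boundary and a real, not necessarily symmetric, bounded divergence-form elliptic operator $L$ --- are precisely the standing hypotheses of the corollary.

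For the equivalence of $(c)$ with $(a)$ I would invoke \cite[Theorem 1.1]{CHMT}. The point to verify is that the geometric setting matches: the class of uniform domains with $n$-ADR boundary (Definition \ref{unifdom.def}) coincides with the class of $1$-sided NTA domains with $n$-Ahlfors regular boundary, i.e.\ $1$-sided chord-arc domains, which is exactly the framework of \cite{CHMT}; and that the $L^2$-type Carleson measure estimate with $L^\infty$ control over the Carleson norm in part $(c)$ --- with the supremum taken over $r\in(0,\diam(\pom))$, interpreted as all $r>0$ when $\pom$ is unbounded --- is literally the estimate stated there. Granting this, \cite[Theorem 1.1]{CHMT} gives $\omega_L\in A_\infty(\sigma)$ if and only if $(c)$ holds, which together with the previous paragraph completes the chain.

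There is essentially no serious obstacle here: the corollary is a formal combination of Theorem \ref{theorem:approximability}, Theorem \ref{theorem:converse}, and \cite[Theorem 1.1]{CHMT}. The only delicate points are bookkeeping ones --- confirming that the dependence of constants is consistent across the three inputs, and reconciling the normalizations of Carleson boxes and of the $A_\infty$ class used in \cite{CHMT} with those of the present paper. If one preferred an argument for the arc $(b)\Rightarrow(c)$ that does not pass through \cite{CHMT}, one could instead proceed directly: writing $u=\Phi+(u-\Phi)$ with $\|u-\Phi\|_{L^\infty(\Omega)}\le\eps\|u\|_{L^\infty(\Omega)}$ and combining the pointwise gradient bound (iii) with the $L^1$-Carleson control (ii) in a standard square-function/stopping-time estimate yields the $L^2$-type Carleson estimate of $(c)$ with constant depending on $\eps$; then only the implication $(c)\Rightarrow(a)$ would need to be borrowed from \cite{CHMT}.
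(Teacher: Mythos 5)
Your proposal is correct and follows essentially the same route as the paper: $(a)\Leftrightarrow(b)$ is exactly the combination of Theorem \ref{theorem:approximability} and Theorem \ref{theorem:converse}, and $(a)\Leftrightarrow(c)$ is imported from \cite{CHMT}. The only point you miss is that \cite[Theorem 1.1]{CHMT} is stated for $n\geq 2$, so for the case $n=1$ the equivalence $(a)\Leftrightarrow(c)$ must instead be drawn from the more general result in \cite{fp}, as the paper notes.
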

Only the implications ``(a) $\implies$ (b)'' and ``(b) $\implies$ (a)'' in Corollary \ref{corollary:characterization} are new. The equivalence ``(a)$\iff$ (c)'' was already shown in \cite{CHMT} for $n\geq2$, while the case $n=1$ follows as a particular case of a more general result in \cite{fp}, with a  similar method of proof. 

In the setting of a uniform domain with Ahlfors regular boundary, the conditions (a), (b) and (c) in Corollary \ref{corollary:characterization} are known to be equivalent with uniform rectifiability of $\pom$ for the special case $L = -\Delta$ \cite{HMUR1, HMUUR2,  HMM, GMT}, or for $L = -\dv A\nabla$ where $A$ is a locally Lipschitz symmetric matrix such that $|\nabla A|$ satisfies an $L^1$-type Carleson measure condition \cite{CHMT, HMT,AGMT}. Even for the aforementioned operators with nice structure, all the available proofs in the literature of the implications ``(a) $\implies$ (b)'' or ``(c) $\implies$ (b)'' in Corollary \ref{corollary:characterization} rely on uniform rectifiability techniques  in a decisive fashion, and do not extend to domains with rougher boundaries. The main novelty of this manuscript is  that we succeed in proving the implication ``(a) $\implies$ (b)'' without appealing to uniform rectifiability theory. This allows us to establish the equivalence ``(a)$\iff$ (b)'' for arbitrary elliptic operators in settings that are beyond \emph{chord-arc domains} (see Definition \ref{cad.def}). For further characterizations of the  $\omega_L\in A_{\infty}(\sigma)$ property for arbitrary real divergence form elliptic operators, see \cite{cdmt22} and \cite{mpt22}.

Let us see an immediate corollary of our new characterization of the $\omega_L\in A_\infty(\sigma)$ property. We say that $L$ is a \emph{Dahlberg--Kenig--Pipher operator} if $A\in\operatorname{Lip}_{\loc}(\Omega)$ with $|\nabla A|\dist(\cdot,\partial\Omega)\in L^{\infty}(\Omega)$, and the measure $\mu_A$ such that $d\mu_A = |\nabla A(X)|^2\dist(X,\partial\Omega)\,dX$ is a Carleson measure. For these operators, the conditions (a) and (c) in Corollary \ref{corollary:characterization} are equivalent with uniform rectifiability of $\partial\Omega$ when $\Omega$ is a uniform domain with Ahlfors regular boundary \cite{HMMTZ}. Combining the main result of \cite{HMMTZ} with Corollary \ref{corollary:characterization} gives us a new result for Dahlberg--Kenig--Pipher operators:

\begin{corollary} Let $\Omega \subset \ree$, $n\geq2$, be a uniform domain (see Definition \ref{unifdom.def})   with $n$-Ahlfors regular boundary (see Definition \ref{AR.def}), and let $L=-\dv A\nabla$ be a not necessarily symmetric Dahlberg--Kenig--Pipher operator in $\Omega$. The following   are equivalent:
	\begin{enumerate}
		\item[(a)] $\partial\Omega$ is uniformly rectifiable (see Definition \ref{UR.def}).
		\item[(b)] every solution $u \in W^{1,2}_{\loc}(\Omega) \cap L^\infty(\Omega)$ to $Lu = 0$ in $\Omega$ is $\eps$-approximable for any $\eps \in (0,1)$ in the sense of Theorem \ref{theorem:approximability}. 
	\end{enumerate}
\end{corollary}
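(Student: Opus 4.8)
The plan is to obtain this corollary as a formal consequence of Corollary \ref{corollary:characterization} together with the main theorem of \cite{HMMTZ}, using the condition $\omega_L \in A_\infty(\sigma)$ as the common pivot. I would first isolate the two inputs in the precise form needed. From Corollary \ref{corollary:characterization} (the equivalence ``(a) $\iff$ (b)'' there, which holds for \emph{every} real, not necessarily symmetric, bounded elliptic operator on a uniform domain with $n$-Ahlfors regular boundary, $n \ge 1$) we know that $\eps$-approximability of all bounded solutions for every $\eps \in (0,1)$ is equivalent to $\omega_L \in A_\infty(\sigma)$. From \cite{HMMTZ} we know that, when $L$ is a Dahlberg--Kenig--Pipher operator on such a domain with $n \ge 2$, one has $\omega_L \in A_\infty(\sigma)$ if and only if $\partial\Omega$ is uniformly rectifiable. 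Since a Dahlberg--Kenig--Pipher operator is in particular a real bounded elliptic operator, both equivalences apply to the $L$ in the statement.

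Chaining the two inputs then gives both implications. For ``(a) $\Rightarrow$ (b)'': if $\partial\Omega$ is uniformly rectifiable, then \cite{HMMTZ} yields $\omega_L \in A_\infty(\sigma)$, and the implication ``(a) $\Rightarrow$ (b)'' of Corollary \ref{corollary:characterization} (equivalently, Theorem \ref{theorem:approximability}) produces the $\eps$-approximator $\Phi^\eps$ for every bounded solution and every $\eps \in (0,1)$. For ``(b) $\Rightarrow$ (a)'': if all bounded solutions are $\eps$-approximable for every $\eps \in (0,1)$, then Theorem \ref{theorem:converse} gives $\omega_L \in A_\infty(\sigma)$, and feeding this back into \cite{HMMTZ} shows $\partial\Omega$ is uniformly rectifiable.

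The only thing requiring care is that the standing hypotheses coincide across the two references --- uniform domain, $n$-Ahlfors regular boundary, real (possibly non-symmetric) bounded elliptic $L$ --- and that the dimensional restriction $n \ge 2$ in the present statement is precisely what \cite{HMMTZ} needs (Corollary \ref{corollary:characterization} being available for all $n \ge 1$). I do not expect any genuine obstacle here: all of the analytic difficulty has already been absorbed, on one side into Theorem \ref{theorem:approximability}, whose implication ``(a) $\Rightarrow$ (b)'' --- constructing $\Phi^\eps$ with no rectifiability assumption on $\partial\Omega$ --- is the hard new content of the paper, and on the other side into the uniform-rectifiability machinery behind \cite{HMMTZ}. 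So the proof of this corollary should amount to a couple of lines citing those results.
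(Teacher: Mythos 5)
Your argument is exactly the paper's: the corollary is obtained by chaining the equivalence ``(a) $\iff$ (b)'' of Corollary \ref{corollary:characterization} (i.e.\ $\omega_L\in A_\infty(\sigma)$ iff $\eps$-approximability, valid for all real bounded elliptic operators on uniform domains with Ahlfors regular boundary) with the main result of \cite{HMMTZ} ($\omega_L\in A_\infty(\sigma)$ iff uniform rectifiability for Dahlberg--Kenig--Pipher operators, $n\ge 2$), using $A_\infty$ as the pivot. Your remark about the dimensional restriction $n\ge 2$ coming from \cite{HMMTZ} is also consistent with the paper.
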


As a consequence of Theorem \ref{theorem:approximability} and the techniques in \cite{HT2}, we get the following generalization of the Varopoulos extension theorem \cite{Var1,Var2}:
\begin{theorem}\label{main.thrm}
Let $\Omega \subset \ree$, $n\geq1$, be a uniform domain (see Definition \ref{unifdom.def}) with Ahflors regular boundary (see Definition \ref{AR.def}). If there exists a divergence form elliptic operator $L = -\div A\nabla$ such that the corresponding elliptic measure $\hm_L$ satisfies $\omega_L\in A_\infty(\sigma)$ (see Definition \ref{defin:a_infty}), then every $f \in \BMO_{\smallc}(\pom)$ has a Varopoulos extension in $\Omega$. That is, there exists   $F$ with the following properties:
\begin{enumerate}
\item $F \in C^\infty(\Omega)$ and $|\nabla F(X)| \lesssim  \tfrac{\|f\|_{\BMO(\pom)}}{\delta(X)}$, for all $X \in \Omega$,
\item $\lim_{Y \to x, \, \nt} F(Y) = f(x)$ for $\sigma$-a.e. $x \in \pom$, and
\item $|\nabla F(Y)|$ is the density of a Carleson measure in the sense that 
\[\sup_{r > 0, x \in \pom} \frac{1}{r^n} \iint_{B(x,r) \cap \Omega} |\nabla F(Y)| \, dY \le C\|f\|_{\BMO(\pom)}.\]
\end{enumerate}
The notation $\lim_{Y \to x, \, \nt}$ means non-tangential limit and $\delta(\cdot) \coloneqq \dist(\cdot,\pom)$. Here, $C$ depends on the structural constants related to $\Omega$ and $\partial\Omega$, and the $\omega_L\in A_\infty(\sigma)$ constants.
\end{theorem}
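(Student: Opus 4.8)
The plan is to follow the argument of \cite{HT2}, which manufactures Varopoulos extensions out of the $\eps$-approximability of bounded $L$-solutions --- precisely the information supplied by Theorem \ref{theorem:approximability} once $\omega_L\in A_\infty(\sigma)$ is assumed. (In \cite{HT2} this $\eps$-approximability came from uniform rectifiability through \cite{HMM, GMT}; that is the only point at which rectifiability enters there, and every other step uses only the uniform-domain and Ahlfors-regular structure, which we have.) We may assume $\|f\|_{\BMO(\pom)}=1$ and fix a surface ball $\Delta_0=B(x_0,R_0)\cap\pom$ containing $\supp f$. Since $\omega_L\in A_\infty(\sigma)$, the $L^q(\sigma)$-Dirichlet problem for $L$ is solvable for some $q<\infty$; as a compactly supported $\BMO$ function on an Ahlfors regular set lies in every $L^s(\sigma)$, the elliptic extension $u(X):=\int_{\pom}f\,d\omega_L^X$ is a well-defined solution of $Lu=0$ in $\Omega$ with $\lim_{Y\to x,\,\nt}u(Y)=f(x)$ for $\sigma$-a.e.\ $x$, and $u$ satisfies the $L^2$-type Carleson estimate $r^{-n}\iint_{B(x,r)\cap\Omega}|\nabla u(Y)|^2\delta(Y)\,dY\lesssim 1$ (the $\BMO$ analogue of Corollary \ref{corollary:characterization}(c), a standard consequence of $\omega_L\in A_\infty(\sigma)$; cf.\ \cite{CHMT}).

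The heart of the matter is the construction, for each fixed $\eps\in(0,1)$, of a smooth \emph{$\BMO$ $\eps$-approximator} $\Phi=\Phi^\eps$ of $u$: a function $\Phi\in C^\infty(\Omega)$ with $\|u-\Phi\|_{L^\infty(\Omega)}\le\eps$, with $|\nabla\Phi(X)|\le C_\eps\,\delta(X)^{-1}$ and $r^{-n}\iint_{B(x,r)\cap\Omega}|\nabla\Phi|\le C_\eps$, and admitting a non-tangential limit $\varphi$ $\sigma$-a.e.\ (which, after an extra cut-off exploiting that $u$ is $O(\eps)$ far from $\Delta_0$, one may take supported in a fixed dilate of $\Delta_0$). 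This is Garnett's corona/stopping-time construction carried over to the present setting: one runs a stopping-time family $\mathcal{S}$ of boundary cubes, stopping whenever the oscillation of $u$ over the associated Whitney regions exceeds $\eps$; the Carleson packing $\sum_{Q\in\mathcal{S},\,Q\subseteq R}\sigma(Q)\lesssim_\eps\sigma(R)$ of $\mathcal{S}$ follows from the $L^2$-Carleson estimate for $u$; on each coherent sawtooth region (which together tile $\Omega$ with bounded overlap) $u$ differs by $O(\eps)$ from a constant and is handled via Theorem \ref{theorem:approximability} applied to the corresponding bounded solution; and the pieces are glued by a partition of unity subordinate to the Whitney decomposition and localized to the relevant Carleson boxes. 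Since the coherent sawtooths tile $\Omega$ and $\nabla\Phi$ is supported near their boundaries (i.e.\ near the Carleson boxes of the stopping cubes), the packing of $\mathcal{S}$ gives both the pointwise bound $|\nabla\Phi|\lesssim_\eps\delta^{-1}$ and the $L^1$-type Carleson estimate for $\Phi$; the stopping rule forces $\|u-\Phi\|_{L^\infty}\le\eps$, whence $|\varphi-f|=|\lim_{\nt}(\Phi-u)|\le\eps$ $\sigma$-a.e.

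Now iterate. Set $f_1:=f-\varphi$, so $\|f_1\|_{L^\infty(\pom)}\le\eps$; being bounded (and compactly supported), $f_1$ has a bounded elliptic extension, to which Theorem \ref{theorem:approximability} applies directly --- no further corona is needed. Inductively let $\Phi_m$ be an $\eps$-approximator of the bounded elliptic extension of $f_m$, with non-tangential limit $\varphi_m$, and put $f_{m+1}:=f_m-\varphi_m$; then $\|f_{m+1}\|_{L^\infty}\le\eps\|f_m\|_{L^\infty}\le\eps^{m+1}$, $\|\Phi_m\|_{L^\infty}\lesssim\eps^{m}$, $|\nabla\Phi_m(X)|\le C_\eps\eps^{m}\delta(X)^{-1}$ and $r^{-n}\iint_{B(x,r)\cap\Omega}|\nabla\Phi_m|\le C_\eps\eps^{m}$ for $m\ge1$, while $\Phi_0:=\Phi$, $\varphi_0:=\varphi$ obey the bounds of the previous paragraph. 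Define $F:=\sum_{m\ge0}\Phi_m$: the series converges in $C^\infty(\Omega)$, and summing the above gives $|\nabla F(X)|\lesssim_\eps\delta(X)^{-1}$ and $r^{-n}\iint_{B(x,r)\cap\Omega}|\nabla F|\lesssim_\eps 1$, i.e.\ properties (1) and (3) after undoing the normalization. For property (2), telescoping gives $\sum_{m\le M}\varphi_m=f-f_{M+1}$ with $\|f_{M+1}\|_{L^\infty}\le\eps^{M+1}$, while $\big\|\sum_{m>M}\Phi_m\big\|_{L^\infty}\lesssim\eps^{M+1}$; since $\sum_{m\le M}\Phi_m\to\sum_{m\le M}\varphi_m$ non-tangentially $\sigma$-a.e., letting $M\to\infty$ yields $\lim_{Y\to x,\,\nt}F(Y)=f(x)$ for $\sigma$-a.e.\ $x$.

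The main obstacle is the $\BMO$ $\eps$-approximator of the second paragraph: one must arrange that gluing the localized approximators of the bounded pieces produces a function whose gradient satisfies the \emph{pointwise} bound $|\nabla\Phi|\lesssim_\eps\delta^{-1}$ and the \emph{$L^1$-type} Carleson estimate, rather than versions worsened by a logarithmic overlap of Carleson boxes; this hinges on stopping at a fixed oscillation $\eps$ (so that the coherent sawtooths tile $\Omega$ with bounded overlap), on the Carleson packing of the stopping family (whose sole input here is the $L^2$-Carleson bound for $u$), and on a careful partition-of-unity/cut-off gluing. All of this is carried out in \cite{HT2}; given Theorem \ref{theorem:approximability}, which now supplies the $\eps$-approximability of bounded solutions with no rectifiability hypothesis, that construction applies in our setting essentially verbatim.
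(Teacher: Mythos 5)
Your third paragraph (the iteration $f_{m+1}=f_m-\varphi_m$, summing the approximators, and passing the non-tangential limits through the telescoping sum) is exactly the paper's argument for the bounded part of the data, so that portion is fine (modulo one small point: a uniformly convergent sum of $C^\infty$ functions with only first-order gradient bounds is $C^1$, not $C^\infty$; the paper re-mollifies the sum as in Lemma \ref{lemma:smooth_approximators} to restore smoothness). The genuine gap is in your second paragraph, which is where all the work lies. You propose to build a ``$\BMO$ $\eps$-approximator'' of the unbounded solution $u=\int f\,d\omega_L^X$ by a Garnett-type stopping-time construction and assert that the Carleson packing of the stopping family ``follows from the $L^2$-Carleson estimate for $u$.'' It does not, and this is precisely the central difficulty of the paper. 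For the sawtooth regions to be sets where $u$ is within $O(\eps)$ of a constant, the stopping rule must include stopping when the \emph{value} of $u$ at corkscrew points drifts by more than $\eps$ from its value at the top cube (local Whitney-region oscillation below $\eps$ does not prevent slow drift along long chains). Packing the maximal cubes of those value-drift regimes is exactly Lemma \ref{lemma:packing_for_type_4}, whose proof occupies Sections \ref{section:set-up_for_approximability}--\ref{section:proof_of_approximability}: it needs the corona decomposition of the elliptic measure (Lemma \ref{Coronaellipmeasure.lem}), the sawtooth Green function comparisons (Lemmas \ref{prelimlem1.lem}, \ref{lem5.lem}, \ref{GFdeltacomp.lem}) and the high-oscillation estimate (Lemma \ref{lotsofosc.cl}) --- not merely the CME. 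Indeed, if the packing followed from the CME alone one would have a direct proof of ``(c) $\implies$ (b)'' in Corollary \ref{corollary:characterization}, which the authors explicitly state is unavailable without uniform rectifiability; in \cite{HMM} this step is done via an ``$N\lesssim S$'' estimate that does not exist in the present rough setting. Carrying this machinery out for an \emph{unbounded} ($\BMO$-data) solution would require reworking those sections, and your appeal that ``all of this is carried out in \cite{HT2}'' is misplaced: \cite{HT2} contains no corona-built $\eps$-approximator of an unbounded solution.

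The paper avoids this issue entirely, and you could repair your proof the same way: use the purely real-variable decomposition $f=f_0+g$ of \cite{HT2} (Lemma 10.1 and Remark 10.3 there), where $f_0\in L^\infty(\pom)$ with $\|f_0\|_{L^\infty}\lesssim\|f\|_{\BMO}$ and $g=\sum_j\alpha_j\mathbbm{1}_{Q_j}$ is a dyadic step function whose cubes satisfy a Carleson packing condition with $\sup_j|\alpha_j|\lesssim\|f\|_{\BMO}$. The step-function part $g$ has an explicit Varopoulos extension built from the dyadic structure alone (\cite{HT2}, Proposition 1.3), with no PDE input, and the bounded part $f_0$ is handled by exactly your iteration, using Theorem \ref{theorem:approximability} (which applies since $f_0$ is bounded). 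With that substitution your argument matches the paper's proof; as written, the construction of the first approximator for $u$ itself is an unproved claim, and your auxiliary assertions (the $L^2$-Carleson estimate for $\BMO$ data, the cut-off localizing the trace near $\Delta_0$) are likewise left unjustified but become unnecessary once the decomposition is used.
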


Theorem \ref{main.thrm} is not (and is not meant to be) a generalization of the main result in \cite{HT2} where an extension theorem of this type was proven in the presence of a quantitative rectifiability hypothesis for the boundary. The novelty of Theorem \ref{main.thrm} is that its assumptions hold for some sets with very rough boundaries. In particular, recently David and Mayboroda \cite{DM-4cc} showed that the key hypothesis of Theorem \ref{main.thrm} holds for the exterior of the $4$-corner Cantor set:

\begin{theorem}[{\cite[Section 4]{DM-4cc}}]\label{DMthrm.thrm}
  Let $\Omega$ be the complement of the $4$-corner Cantor set in $\re^2$ (see Section \ref{section:example}). There exists a divergence form elliptic operator $L = -\div A\nabla$ in $\Omega$ such that $\omega_L\in A_\infty(\sigma)$  (see Definition \ref{defin:a_infty}). More precisely, one can take the matrix $A$ to be diagonal and equal to the identity outside a ball of radius $1$ concentric with the Cantor set and so that the $L$-elliptic measure  with pole at $\infty$
 equals $\sigma/\sigma(\pom)$. 
\end{theorem}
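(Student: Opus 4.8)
The statement concerns a specific example, so the plan is to build the operator by hand, exploiting the self-similar, highly symmetric structure of the four-corner Cantor set $E$. Realize $E$ as the attractor of the four homotheties $f_1,\dots,f_4$ of ratio $1/4$ carrying the unit square $Q^0$ onto its four corner sub-squares (these are dilations composed with translations --- in particular no rotations occur), and for a word $\alpha=\alpha_1\cdots\alpha_k$ put $f_\alpha=f_{\alpha_1}\circ\cdots\circ f_{\alpha_k}$, $Q_\alpha=f_\alpha(Q^0)$, so that $E=\bigcap_k T_k$ with $T_k=\bigcup_{|\alpha|=k}Q_\alpha$. Then $\Omega=\re^2\setminus E$ is, up to a set of Lebesgue measure zero, tiled by $\re^2\setminus Q^0$ together with the affine copies $C_\alpha=f_\alpha(C_0)$ of the single cell $C_0=Q^0\setminus\bigcup_{i=1}^4 f_i(Q^0)$. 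Note that $\Omega$ is a uniform domain (Definition \ref{unifdom.def}) with $1$-Ahlfors regular boundary (Definition \ref{AR.def}) and that $\sigma=\H^1|_E$ is, after normalization, the \emph{unique} Borel probability measure on $E$ with $\sigma(Q_\alpha)=4^{-|\alpha|}$ for every cylinder square. We fix $A\equiv\mathrm{Id}$ off $Q^0$, and since pulling a diagonal matrix back along a map of the form $x\mapsto\lambda x+b$ keeps it diagonal, it remains only to prescribe a diagonal profile on each cell $C_\alpha$.

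The core of the proof is the choice of these profiles. Working in the approximating domains $\Omega_m=\re^2\setminus T_m$, whose boundaries are uniform and $1$-Ahlfors regular uniformly in $m$, Green's identity identifies $\omega_L^{\infty,\Omega_m}$ restricted to $\partial Q_\alpha$ with a fixed multiple of the flux of $A\nabla u_m$ through $\partial Q_\alpha$ (here $u_m>0$ is the $L$-harmonic function in $\Omega_m$ vanishing on $\partial T_m$, normalized at $\infty$); consequently the target identity $\omega_L^\infty=\sigma/\sigma(\pom)$ will follow once one arranges $\omega_L^{\infty,\Omega_m}(\partial Q_\alpha)=4^{-|\alpha|}$ for all $|\alpha|\le m$, because then $\omega_L^{\infty,\Omega_m}\rightharpoonup\omega_L^{\infty,\Omega}$ produces a limit measure with mass $4^{-|\alpha|}$ on every cylinder, i.e. $\sigma/\sigma(\pom)$ by the uniqueness just noted. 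A naively self-similar $A$ --- the same profile at every scale and location --- does not accomplish this: the global dihedral symmetry forces the correct mass $1/4$ on each generation-one square, but at deeper generations the conditional masses $\omega_L^\infty(Q_{\alpha i})/\omega_L^\infty(Q_\alpha)$ are perturbed, \emph{asymmetrically}, by the distant part of $E$, since distinct squares $Q_\alpha$ occupy geometrically inequivalent positions within the whole configuration (for the Laplacian the resulting measure is even mutually singular with respect to $\sigma$). The remedy is to choose all cell profiles simultaneously as the solution of a fixed-point problem: by the uniformly tree-like geometry of $\Omega$ and boundary Harnack and comparison estimates of Caffarelli--Fabes--Mortola--Salsa type, the map sending a symmetric, uniformly elliptic collection of profiles to the collection of conditional masses $\{\omega_L^\infty(Q_{\alpha i})/\omega_L^\infty(Q_\alpha)\}$ is a small perturbation of the ``diagonal'' map in which the profile on $C_\alpha$ governs only the four masses at $Q_\alpha$, the cross-effects decaying geometrically in the generational distance between cells. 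A contraction-mapping (Newton-type) argument then solves $\omega_L^\infty(Q_{\alpha i})/\omega_L^\infty(Q_\alpha)=1/4$ for all $\alpha,i$, yielding diagonal profiles trapped between fixed ellipticity constants; carrying this out in each $\Omega_m$ and checking that the profiles stabilize as $m\to\infty$ gives a limiting diagonal, uniformly elliptic $A$, equal to $\mathrm{Id}$ off $Q^0$, with $\omega_L^\infty=\sigma/\sigma(\pom)$.

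It then remains to deduce $\omega_L\in A_\infty(\sigma)$ (Definition \ref{defin:a_infty}). Since $\sigma$ is $1$-Ahlfors regular, hence doubling, the identity $\omega_L^\infty=\sigma/\sigma(\pom)$ gives $\omega_L^\infty\in A_\infty(\sigma)$ trivially; and as $\Omega$ is a uniform domain with Ahlfors regular boundary, the standard change-of-pole formula and Caffarelli--Fabes--Mortola--Salsa comparisons show that scale-invariant $A_\infty(\sigma)$ for the pole at infinity is equivalent, with comparable constants, to $A_\infty(\sigma)$ for the poles attached to surface balls. This gives the conclusion.

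The hard part is the fixed-point construction of $A$: making the iteration converge while keeping the profiles diagonal and uniformly elliptic, and --- the delicate point --- obtaining the \emph{exact} identity $\omega_L^\infty=\sigma/\sigma(\pom)$ rather than mere comparability. This requires sharp quantitative control of how the asymmetric, long-range interaction between cells at different scales perturbs the conditional elliptic measures, and rests on a uniqueness statement for the positive $L$-solution vanishing on $E$ with prescribed growth at $\infty$ together with the Green's-identity computation identifying the limiting boundary flux with $\sigma$.
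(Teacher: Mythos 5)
You should first note that the paper does not prove this statement at all: Theorem \ref{DMthrm.thrm} is imported verbatim from \cite{DM-4cc}, so the only thing to assess is whether your sketch would independently establish it. Your reductions at the two ends are sound (uniqueness of $\sigma$ among measures giving each cylinder square mass $4^{-|\alpha|}$; the flux/Green identity for the pole at infinity; and the passage from the exact identity $\omega_L^\infty=\sigma/\sigma(\pom)$ to $A_\infty(\sigma)$ for finite poles via change of poles and CFMS-type comparisons). But the entire mathematical content of the theorem --- the existence of a diagonal, uniformly elliptic $A$, equal to the identity off a fixed ball, with \emph{exact} equidistribution at every generation --- is delegated to an asserted contraction/Newton scheme that is never carried out. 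You do not identify a norm on the space of cell profiles in which the profile-to-conditional-mass map is a contraction, do not verify invertibility of its linearization at the symmetric starting point, do not show that the corrections can be taken diagonal and trapped between fixed ellipticity constants, and do not justify that the profiles obtained in each truncated domain $\Omega_m$ stabilize as $m\to\infty$ with the exact (not merely approximate) masses surviving the weak limit; the normalization and uniqueness of the solution with pole at infinity, which you invoke for the convergence $\omega^{\infty,\Omega_m}\rightharpoonup\omega^{\infty,\Omega}$, is likewise only flagged at the end. Since these points are precisely where the difficulty of the result lies, the proposal is a plausible program rather than a proof.

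It is also worth pointing out that the construction in \cite{DM-4cc} proceeds differently from your correction scheme: rather than compensating the long-range, asymmetric influence of the rest of the Cantor set by solving an infinite-dimensional fixed-point problem, David and Mayboroda build the operator explicitly around a prescribed structure for the solution near each square $Q_\alpha$, arranging matters so that the splitting of flux among the four children of a given square is governed by a purely local problem in the cell between consecutive generations. That local problem inherits the dihedral symmetry of the cell, so the four fluxes are exactly equal by symmetry, and the exact identity $\omega_L^\infty=\sigma/\sigma(\pom)$ comes out of the construction itself instead of being enforced a posteriori. This screening/decoupling idea is the insight your sketch is missing, and it is what makes exactness achievable without the delicate quantitative control your approach would require.
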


The remarkable thing about Theorem \ref{DMthrm.thrm} is that since the complement of the $4$-corner Cantor set is an unbounded uniform domain with unrectifiable $1$-Ahlfors regular boundary, we know that harmonic measure for this set cannot satisfy the $A_\infty(\sigma)$ condition (see, for example, \cite{AHMMT}). Thus, constructing operators like this is highly non-trivial. In Section \ref{section:example}, we take the David--Mayboroda example and use it to build an example of a similar operator in $\re^3$ (see Proposition \ref{proposition:3-dimensional_example}).

By combining Theorems \ref{main.thrm} and \ref{DMthrm.thrm} and Proposition \ref{proposition:3-dimensional_example}, we get the following:
\begin{corollary}\label{corollary:main}
 There exist uniform domains $\Omega$ with unrectifiable Ahlfors regular boundaries $\pom$ in $\re^2$ and $\re^3$ such that every function $f \in \BMO(\pom)$ with compact support has a Varopoulos extension in $\Omega$. In particular, the existence of Varopoulos extensions does not imply rectifiability for the boundary.
\end{corollary}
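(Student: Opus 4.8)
The plan is to assemble the statement from ingredients already in hand: Theorem~\ref{DMthrm.thrm} (in the plane) and Proposition~\ref{proposition:3-dimensional_example} (in $\re^3$) supply uniform domains with unrectifiable Ahlfors regular boundaries carrying an elliptic operator whose elliptic measure lies in $A_\infty(\sigma)$, and Theorem~\ref{main.thrm} turns that $A_\infty$ hypothesis into the existence of Varopoulos extensions for every compactly supported $\BMO$ boundary datum. Thus the proof reduces to checking that the hypotheses of Theorem~\ref{main.thrm} are satisfied by these examples.

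First, for $n=1$, I would take $\Omega=\re^2\setminus E$, where $E$ is the four-corner Cantor set of Section~\ref{section:example}. One records that $E$ is $1$-Ahlfors regular (immediate from the self-similar construction with contraction ratio $1/4$), that $E$ is purely unrectifiable (its orthogonal projection onto almost every line has Lebesgue measure zero, so by the Besicovitch projection theorem $E$ meets every rectifiable curve in an $\mathcal{H}^1$-null set), and that $\Omega$ is a uniform domain --- corkscrew points are available at every boundary point and scale thanks to the gaps at each generation, and Harnack chains can be routed through those gaps with uniformly controlled length and non-degeneracy. Granting these, Theorem~\ref{DMthrm.thrm} furnishes $L=-\div A\nabla$ on $\Omega$ with $\omega_L\in A_\infty(\sigma)$, and Theorem~\ref{main.thrm} then produces, for each $f\in\BMO_{\smallc}(\pom)$, a function $F$ with properties (1)--(3) of that theorem, which is precisely the desired Varopoulos extension.

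Next, for $n=2$, I would argue in the same way, replacing Theorem~\ref{DMthrm.thrm} by Proposition~\ref{proposition:3-dimensional_example}: that proposition provides a uniform domain $\Omega\subset\re^3$ with unrectifiable $2$-Ahlfors regular boundary and an elliptic operator $L$ with $\omega_L\in A_\infty(\sigma)$, so feeding it into Theorem~\ref{main.thrm} once more yields Varopoulos extensions for all $f\in\BMO_{\smallc}(\pom)$.

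Finally, the ``in particular'' clause is obtained by contraposition: if the existence of Varopoulos extensions for every compactly supported $\BMO$ datum were enough to force (uniform) rectifiability of $\pom$, it would rule out the two domains just exhibited, whose boundaries are purely unrectifiable. The one point that needs genuine (if routine) verification is that these Cantor-type complements are uniform domains; everything else is either classical fractal geometry or already contained in Theorem~\ref{main.thrm}, Theorem~\ref{DMthrm.thrm} and Proposition~\ref{proposition:3-dimensional_example}.
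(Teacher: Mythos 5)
Your proposal is correct and follows essentially the same route as the paper: Corollary \ref{corollary:main} is obtained by feeding Theorem \ref{DMthrm.thrm} (in $\re^2$) and Proposition \ref{proposition:3-dimensional_example} (in $\re^3$) into Theorem \ref{main.thrm}, with the same routine geometric checks (Ahlfors regularity, uniformity of the Cantor-set complement, pure unrectifiability of $E$ via the Besicovitch projection theorem) that the paper asserts without detail. One small correction: in the $\re^3$ example the boundary is $(E\cup\bb L)\times\re$, which contains the plane $\bb L\times\re$ and hence is merely non-rectifiable rather than purely unrectifiable, so your closing phrase ``whose boundaries are purely unrectifiable'' should be weakened for that case --- this does not affect the contraposition, since non-rectifiability suffices (and the planar boundary $E$ is indeed purely unrectifiable).
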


By Corollary \ref{corollary:main}, $L^1$-type Carleson measure estimates are simultaneously too strong and too weak from the point of view of the David--Semmes theory: harmonic functions fail these estimates even in the unit disk but the existence of non-harmonic extensions that satisfy these estimates does not imply even qualitative rectifiability for the boundary.

Finally, we want to mention an upcoming paper that is closely related to our results. As we were finishing this work, we were informed\footnote{Personal communication.} about an upcoming manuscript by M. Mourgoglou and T. Zacharopoulos, where the authors construct Varopoulos extensions in corkscrew domains with Ahlfors regular boundaries satisfying a mild quantitative connectivity hypothesis by different methods.

The paper is organized as follows. In Section \ref{section:preliminaries}, we discuss basic definitions and consider some key tools from dyadic analysis and elliptic PDE theory. In Section \ref{section:set-up_for_approximability}, we prove some important preliminary estimates for Theorem \ref{theorem:approximability}, and in Section \ref{section:proof_of_approximability}, we prove Theorem \ref{theorem:approximability}. In Section \ref{section:proof_of_converse}, we prove Theorem \ref{theorem:converse} (and hence, Corollary \ref{corollary:characterization}). Finally, in Section \ref{section:proof_of_varopoulos} we sketch the proof of Theorem \ref{main.thrm} and in Section \ref{section:example} we construct a David--Mayboroda-type example in $\mathbb{R}^3$ (which completes the proof of Corollary \ref{corollary:main}).

\section{Preliminaries}
\label{section:preliminaries}

Throughout, we let $\Omega \subset \ree$ be an open set with $n \ge 1$. We say that $\Omega$ is a \emph{domain} if it is also connected.

Usually, we use capital letters $X,Y,Z$, and so on to denote points in $\Omega$, and lowercase letters $x,y,z$, and so on to denote points in $\pom$. For $X \in \ree$ and $r > 0$, we let $B(X,r)$ be the Euclidean open ball of radius $r$ centered at $X$. The letters $c$ and $C$ and their obvious variations denote constants that depend only on dimension, $n$-Ahlfors regularity constant (see Definition \ref{AR.def}), corkscrew constant (see Definition \ref{CS.def}),  Harnack chain constants, ellipticity constants (see Section \ref{section:elliptic_pde}), and so on. We call these kinds of constants \emph{structural constants}. We write $a \lesssim b$ if $a \le Cb$ for a structural constant $C$ and $a \approx b$ if $C_1 b \le a \le C_2 b$ for structural constants $C_1$ and $C_2$.

\subsection{Uniform domains, chord-arc domains, Ahlfors regularity and uniform rectifiability}
\label{subsection:uniform_domains}

\begin{definition}[Corkscrew condition]\label{CS.def}
We say that a domain $\Omega \subset \ree$ satisfies the \emph{corkscrew condition} if there exists a constant $\gamma > 0$ such that for every $x \in \partial\Omega$ and $r \in (0,\diam(\Omega))$ there exists $Y_{x,r}$ such that
\[B(Y_{x,r}, \gamma r) \subset B(x,r) \cap \Omega.\]
We call $Y_{x,r}$ a \emph{corkscrew point relative to $x$ at scale $r$}.
\end{definition}

\begin{definition}[Harnack chain Condition]
We say a domain $\Omega \subset \ree$ satisfies the \emph{Harnack chain condition} if there exists a uniform constant $C$ such that for every $\rho > 0$ and $\Lambda \ge 0$ and $X, X' \in \Omega$ with $\dist(X,\pom), \dist(X',\pom) \ge \rho$ and 
$|X - X'| \le \Lambda \rho$ there exists a chain of open balls $B_1,\dots, B_J$ with $J \le N(\Lambda)$ with $X \in B_1$, $X' \in B_J$, $B_{j} \cap B_{j + 1} \neq \emptyset$ and $C^{-1} \diam(B_j) \le \dist(B_j, \pom) \le C\diam(B_j)$.
\end{definition}

\begin{definition}[Uniform domain]\label{unifdom.def}
We say that a domain $\Omega \subset \ree$ is \emph{uniform} if it satisfies the corkscrew and Harnack chain conditions.
\end{definition}

\begin{definition}[Ahlfors regularity]\label{AR.def}
We say $\Sigma \subset \ree$ is \emph{$n$-Ahlfors regular} (or simply Ahlfors regular) if there exists $C$ such that
\[C^{-1} r^n \le \H^n(B(x,r) \cap \Sigma) \le Cr^n, \quad \text{for each } x \in r \in (0, \diam(\Sigma)).\]
Here and below $\H^n$ denotes the $n$-dimensional Hausdorff measure. 
\end{definition}

\begin{definition}[Chord-arc domain]\label{cad.def}
  We say that a domain $\Omega \subset \ree$ is a \emph{chord-arc domain} if $\Omega$ satisfies the Harnack chain condition, both $\Omega$ and $\text{int} \, \Omega^{\smallc}$ satisfy the corkscrew condition, and the boundary $\pom$ is $n$-Ahlfors regular.
\end{definition}

\begin{definition}[Uniform rectifiability]\label{UR.def}
  Following \cite{DS1}, we say that an $n$-Ahlfors regular set $E \subset \ree$ is \emph{uniformly rectifiable} if it contains ``big pieces of Lipschitz images'' of $\re^n$: there exist constants $\theta, M > 0$ such that for every $x \in E$ and $r \in (0,\diam(E))$ there is a Lipschitz mapping $\rho = \rho_{x,r} \colon \re^n \to \ree$, with Lipschitz norm no larger that $M$, such that
  \begin{align*}
    \mathcal{H}^n\big(E \cap B(x,r) \cap \rho(\{y \in \re^n \colon |y| < r\})\big) \ge \theta r^n.
  \end{align*}
\end{definition}

\subsection{Carleson measures, non-tangential convergence, BMO and local BV}

Given a domain $\Omega\subset\ree$, we set $\sigma \coloneqq \H^n|_{\partial\Omega}$ and $\delta(X) \coloneqq \dist(X,\partial\Omega)$ for $X\in\Omega$.

\begin{definition}[Carleson measures]\label{CME.def}
  \label{defin:carleson_measure_constant}
  We say that a Borel measure $\mu$ in $\Omega$ is a \emph{Carleson measure (with respect to $\partial \Omega$)} if we have
  \begin{align*}
    C_\mu \coloneqq \sup_{x \in \pom, r > 0} \frac{\mu(B(x,r) \cap \Omega)}{r^n} < \infty.
  \end{align*}
  We call $C_\mu$ the \emph{Carleson norm of $\mu$}.
\end{definition}

\begin{definition}[Cones and non-tangential convergence]\label{defin:non-tangential}
  Suppose that $m>1$. For every $x \in \partial \Omega$, the \emph{cone of $m$-aperture at $x$} is the set
  \begin{align}
    \label{defin:cones} \widetilde{\Gamma}(x) \coloneqq \widetilde{\Gamma}^m(x) \coloneqq \{Z \in \Omega \colon \dist(Z,x) <m\dist(Z,\pom)\}.
  \end{align}
  Let $G$ be a function defined in $\Omega$ and $g$ be a function defined on $\pom$. We say that $G$ \emph{converges non-tangentially to $g$ at $x \in \pom$} if there exists $m> 1$ such that we have $\lim_{k \to \infty} G(Y_k) = g(x)$ for every sequence $(Y_k)$ in $\widetilde{\Gamma}^m(x)$ such that $\lim_{k \to \infty} Y_k  = x$. We denote this by $\lim_{Y \to x, \, \nt} G(Y) = g(x)$.
\end{definition}

\begin{definition}[Non-tangential maximal operator]
  We denote the \emph{non-tangential maximal operator} by $N_*$, that is, for a function $u \in L^\infty(\Omega)$, the function $N_*u \colon \pom \to \re$ is defined as
  \begin{align*}
    N_* u(x) = \sup_{X \in \widetilde{\Gamma}(x)} |u(X)|.
  \end{align*}
  We call $N_* u$ the \emph{non-tangential maximal function of $u$}.
\end{definition} 

\begin{remark} The aperture constant $m$ in Definition \ref{defin:non-tangential} does not play a big role in this paper and therefore we do not analyze it in detail for our results; we simply have that the results hold for some uniform aperture constant. Naturally, the aperture constant affects the values of the non-tangential maximal function but since the $L^p$ norms of non-tangential maximal functions that are defined with different aperture constants are comparable (with the comparability constant depending on the aperture constants) (see \cite[Lemma 1]{FefSt} and \cite[Lemma 1.10]{HT1}), this is not important for us. In most computations, it is convenient to use \emph{dyadic cones} (see \eqref{defin:dyadic_cone}) instead of cones of the previous type.
\end{remark}

\begin{definition}[BMO]
  The space $\BMO(\pom)$ (\emph{bounded mean oscillation}) consists of  $f\in L^1_{\loc}(\partial\Omega)$ with
  \begin{align*}
    \|f\|_{\BMO} \coloneqq \sup_\Delta \fint_\Delta |f(y) - \langle f \rangle_\Delta| \, d\sigma(y) < \infty,
  \end{align*}
  where the supremum is taken over all surface balls $\Delta = \Delta(x,r) \coloneqq B(x,r) \cap \pom$. We denote $f \in \BMO_{\smallc}(\pom)$ if  $f$ is a $\BMO$ function with compact support.
\end{definition}

\begin{definition}[Local BV]
  \label{defin:local_bv}
  We say that locally integrable function $f$ has \emph{locally bounded variation in $\Omega$} (denote $f \in \BV_{\loc}(\Omega)$) if for any open relatively compact set $\Omega' \subset \Omega$ the \emph{total variation over $\Omega'$} is finite:
  \begin{align*}
    \iint_{\Omega'} |\nabla f(Y)| \, dY 
    \coloneqq \sup_{\substack{\overrightarrow{\Psi} \in C_0^1(\Omega') \\ \|\overrightarrow{\Psi}\|_{L^\infty(\Omega') \le 1}}} \iint_{\Omega'} f(Y) \, \text{div} \overrightarrow{\Psi}(Y) \, dY < \infty,
  \end{align*}
  where $C_0^1(\Omega')$ is the class of compactly supported continuously differentiable vector fields in $\Omega'$.
\end{definition}

\subsection{Dyadic cubes, Whitney regions and approximating domains}
\label{section:dyadic_and_whitney}

An $n$-Ahlfors regular set $E\subset\bb R^{n+1}$ equipped with the Euclidean distance and surface measure can be viewed as a space of homogeneous type of Coifman and Weiss \cite{coifmanweiss}, with ambient dimension $n+1$. All such sets can be decomposed dyadically in the following sense:
 
\begin{lemma}[\cite{Ch,DS1,hyt-k}]\label{cubes} Assume that $E \subset \mathbb R^{n+1}$ is $n$-Ahlfors regular. Then $E$ admits a \emph{dyadic decomposition} in the sense that there exist constants $a_1 \ge a_0 > 0$
such that for each $k \in \mathbb{Z}$ there exists a collection of Borel sets, $\dd_k$,  which we will call \emph{(dyadic) cubes}, such that
$$
\dd_k \coloneqq \{Q_{j}^k\subset E \colon j\in \mathfrak{I}_k\},
$$
where $\mathfrak{I}_k$ denotes a countable index set depending on $k$, satisfying
\begin{enumerate}
  \item[(i)] for each fixed $k \in \mathbb{Z}$, the sets $Q_j^k$ are disjoint and $E=\cup_{j}Q_{j}^k\,\,$,
  
  \item[(ii)] if $m\geq k$ then either $Q_{i}^{m}\subset Q_{j}^{k}$ or $Q_{i}^{m}\cap Q_{j}^{k}=\emptyset$,
  
  \item[(iii)] for each $k \in \Z$, $j \in \mathfrak{I}_k$ and $m<k$, there is a unique $i \in \mathfrak{I}_m$ such that $Q_{j}^k\subset Q_{i}^m$,

  \item[(iv)] $\diam(Q_{j}^k)\leq a_1 2^{-k}$,

  \item[(v)] for each $Q_{j}^k$, there exists a point $z_j^k \in Q_j^k$ such that $E \cap B(z^k_{j}, a_0 2^{-k}) \subset Q_j^k \subset E \cap B(z^k_{j}, a_1 2^{-k})$.
\end{enumerate}
\end{lemma}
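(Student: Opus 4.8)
The plan is to realize the Ahlfors regular set $E$ as a space of homogeneous type, as recalled above, and to run the Christ--David construction of dyadic cubes; I will work with base $\tfrac12$, adjusting the constants accordingly. For each $k\in\mathbb Z$, use Zorn's lemma to choose a maximal $2^{-k}$-separated subset $X_k=\{z_j^k\}_{j\in\mathfrak I_k}\subset E$. Doubling makes each $\mathfrak I_k$ countable, and maximality supplies the two facts that organize everything: (a) $|z_i^k-z_j^k|\ge 2^{-k}$ whenever $i\ne j$, and (b) for every $x\in E$ there is a $j$ with $|x-z_j^k|<2^{-k}$. These centers $z_j^k$ will be the ones appearing in property (v). Next I would build an ancestor forest: for each $k$ define a parent map $\mathfrak p_k\colon X_{k+1}\to X_k$ sending $z$ to the point of $X_k$ nearest to it (at distance $<2^{-k}$ by (b)), ties broken by a fixed enumeration of $\bigcup_l X_l$; for $m>k$ put $\mathfrak p_{k,m}=\mathfrak p_k\circ\cdots\circ\mathfrak p_{m-1}$ and call $z_i^m$ a \emph{descendant} of $z_j^k$ when $\mathfrak p_{k,m}(z_i^m)=z_j^k$. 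Telescoping along a parent chain gives $|\mathfrak p_{k,m}(z)-z|\le\sum_{l\ge k}2^{-l}<2^{-k+1}$, so every descendant of $z_j^k$ lies in $B(z_j^k,2^{-k+1})$.

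Then I would form, for each $(k,j)$, a \emph{pre-cube} $\widehat Q_j^k$ defined as the union of the balls $B(z_i^m,c\,2^{-m})\cap E$ over all descendants $z_i^m$ of $z_j^k$, with $c$ a small structural constant and with the $m=k$ term giving $B(z_j^k,c\,2^{-k})\cap E\subset\widehat Q_j^k$. By construction the descendant relation yields $\widehat Q_i^m\subset\widehat Q_j^k$ whenever $z_i^m$ is a descendant of $z_j^k$, the chain estimate gives $\widehat Q_j^k\subset B(z_j^k,(2+c)2^{-k})$ and hence $\diam\widehat Q_j^k\lesssim 2^{-k}$, and fact (b) shows that the scale-$k$ pre-cubes cover $E$. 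What fails is disjointness: distinct scale-$k$ pre-cubes may overlap. To repair this, fix a well-ordering of each $\mathfrak I_k$, set $Q_j^k=\overline{\widehat Q_j^k}\setminus\bigcup_{j'<j}\overline{\widehat Q_{j'}^k}$, and then restore scale coherence by intersecting each $Q_i^m$ with the cube $Q_j^k$ of its scale-$k$ ancestor. With this done, property (i) is immediate from the covering property and the priority rule, (ii) and (iii) hold because a cube is only ever intersected with ancestors, and (iv) with $a_1$ an absolute constant follows from $Q_j^k\subset\overline{\widehat Q_j^k}$ and the pre-cube diameter bound.

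The one genuinely delicate point, and the step I expect to be the main obstacle, is the inner inclusion $E\cap B(z_j^k,a_0 2^{-k})\subset Q_j^k$ in property (v): one must show that the disjointification surgery does not bite into a definite ball about the center. Equivalently one needs Christ's ``small boundary layer'' estimate, that the set where $\widehat Q_j^k$ meets any other scale-$k$ pre-cube is contained in a thin collar near $\partial Q_j^k$ built from descendant balls of high generation, hence stays a definite distance away from $z_j^k$, so that a small enough structural $a_0$ keeps $B(z_j^k,a_0 2^{-k})$ clear of every competing pre-cube. This is proved by an induction on the scale: for $x$ sufficiently close to $z_j^k$ one shows, scale by scale, that the center nearest to $x$ at each finer scale must be a descendant of $z_j^k$, using the defining nearest-point property of the parent maps together with the separation (a) and doubling to keep the ancestor chain from drifting. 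Granting this, (v) holds with that $a_0$ and the proof is complete; the remaining bookkeeping (the bi-infinite range of $k$, and that the $Q_j^k$ are Borel) is routine. A reader content with less detail may instead simply invoke \cite{Ch,DS1,hyt-k}.
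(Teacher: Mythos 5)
The paper offers no proof of this lemma at all (it is quoted verbatim from \cite{Ch,DS1,hyt-k}), so the only question is whether your construction is sound; the step you yourself single out as the main obstacle is exactly where it breaks. With consecutive scale ratio $\tfrac12$, separation $2^{-k}$ and covering radius $2^{-k}$, the nearest-point parent map does \emph{not} force points lying very close to $z_j^k$ at finer scales to be descendants of $z_j^k$, and your claimed scale-by-scale induction is false as stated. Concretely, already on a line (or on an Ahlfors regular set containing a segment): take $z_j^k=0$, and build nets in which the net point of scale $m$ nearest to $0$ sits at a tiny distance $\varepsilon$ from $0$, but at each coarser scale $l$ the nearest scale-$l$ net point lies about $0.9\cdot 2^{-l}$ further to the right; the maximality and separation constraints permit this, the parent chain drifts by nearly $\sum_{l>k}2^{-l}\approx 2^{-k}$, and its scale-$(k+1)$ ancestor can end up closer to a different scale-$k$ center $z_{j'}^k$ (say at $1.5\cdot 2^{-k}$) than to $0$. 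Then a descendant ball of $z_{j'}^k$ penetrates every ball $B(z_j^k,a_0 2^{-k})$, and if $j'$ has higher priority your disjointification bites into that ball, so property (v) genuinely fails for the construction as described. The underlying issue is quantitative: the cumulative drift of an ancestor chain is comparable to (in fact up to twice) the separation at scale $k$, so there is no exclusive core around $z_j^k$ to protect. The standard remedy, used in all of \cite{Ch,DS1,hyt-k}, is to run the construction with a sufficiently small scale ratio $\delta$ (so that the drift contributed by scales finer than $k+1$ is a small fraction of the separation $\delta^k$, which is what makes the ``close at a finer scale $\Rightarrow$ descendant'' induction close), prove (iv)--(v) there, and only afterwards pass to the base-$2$ statement of the lemma by reindexing generations ($\delta^{k'}\le 2^{-k}<\delta^{k'-1}$), at the cost of worse constants $a_0,a_1$. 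Your decision to work directly at base $\tfrac12$ is not a harmless normalization; it removes the room the key induction needs.

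A secondary, repairable defect: after disjointifying independently at each scale by arbitrary well-orderings and then ``intersecting each $Q_i^m$ with the cube of its scale-$k$ ancestor,'' the scale-$m$ cubes need no longer cover their scale-$k$ ancestor exactly, since the portion of $Q_i^m$ that happens to lie in a different scale-$k$ cube is simply deleted; so (i) at scale $m$ (and hence the partition structure needed for (ii)--(iii)) is not automatic. One must make the priority orders coherent across scales (children inherit the parent's priority, and the surgery is performed top-down), or define the cubes recursively as unions of their children as in \cite{hyt-k}, which also produces genuinely Borel (half-open) cubes and an exact partition as the lemma asserts. If you want a complete argument rather than the paper's citation, the cleanest route is to follow the Hyt\"onen--Kairema scheme with small $\delta$ and then reindex; as written, your proof has a real gap at the inner-ball step and a bookkeeping gap in the nesting/partition step.
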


We denote by $\dd = \dd(E)$ the collection of all cubes $Q^k_j$, that is,
  $$\dd \coloneqq \cup_{k} \dd_k.$$
If $E$ is bounded, we ignore cubes where $2^{-k} \gtrsim \diam(\pom)$ (in particular, where $a_0 2^{-k} \ge \diam(\pom)$).
Given a cube $Q = Q_j^k \in \mathbb{D}$, we define the \emph{side-length of $Q$} as $\ell(Q) \coloneqq 2^{-k}$. By Ahlfors regularity and property (v) in Lemma \ref{cubes}, we know that $\ell(Q) \approx \diam(Q)$ and $\H^n(Q) \approx \ell(Q)^n$. Given $Q \in \dd$ and $m \in \Z$, we set
\begin{equation*}
  \dd_Q     \coloneqq \left\{Q'\in \dd \colon Q'\subseteq Q\right\},\qquad   \dd_{m,Q} \coloneqq \left\{Q'\in \dd_Q \colon \ell(Q') = 2^{-m} \ell(Q) \right\}.
\end{equation*}
We call the cubes in the collection $\dd_{1,Q}$ the \emph{children} of $Q$. Notice that by Ahlfors regularity and property (v) in Lemma \ref{cubes}, each cube has a uniformly bounded number of children.
 
Given a cube $Q = Q_j^k \in \dd$, we call the point $z_j^k \in Q$ in property (v) in Lemma \ref{cubes} the \emph{center of $Q$}, denote $x_Q \coloneqq z_j^k$, and set
\[
\Delta_Q \coloneqq E \cap B(z_j^k, a_0 \ell(Q)).
\]
We use Lemma \ref{cubes} to decompose $\pom$, so that $\dd(E) = \dd(\pom) \eqqcolon \dd$. For each $Q \in \dd$, we let $X_Q$ be the corkscrew point relative to $x_Q$ at scale $10^{-5}a_0\ell(Q)$. We have $B(X_Q, \gamma 10^{-5} a_0 \ell(Q)) \subset B(x_Q, 10^{-5} a_0 \ell(Q)) \cap \Omega$, where $\gamma$ is the corkscrew constant in Definition \ref{CS.def}.

For many of our techniques, it is important that we show that some collections of dyadic cubes are quantitatively small in the following sense:
\begin{definition}[Carleson packing condition]\label{defin:carleson_packing}
  Let $\dd$ be a dyadic system on $\pom$ and let $\Ac \subset \dd$. We say that $\Ac$ satisfies a \emph{Carleson packing condition} if there exists a constant $C \ge 1$ such that for any $Q_0 \in \dd$ we have
  \begin{align*}
    \sum_{Q \in \Ac, Q \subset Q_0} \sigma(Q) \le C \sigma(Q_0).
  \end{align*}
  We denote the smallest such constant $C$ by $\C_{\Ac}$.
\end{definition}

Next, we   use a standard decomposition of $\Omega$ into Whitney cubes (see e.g. \cite[Chapter VI]{stein-SIOs}), and then associate a collection of such Whitney cubes to each boundary cube to construct suitable Whitney-type regions. These Whitney regions are modeled after regions of the type $Q \times (\ell(Q)/2,\ell(Q))$ (that is, the upper halves of Carleson boxes) in the simpler geometry of the upper half-space. For this, we recall the construction found in \cite{HMUR1} noting that we make some changes to the notation therein (following the notation of more recent papers, e.g. \cite{HMM}). We let $\W = \{I\}_I$ denote a Whitney decomposition of $\Omega$, with the properties that each $I$ is a closed $(n+1)$-dimensional cube satisfying
\begin{align*}
  4\diam(I) \le \dist(4I, \pom) \le \dist(I, \pom) \le 40\diam(I),
\end{align*}
where $4I$ is the standard concentric Euclidean dilate of a cube; the interiors of the cubes $I$ are disjoint, and for all $I_1, I_2 \in \W$ with $I_1 \cap I_2 \neq \emptyset$ we have
\begin{align*}
  \frac{1}{4} \diam(I_1) \le \diam(I_2) \le 4\diam(I_1).
\end{align*}
For $I \in \W$ we let $\ell(I)$ denote the side length of $I$.

For each cube $Q \in \dd$ and constant $K \ge K_0$, with $K_0$ to be described momentarily, we associate an initial collection of Whitney cubes
\[W_Q(K) \coloneqq \{I \in \W \colon K^{-1} \ell(I) \le \ell(Q) \le K \ell(I), \dist(I, Q) \le K \ell(Q)\}.\]
We choose $K_0$ depending on the constants in the corkscrew condition and the Ahlfors regularity condition, insisting on two conditions being met:
\begin{enumerate}
  \item[(1)] If $X \in \Omega$ with $\dist(X, \pom) \le 10^5 \diam(\pom)$ then $X \in I \in W_Q(K)$ for some $Q \in \dd$.
  \item[(2)] For any $Q \in \dd$, we have $B(X_Q, \dist(X_Q,\pom)/2) \subseteq \cup_{I \in W_Q(K)} I$, and if $Q' \in \dd$ is another cube such that $Q' \subset Q$ with $\ell(Q') = \tfrac{1}{2}\ell(Q)$, then we also have $B(X_{Q'}, \dist(X_{Q'},\pom)/2) \subseteq \cup_{I \in W_Q(K)} I$.
\end{enumerate}
Of course, condition (1) above is automatically satisfied if $\diam(\pom) = \infty$.

Following \cite[Section 3]{HMUR1}, we augment the collection $\W_Q(K)$ as follows. For each $I \in W_Q(K)$, we take a Harnack chain $H(I)$ from the center of $I$ to the corkscrew point $X_Q$, and we let $W_{Q,I}(K)$ be be the collection of Whitney cubes in $\W$ that meet at least one ball in the chain $H(I)$. We then set $\W^*_Q(K) \coloneqq \bigcup_{I \in W_Q(K)} W_{Q,I}(K)$. Finally, for a small dimensional parameter $\tau$ (this is the parameter $\lambda$ in \cite[Section 3]{HMUR1}), we define the \emph{Whitney region relative to $Q$} as
\begin{align}\nonumber
  \label{defin:whitney_region} U_Q = U_Q(K_0) \coloneqq \bigcup_{I \in \W^*_Q(K_0)} (1 + \tau) I.
\end{align}
By construction, we know that if $X \in U_Q$, then
\begin{equation}\label{K1def.eq}
  K_1^{-1} \ell(Q) \le \dist(X, \pom) \le \dist(X, Q) \le K_1 \ell(Q),
\end{equation}
where $K_1$ depends on $K_0$, the dimension and the Harnack chain condition. For $\kappa \gg K_0$ to be chosen, we also define the following fattened versions of the Whitney regions:
\[
U_Q^* = U_Q(\kappa) = \bigcup_{I \in \W^*_Q(\kappa)} (1 + \tau)I,\qquad U_Q^{**} = \bigcup_{I \in \W^*_Q(\kappa)} (1 + 2\tau)I,
\]
that is, $U_Q^*$ is constructed the same way as $U_Q$ but we replace the constant $K_0$ by $\kappa$ (similarly for $U_Q^{**}$). We describe the reasoning and choice of $\kappa$ in the next subsection. We note that for $\tau$ small enough the regions $U_Q$, $U^*_Q$ and $U_Q^{**}$ have bounded overlaps, that is, for a collection of dyadic cubes $\mathscr{Q}$ and the $(n+1)$-dimensional Lebesgue measure $|\cdot|$ we have $|\bigcup_{Q \in \mathscr{Q}} U_Q^{**}| \approx \sum_{Q \in \mathscr{Q}} |U_Q^{**}|$.

Using the Whitney regions above, we can now define objects like sawtooth regions, Carleson boxes and dyadic cones. Let $Q_0 \in \dd$ be a fixed cube and $\F \subset \dd_{Q_0}$ a collection of pairwise disjoint cubes. We set
\[\dd_{\mathcal{F}, Q_0} \coloneqq \dd_{Q_0} \setminus \cup_{Q \in \mathcal{F}} \dd_Q.\] 
We then define the \emph{local sawtooth relative to $\F$} (and its fattened version) as
\[\Omega_{\mathcal{F}, Q_0} \coloneqq \interior\Big(\bigcup_{Q \in\dd_{\mathcal{F}, Q_0}} U_Q\Big),\qquad \Omega^*_{\mathcal{F}, Q_0} \coloneqq \interior\Big(\bigcup_{Q \in\dd_{\mathcal{F}, Q_0}} U^*_Q\Big).\]
In the special case where $\F = \emptyset$, we write $T_{Q_0} = \Omega_{\mathcal{F}, Q_0}$ and $T^*_{Q_0} = \Omega^*_{\mathcal{F}, Q}$, that is,
\[T_{Q_0} \coloneqq \interior\Big(\bigcup_{Q \in\dd_{Q_0}} U_Q\Big),\qquad T_{Q_0}^* \coloneqq \interior\Big(\bigcup_{Q \in\dd_{Q_0}} U^*_Q\Big),\qquad T_{Q_0}^{**} \coloneqq \interior\Big(\bigcup_{Q \in\dd_{Q_0}} U^{**}_Q\Big)\] 
We call $T_{Q_0}$ the \emph{Carleson box relative to $Q_0$} and $T_{Q_0}^*$ and $T_{Q_0}^{**}$ its fattened versions. Given a cube $Q_0 \in \dd$ and a point $x \in Q_0$, we also define the \emph{(truncated) dyadic cone at $x \in \pom$}, $\Gamma(x)$, by setting
\begin{align}
  \label{defin:dyadic_cone} \Gamma(x) \coloneqq \Gamma_{Q_0}(x) \coloneqq \interior\Big( \bigcup_{Q \in \dd_{Q_0} \colon x \in Q} U_Q \Big).
\end{align}
Notice that $\Gamma(x) = \Omega_{\F_x, Q_0}$, where $\F_x$ is the collection of maximal\footnote{Since the sizes of the cubes in $\{Q \in \dd_{Q_0} \colon x \notin Q\}$ are bounded from above, we can always choose the maximal cubes. In the case of non-truncated dyadic cones (that is, cones of the form $\interior\left( \bigcup_{Q \in \dd \colon x \in Q} U_Q \right)$), we can choose the maximal cubes if the dyadic system $\dd$ forms a tree-like structure. This structure can be achieved by choosing the center points of the dyadic cubes in a suitable way (see, for example, \cite[Theorem 2.4]{hyt-tap}).} (and hence, disjoint) cubes in the collection $\{Q \in \dd_{Q_0} \colon x \notin Q\}$. It is straightforward to verify that there exists uniform constants $m_1, m_2 > 1$ such that $\Gamma(x)$ contains a truncated version of the cone $\widetilde{\Gamma}^{m_1}(x)$ and it is contained in a truncated version of the cone $\widetilde{\Gamma}^{m_2}(x)$, where $\widetilde{\Gamma}^{m_1}(x)$ and $\widetilde{\Gamma}^{m_2}(x)$ are cones of the type \eqref{defin:cones}. Thus, since the aperture of the cones is not important for us, we mostly use dyadic cones when studying non-tangential convergence.

By the following lemma, the Whitney regions, sawtooth regions, Carleson boxes, and truncated dyadic cones inherit many quantitative geometric properties from $\Omega$:
\begin{lemma}[{\cite[Lemma 3.61]{HMUR1}}]\label{lemma:hm}
  Suppose that $\Omega\subset\bb R^{n+1}$ is a uniform domain with $n$-Ahlfors regular boundary. Let $Q_0 \in \dd(\pom)$ be a cube and $\F \subset \dd_{Q_0}$ be a collection of pairwise disjoint cubes. Then $\Omega_{\mathcal{F}, Q}$ and $\Omega^*_{\mathcal{F}, Q}$ are uniform domains with $n$-Ahlfors regular boundary whose structural constants depend only on the dimension, the structural constants of $\Omega$, and the constant $\kappa$.   In particular, the Whitney regions $U_Q$, $U_Q^*$ and $U_Q^{**}$, the Carleson boxes $T_Q$, $T_Q^*$ and $T_Q^{**}$ and the truncated dyadic cones $\Gamma(x)$ are uniform domains with $n$-Ahlfors regular boundaries, with structural constants depending only on the dimension and the   structural constants of $\Omega$ and the constant $\kappa$.
\end{lemma}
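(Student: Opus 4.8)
The plan is to verify, directly from the construction in Section \ref{section:dyadic_and_whitney}, the three defining properties of a uniform domain for the local sawtooths $\Omega_{\F,Q_0}$ and $\Omega^*_{\F,Q_0}$ --- $n$-Ahlfors regularity of the boundary, the corkscrew condition, and the Harnack chain condition --- and then read off the assertions about $U_Q$, $U_Q^*$, $U_Q^{**}$, $T_{Q_0}$, $T_{Q_0}^*$, $T_{Q_0}^{**}$ and the truncated cones $\Gamma(x)$ as special cases. I would run the argument for $\Omega_{\F,Q_0}$; the fattened version is identical after replacing $K_0$ by $\kappa$ and $\tau$ by $2\tau$, and a single Whitney region $U_Q$ is easier still, since it is a connected union of boundedly many fattened Whitney cubes all of side length $\approx\ell(Q)$ joined to $X_Q$ by Harnack chains, so there all three properties follow at once from the bounded geometry of Whitney cubes. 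Throughout I will use that $\dd_{\F,Q_0}$ is closed under passing to dyadic ancestors inside $Q_0$, that the $U_Q$ have bounded overlap, and that $\delta(\cdot)\approx\ell(Q)$ on $U_Q$.

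First I would treat the Ahlfors regularity of $\partial\Omega_{\F,Q_0}$, which is the most bookkeeping-heavy ingredient. I would split $\partial\Omega_{\F,Q_0}$ into a \emph{bottom part} contained in $\pom$ and a \emph{top part} covered by finitely overlapping faces of the fattened cubes $(1+\tau)I$ with $I\in\W^*_Q(K_0)$, $Q\in\dd_{\F,Q_0}$. For the upper bound, fixing $z\in\partial\Omega_{\F,Q_0}$ and $r>0$: only $O(1)$ Whitney cubes of side length $\approx r$ meet $B(z,r)$, each carrying $O(r^n)$ of top surface; smaller cubes contribute a geometrically summable tail (dyadic counting of Whitney cubes plus bounded overlap of the $U_Q$); and the bottom part contributes $O(r^n)$ by Ahlfors regularity of $\pom$; hence $\H^n(B(z,r)\cap\partial\Omega_{\F,Q_0})\lesssim r^n$. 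For the lower bound I would argue by cases: if $B(z,r)$ comes within $\sim r$ of $\pom$, then the surface ball of $\pom$ of radius $\sim r$ about the nearest boundary point lies in the bottom part and inside $B(z,Cr)$ (here the ancestor-closedness of $\dd_{\F,Q_0}$ guarantees this piece of $\pom$ is actually retained), giving $\gtrsim r^n$; and if $z$ lies on a top face $(1+\tau)I$ with $\ell(I)\gtrsim r$, a flat $n$-disk of radius $\sim r$ in that face lies in $B(z,r)\cap\partial\Omega_{\F,Q_0}$. This yields $n$-Ahlfors regularity with the claimed dependence of the constants.

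Next I would handle the corkscrew condition. Given $z\in\partial\Omega_{\F,Q_0}$ and $0<r<\diam(\Omega_{\F,Q_0})$, I produce an interior corkscrew point by cases: for $r\gtrsim\ell(Q_0)$ take $X_{Q_0}$; for $r\ll\ell(Q_0)$ with $z$ within $\sim r$ of $\pom$, take $X_Q$ where $Q\in\dd_{\F,Q_0}$ is the cube with $\ell(Q)\approx r$ whose shadow contains the nearest point of $\pom$, using the defining property (2) of $K_0$ together with the $\tau$-fattening to ensure $B(X_Q,\delta(X_Q)/2)\subset\Omega_{\F,Q_0}$; and for $z$ near a top face coming from some $I\in\W^*_Q(K_0)$, take the center of $I$, which is $\gtrsim\ell(I)\approx r$ deep inside $(1+\tau)I\subset\Omega_{\F,Q_0}$. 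The one point needing care is that the selected ball never escapes $\Omega_{\F,Q_0}$, which is exactly what conditions (1)--(2) on $K_0$ and the bounded overlap of Whitney cubes were arranged to secure; since $\delta(\cdot)\approx r$ on the relevant cubes, the corkscrew constant is structural (depending on $\kappa$).

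The Harnack chain condition is where I expect essentially all the work to lie, and the main obstacle is keeping the connecting chain \emph{inside} the sawtooth while keeping its length bounded in terms of the separation parameter only. Given $X,X'\in\Omega_{\F,Q_0}$ with $\dist(X,\partial\Omega_{\F,Q_0}),\dist(X',\partial\Omega_{\F,Q_0})\ge\rho$ and $|X-X'|\le\Lambda\rho$, pick $Q,Q'\in\dd_{\F,Q_0}$ with $X\in U_Q$, $X'\in U_{Q'}$; since $\delta(\cdot)\approx\ell(Q)$ on $U_Q$ and $\dist(X,\partial\Omega_{\F,Q_0})\lesssim\delta(X)$, we get $\rho\lesssim\ell(Q)$ and $\rho\lesssim\ell(Q')$. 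I would first connect $X$ to $X_Q$ by an $O(1)$-chain inside $U_Q$ --- possible precisely because $U_Q$ was built by adjoining to $W_Q(K_0)$ the Harnack chains $H(I)$ running to $X_Q$ --- and likewise $X'$ to $X_{Q'}$ inside $U_{Q'}$. Then I would connect $X_Q$ to $X_{Q'}$ by climbing the dyadic tree: letting $R\subseteq Q_0$ be the lowest common ancestor of $Q$ and $Q'$, walk $Q=Q^{(0)}\subset Q^{(1)}\subset\dots\subset Q^{(j)}=R\supset\dots\supset Q'$ through parents, and at each step use property (2) of $K_0$, which puts $B(X_{Q^{(i)}},\delta(X_{Q^{(i)}})/2)$ and $B(X_{Q^{(i+1)}},\delta(X_{Q^{(i+1)}})/2)$ inside $\bigcup_{I\in W_{Q^{(i+1)}}(K_0)}I\subset U_{Q^{(i+1)}}$, so an $O(1)$-chain inside $U_{Q^{(i+1)}}$ (itself a uniform domain of diameter $\approx\ell(Q^{(i+1)})$ by the easy case above) joins $X_{Q^{(i)}}$ to $X_{Q^{(i+1)}}$. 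Every cube appearing here lies in $\dd_{\F,Q_0}$ by ancestor-closedness, so the whole chain stays in $\Omega_{\F,Q_0}$; and since $\ell(R)\lesssim\ell(Q)+\ell(Q')+|X_Q-X_{Q'}|\lesssim\Lambda\max(\ell(Q),\ell(Q'))$, the number of scales $j$ is $\lesssim\log\Lambda+C$, each costing $O(1)$ balls, so the total length is $\le N(\Lambda)$. Splicing the pieces gives the Harnack chain, and the ``in particular'' statements then follow by specializing to $\F=\emptyset$ (Carleson boxes), $\F=\F_x$ (truncated cones), or by the direct Whitney-geometry argument (single Whitney regions).
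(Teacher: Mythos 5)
The paper does not actually prove this lemma --- it is imported wholesale from \cite[Lemma 3.61]{HMUR1}, whose proof occupies an appendix there --- and the two places where that proof has to work hardest are precisely where your outline has genuine gaps. The first is the lower Ahlfors bound. Your case split is neither exhaustive nor justified: the portion of $\pom$ sitting underneath the stopping cubes $Q\in\F$ is \emph{not} part of $\partial\Omega_{\F,Q_0}$ (the sawtooth stays at height $\gtrsim\ell(Q)$ above such $Q$), and ancestor-closedness of $\dd_{\F,Q_0}$ does not ``retain'' it; $\partial\Omega_{\F,Q_0}\cap\pom$ is essentially $Q_0\setminus\bigcup_{Q\in\F}Q$ and can carry little or no measure even though the sawtooth comes within distance $\sim r$ of $\pom$. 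So when $B(z,r)$ approaches $\pom$ over a stopping cube your ``bottom part'' contributes nothing; moreover $z$ may then lie on faces of Whitney cubes of side length $d\ll r$, a case covered by neither of your alternatives, and even when $z$ lies on a face with $\ell(I)\gtrsim r$ it is not true in general that a full flat $n$-disk of radius $\sim r$ in that face belongs to $\partial\Omega_{\F,Q_0}$: neighboring regions $U_{Q'}$, $Q'\in\dd_{\F,Q_0}$, can cover most of that face from the other side, leaving only a thin sliver of genuine boundary. The lower bound needs a different mechanism (for instance interior \emph{and} exterior corkscrew balls for the sawtooth at scale $r$ combined with a separation/projection argument, which is where the bulk of the work in \cite{HMUR1} lies); no ingredient of this kind appears in your sketch.

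The second gap is the Harnack chain length. You connect $X_Q$ to $X_{Q'}$ by climbing to the lowest common dyadic ancestor $R$ and assert $\ell(R)\lesssim\Lambda\max(\ell(Q),\ell(Q'))$, but this is false for the lowest common ancestor: two retained cubes can be geometrically adjacent (so $\Lambda\approx1$) while their lowest common ancestor is $Q_0$ itself, since nearby boundary points may be separated at the top of the dyadic tree. Your chain then has length $\approx\log\big(\ell(Q_0)/\ell(Q)\big)$, which is not bounded by any $N(\Lambda)$, so the Harnack chain condition is not verified. The repair is to connect at the correct scale: go up only about $\log\Lambda$ generations to retained ancestors of side length $\approx\Lambda\max(\ell(Q),\ell(Q'))$ and then join their corkscrew points through a bounded chain of ambient Whitney cubes, checking that each such cube is captured by $\W^*_P(K_0)$ (or $\W^*_P(\kappa)$) for some retained $P$; this verification is exactly the delicate part of the argument in \cite{HMUR1} and is missing here. (A smaller, fixable slip: in your corkscrew step the cube of side $\approx r$ containing the nearest point of $\pom$ may be a descendant of a stopping cube and hence not retained; one should instead pass to an ancestor of the cube $Q$ with $z\in\overline{U_Q}$, which is retained by ancestor-closedness.) The remaining parts of your outline (ADR upper bound by counting faces, corkscrews from the points $X_Q$ via property (2), single Whitney regions by bounded geometry) are sound in spirit, but as it stands the proof of the two items above is incomplete.
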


\subsection{The choice of the parameter $\kappa$}\label{kappachoice.subsect}
In contrast to the setting of the upper half space, we do not define the sawtooths by removing Whitney regions. This is due to the overlaps of the regions $U_Q$: we may encounter situations where for $Q_0 \in \dd(\pom)$ and a collection of pairwise disjoint cubes $\F \subset \dd_{Q_0}$ there exists a cube $Q \in \dd_{\F, Q_0}$ such that $\overline{U_Q}$ does not contribute to the boundary of $\Omega_{\F, Q}$. That being said, if $\kappa$ is chosen large enough, then the fattened Whitney region $U_Q^*$ meets the boundary of the unfattened region $\Omega_{\mathcal{F}, Q}$ on a portion roughly the measure of $Q$. We will consider this in Section \ref{section:proof_of_approximability} where it will be convenient for us, but we prove the technical estimates that give this property below.

Let us fix a cube $Q \in \dd$. Recall that $X_Q$ is a corkscrew point relative to $x_Q \in Q$ at scale $r_Q \coloneqq 10^{-5}a_0 \ell(Q)$ and  $\Delta_Q = B(x_Q, 10^5 r_Q) \cap \pom \subset Q$ is the surface ball associated to $Q$. We let $\hat{x}_Q \in \pom$ denote a touching point for $X_Q$, that is, a point such that $|X_Q - \hat{x}_Q| = \delta(X_Q)$. By triangle inequality, $|x_Q - \hat{x}_Q|<2r_Q$, and thus,
\begin{equation}\label{tbalsbal.eq}
  \hat{\Delta}_Q = B(\hat{x}_Q, 10^3 r_Q) \cap \pom \subset \Delta_Q \subseteq Q.
\end{equation}
For every $\theta \in (0,1)$, we let 
\begin{align*}
  P_Q(\theta) \coloneqq \hat{x}_Q + \theta(X_Q - \hat{x}_Q)
\end{align*}
be the ``$\theta$-point'' on the directed line segment from $\hat{x}_Q$ to $X_Q$. Then, by definitions,
\begin{equation}\label{Pqdist.eq}
\gamma\theta r_Q \le |P_Q(\theta) - \hat{x}_Q| = \dist(P_Q(\theta), \pom) \le \theta r_Q.
\end{equation}

\begin{lemma}\label{thetachoice.cl}
There exists $\theta_0 \in (0,1)$ depending on $K_0$ and the structural constants such that if for some $Q'\in\mathbb D$ we have that
\begin{align*}
  B\big(P_Q(\theta_0), \tfrac{\gamma\theta_0}{10} r_Q \big) \cap U_{Q'} \neq \emptyset,
\end{align*}
then $Q' \subseteq Q$ and $\ell(Q') < \ell(Q)$. 
\end{lemma}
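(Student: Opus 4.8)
The plan is to argue by tracking side-lengths and distances to the boundary, exploiting the two properties that define membership in $W_{Q'}(K_0)$ (and hence in $U_{Q'}$ up to the fixed dilation/Harnack-chain fattening encoded in $K_1$). Fix $\theta_0 \in (0,1)$ to be chosen at the end, write $P \coloneqq P_Q(\theta_0)$ and $\rho \coloneqq \tfrac{\gamma\theta_0}{10} r_Q$, and suppose $X \in B(P,\rho) \cap U_{Q'}$. First I would record the basic facts about $P$: by \eqref{Pqdist.eq} we have $\gamma\theta_0 r_Q \le \delta(P) \le \theta_0 r_Q$, and $|P - \hat x_Q| \le \theta_0 r_Q$, so $\dist(P,Q) \le |P - \hat x_Q| \le \theta_0 r_Q$ since $\hat x_Q \in \hat\Delta_Q \subset Q$ by \eqref{tbalsbal.eq}. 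Since $\rho \le \tfrac{\gamma\theta_0}{10} r_Q \le \tfrac{1}{10}\delta(P)$, the ball $B(P,\rho)$ stays within a Harnack-chain's reach of $P$ and we get $\delta(X) \approx \delta(P) \approx \theta_0 r_Q$ and $\dist(X,Q) \lesssim \theta_0 r_Q$, with implicit constants structural. On the other hand, by \eqref{K1def.eq}, $X \in U_{Q'}$ forces $K_1^{-1}\ell(Q') \le \delta(X) \le K_1 \ell(Q')$ and $\dist(X,Q') \le K_1 \ell(Q')$.

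From the two-sided comparison $\delta(X) \approx \theta_0 r_Q$ and $\delta(X) \approx \ell(Q')$ we immediately get $\ell(Q') \approx \theta_0 r_Q = 10^{-5} a_0 \theta_0 \ell(Q)$; in particular, choosing $\theta_0$ small enough (depending only on $K_1$, hence on $K_0$ and the structural constants) we can guarantee $\ell(Q') < \ell(Q)$, which is the second conclusion. For the first conclusion $Q' \subseteq Q$, I would use property (ii) of the dyadic cubes (Lemma \ref{cubes}): since $\ell(Q') < \ell(Q)$, either $Q' \subseteq Q$ or $Q' \cap Q = \emptyset$, so it suffices to rule out the disjoint case by producing a point of $Q'$ that is close to $Q$ relative to $\diam(Q')$. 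Concretely, pick $y \in Q'$ with $|X - y| \le \dist(X,Q') + \diam(Q') \lesssim \ell(Q') \lesssim \theta_0 \ell(Q)$; combined with $\dist(X,Q) \lesssim \theta_0 \ell(Q)$, the triangle inequality gives $\dist(y,Q) \lesssim \theta_0 \ell(Q)$. If $Q'$ and $Q$ were disjoint, then by property (v) of Lemma \ref{cubes} every point of $Q'$ lies outside $B(z_j^k, a_0 \ell(Q'))$ is not quite the obstruction; rather I would argue that a point of $Q'$ within distance $C\theta_0 \ell(Q)$ of $Q$ must in fact lie in $Q$ once $\theta_0$ is small, because $\diam(Q') \approx \theta_0 \ell(Q)$ and the boundary ball $\Delta_Q = B(x_Q, a_0\ell(Q)) \cap \pom \subset Q$ has radius $\approx \ell(Q) \gg \theta_0 \ell(Q)$: a point of $\pom$ within $C\theta_0\ell(Q)$ of $Q$ and within $C\theta_0\ell(Q)$ of the segment $[\hat x_Q, X_Q]$ lies in $\hat\Delta_Q \subset Q$ for $\theta_0$ small, so $Q' \cap Q \supset \{y\} \neq \emptyset$, forcing $Q' \subseteq Q$.

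The main obstacle, and the only place requiring genuine care, is the passage from ``$X$ is near the segment $[\hat x_Q, X_Q]$'' to ``$Q' \subseteq Q$'': one must make sure that the point of $Q'$ we locate actually falls inside the specific boundary ball $\hat\Delta_Q = B(\hat x_Q, 10^3 r_Q) \cap \pom$ that \eqref{tbalsbal.eq} certifies to be contained in $Q$, rather than merely being ``close to $Q$'' in a way that is compatible with lying in a sibling cube. This is handled by the quantitative gap between the scales $\theta_0 r_Q$ (the size of $Q'$ and the radius of the small ball) and $r_Q$ (the radius of $\hat\Delta_Q$): choosing $\theta_0$ with $C\theta_0 < 10^3$, say $\theta_0 < 10^2/C$ where $C$ absorbs $K_1$, the corkscrew constant $\gamma$, and the Harnack-chain constants, forces the located point of $Q'$ into $\hat\Delta_Q$. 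I would also double-check that the small ball $B(P,\rho)$ indeed meets $U_{Q'}$ only for $Q'$ with $\dist(X,Q') \lesssim \ell(Q')$ — this is exactly \eqref{K1def.eq}, so no extra work is needed there. Finally, one records that all thresholds on $\theta_0$ involve only $K_0$ (through $K_1$) and the structural constants, so a single admissible $\theta_0$ can be fixed once and for all, completing the proof.
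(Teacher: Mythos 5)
Your argument is correct and is essentially the paper's own proof: both use \eqref{K1def.eq} together with \eqref{Pqdist.eq} to get $\ell(Q')\lesssim K_1\theta_0 r_Q<\ell(Q)$, then locate a point of $Q'$ inside a surface ball around $\hat{x}_Q$ of radius comparable to $r_Q$ that is contained in $Q$ (the paper uses the center $x_{Q'}$ and $\Delta_Q$, you use a near-closest point and $\hat{\Delta}_Q$), and conclude via the dyadic dichotomy once $\theta_0\lesssim (K_1)^{-2}$. The only blemishes are cosmetic (the abandoned detour about property (v) and some loose constant bookkeeping between the scales $r_Q$ and $\ell(Q)$), which do not affect the proof.
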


\begin{proof}
Suppose that $\theta_0 \le 1 / 4C(K_1)^2$ for a large structural constant $C \ge 1$ and
\begin{align}
  \label{point_x} X \in B\big(P_Q(\theta_0), \tfrac{\gamma\theta_0}{10} r_Q \big) \cap U_{Q'}
\end{align}
for some $Q'$, where $K_1$ is the constant in \eqref{K1def.eq}. By \eqref{K1def.eq}, \eqref{point_x} and \eqref{Pqdist.eq}, it holds that 
\begin{align}
  \nonumber \ell(Q') &\le K_1 \dist(X,\pom) \\
  \label{lQpsmall.eq}         &\le K_1 \bigg(\dist(P_Q(\theta_0),\pom) + \frac{\gamma\theta_0}{10} r_Q \bigg) \le 2K_1 \theta_0 r_Q = 2K_1 \theta_0 10^{-5} a_0 \ell(Q).
\end{align}
In particular, we have $\ell(Q') < \ell(Q)$.

To show that $Q' \subset Q$, we first notice that we have
\begin{align*}
  |X - x_{Q'}| \le C K_1 \ell(Q')
\end{align*}
for a structural constant $C \ge 1$ by \eqref{K1def.eq} and the fact that $\diam(Q') \approx \ell(Q')$. This and \eqref{lQpsmall.eq} then give us
\begin{align*}
  |X - x_{Q'}| \le 2 C (K_1)^2 \theta_0 10^{-5} a_0 \ell(Q).
\end{align*}
Thus, by \eqref{Pqdist.eq} and \eqref{point_x}, it holds that
\begin{align*}
  |X - \hat{x}_Q| \le 2\theta_0 r_Q = 2\theta_0 10^{-5} a_0 \ell(Q).
\end{align*}
Combining the previous two inequalities then gives us
\begin{align}\label{xclosexhat.eq}
  |\hat{x}_Q - x_{Q'}| \le 4 C (K_1)^2 \theta_0 10^{-5} a_0 \ell(Q) = 4C (K_1)^2 \theta_0 r_Q.
\end{align}
In particular,
\begin{align*}
  |x_{Q'} - x_Q| \le |x_{Q'} - \hat{x}_Q| + |\hat{x}_Q - x_Q| \le 4C (K_1)^2 \theta_0 r_Q + 2r_Q < 3r_Q < a_0 \ell(Q),
\end{align*}
by \eqref{xclosexhat.eq}, the fact that $|\hat x_Q-x_Q|<2r_Q$,  and the choice $\theta_0 \le 1 / 4C(K_1)^2$. Thus, $x_{Q'} \in \Delta_Q \subset Q$. Since $Q' \cap Q \neq \emptyset$ and $\ell(Q') < \ell(Q)$, we know that $Q' \subset Q$, which is what we wanted.
\end{proof}

Let us then fix $\theta_0$ so that Lemma \ref{thetachoice.cl} holds. For $Q \in \dd$, we set
\begin{align*}
  \Xi_Q \coloneqq \bigcup_{\theta \in [\theta_0,1]} B\big(P_Q(\theta), \tfrac{\gamma\theta_0}{10} r_Q \big),
\end{align*}
which is a cylinder-like object. We get the following straightforward lemma:

\begin{lemma}\label{lemma:cylinder_inclusion}
  Let $Q \in \dd$ be a fixed cube and let $\mathscr{Q}_Q$ be the collection of cubes that share the same dyadic parent as $Q$, that is,
  \begin{align*}
    \mathscr{Q}_Q \coloneqq \{P \in \dd \colon P,Q \subset Q_0 \text{ for a cube } Q_0 \in \dd \text{ such that } \ell(P) = \ell(Q) = \tfrac{1}{2} \ell(Q_0)\}.
  \end{align*}
  Let $\kappa \gg \max\{K_0,(\theta_0)^{-1}\}$ and $X \in \Xi_P$ for some $P \in \mathscr{Q}_Q$. Then $X \in U_Q^*$.
\end{lemma}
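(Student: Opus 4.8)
The plan is to show directly that any point $X \in \Xi_P$, with $P \in \mathscr{Q}_Q$, lies in a single Whitney cube $I_X \in \W$ which already belongs to the collection $W_Q(\kappa)$ — and hence to $\W^*_Q(\kappa)$ — so that $X \in I_X \subseteq (1+\tau)I_X \subseteq U_Q(\kappa) = U_Q^*$, which is exactly the assertion. No Harnack-chain argument is needed beyond the trivial observation that $I$ itself always belongs to $W_{Q,I}(\kappa)$ (it meets the first ball of the chain $H(I)$).

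First I would record a few elementary size estimates for a point $X \in \Xi_P$. By definition $X \in B\big(P_P(\theta), \tfrac{\gamma\theta_0}{10} r_P\big)$ for some $\theta \in [\theta_0,1]$, with $r_P = 10^{-5}a_0\ell(P) = 10^{-5}a_0\ell(Q)$ since $\ell(P) = \ell(Q)$. Feeding this into \eqref{Pqdist.eq} (applied to the cube $P$) and using $\theta_0 \le \theta \le 1$ and $\gamma,\theta_0 \in (0,1)$ yields
\[
\tfrac{9\gamma\theta_0}{10}\, r_P \;\le\; \delta(X) \;\le\; 2r_P,
\]
so $\delta(X)$ is comparable to $\ell(Q)$ with comparability constants depending only on $\theta_0$ and on structural constants. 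Similarly $|X - \hat{x}_P| \le |X - P_P(\theta)| + |P_P(\theta) - \hat{x}_P| \le 2r_P$, and $\hat{x}_P \in P$ by \eqref{tbalsbal.eq}; since $P$ and $Q$ share a dyadic parent $Q_0$, for which $\diam(Q_0) \le a_1 \ell(Q_0) = 2a_1\ell(Q)$, the triangle inequality gives $\dist(X,Q) \le \dist(\hat{x}_P,Q) + |X - \hat{x}_P| \lesssim \ell(Q)$ with a purely structural implicit constant. (When $P = Q$ one need not invoke $Q_0$ at all, since $\hat{x}_Q \in Q$ directly; this also disposes of any degenerate case in which $\mathscr{Q}_Q$ would otherwise be empty.)

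Next, since $\delta(X) > 0$, the Whitney decomposition furnishes a cube $I_X \in \W$ with $X \in I_X$, and the Whitney property $4\diam(I_X) \le \dist(I_X,\pom) \le 40\diam(I_X)$ together with $X \in I_X$ forces $\ell(I_X) \approx \diam(I_X) \approx \delta(X)$. Combined with the previous paragraph, this produces a structural constant $C_1 \ge 1$ with
\[
C_1^{-1}\theta_0\,\ell(Q) \;\le\; \ell(I_X) \;\le\; C_1\,\ell(Q), \qquad \dist(I_X,Q) \le \dist(X,Q) \le C_1\,\ell(Q).
\]
It then suffices to take $\kappa > \max\{K_0,\ \theta_0^{-1},\ C_1,\ C_1\theta_0^{-1}\}$, which is compatible with the standing hypothesis $\kappa \gg \max\{K_0,(\theta_0)^{-1}\}$. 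For such $\kappa$, both conditions defining $W_Q(\kappa)$ are met by $I_X$, namely $\kappa^{-1}\ell(I_X) \le \ell(Q) \le \kappa\,\ell(I_X)$ and $\dist(I_X,Q) \le \kappa\,\ell(Q)$; hence $I_X \in W_Q(\kappa) \subseteq \W^*_Q(\kappa)$, and therefore $X \in I_X \subseteq (1+\tau)I_X \subseteq U_Q(\kappa) = U_Q^*$, as wanted.

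The whole argument is essentially bookkeeping and there is no genuine obstacle; the only point requiring attention is to keep the dependence on $\theta_0$ of the lower bound for $\delta(X)$ — and hence for $\ell(I_X)$ — explicit, so that the final choice of $\kappa$ is seen to depend only on $K_0$, on $\theta_0$, and on structural constants, exactly as in the statement.
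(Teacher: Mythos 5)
Your proof is correct and follows essentially the same route as the paper's: take the Whitney cube containing $X$, use \eqref{Pqdist.eq} and the sibling relation to verify the two defining conditions of $W_Q(\kappa)$, and conclude $X \in (1+\tau)I \subseteq U_Q^*$. Your version is only slightly more explicit about the constant tracking (the lower bound $\delta(X) \gtrsim \theta_0 r_P$ and the inclusion $W_Q(\kappa) \subseteq \W^*_Q(\kappa)$), which the paper leaves implicit.
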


\begin{proof}
  Let $\kappa \gg \max\{K_0,(\theta_0)^{-1}\}$ and $X \in \Xi_P$ for some $P \in \mathscr{Q}_Q$. By the Whitney decomposition, there exists a Whitney cube $I \in \W$ such that $X \in I$. By the definition of $U_Q^*$, it is enough to show that $I \in \W_Q^*(\kappa)$.
  
  By \eqref{Pqdist.eq} and the definitions, we first notice that
  \begin{align*}
    \ell(I) \approx \dist(X,\pom) \approx \gamma \theta_0 r_P \approx \theta_0 \ell(P) = \theta_0 \ell(Q)
  \end{align*}
  with uniformly bounded implicit constants. In particular, since $\theta_0 \gg \tfrac{1}{\kappa}$ and $\kappa \gg 1$, we get
  \begin{align*}
    \frac{1}{\kappa} \ell(I) \le \ell(Q) \le \kappa \ell(I).
  \end{align*}
  On the other hand, since $P \in \mathscr{Q}_Q$, we know that
  \begin{align*}
    \dist(I,Q) \lesssim \dist(I,P) + \ell(Q),
  \end{align*}
  and by \eqref{Pqdist.eq}, \eqref{tbalsbal.eq} and the fact that $X \in \Xi_P$, we know that
  \begin{align*}
    \dist(I,P) \le \dist(X,P) \le |X - \hat{x}_P| \le \frac{\gamma \theta_0}{10}r_P + |X_P - \hat{x}_P| \le 2r_P \le 2\ell(P) = 2\ell(Q).
  \end{align*}
  In particular, since $\kappa \gg 1$, we have
  \begin{align*}
    \dist(I,Q) \le \kappa \ell(Q).
  \end{align*}
  Thus, $I \in \W_Q^*(\kappa)$, which proves the claim.
\end{proof}

Let us also record the following simple lemma for future use:
\begin{lemma}\label{lemma:touching_point_surface_ball}
  Let $Q \in \dd$ and let $X_Q \in \Omega$ be a corkscrew point, $\hat{x}_Q \in \pom$ be a touching point and $r_Q = 10^{-5} a_0 \ell(Q)$ as above. If $Y \in B(P_Q(\theta),r_Q)$ for some $\theta \in [0,1]$ and $\hat{y} \in \pom$ is a point such that $|Y-\hat{y}| = \dist(Y,\pom)$, then $\hat{y} \in \hat{\Delta}_Q \subseteq \Delta_Q \subseteq Q$.
\end{lemma}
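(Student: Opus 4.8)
The plan is to prove Lemma \ref{lemma:touching_point_surface_ball} by a direct triangle-inequality estimate, showing that the touching point $\hat y$ lies within distance $10^3 r_Q$ of $\hat x_Q$, which by \eqref{tbalsbal.eq} places it in $\hat\Delta_Q\subseteq\Delta_Q\subseteq Q$. First I would record the elementary bound $\dist(Y,\pom)\le|Y-\hat x_Q|$, which holds because $\hat x_Q\in\pom$; combined with $|Y-\hat y|=\dist(Y,\pom)$ this gives $|\hat y-\hat x_Q|\le|\hat y-Y|+|Y-\hat x_Q|\le 2|Y-\hat x_Q|$. So everything reduces to estimating $|Y-\hat x_Q|$.

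Next I would estimate $|Y-\hat x_Q|$ using the hypotheses. Since $Y\in B(P_Q(\theta),r_Q)$ we have $|Y-P_Q(\theta)|<r_Q$, and since $P_Q(\theta)=\hat x_Q+\theta(X_Q-\hat x_Q)$ lies on the segment from $\hat x_Q$ to $X_Q$ with $\theta\in[0,1]$, we get $|P_Q(\theta)-\hat x_Q|=\theta|X_Q-\hat x_Q|\le|X_Q-\hat x_Q|=\delta(X_Q)\le r_Q$, where the last inequality uses that $X_Q$ is a corkscrew point at scale $r_Q$ so $\delta(X_Q)\le r_Q$. Hence $|Y-\hat x_Q|\le|Y-P_Q(\theta)|+|P_Q(\theta)-\hat x_Q|<2r_Q$, and therefore $|\hat y-\hat x_Q|\le 2|Y-\hat x_Q|<4r_Q<10^3 r_Q$. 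Thus $\hat y\in B(\hat x_Q,10^3 r_Q)\cap\pom=\hat\Delta_Q$, and the chain of inclusions $\hat\Delta_Q\subseteq\Delta_Q\subseteq Q$ from \eqref{tbalsbal.eq} finishes the proof.

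There is really no main obstacle here: the statement is a routine quantitative bookkeeping lemma, and every ingredient (the corkscrew scale relation $\delta(X_Q)\le r_Q$, the definition of $P_Q(\theta)$, and the inclusion \eqref{tbalsbal.eq}) is already in place in the excerpt. The only thing to be slightly careful about is tracking constants so that the final radius stays below $10^3 r_Q$; the crude bound $4r_Q$ comfortably suffices, so no sharpness is needed. One could alternatively note $|Y-\hat x_Q|\le|Y-X_Q|+|X_Q-\hat x_Q|$ and bound $|Y-X_Q|$ directly, but routing through $P_Q(\theta)$ and using $|P_Q(\theta)-\hat x_Q|\le\delta(X_Q)$ is the cleanest since it is exactly the quantity controlled by \eqref{Pqdist.eq}.
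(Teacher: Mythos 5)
Your proof is correct and follows essentially the same route as the paper: a short chain of triangle inequalities, using $|Y-P_Q(\theta)|<r_Q$ and the bound $|P_Q(\theta)-\hat x_Q|\le\theta r_Q\le r_Q$ from \eqref{Pqdist.eq}, to conclude $|\hat y-\hat x_Q|<4r_Q$ and then invoke \eqref{tbalsbal.eq}. The paper bounds $\dist(Y,\pom)$ via $P_Q(\theta)$ while you bound it by $|Y-\hat x_Q|$, but this is only a trivial rearrangement of the same estimate.
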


\begin{proof}
  By definitions and using \eqref{Pqdist.eq} several times, we get
  \begin{multline*}
    |\hat{y} - \hat{x}_Q|
    \le |\hat{y} - Y| + |Y - P_Q(\theta)| + |P_Q(\theta) - \hat{x}_Q|
    < \dist(Y,\pom) + 2r_Q \\
    \le |Y - P_Q(\theta)| + \dist(P_Q(\theta),\pom) + 2r_Q 
    < r_Q + r_Q + 2r_Q = 4r_Q,
  \end{multline*}
  and thus, $\hat{y} \in \hat{\Delta}_Q \subset \Delta_Q \subset Q$ by \eqref{tbalsbal.eq}.
\end{proof}

\subsection{Elliptic PDE estimates}
\label{section:elliptic_pde}

Here we collect some of the standard estimates for divergence form elliptic operators with real coefficients that will be used throughout the paper. In this section, $\Omega$ always denotes a uniform domain in $\bb R^{n+1}$, $n\geq1$, with $n$-Ahlfors regular boundary.  We recall that a divergence form elliptic operator is of the form 
\begin{align*}
  L (\cdot ) \coloneqq - \div (A \nabla \cdot),
\end{align*}
viewed in the weak sense, where $A$ is a uniformly elliptic matrix, that is, $A = (a_{i,j})_{i,j=1}^{n+1}$ is an $(n+1)\times (n+1)$ matrix-valued function on $\ree$ and there exists a constant $\Lambda$, the ellipticity parameter, such that  
\begin{align}\label{property:ellipticity}\nonumber
  \Lambda^{-1} |\xi|^2 \le A(X)\xi \cdot \xi,\qquad\text{and}\qquad \|a_{i,j}\|_{L^\infty(\ree)} \le \Lambda,
\end{align}
for all $\xi, \zeta \in \ree$ and almost every $X \in \Omega$. We say that a constant depends on ellipticity if it depends on $\Lambda$. Given an open set $\mathcal{O} \subset \ree$ we say a function $u\in W^{1,2}_{\loc}(\mathcal{O})$ is a solution to $Lu = 0$ in $\mathcal{O}$ if 
\begin{align*}
  \iint_{\mathcal{O}}A \nabla u \cdot \nabla \varphi \, dX = 0, \quad \text{ for every } \varphi \in C_{\smallc}^\infty(\mathcal{O}).
\end{align*}

The most fundamental estimate for solutions to divergence form elliptic equations is the following local energy inequality.
\begin{lemma}[Caccioppoli Inequality]\label{lemma:caccioppoli}
  Let $L = -\div A \nabla$ be a divergence form elliptic operator and $u$ a solution to $Lu = 0$ in an open set $\mathcal{O}$. If $a> 0$ and $B$ is a ball such that $(1 + a)B \subset \mathcal{O}$  then
  \begin{align*}
    \iint_B |\nabla u| \, dX \lesssim r^{-2} \iint_{(1 + a)B} u^2 \, dX,
  \end{align*}
  where the implicit constant depends only on $a$, dimension and ellipticity.
\end{lemma}

Solutions to divergence form elliptic equations are locally H\"older continuous.

\begin{lemma}[H\"older continuity of solutions, \cite{DeG,nash}]\label{DGN.lem}
  Let $L = -\div A \nabla$ be a divergence form elliptic operator and $u$ a non-negative solution to $Lu = 0$ in an open set $\mathcal{O}$. Suppose that $B = B(X_0,R)$ is a ball such that  $\lambda B \coloneqq B(X_0,2\lambda R) \subset \mathcal{O}$ for $\lambda > 1$. Then we have
  \begin{align*}
    |u(X) - u(Y)| \le C \left(\frac{|X - Y|}{\lambda R}\right)^{\alpha} \left( \dashiint_{2\lambda B} |u|^2 \, dY \right)^{1/2} \quad \text{ for all } X,Y \in B,
  \end{align*}
  where $\alpha$ and $C$ depend only on dimension and ellipticity.
\end{lemma}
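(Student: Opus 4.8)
The plan is to derive the H\"older bound from a scale-invariant \emph{oscillation decay} inequality, and then iterate it over dyadic scales. The first step is to prove that there is $\gamma = \gamma(n,\Lambda) \in (0,1)$ such that for every ball with $B(Z,2\rho) \subset \mathcal O$ and every solution $u$ of $Lu = 0$ in $B(Z,2\rho)$ one has
\begin{equation*}
  \osc_{B(Z,\rho)} u \ \le\ \gamma\, \osc_{B(Z,2\rho)} u .
\end{equation*}
Granting this, I would argue as follows for $X,Y \in B(X_0,R)$ with $d := |X-Y|$ (the case $d \gtrsim \lambda R$ being trivial, since then $|u(X)-u(Y)| \le 2\sup_{2\lambda B}|u| \lesssim (d/\lambda R)^{\alpha}\sup_{2\lambda B}|u|$): iterating the decay about $Z = X$ from a top scale $\rho_0 \sim \lambda R$ (chosen so that $B(X,2\rho_0) \subset \mathcal O$ and $B(X,\rho_0)\subset 2\lambda B$) down to $\rho_k \sim d$, with $k \approx \log_2(\lambda R / d)$, gives
\[
  |u(X) - u(Y)| \le \osc_{B(X,d)} u \le \gamma^{k}\, \osc_{B(X,\rho_0)} u \lesssim \Big(\frac{d}{\lambda R}\Big)^{\alpha} \sup_{2\lambda B} |u|, \qquad \gamma = 2^{-\alpha}.
\]
Finally, the $\sup$ on the right is converted into the $L^2$ average in the statement via the interior local boundedness estimate $\sup_{\lambda B}|u| \lesssim \big(\dashiint_{2\lambda B} |u|^2\big)^{1/2}$, i.e.\ Moser's $L^\infty$ bound (obtained by iterating the Caccioppoli inequality of Lemma~\ref{lemma:caccioppoli} applied to positive powers of $|u|$, together with Sobolev embedding). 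The bookkeeping with $\lambda$, $R$ and the concentric balls is routine, so I will not spell it out.

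For the oscillation decay I would use the Harnack inequality for nonnegative solutions. Writing $M := \sup_{B(Z,2\rho)} u$ and $m := \inf_{B(Z,2\rho)} u$, both $M - u \ge 0$ and $u - m \ge 0$ solve $L(\cdot) = 0$ in $B(Z,2\rho)$, so Harnack on $B(Z,\rho)$ yields $\sup_{B(Z,\rho)}(M - u) \le C \inf_{B(Z,\rho)}(M - u)$ and the analogous estimate for $u - m$. Adding these and rearranging produces $\osc_{B(Z,\rho)} u \le \tfrac{C-1}{C+1}\, \osc_{B(Z,2\rho)} u$, i.e.\ the desired decay with $\gamma = \tfrac{C-1}{C+1} < 1$.

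Everything therefore reduces to the Harnack inequality, which I would prove by Moser iteration in three movements. First, for a nonnegative subsolution $w$, testing the equation against $\eta^2 w^{2p-1}$ (with $p > 1$ and $\eta$ a cutoff) and using ellipticity gives a Caccioppoli-type estimate for $\nabla(w^p)$, which with the Sobolev embedding $W^{1,2} \hookrightarrow L^{2\chi}$ (where $\chi = \tfrac{n+1}{n-1}$ for $n \ge 2$, and any fixed $\chi > 1$ when $n = 1$) upgrades $L^{q}$ control of $w$ on a ball to $L^{\chi q}$ control on a slightly smaller one; iterating upward over a summable sequence of radii yields $\sup_{B(Z,\rho)} w \lesssim \big(\dashiint_{B(Z,3\rho/2)} w^{p_0}\big)^{1/p_0}$ for \emph{every} $p_0 > 0$. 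Second, the same scheme run with negative exponents on the nonnegative supersolution $u$ gives $\inf_{B(Z,\rho)} u \gtrsim \big(\dashiint_{B(Z,3\rho/2)} u^{-p_0}\big)^{-1/p_0}$. Third, the two chains are bridged by the fact that $\log u$ has bounded mean oscillation: testing $Lu = 0$ against $\eta^2/u$ gives $\dashiint_{B} |\nabla \log u|^2 \lesssim \rho^{-2}$ on subballs $B$, hence $\log u \in \BMO$ with norm $\lesssim 1$, and the John--Nirenberg inequality then produces $p_0 = p_0(n,\Lambda) > 0$ with $\dashiint_B u^{p_0} \cdot \dashiint_B u^{-p_0} \le C$. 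Chaining the three estimates gives $\sup_{B(Z,\rho)} u \lesssim \inf_{B(Z,2\rho)} u$, after a covering/Harnack-chain step to enlarge the inner ball.

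The main obstacle is the third movement: passing from a positive moment of $u$ to a negative moment of $u$. The logarithmic Caccioppoli estimate together with the John--Nirenberg inequality is the one genuinely nontrivial ingredient; everything else is Caccioppoli, Sobolev embedding, and iteration of exponents, which is essentially mechanical. (An alternative that avoids Harnack entirely, and hence does not even use the assumed nonnegativity of $u$, is De Giorgi's original argument: one shows $u$ lies in the De Giorgi class by applying Caccioppoli to the truncations $(u-k)_{\pm}$, proves the measure-decay ``isoperimetric'' lemma for superlevel sets, and deduces the same oscillation decay; in that route the measure-decay lemma plays the role of the hard step.)
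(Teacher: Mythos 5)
The paper gives no proof of this lemma at all: it is the classical interior De Giorgi--Nash(--Moser) H\"older estimate, stated with citations to \cite{DeG,nash}, so there is no in-paper argument to compare against. Your sketch is precisely the standard proof of that cited result --- Harnack via Moser iteration (sub/supersolution iterations bridged by the $\log u \in \BMO$ plus John--Nirenberg crossover), oscillation decay $\osc_{B(Z,\rho)}u \le \tfrac{C-1}{C+1}\osc_{B(Z,2\rho)}u$, dyadic iteration down to scale $|X-Y|$, and local boundedness to replace the sup by the $L^2$ average --- and it is sound; the only suppressed details are routine (e.g.\ testing against $\eta^2/(u+\varepsilon)$ rather than $\eta^2/u$ and using Poincar\'e to pass from the gradient bound on $\log u$ to its $\BMO$ bound), and, as you observe, the De Giorgi variant even removes the nonnegativity assumption, which the quantities $M-u$ and $u-m$ in your Harnack step also render harmless.
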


Another celebrated result is Moser's Harnack inequality for non-negative solutions.
  
\begin{lemma}[Harnack inequality \cite{Moser}]\label{lemma:harnack}
  Let $L = -\div A \nabla$ be a divergence form elliptic operator and $u$ a non-negative solution to $Lu = 0$ in an open set $\mathcal{O}$. If $B$ is a ball such that $2B \subset \mathcal{O}$ then $\sup_{B} u \le C \inf_{B} u$, where $C$ depends only on dimension and ellipticity.
\end{lemma}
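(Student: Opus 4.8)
Since the ball $2B$ lies compactly inside $\mathcal O$ and no part of $\partial\Omega$ enters the statement, this is the purely interior Moser estimate and the uniform domain hypothesis plays no role; the plan is to reproduce Moser's iteration argument. Write $B=B(X_0,r)$ and fix a family of concentric balls $B=B_{r_\infty}\subset\dots\subset B_{r_0}=2B$. Replacing $u$ by $u+\eps$ and letting $\eps\downarrow 0$ at the end, we may assume $u\ge\eps>0$, so that all the powers $u^\beta$ appearing below are well defined; in addition one uses the truncation $u\wedge m$ to guarantee that the test functions lie in $W^{1,2}$, sending $m\to\infty$ at the end via monotone convergence. Throughout, $\chi>1$ denotes the Sobolev exponent gain (equal to $(n+1)/(n-1)$ when $n\ge2$, with the usual modification when $n=1$).

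\textbf{Step 1 (supremum bound via positive powers).} For $\beta\ge 1$ and a cutoff $\eta\in C_c^\infty$ insert $\varphi=\eta^2 u^{2\beta-1}$ into the weak formulation $\iint A\nabla u\cdot\nabla\varphi=0$; expanding $\nabla\varphi$, using ellipticity to absorb the cross term, and writing $u^{2\beta-2}|\nabla u|^2=\beta^{-2}|\nabla(u^\beta)|^2$, one obtains the Caccioppoli-type energy estimate
\[
\iint \eta^2\,|\nabla(u^\beta)|^2\,dX \;\lesssim\; \beta^2\iint |\nabla\eta|^2\, u^{2\beta}\,dX .
\]
Combined with the Sobolev inequality on $B_{r_j}$ this yields the reverse-Hölder gain $\big(\fint_{B_{r_{j+1}}} u^{\beta\chi}\big)^{1/\chi}\lesssim \big(C\beta/(r_j-r_{j+1})\big)^{2}\fint_{B_{r_j}} u^{\beta}$. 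Iterating along $\beta=\beta_0\chi^k$ with radii $r_j\downarrow r_\infty$ and summing the (convergent, since $\chi>1$) geometric series in the exponents gives, for any fixed $p>0$,
\[
\sup_{B} u \;\lesssim_{p}\; \Big(\fint_{2B} u^{p}\,dX\Big)^{1/p};
\]
for $p\ge 2$ this is immediate, and for $0<p<2$ one descends to small exponents by the standard interpolation/covering trick absorbing a power of $\sup_{B'} u$.

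\textbf{Steps 2--3 (infimum bound and the bridge).} Repeating Step 1 with $\beta<0$ — equivalently, running the iteration on $v:=u^{-1}$ — produces, for small $p>0$, the estimate $\inf_B u\gtrsim\big(\fint_{2B} u^{-p}\,dX\big)^{-1/p}$. To connect positive and negative moments, test the equation with $\varphi=\eta^2 u^{-1}$; ellipticity gives $\iint \eta^2\,|\nabla\log u|^2\lesssim\iint|\nabla\eta|^2$, hence $\fint_{B}|\nabla\log u|^2\lesssim r^{-2}$ on all subballs, so by the Poincaré inequality $\log u$ has $\mathrm{BMO}$ norm on $2B$ bounded by a structural constant. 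The John--Nirenberg inequality then furnishes a dimension/ellipticity constant $p_0>0$ with $\big(\fint_{2B} u^{p_0}\big)^{1/p_0}\lesssim\big(\fint_{2B} u^{-p_0}\big)^{-1/p_0}$. Chaining the three steps with this exponent $p_0$,
\[
\sup_B u \;\lesssim\; \Big(\fint_{2B} u^{p_0}\Big)^{1/p_0} \;\lesssim\; \Big(\fint_{2B} u^{-p_0}\Big)^{-1/p_0} \;\lesssim\; \inf_B u,
\]
and removing the truncations finishes the proof.

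The genuinely delicate points are bookkeeping ones: checking that $\eta^2 u^{2\beta-1}$ and $\eta^2 u^{-1}$ are admissible test functions (handled by the $\eps$-shift and the $u\wedge m$ truncation, with limits justified by Fatou and monotone convergence), and keeping the constant produced by summing the Moser geometric series finite — this is exactly where $\chi>1$, hence the dimension $n$, is used. The conceptual heart is the passage from control of positive moments to control of negative moments, which rests on the $\log u\in\mathrm{BMO}$ estimate together with John--Nirenberg.
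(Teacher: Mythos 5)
The paper offers no proof of this lemma at all: it is quoted as Moser's classical interior Harnack inequality with a citation to \cite{Moser}, and the uniform-domain setting indeed plays no role. Your proposal is a correct sketch of precisely that classical argument (Moser iteration on positive and negative powers of $u$, bridged by the $\log u \in \BMO$ estimate via the test function $\eta^2 u^{-1}$ and the John--Nirenberg inequality), so it matches the intended source proof and needs no further comment beyond the bookkeeping caveats you already flag.
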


We now turn our attention to the elliptic measure, for which we borrow the setting of \cite{AGMT}. Consider the compactified space $\overline{\mathbb R}^{n+1}=\mathbb R^{n+1}\cup\{\infty\}$; following \cite[Section 9]{hkm93}, we will understand all topological notions with respect to this space. Hence, for instance, if $\Omega$ is unbounded, then $\infty\in\partial\Omega$, and the functions in the space $C(\partial\Omega)$ are assumed continuous and real-valued, so that all functions in $C(\partial\Omega)$ lie in $L^{\infty}(\partial\Omega)$ even if $\partial\Omega$ is unbounded.

Given a domain $\Omega$ and a divergence form elliptic operator $L$, we let $\hm_{L,\Omega}^X$ denote the elliptic measure with pole at $X \in \Omega$. That is, by the Riesz representation theorem, for every $X \in \Omega$ there exists a probability measure $\omega_{L,\Omega}^X$ such that if $f \in C_{\smallc}(\pom)$, then the solution to $Lu = 0$ in $\Omega$ with $u \in C(\overline{\Omega})$ and $u = f$ on $\pom$, constructed via Perron's method, satisfies
\begin{equation}\label{formula:elliptic_measure}
  u(X) = u_f(X) = \int_{\pom} f(y) \, d\hm^{X}_{L,\Omega}(y).
\end{equation}
When the context is clear, we simply denote $\hm^X \coloneqq \omega^X_{L,\Omega}$ and, with slight abuse of terminology, call the family of elliptic measures $\omega = \omega_L = \omega_{L,\Omega} \coloneqq \{\omega^X\}_X$ just the elliptic measure. 

Our main results consider characterizations and implications given by quantitative absolute continuity of elliptic measure in the sense of Muckenhoupt's $A_\infty$ condition \cite{muckenhoupt,coifman-fefferman}:

\begin{definition}[$A_\infty$ for elliptic measure]\label{defin:a_infty}
Let $L$ be a divergence form elliptic operator in $\Omega$. We say that the   elliptic measure $\omega = \omega_{L,\Omega}$ satisfies the \emph{$A_\infty$ condition with respect to surface measure} (denote $\omega \in A_\infty(\sigma)$) if there exist constants $C \ge 1$ and $s > 0$ such that if $B \coloneqq B(x,r)$ with $x \in \pom$ and $r \in (0,\diam(\pom)/4)$ and $A \subset \Delta \coloneqq B \cap \pom$ is a Borel set, then
\begin{align*}
\omega^Y(A) \le C \left( \frac{\sigma(A)}{\sigma(\Delta)} \right)^s \omega^Y(\Delta),\qquad\text{for every }Y \in \Omega \setminus 4B.
\end{align*} 
We refer to $C$ and $s$ here together as the ``$\omega_L\in A_\infty(\sigma)$ constants''.
\end{definition}

Next we discuss the Green's function and its properties.

\begin{definition}[Green's function]\label{def.green} Let $L=-\dv A\nabla$ be a not necessarily symmetric divergence form elliptic operator with bounded measurable coefficients. There exists a unique non-negative function $G=G_L=G_{L,\Omega}:\Omega\times\Omega\ra\bb R$, called \emph{Green's function} for $L$, satisfying the following properties:
\begin{enumerate}[(i)]
\item For each $X,Y\in\Omega$, 
\begin{equation}\label{fungf1.eq}
0\leq G(X,Y)\lesssim\left\{\begin{matrix}|X-Y|^{1-n},&\qquad n\geq2,&\qquad X\neq Y,\\[1mm] 1,&\qquad n=1,&\qquad|X-Y|\geq\delta(Y)/10,\\[1mm] \log\big(\frac{\delta(Y)}{|X-Y|}\big),&\qquad n=1,&\qquad|X-Y|\leq\delta(Y)/2. \end{matrix}\right.
\end{equation}
\item For every $a \in (0,1)$ there exists $c_a$ such that
\begin{equation}\label{fungf2.eq}
G(X,Y)\geq\left\{\begin{matrix}c_a|X-Y|^{1-n},&\qquad n\geq2, &\qquad|X-Y| \le a\delta(Y),\\[1mm] c_a\log\big(\frac{\delta(Y)}{|X-Y|}\big),&\qquad n=1, &\qquad |X-Y|\leq a\delta(Y). \end{matrix}\right.
\end{equation}
\item For each $Y\in\Omega$, $G(\cdot,Y)\in C(\overline{\Omega}\backslash\{Y\})\cap W^{1,2}_{\loc}(\Omega\backslash\{Y\})$  and  $G(\cdot,Y)|_{\partial\Omega}\equiv0$.
\item For each $X\in\Omega$,   the identity $LG(\cdot,X)=\delta_X$ holds in the distributional sense; that is,
\[
\int_\Omega A(Y)\nabla_YG(Y,X)\nabla\Phi(Y)\,dY=\Phi(X),\qquad\text{for any }\Phi\in C_c^{\infty}(\Omega).
\]
\item For each $X,Y\in\Omega$ with $X\neq Y$, if $L^*=-\dv A^T\nabla$, then
\begin{equation}\label{eq.transpose}\nonumber
G_L(X,Y)=G_{L^*}(Y,X).
\end{equation} 
\end{enumerate}
\end{definition}

If $n\geq2$, then it has been known for a long time that a non-negative Green's function exists for any domain, without any further regularity assumptions on the geometry \cite{GW,HK}. If $n=1$, the situation has been more challenging; for instance, key Sobolev embeddings, available when $n\geq2$, fail when $n=1$, and the fundamental solution changes sign when $n=1$ \cite{kn85}; nevertheless, the paper \cite{dk09} shows the construction of a Green's function for any domain with either finite volume or finite width, and also, for the domain above a Lipschitz graph, improving on the result of \cite{dm95} (but non-negativity is not shown in these works). For our setting of uniform domains $\Omega\subset\bb R^{n+1}$, $n\geq1$, with $n$-Ahlfors regular boundary, the unified (for $n=1$ and $n\geq2$) existence of the non-negative Green's function for arbitrary divergence form elliptic operators $L$ of merely bounded measurable coefficients with the properties stated above follows from the much more general, recent construction in \cite[Theorem 14.60 and Lemma 14.78]{dfm20}. 

The Green's function and the elliptic measure are related through the following Riesz formula: For every $F \in C_{\smallc}^\infty(\ree)$, one has that
\begin{equation}\label{Rieszformfin.eq}
\int_{\pom} F(y) \, d\hm^X(y) = F(X) - \iint_{\Omega} A^T \nabla_Y G_L(X,Y) \cdot \nabla_Y F(Y) \, dY.
\end{equation} 

We need several estimates from the literature for the elliptic measure and Green's function in our proofs and we list these estimates below. Although these have appeared in several works in the literature \cite{CFMS, hkm93, AGMT}, we cite \cite{dfm20} for their unified consideration of the cases $n=1$ and $n\geq2$ and arbitrary elliptic operators on uniform domains with Ahlfors regular boundary. The first estimate is a non-degeneracy estimate for elliptic measure.

\begin{lemma}[Bourgain's estimate, {\cite[Lemma 15.1]{dfm20}}]\label{bour.lem}
  Let $x \in \pom$ and $r \in (0,\diam(\pom)]$ and let $Y_{x,r}$ be a corkscrew point relative to $x$ at scale $r$. We have 
  \begin{align*}
    \hm^{Y_{x,r}}(\Delta(x,r)) \ge c,
  \end{align*}
  where $c$ depends only on dimension, ellipticity, and the Ahlfors regularity constant. Here and below $\Delta(x,r) = B(x,r) \cap \pom$.
\end{lemma}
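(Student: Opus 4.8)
The plan is to combine a maximum‑principle reduction to the bounded domain $D\coloneqq \Omega\cap B(x,r)$ with the two‑sided Green function bounds \eqref{fungf1.eq}--\eqref{fungf2.eq} and the $n$‑Ahlfors regularity of $\pom$. Write $B\coloneqq B(x,r)$, $\Delta\coloneqq\Delta(x,r)$, and fix a structural constant $K$ (say $K=10$); all corkscrew points below are relative to $\Omega$.

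First I would reduce the scale. Since $Y\mapsto \hm^{Y}(\Delta)$ is a non‑negative solution of $Lu=0$ in $\Omega$, and the corkscrew points $Y_{x,r/K}$ and $Y_{x,r}$ are joined by a Harnack chain of bounded length (both lie in $B$ at distance $\gtrsim r$ from $\pom$), Moser's inequality (Lemma~\ref{lemma:harnack}) gives $\hm^{Y_{x,r}}(\Delta)\gtrsim \hm^{Y_{x,r/K}}(\Delta)$, so it suffices to bound $\hm^{Y_{x,r/K}}(\Delta)$ from below. Second I would pass to $D$: the function $Y\mapsto \hm^{Y}(\Delta)$ restricted to $D$ is a non‑negative solution in $D$ with boundary data $1$ on $\Delta$ and $\geq 0$ on $\partial B\cap\Omega$, so by the maximum principle $\hm^{Y}(\Delta)\geq \hm^{Y}_{L,D}(\Delta)$ for $Y\in D$; hence it is enough to prove $\hm_{L,D}^{Y_{x,r/K}}(\Delta)\gtrsim 1$.

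For this I would build an equilibrium‑potential barrier. Let $E\coloneqq \pom\cap\overline{B(x,r/K)}\subset\Delta$ and let $w$ be the $L$‑harmonic measure, in the domain $B\setminus E$, of its inner boundary component $E$; thus $w$ is $L$‑harmonic in $B\setminus E$, $0\leq w\leq 1$, $w\equiv 0$ on $\partial B$, and $w=1$ at the (regular) points of $E$. Since $E\subset\pom$ is disjoint from the open set $D$ and $D\subset B\setminus E$, the restriction $w|_D$ is $L$‑harmonic in $D$, and on $\partial D$ it satisfies $w\leq 1=\mathbf 1_\Delta$ on $\Delta$ (because $E\subset\Delta$) and $w\equiv 0=\mathbf 1_\Delta$ elsewhere; consequently $w(Y)=\hm^{Y}_{L,D}(w|_{\partial D})\leq \hm^{Y}_{L,D}(\Delta)$ for $Y\in D$, and it remains to show $w(Y_{x,r/K})\gtrsim 1$. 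Using the representation $w(Y)=\int_E G_{L,B}(Y,Z)\,d\mu_E(Z)$, where $\mu_E=Lw$ is the $L$‑equilibrium measure of $E$ in $B$ (so $\mu_E(E)=\operatorname{Cap}_L(E;B)$), I would note that for $Z\in E$ one has $|Y_{x,r/K}-Z|<2r/K\leq\tfrac12\dist(Z,\partial B)$, so the lower bound \eqref{fungf2.eq} — which holds for balls as well, e.g.\ by the monotonicity $G_{L,B}\geq G_{L,B(Z,\dist(Z,\partial B))}$ — gives $G_{L,B}(Y_{x,r/K},Z)\gtrsim r^{1-n}$ if $n\geq 2$, resp.\ $\gtrsim 1$ if $n=1$, uniformly in $Z\in E$. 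Hence $w(Y_{x,r/K})\gtrsim r^{1-n}\operatorname{Cap}_L(E;B)$, and since $E$ is a piece of an $n$‑Ahlfors regular set lying inside a concentric ball $K$ times larger, the capacity density condition yields $\operatorname{Cap}_L(E;B)\gtrsim r^{n-1}$ (using that $w$ is an admissible competitor for the Newtonian relative capacity and the ellipticity bound $\int_B A\nabla w\cdot\nabla w\approx\int_B|\nabla w|^2$). Combining the displays gives $w(Y_{x,r/K})\gtrsim 1$, which closes the argument.

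The main obstacle — essentially the only non‑routine point — is this last estimate $w(Y_{x,r/K})\gtrsim 1$, i.e.\ that the Dirichlet face $\Delta$ carries a definite proportion of harmonic measure at deep interior points. This is where $n$‑Ahlfors regularity is used essentially (through the capacity density condition) and where the lower Green bound \eqref{fungf2.eq}, not any softer scale‑invariant tool, is indispensable; a little extra care is also needed to set up the $L$‑equilibrium potential for the non‑symmetric operators allowed here, although one can work directly with the Perron solution $w$ and estimate it from below by a Riesz‑type representation against $G_{L,B}$ (and with the customary logarithmic modifications when $n=1$). I would also record why the tempting shortcut fails: inserting a smooth cut‑off $\varphi$ of $\Delta$ into the representation formula \eqref{Rieszformfin.eq} produces an error term $\iint_\Omega A^T\nabla_Y G_L(X,Y)\cdot\nabla\varphi(Y)\,dY$ that is merely $O(1)$ and not small, being scale invariant, with its mass concentrated on the annulus where $\varphi$ transitions, where $|\nabla\varphi|$ and the total mass of $|\nabla_Y G_L(X,\cdot)|$ are in exact balance.
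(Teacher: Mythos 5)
The paper does not prove this lemma at all: it is quoted verbatim from \cite[Lemma 15.1]{dfm20}, so there is no internal proof to compare against. Your argument is, in substance, the classical potential-theoretic proof of Bourgain's estimate, and I find it correct.

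A few comparative remarks and caveats. The reference (and Bourgain's original argument) lower-bounds the same kind of barrier but builds it as the Green potential of the surface measure $\sigma|_{E}$ (a Frostman measure), for which the needed bounds follow directly from Ahlfors regularity and \eqref{fungf1.eq}--\eqref{fungf2.eq}; your variant routes through the $L$-equilibrium measure $\mu_E=L\tilde w$, which is equivalent but is precisely the step you leave as ``extra care'' for non-symmetric $L$: you need $L\tilde w=\mu\ge0$, the representation $w(Y)=\int_E G_{L,B}(Y,Z)\,d\mu(Z)$ (your usage is consistent with the paper's convention that the pole is the second variable), and the energy identity $\int_B A\nabla\tilde w\cdot\nabla\tilde w\le\mu(E)$; these are standard for coercive non-symmetric forms but not free, and the Frostman-measure route avoids them entirely. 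Second, your opening reduction via a Harnack chain from $Y_{x,r}$ to $Y_{x,r/K}$ (and the use of the corkscrew point $Y_{x,r/K}$) makes your constant depend also on the corkscrew and Harnack chain constants of $\Omega$, whereas the statement claims dependence only on dimension, ellipticity and Ahlfors regularity; \cite{dfm20} achieve that by proving the bound for \emph{every} pole in $B(x,r/M)\cap\Omega$, with no connectivity used. Within this paper the discrepancy is harmless, since the lemma is only applied in uniform domains (including the sawtooths $\Omega_*$) whose corkscrew and Harnack chain constants are structural, and any statement phrased at a corkscrew point at scale $r$ implicitly carries the corkscrew constant anyway; still, you could drop the chain by taking $K\gtrsim 1/\gamma$ and evaluating at $Y_{x,r}$ itself, or by using the Green function of a dilated ball. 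Finally, two small points you use silently: every point of $\pom$ is Wiener regular (capacity density from codimension-one Ahlfors regularity), which is what gives $\omega^Y(\Delta)\to 1$ at points of $\Delta$ and makes your maximum-principle comparisons legitimate; and the intermediate comparison with $\omega_{L,\Omega\cap B}(\Delta)$ can be skipped altogether by comparing $\omega_\Omega^{\cdot}(\Delta)$ directly with $w$ in $\Omega\cap B$, since $w\to0$ on all of $\partial B$ and $\omega_\Omega^{\cdot}(\Delta)\to1$ on $\Delta$, which slightly streamlines the Perron-method bookkeeping.
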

The elliptic measure is locally doubling in the following sense.
\begin{lemma}[(Local) doubling property, {\cite[Lemma 15.43]{dfm20}}]\label{lm.doubling}
  Let $x \in \pom$ and $r \in (0, \diam(\Omega)] $. If $Y \in \Omega \setminus B(x,4r)$ then $\hm^Y(\Delta(x,2r)) \le C\hm^Y(\Delta(x,r))$,  where $C$ depends on dimension, ellipticity, Harnack chain, corkscrew and Ahlfors regularity constants.
\end{lemma}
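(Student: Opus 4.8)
The plan is to reduce the doubling inequality to the Caffarelli--Fabes--Mortola--Salsa (CFMS) comparison of elliptic measure with the Green's function, and then to conclude by the Harnack chain condition. I describe the argument for $n\ge 2$; the case $n=1$ is the same, with the factor $\rho^{\,n-1}$ below replaced by the logarithmic normalisation implicit in Definition \ref{def.green}, the final Harnack step being insensitive to dimension. One may assume $4r<\diam(\Omega)$, since otherwise $\Omega\setminus B(x,4r)=\emptyset$ and there is nothing to prove.

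The core step is the CFMS estimate: there is a structural constant $C\ge 1$ such that, for every $x\in\pom$, every $\rho>0$ with $2\rho<\diam(\Omega)$, and every corkscrew point $X_\rho$ relative to $x$ at scale $\rho$,
\begin{equation}\label{proposal.cfms}
  C^{-1}\rho^{\,n-1}G_L(Y,X_\rho)\ \le\ \omega^Y(\Delta(x,\rho))\ \le\ C\rho^{\,n-1}G_L(Y,X_\rho),\qquad Y\in\Omega\setminus B(x,2\rho).
\end{equation}
For the left inequality I would apply the maximum principle on $\Omega\setminus\overline{B(X_\rho,\gamma\rho/2)}$ to the $L$-harmonic function $Z\mapsto\omega^Z(\Delta(x,\rho))-c\,\rho^{\,n-1}G_L(Z,X_\rho)$: on $\pom$ the Green's function term vanishes (Definition \ref{def.green}(iii)) while $\omega^Z(\Delta(x,\rho))\ge 0$; on the inner sphere, Bourgain's estimate (Lemma \ref{bour.lem}) and the Harnack inequality (Lemma \ref{lemma:harnack}) give $\omega^Z(\Delta(x,\rho))\gtrsim 1$ whereas \eqref{fungf1.eq} gives $\rho^{\,n-1}G_L(Z,X_\rho)\lesssim 1$, so a small enough $c$ makes the function nonnegative there; and it tends to $0$ at $\infty$ when $\Omega$ is unbounded. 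For the (harder) right inequality I would fix $\Phi\in C_c^\infty(B(x,2\rho))$ with $\Phi\equiv 1$ on $B(x,\rho)$, $|\nabla\Phi|\lesssim\rho^{-1}$, and $\nabla\Phi$ supported in a thin shell lying well inside $B(x,2\rho)\setminus B(x,\rho)$ (so that this shell stays at distance $\gtrsim\rho$ from any pole $Y\notin B(x,2\rho)$). The Riesz formula \eqref{Rieszformfin.eq} with such a pole, for which $\Phi(Y)=0$, then gives
\begin{equation}\label{proposal.riesz}
  \omega^Y(\Delta(x,\rho))\ \le\ \int_{\pom}\Phi\,d\omega^Y\ =\ -\iint_{\Omega}A^T\nabla_Z G_L(Y,Z)\cdot\nabla_Z\Phi(Z)\,dZ\ \lesssim\ \frac{1}{\rho}\iint_{\Omega\cap A_\rho}|\nabla_Z G_L(Y,Z)|\,dZ,
\end{equation}
where $A_\rho=\supp\nabla\Phi$. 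Decomposing $\Omega\cap A_\rho$ into Whitney cubes $I$, the Caccioppoli inequality (Lemma \ref{lemma:caccioppoli}) together with Cauchy--Schwarz turns $\iint_I|\nabla_Z G_L(Y,Z)|\,dZ$ into $\ell(I)^n$ times the size of the $L^*$-solution $G_L(Y,\cdot)$ on $I$; since $G_L(Y,\cdot)$ vanishes on $\pom$, and $\pom$ being Ahlfors regular satisfies a capacity density condition, the boundary H\"older estimate for such solutions bounds that size by $(\ell(I)/\rho)^\alpha G_L(Y,X_\rho)$ for some $\alpha>0$; and Ahlfors regularity gives $\sum_{I\cap A_\rho\neq\emptyset}\ell(I)^{\,n+\alpha}\lesssim\rho^{\,n+\alpha}$. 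Combining these with \eqref{proposal.riesz} yields the right inequality of \eqref{proposal.cfms}.

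Granting \eqref{proposal.cfms}, the lemma follows quickly. Let $Y\in\Omega\setminus B(x,4r)$; then $Y\notin B(x,2\rho)$ for both $\rho=r$ and $\rho=2r$, so \eqref{proposal.cfms} gives $\omega^Y(\Delta(x,2r))\lesssim(2r)^{\,n-1}G_L(Y,X_{x,2r})$ and $\omega^Y(\Delta(x,r))\gtrsim r^{\,n-1}G_L(Y,X_{x,r})$. As $(2r)^{\,n-1}\approx r^{\,n-1}$, it remains to see $G_L(Y,X_{x,2r})\lesssim G_L(Y,X_{x,r})$; by Definition \ref{def.green}(v) this reads $G_{L^*}(X_{x,2r},Y)\lesssim G_{L^*}(X_{x,r},Y)$, and $G_{L^*}(\cdot,Y)$ is a nonnegative $L^*$-solution in $\Omega\setminus\{Y\}$. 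Since $\delta(X_{x,2r}),\delta(X_{x,r})\gtrsim r$, $|X_{x,2r}-X_{x,r}|\lesssim r$, and a Harnack chain joining the two corkscrew points stays in $B(x,Cr)$ --- hence at distance $\gtrsim r$ from $Y$ --- the Harnack inequality (Lemma \ref{lemma:harnack}) along that chain gives the comparison, and chaining the estimates yields $\omega^Y(\Delta(x,2r))\le C\omega^Y(\Delta(x,r))$.

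The main obstacle is the upper bound in \eqref{proposal.cfms}: one has to control $\iint_{\Omega\cap A_\rho}|\nabla_Z G_L(Y,Z)|\,dZ$ uniformly \emph{up to $\pom$}, and a crude Whitney-cube estimate for the gradient diverges near the boundary; the point is the H\"older decay $G_L(Y,Z)\lesssim(\delta(Z)/\rho)^\alpha G_L(Y,X_\rho)$ as $Z\to\pom$, which is exactly where the Ahlfors regularity of $\pom$ (through the capacity density condition) and the De Giorgi--Nash--Moser theory are used. Everything else --- the lower bound in \eqref{proposal.cfms}, the reduction \eqref{proposal.riesz}, and the Harnack comparison above --- is routine. (Pinning down the precise numerical constant $4$ in the hypothesis, rather than some larger structural multiple, requires slightly more careful bookkeeping with the large Whitney cubes near $\partial B(x,2\rho)$; this is carried out in \cite{dfm20}.)
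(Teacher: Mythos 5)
The paper does not prove this lemma at all --- it is quoted verbatim from \cite{dfm20}, so the only comparison available is with the standard argument, and your overall route (the CFMS comparison at scales $r$ and $2r$, then a comparison of the Green's function at the two corkscrew points) is exactly that standard route; re-deriving CFMS is unnecessary here, since the paper already imports it as Lemma \ref{CFMSest.lem}, but your sketch of it is the usual one and is fine.

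The genuine gap is in the last step, the claim that $G_{L^*}(X_{x,2r},Y)\lesssim G_{L^*}(X_{x,r},Y)$ follows because ``a Harnack chain joining the two corkscrew points stays in $B(x,Cr)$ --- hence at distance $\gtrsim r$ from $Y$.'' First, the Harnack chain condition as defined in this paper gives no localization of the chain at all: the balls only satisfy $\diam(B_j)\approx\dist(B_j,\pom)$ and the chain has bounded length, so containment in $B(x,Cr)$ is an additional (cigar-type) property that needs justification. Second, and more importantly, even granting containment in $B(x,Cr)$ for some structural $C$, that $C$ will in general exceed $4$, while the hypothesis only gives $|Y-x|\ge 4r$; when $4r\le|Y-x|\le Cr$ the chain may pass arbitrarily close to the pole, some $2B_j$ may contain $Y$, and then Lemma \ref{lemma:harnack} cannot be applied to $G_{L^*}(\cdot,Y)$ on those balls. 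This borderline regime is precisely where the work lies, and it is not addressed by your closing parenthetical, which concerns the Whitney-cube bookkeeping in the CFMS upper bound, a different constant. A clean repair inside the paper's toolkit: prove the comparison on the sphere first, i.e.\ show $\omega^Z(\Delta(x,2r))\le C\,\omega^Z(\Delta(x,r))$ for all $Z\in\Omega\cap\partial B(x,3r)$ --- if $\delta(Z)\gtrsim r$ this follows from a Harnack chain from $Z$ to $X_{x,r}$ applied to the two pole-free solutions $W\mapsto\omega^W(\Delta(x,2r))$ and $W\mapsto\omega^W(\Delta(x,r))$ (no singularity, so localization is irrelevant) together with Bourgain's estimate (Lemma \ref{bour.lem}); if $\delta(Z)\ll r$, both functions are positive solutions vanishing on $\pom\cap B(z,r/2)$ for a touching point $z$, and the Boundary Harnack Principle (Lemma \ref{bharn.lem}) reduces the ratio at $Z$ to the ratio at a corkscrew point, handled as before --- and then extend to all $Y\in\Omega\setminus B(x,4r)$ by the maximum principle in $\Omega\setminus\overline{B(x,3r)}$. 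Alternatively, prove doubling first for poles $Y\notin B(x,Kr)$ with $K$ structural (where your chain argument is sound once chain localization is justified) and treat $4r\le|Y-x|\le Kr$ by the same maximum-principle step.
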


The following estimate allows us to connect the Green's function and elliptic measure in a quantitative way:
\begin{lemma}[CFMS estimate, {\cite[Lemma 15.28]{dfm20}}]\label{CFMSest.lem}
If $x \in \pom$ and $r \in (0, \diam(\pom)]$ then 
\begin{align*}
  \frac{G_L(X, Y_{x,r})}{r} \approx \frac{\hm^X_L(\Delta(x,r))}{r^n}
\end{align*}
for every $X \in \Omega \setminus B(x,2r)$, where the implicit constants depend on dimension, ellipticity, Harnack chain, corkscrew and Ahlfors regularity constants, and $Y_{x,r}$ is any corkscrew point relative to $x$ at scale $r$. 
\end{lemma}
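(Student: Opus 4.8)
The plan is to establish the two estimates $G_L(X,Y_{x,r})\lesssim r^{1-n}\hm^X_L(\Delta(x,r))$ and $G_L(X,Y_{x,r})\gtrsim r^{1-n}\hm^X_L(\Delta(x,r))$ separately for $X\in\Omega\setminus B(x,2r)$; this is the classical Caffarelli--Fabes--Mortola--Salsa (CFMS) comparison, and the only thing one must check for the present generality is that the tools collected above --- Harnack's inequality (Lemma \ref{lemma:harnack}), Bourgain's non-degeneracy estimate (Lemma \ref{bour.lem}), the local doubling property (Lemma \ref{lm.doubling}), the Green function bounds \eqref{fungf1.eq}--\eqref{fungf2.eq} and the Riesz formula \eqref{Rieszformfin.eq} --- are all available. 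Write $\Delta_t:=\Delta(x,t)=B(x,t)\cap\pom$, and recall that $\gamma r\le\delta(Y_{x,r})\le|Y_{x,r}-x|\le(1-\gamma)r$, so $Y_{x,r}$ lies comfortably inside $B(x,2r)$.

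\textbf{Upper bound.} I would compare, on the open set $U:=\Omega\setminus\overline{B(x,2r)}$, the two non-negative $L$-solutions $Z\mapsto G_L(Z,Y_{x,r})$ (a solution in $U$ since $Y_{x,r}\in B(x,2r)$, vanishing on $\pom\setminus\overline{B(x,2r)}$ by property (iii) of Definition \ref{def.green}) and $Z\mapsto\hm^Z_L(\Delta_{Cr})$, with $C$ a large structural constant. On the inner sphere $\Omega\cap\partial B(x,2r)$ one has $G_L(Z,Y_{x,r})\lesssim r^{1-n}$ by \eqref{fungf1.eq}, since $|Z-Y_{x,r}|\approx r$ (with the obvious reading for $n=1$), whereas $\hm^Z_L(\Delta_{Cr})\gtrsim1$ for every such $Z$: if $\delta(Z)\gtrsim r$ this follows by joining $Z$ to a corkscrew point of $x$ at scale $2r$ through a bounded Harnack chain and invoking Bourgain's estimate, while if $\delta(Z)$ is small one lets $\hat z\in\pom$ be a nearest boundary point and observes that $Z$ is itself a corkscrew point for $\hat z$ at scale $2\delta(Z)$, so Lemma \ref{bour.lem} gives $\hm^Z_L(\Delta(\hat z,2\delta(Z)))\gtrsim1$, and $\Delta(\hat z,2\delta(Z))\subset\Delta_{Cr}$ once $C$ is large (here $|\hat z-x|\le|Z-x|+\delta(Z)\le4r$). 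Hence, for a suitable structural $c_0>0$, the function $h:=c_0r^{1-n}\hm^{\cdot}_L(\Delta_{Cr})-G_L(\cdot,Y_{x,r})$ satisfies $\liminf h\ge0$ along $\partial U$ (on $\pom\setminus\overline{B(x,2r)}$ because $G_L(\cdot,Y_{x,r})$ vanishes and $\hm\ge0$; and, if $\Omega$ is unbounded, at $\infty\in\pom$ since $G_L(\cdot,Y_{x,r})$ vanishes there too), so the maximum principle yields $G_L(X,Y_{x,r})\le c_0r^{1-n}\hm^X_L(\Delta_{Cr})$ for every $X\in\Omega\setminus B(x,2r)$. Finally the local doubling property (Lemma \ref{lm.doubling}) upgrades $\Delta_{Cr}$ to $\Delta_r$ provided $X\notin B(x,4Cr)$, and the leftover range $2r\le|X-x|\le4Cr$ --- where $|X-x|\approx r$ --- is handled directly by Harnack's inequality, Harnack chains, and (for $X$ near $\pom$) once more Bourgain's estimate.

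\textbf{Lower bound.} I would plug into the Riesz formula \eqref{Rieszformfin.eq} a cutoff $F\in C^\infty_c(\ree)$ with $\mathbf 1_{\overline{B(x,r)}}\le F\le\mathbf 1_{B(x,3r/2)}$ and $|\nabla F|\lesssim r^{-1}$. For $X\notin B(x,3r/2)$ we have $F(X)=0$, $F\ge\mathbf 1_{\Delta(x,r)}$ on $\pom$, and $\nabla F$ is supported in $\overline{B(x,3r/2)}\setminus B(x,r)$, so
\[
\hm^X_L(\Delta(x,r))\le\int_{\pom}F\,d\hm^X_L=-\iint_{\Omega}A^T\nabla_YG_L(X,Y)\cdot\nabla_YF(Y)\,dY\lesssim\frac1r\iint_{(B(x,3r/2)\setminus B(x,r))\cap\Omega}|\nabla_YG_L(X,Y)|\,dY.
\]
Covering the annular region by boundedly many balls of radius $\approx r$, applying the Caccioppoli inequality (Lemma \ref{lemma:caccioppoli}) on each --- up to the boundary on the balls meeting $\pom$, which is legitimate because $Y\mapsto G_L(X,Y)$ vanishes there (property (iii) of Definition \ref{def.green}) --- and then Cauchy--Schwarz, one obtains, for $X\notin B(x,4r)$,
\[
\hm^X_L(\Delta(x,r))\lesssim r^{\frac{n-3}{2}}\Big(\iint_{(B(x,2r)\setminus B(x,r/2))\cap\Omega}G_L(X,Y)^2\,dY\Big)^{1/2}\lesssim r^{n-1}\,\sup_{(B(x,2r)\setminus B(x,r/2))\cap\Omega}G_L(X,\cdot).
\]
It then suffices to show that this supremum is $\lesssim G_L(X,Y_{x,r})$. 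For points $Y'$ of the annulus with $\delta(Y')\gtrsim r$ this is immediate from Harnack's inequality and the Harnack chain condition; for $Y'$ near $\pom$ one uses that $Y\mapsto G_L(X,Y)$ is a non-negative $L^*$-solution vanishing on $\pom\cap B(x,4r)$, so the boundary H\"older continuity of solutions --- valid because Ahlfors regularity of $\pom$ implies the capacity density condition --- bounds $G_L(X,Y')$ by $(\delta(Y')/r)^{\alpha}$ times the supremum of $G_L(X,\cdot)$ over a surface-ball's worth of interior points, which is in turn $\lesssim G_L(X,Y_{x,r})$ by Harnack. (Alternatively, the lower bound can be obtained, like the upper bound, by a maximum principle comparison of $G_L(\cdot,Y_{x,r})$ with $r^{1-n}\hm^{\cdot}_L(\Delta_r)$ on $\Omega\setminus\overline{B(x,2r)}$, using \eqref{fungf2.eq} and Harnack to get $G_L(Z,Y_{x,r})\gtrsim r^{1-n}$ on the part of $\Omega\cap\partial B(x,2r)$ away from $\pom$, and the boundary comparison principle, i.e. Carleson's estimate, near $\pom$.) As before, the range $2r\le|X-x|\le4r$ is taken care of separately by Harnack chains.

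\textbf{The main obstacle.} The genuinely delicate point in both halves is to control $G_L(\cdot,Y_{x,r})$ and $\hm^{\cdot}_L(\Delta(x,r))$ near the portion of $\pom$ lying outside $\Delta_{2r}$, where both functions vanish: in the upper bound one gets past it by applying Bourgain's estimate at the nearest boundary point, at the scale dictated by the distance to $\pom$; in the lower bound it is exactly where boundary H\"older continuity of solutions (equivalently, the boundary comparison principle) is needed, and hence where the capacity density condition --- i.e. Ahlfors regularity of $\pom$ --- enters. Everything else (the doubling step, and the scale $|X-x|\approx r$) is routine bookkeeping.
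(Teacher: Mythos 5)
The paper does not prove this lemma at all: it is imported verbatim from \cite[Lemma 15.28]{dfm20}, so your argument can only be measured against the standard CFMS proof in the literature, and your outline is essentially that proof --- maximum-principle comparison of $G_L(\cdot,Y_{x,r})$ with $r^{1-n}\hm^{\cdot}_L(\Delta_{Cr})$ for the upper bound, and the Riesz formula \eqref{Rieszformfin.eq} with a cutoff plus Caccioppoli and a Carleson-type bound for the lower bound. That architecture is correct, and your verification that the needed ingredients (Bourgain's estimate, doubling, the Green function bounds \eqref{fungf1.eq}--\eqref{fungf2.eq}, vanishing at $\infty$ in the compactified setting of Definition \ref{def.green}) are available in this generality is the right thing to check.

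There is, however, one step that fails as written: the ``leftover range'' $2r\le|X-x|\lesssim Cr$ when $X$ is close to $\pom$. In the upper bound you claim this case is handled by Harnack chains and ``once more Bourgain's estimate'', and in the lower bound ``separately by Harnack chains''; but for such $X$ both $G_L(X,Y_{x,r})$ and $\hm^X_L(\Delta_r)$ degenerate as $\delta(X)\to0$, Harnack chains from $X$ to a deep point have unbounded length, and Bourgain's estimate applied at the nearest boundary point $\hat x$ only controls $\hm^X_L(\Delta(\hat x,2\delta(X)))$, a surface ball which is \emph{not} contained in (indeed is far from) $\Delta_r$, so it says nothing about $\hm^X_L(\Delta_r)$. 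What is genuinely needed there is a boundary comparison: either apply the boundary Harnack principle (Lemma \ref{bharn.lem}) to the pair $G_L(\cdot,Y_{x,r})$, $\hm^{\cdot}_L(\Delta_r)$ in $B(\hat x,cr)\cap\Omega$ (both vanish on $B(\hat x,4cr)\cap\pom$ once $c$ is small) and then estimate both quantities at the corkscrew point $Y_{\hat x,cr}$, or rerun your maximum-principle comparison with a smaller inner sphere $\partial B(x,r/4)$ and pole $Y_{x,r/8}$ (so that the Bourgain ball lands inside $\Delta_r$) and transfer the pole from $Y_{x,r/8}$ to $Y_{x,r}$ by Harnack in the second variable along a chain avoiding $X$. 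Relatedly, in the lower bound the step ``$\sup$ of $G_L(X,\cdot)$ over the annulus $\lesssim G_L(X,Y_{x,r})$'' is precisely the Carleson estimate: a single application of the boundary H\"older bound (Lemma \ref{Hvanish.lem}) gives the sup over a full ball, not over points of definite depth, so one needs the standard iteration of boundary H\"older continuity and Harnack (or to quote the Carleson estimate, which you do name parenthetically). With these repairs --- all using tools already present in the paper --- your proof closes, but as stated the annulus case is a gap.
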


Non-negative solutions $u$ to $Lu = 0$ that vanish on an open subset of the boundary of a uniform domain must do so at the same rate: 
\begin{lemma}[Boundary Harnack Principle, {\cite[Theorem 15.64]{dfm20}}]\label{bharn.lem}
Let $x \in \pom$ and $r \in (0, \diam(\pom)]$, and let $u$ and $v$ be positive functions such that $Lu = Lv = 0$ in $\Omega \cap B(x,4r)$ that vanish continuously on $\pom\cap B(x,4r)$. Then
\begin{align*}
\frac{u(X)}{v(X)} \approx \frac{u(Y_{x,r})}{v(Y_{x,r})}, \qquad \text{for all } X \in B(x,r) \cap \Omega,
\end{align*}
where $Y_{x,r}$ is a corkscrew point relative to $x$ at scale $r$. The implicit constants depend on dimension, ellipticity, Harnack chain, corkscrew and Ahlfors regularity constants.
\end{lemma}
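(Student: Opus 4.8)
The plan is to run, at this level of generality, the classical Caffarelli--Fabes--Mortola--Salsa / Jerison--Kenig proof of the boundary Harnack principle. That scheme is robust precisely because it uses only interior regularity (Harnack, Lemma~\ref{lemma:harnack}; De~Giorgi--Nash--Moser continuity, Lemma~\ref{DGN.lem}; Caccioppoli, Lemma~\ref{lemma:caccioppoli}), the maximum principle, the quantitative connectivity of $\Omega$ (the corkscrew and Harnack chain conditions), and the boundary estimates for $\hm_L$ and $G_L$ that are available here thanks to \cite{dfm20}: Bourgain's estimate (Lemma~\ref{bour.lem}), local doubling of $\hm_L$ (Lemma~\ref{lm.doubling}), the CFMS identity $G_L(X,Y_{x,\rho})/\rho\approx\hm^X_L(\Delta(x,\rho))/\rho^{n}$ for $X\notin B(x,2\rho)$ (Lemma~\ref{CFMSest.lem}), and the pointwise bounds \eqref{fungf1.eq}--\eqref{fungf2.eq}. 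Everything reduces to the following single comparison: \emph{if $w>0$ solves $Lw=0$ in $\Omega\cap B(x,4r)$ and vanishes continuously on $\Delta(x,4r)$, then}
\[
  w(X)\ \approx\ w(Y_{x,r})\,r^{\,n-1}\,G_L\big(X,A^{\sharp}\big),\qquad X\in\Omega\cap B(x,r),
\]
\emph{where $A^{\sharp}$ is an outer corkscrew point for $x$, i.e. $\delta(A^{\sharp})\gtrsim r$ and $4r\le|A^{\sharp}-x|\lesssim r$}. Such a point exists in a uniform domain by the corkscrew condition at a scale $\approx r$ together with a Harnack-chain argument (for bounded $\Omega$ the hypotheses are vacuous unless $r$ is a fixed fraction of $\diam(\pom)$, so one may assume this without loss). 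Here $G_L(\cdot,A^{\sharp})$ is a positive $L$-solution in $\Omega\cap B(x,4r)$ vanishing continuously on $\pom$ by property (iii) of Definition~\ref{def.green}, and $G_L(Y_{x,r},A^{\sharp})\approx r^{1-n}$ by \eqref{fungf1.eq}--\eqref{fungf2.eq} and a bounded Harnack chain; hence the display really pins $w$ on $\Omega\cap B(x,r)$ down to its value at one interior corkscrew point. Granting the comparison, I apply it to $u$ and to $v$ separately; the common factor $r^{n-1}G_L(\cdot,A^{\sharp})$ cancels in the quotient, giving $u(X)/v(X)\approx u(Y_{x,r})/v(Y_{x,r})$ on $\Omega\cap B(x,r)$, and a bounded Harnack chain lets one use any corkscrew point at scale $\approx r$ in place of $Y_{x,r}$.

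\textbf{Step 1 (Carleson estimate).} I would first prove $\sup_{\Omega\cap B(x,3r)}w\lesssim w(Y_{x,2r})$. If a near-maximizing point $X_0$ has $\delta(X_0)\gtrsim r$, then interior Harnack (Lemma~\ref{lemma:harnack}) and a bounded Harnack chain to $Y_{x,2r}$ finish it. Otherwise $\delta(X_0)\ll r$: pick $x_0\in\pom$ with $|X_0-x_0|=\delta(X_0)$ and apply the maximum principle to $w$ in $\Omega\cap B(x_0,\rho)$ for $\rho$ a fixed multiple of $\delta(X_0)$; since $X_0$ is itself a corkscrew point of that ball, Bourgain's estimate (Lemma~\ref{bour.lem}) gives a lower bound $\ge c$ for the elliptic measure (of $\Omega\cap B(x_0,\rho)$) of $\Delta(x_0,\rho)$, whence $w(X_0)\le(1-c)\sup_{\Omega\cap\partial B(x_0,\rho)}w$. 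Iterating this along a geometric sequence of scales forces one, after boundedly many steps and while still inside $B(x,4r)$, into the regime $\delta\gtrsim(\text{scale})$, where interior Harnack closes the estimate. This is the one place where a little bookkeeping of scales is needed, but it is entirely classical.

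\textbf{Step 2 (two-sided comparison with $G_L$).} This is the heart of the argument and the step I expect to be the main obstacle. Neither bound $w(X)\lesssim w(Y_{x,r})r^{n-1}G_L(X,A^{\sharp})$ nor $w(X)\gtrsim w(Y_{x,r})r^{n-1}G_L(X,A^{\sharp})$ on $\Omega\cap B(x,r)$ can be read off from one naive application of the maximum principle on $\Omega\cap B(x,2r)$: on the interior sphere $\Omega\cap\partial B(x,2r)$ one would need to compare $w$ with $G_L(\cdot,A^{\sharp})$ precisely near those points of $\pom$ at distance $\approx 2r$ from $x$, which is again a boundary Harnack statement at scale $\approx r$. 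The standard way around this circularity is the CFMS localization: using Step~1 together with the CFMS identity (Lemma~\ref{CFMSest.lem}) and local doubling (Lemma~\ref{lm.doubling}), one compares, on Whitney-type scales $2^{-k}r$ approaching $\pom$, both $w$ and $G_L(\cdot,A^{\sharp})$ with the $\hm_L$-measure of the surface ball of the corresponding scale, and sums the resulting geometric series; equivalently, one sets up an induction on scale so that the comparison needed on $\Omega\cap\partial B(x,2r)$ is furnished by the inductive hypothesis. I would carry this out following \cite[\S 15, Theorem~15.64]{dfm20}; the only point beyond the NTA / chord-arc setting is that all the inputs invoked above (Lemmas~\ref{bour.lem}, \ref{lm.doubling}, \ref{CFMSest.lem} and \eqref{fungf1.eq}--\eqref{fungf2.eq}) are now known for arbitrary bounded measurable coefficients on uniform domains with Ahlfors regular boundary, so the classical argument goes through verbatim.
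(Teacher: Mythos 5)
The first thing to say is that the paper does not prove Lemma \ref{bharn.lem} at all: it is imported verbatim from \cite[Theorem 15.64]{dfm20}, so there is no internal argument to compare yours against. Judged on its own terms, your outline is the standard CFMS/Jerison--Kenig scheme (a Carleson estimate, followed by a two-sided comparison of every admissible positive solution with a Green function with far pole, run through Lemmas \ref{bour.lem}, \ref{lm.doubling}, \ref{CFMSest.lem} and the bounds \eqref{fungf1.eq}--\eqref{fungf2.eq}), and this is indeed the route taken in the cited reference, so the reduction you state is the right one.

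The genuine gap is that Step 2, which you yourself call the heart of the argument, is never carried out: you defer it to \cite[Theorem 15.64]{dfm20}, i.e.\ to the very statement being proved. The induction on scales (or CFMS localization) that breaks the circularity you correctly diagnose on $\Omega\cap\partial B(x,2r)$ is precisely the content of that theorem, so beyond the (correct) observation that the known proof applies in this generality, no proof has actually been given. If you were to write it out, two further points need care. First, $L$ is not symmetric, and the paper's CFMS estimate (Lemma \ref{CFMSest.lem}) has the pole of $G_L$ at the corkscrew point; your comparison function $G_L(\cdot,A^{\sharp})$ has the pole far away, so you must pass to the adjoint via $G_L(X,A^{\sharp})=G_{L^*}(A^{\sharp},X)$ (Definition \ref{def.green}(v)) and run the localization with $\omega_{L^*}$, which is legitimate since all quoted lemmas hold for $L^*$ as well, but has to be said. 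Second, your Step 1 iteration is too quick: the inequality $w(X_0)\le(1-c)\sup_{\Omega\cap\partial B(x_0,\rho)}w$ by itself does not terminate ``after boundedly many steps''; the successive near-maximizing points can stay close to $\pom$, and the classical argument must be coupled with the boundary H\"older decay (Lemma \ref{Hvanish.lem}) to control their location and derive a contradiction with unbounded growth. Finally, the existence of the outer pole $A^{\sharp}$ and the reduction to $r\le c\,\diam(\pom)$ are routine but your parenthetical is imprecise: for unbounded $\Omega$ with bounded boundary (the situation of the paper's main examples) the hypotheses are not vacuous at the top scale, and one argues directly there rather than by vacuity.
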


We have the following standard consequence of the boundary Harnack Principle.

\begin{lemma}[Change of Poles, {\cite[Lemma 15.61]{dfm20}}]\label{lm.change} Let $x\in\partial\Omega$, $r\in(0,\diam(\partial\Omega))$, $Y_{x,r}$ a corkscrew point relative to $x$ at scale $r$, and $E\subset\Delta(x,r)$ a Borel set. Then
\[
\omega^{Y_{x,r}}(E)\approx\frac{\omega^X(E)}{\omega^X(\Delta(x,r))},\qquad\text{for each }X\in\Omega\backslash B(x,2r),
\]	
where the implicit constants depend on dimension, ellipticity, and structural constants of $\Omega$.
\end{lemma}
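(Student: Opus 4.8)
The plan is to deduce the estimate from the boundary Harnack principle (Lemma~\ref{bharn.lem}), the interior Harnack inequality (Lemma~\ref{lemma:harnack}) together with the Harnack chain condition, and the maximum principle. Write $u(Z):=\omega^Z(E)$ and $v(Z):=\omega^Z(\Delta(x,r))$; these are non-negative $L$-solutions in $\Omega$ as functions of the pole, which (discarding the trivial case $\omega(E)=0$, where both sides vanish) we may take to be strictly positive throughout $\Omega$ by Lemma~\ref{lemma:harnack} and connectedness, and $u\le v$ since $E\subset\Delta(x,r)$. Since $\Delta(x,r)\subset\overline{B(x,r)}$ and all boundary points are regular (by the corkscrew condition), both $u$ and $v$ vanish continuously on $\pom\setminus\overline{B(x,r)}$, and, if $\Omega$ is unbounded, also at $\infty$ (here one uses the compactified setting: $\mathbf{1}_E$ and $\mathbf{1}_{\Delta(x,r)}$ are compactly supported away from $\infty$, so $u,v\to 0$ there). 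Finally, by Bourgain's estimate (Lemma~\ref{bour.lem}) and the maximum principle, $v(Y_{x,r})=\omega^{Y_{x,r}}(\Delta(x,r))\approx 1$, so the right-hand side of the claimed estimate equals $u(Y_{x,r})/v(Y_{x,r})$ up to structural constants, and it suffices to prove that $u(X)/v(X)\approx u(Y_{x,r})/v(Y_{x,r})$ for all $X\in\Omega\setminus B(x,2r)$.

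The first step I would carry out is to establish this comparison on the sphere $\partial B(x,\tfrac{3}{2}r)\cap\Omega$. Fix such a point $Z$. If $\delta(Z)\approx r$, then $Z$ and $Y_{x,r}$ both have distance $\gtrsim r$ from $\pom$ and distance $\lesssim r$ from each other, so the Harnack chain condition joins them by a chain of uniformly bounded length; applying Lemma~\ref{lemma:harnack} to $u$ and to $v$ along it gives $u(Z)\approx u(Y_{x,r})$ and $v(Z)\approx v(Y_{x,r})$. If instead $\delta(Z)\ll r$, pick $\hat z\in\pom$ with $|Z-\hat z|=\delta(Z)$; then $|\hat z - x|>r$, so with $\rho\approx r$ chosen as a small fixed multiple of $|\hat z-x|-r$ the ball $B(\hat z,4\rho)$ is disjoint from $\overline{B(x,r)}\supset\overline{\Delta(x,r)}$, hence $u$ and $v$ both vanish continuously on $\pom\cap B(\hat z,4\rho)$. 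Then Lemma~\ref{bharn.lem} yields $u(Z)/v(Z)\approx u(Y_{\hat z,\rho})/v(Y_{\hat z,\rho})$, and since $Y_{\hat z,\rho}$ again has distance $\gtrsim r$ from $\pom$ and distance $\lesssim r$ from $Y_{x,r}$, a bounded Harnack chain and Lemma~\ref{lemma:harnack} reduce the right-hand side to $u(Y_{x,r})/v(Y_{x,r})$. In all cases we obtain structural constants $0<c_1\le c_2$ with
\[
c_1\,\lambda\, v(Z)\le u(Z)\le c_2\,\lambda\, v(Z),\qquad Z\in\partial B(x,\tfrac{3}{2}r)\cap\Omega,\qquad \lambda:=\frac{u(Y_{x,r})}{v(Y_{x,r})}.
\]

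The second step is to propagate these two-sided bounds to all of $\Omega\setminus B(x,2r)$ by the maximum principle. Consider $u-c_2\lambda v$ and $c_1\lambda v-u$ in the open set $\Omega\setminus\overline{B(x,\tfrac{3}{2}r)}$, where both are $L$-solutions. On $\pom\setminus\overline{B(x,\tfrac{3}{2}r)}$ we have $u=v=0$; on $\partial B(x,\tfrac{3}{2}r)\cap\overline\Omega$ both quantities are $\le 0$ by the display above; and at $\infty$, if relevant, $u,v\to 0$. Hence $c_1\lambda v\le u\le c_2\lambda v$ throughout $\Omega\setminus\overline{B(x,\tfrac{3}{2}r)}$, and since $\Omega\setminus B(x,2r)\subset\Omega\setminus\overline{B(x,\tfrac{3}{2}r)}$, dividing by $v$ gives $u(X)/v(X)\approx\lambda$ for every $X\in\Omega\setminus B(x,2r)$, which is the assertion.

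The main obstacle — such as it is in a lemma of this type — is the patching on the sphere $\partial B(x,\tfrac{3}{2}r)\cap\Omega$: points near $\pom$ (where only the boundary Harnack principle is available) and points in the interior (where one only has Harnack chains) must be handled on an equal footing, and one must check that in either case the relevant reference corkscrew point can be joined to $Y_{x,r}$ by a Harnack chain whose length is controlled purely by the structural constants. The only other point requiring care is the behaviour at $\infty$ for unbounded $\Omega$, which is why the whole argument is run in the compactified space $\overline{\mathbb R}^{n+1}$, ensuring that the boundary data $\mathbf{1}_E$ and $\mathbf{1}_{\Delta(x,r)}$ are legitimate and vanish near $\infty$, so that both $u$ and $v$ decay there and the maximum principle applies.
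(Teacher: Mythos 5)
Your overall strategy is the standard one and matches what the paper implicitly appeals to: the paper gives no proof of this lemma, citing \cite[Lemma 15.61]{dfm20} and describing it as a standard consequence of the boundary Harnack principle, and your two-step scheme (comparison of $u=\omega^{\cdot}(E)$ and $v=\omega^{\cdot}(\Delta(x,r))$ on the sphere $\partial B(x,\tfrac32 r)\cap\Omega$ via Lemma \ref{bharn.lem} plus Harnack chains, then propagation by the maximum principle, with Bourgain's estimate normalizing $v(Y_{x,r})\approx 1$) is exactly that argument. The interior/boundary dichotomy on the sphere and the choice of $\rho\approx r$ are handled correctly.

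There is, however, a genuine gap in the treatment of the point at infinity. You assert that when $\Omega$ is unbounded, $u$ and $v$ tend to $0$ at $\infty$ because the data are compactly supported; this is false precisely in a case this paper needs, namely $n=1$ with $\partial\Omega$ bounded and $\Omega$ unbounded (e.g.\ the complement of the $4$-corner Cantor set, where Lemma \ref{lm.change} is invoked in Section \ref{section:example}). There $\omega^X$ converges, as $X\to\infty$, to the elliptic measure with pole at infinity, a probability measure on $\partial\Omega$, so $v(X)\to\omega^{\infty}(\Delta(x,r))>0$. In that situation the maximum principle on $\Omega\setminus\overline{B(x,\tfrac32 r)}$ requires the boundary inequality $\limsup_{X\to\infty}\big(u(X)-c_2\lambda v(X)\big)\le 0$ at the boundary point $\infty$ of the compactified space, and this is itself a change-of-poles statement (with pole at infinity), so the argument as written is circular at that point. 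It can be repaired — e.g.\ by noting that in the planar case $\{\infty\}$ is a polar set for bounded solutions and hence negligible for the maximum principle, or by treating the regime $|X-x|\gtrsim\diam(\partial\Omega)$ by a separate limiting argument — but some such device must be supplied. Even in the cases where the decay at $\infty$ does hold (unbounded boundary, or bounded boundary with $n\ge 2$), it is asserted rather than proved; it follows from Lemma \ref{CFMSest.lem} together with the Green function bounds \eqref{fungf1.eq} when $n\ge2$, while $n=1$ with unbounded boundary needs an additional (easy but nontrivial) decay estimate. A minor further point: the continuous vanishing of $u,v$ on $\partial\Omega\setminus\overline{B(x,r)}$ comes from Wiener regularity due to the $n$-Ahlfors regularity of $\partial\Omega$ (capacity density), not from the interior corkscrew condition as you state.
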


Finally, solutions $u$ to $Lu = 0$ that vanish continuously on the boundary do so at a H\"older rate:
\begin{lemma}[{\cite[Lemma 11.32]{dfm20}}]\label{Hvanish.lem}
  Let $x \in \pom$ and $r \in (0, \diam(\pom)]$, and let $u$ be a solution to $Lu= 0$ in $\Omega \cap B(x,4r)$ that vanishes continuously on $\pom\cap B(x,4r)$. We have the bound
  \begin{align*}
    |u(X)| \le C\left(\frac{\dist(X,\pom)}{r}\right)^\alpha \sup_{Y \in B(x,4r)} |u(Y)|
  \end{align*}
  for every $X \in B(x,r) \cap \Omega$, where $\alpha$ and $C$ depend on dimension, the Harnack chain, corkscrew and Ahlfors regularity constants.
\end{lemma}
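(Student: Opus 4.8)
The plan is to reduce the statement to a single-scale oscillation-decay estimate at the boundary and then iterate. First I would normalize so that $\sup_{B(x,4r)\cap\Omega}|u|=1$ (the assertion is vacuous when this supremum is infinite), and then aim to establish: there is a structural constant $\theta\in(0,1)$ with
\begin{equation}\tag{$\star$}
\sup_{B(x_0,\rho)\cap\Omega}|u|\ \le\ \theta\,\sup_{B(x_0,2\rho)\cap\Omega}|u|
\end{equation}
whenever $x_0\in\pom$, $0<\rho\le r$ and $\overline{B(x_0,2\rho)}\subset B(x,4r)$. Granting $(\star)$, the theorem follows from a routine geometric-series argument: for $X\in B(x,r)\cap\Omega$ with $d:=\delta(X)<r$, pick $x_0\in\pom$ with $|X-x_0|=d$, observe that $x_0\in B(x,2r)$ and $\overline{B(x_0,2^{1-k}r)}\subset B(x,4r)$ for every $k\ge0$, iterate $(\star)$ downward from scale $r$ to get $\sup_{B(x_0,2^{-k}r)\cap\Omega}|u|\le\theta^{k}$, and choose $k$ maximal with $2^{-k}r>d$ (or use $|u(X)|\le1$ when $d\ge r/2$); this gives $|u(X)|\le(2d/r)^{\alpha}$ with $\alpha:=\log_2(1/\theta)$, which after undoing the normalization is the asserted bound with $C=2^{\alpha}$.

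For $(\star)$ I would fix $x_0,\rho$ as above, set $\mathcal{O}:=\Omega\cap B(x_0,2\rho)$, $\Delta:=\pom\cap B(x_0,2\rho)=\partial\mathcal{O}\cap B(x_0,2\rho)$ and $M:=\sup_{\mathcal{O}}|u|$, and run a maximum-principle comparison in the bounded domain $\mathcal{O}$. Since $\overline{B(x_0,2\rho)}\subset B(x,4r)$, the function $u$ is a solution in $\mathcal{O}$, continuous up to $\overline{\mathcal{O}}$, vanishes on $\pom\cap\overline{B(x_0,2\rho)}$, and obeys $|u|\le M$ on $\partial\mathcal{O}$. Comparing $\pm u$ with the bounded solution $Y\mapsto M\bigl(1-\omega^{Y}_{L,\mathcal{O}}(\Delta)\bigr)$ in $\mathcal{O}$ works: along the $\pom$-part of $\partial\mathcal{O}$ one has $u\to0$ while $M\bigl(1-\omega^{Y}_{L,\mathcal{O}}(\Delta)\bigr)\ge0$, and along the sphere-part $\Omega\cap\partial B(x_0,2\rho)$ — which is exterior-ball-regular and lies at positive distance from $\Delta$ — one has $|u|\le M$ while $M\bigl(1-\omega^{Y}_{L,\mathcal{O}}(\Delta)\bigr)\to M$; hence $|u(Y)|\le M\bigl(1-\omega^{Y}_{L,\mathcal{O}}(\Delta)\bigr)$ for all $Y\in\mathcal{O}$. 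As $M=\sup_{B(x_0,2\rho)\cap\Omega}|u|$, estimate $(\star)$ with $\theta=1-\eta$ reduces to the non-degeneracy bound
\begin{equation}\tag{$\star\star$}
\inf_{Y\in B(x_0,\rho)\cap\Omega}\ \omega^{Y}_{L,\mathcal{O}}(\Delta)\ \ge\ \eta
\end{equation}
for a structural $\eta>0$.

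To prove $(\star\star)$ I would use that $\mathcal{O}=\Omega\cap B(x_0,2\rho)$ is again a uniform domain with $n$-Ahlfors regular boundary, with constants depending only on those of $\Omega$ (a standard fact, of the same flavor as Lemma~\ref{lemma:hm}), so that Bourgain's estimate (Lemma~\ref{bour.lem}) and Harnack's inequality (Lemma~\ref{lemma:harnack}) apply inside $\mathcal{O}$. Fix $Y\in B(x_0,\rho)\cap\Omega$; since $x_0\in\pom$ one has $d:=\delta(Y)\le|Y-x_0|<\rho$, and choosing $y\in\pom$ with $|Y-y|=d$ gives $y\in\partial\mathcal{O}$ and $\dist(Y,\partial\mathcal{O})=d$. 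If $d$ is small relative to $\rho$, then for a suitable structural $C$ the ball $B(y,Cd)$ stays inside $B(x_0,2\rho)$, so $\partial\mathcal{O}\cap B(y,Cd)=\pom\cap B(y,Cd)\subset\Delta$, and since $Y$ is a corkscrew point of $\mathcal{O}$ relative to $y$ at scale $Cd$, Bourgain's estimate in $\mathcal{O}$ gives $\omega^{Y}_{L,\mathcal{O}}(\Delta)\ge\omega^{Y}_{L,\mathcal{O}}(\pom\cap B(y,Cd))\gtrsim1$. If instead $d$ is comparable to $\rho$, I would join $Y$ inside $\mathcal{O}$, by a Harnack chain of uniformly bounded length, to a corkscrew point $P$ of $\mathcal{O}$ relative to $x_0$ at scale $\rho$, and combine Harnack's inequality for the non-negative solution $\omega^{\cdot}_{L,\mathcal{O}}(\pom\cap B(x_0,\rho))$ with Bourgain's estimate in $\mathcal{O}$ to obtain $\omega^{Y}_{L,\mathcal{O}}(\Delta)\ge\omega^{Y}_{L,\mathcal{O}}(\pom\cap B(x_0,\rho))\gtrsim\omega^{P}_{L,\mathcal{O}}(\pom\cap B(x_0,\rho))\gtrsim1$. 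In either case $(\star\star)$ follows, which completes the plan.

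The step I expect to be the main obstacle is the non-degeneracy estimate $(\star\star)$: a lower bound, uniform over all poles $Y\in B(x_0,\rho)\cap\Omega$, for the $\mathcal{O}$-elliptic measure of the surface ball $\Delta$. This is exactly where the Ahlfors regularity of $\pom$ is used — through Bourgain's estimate, i.e.\ the capacity density condition — together with the (standard but not entirely free) fact that intersecting $\Omega$ with a ball centered on $\pom$ preserves the structural hypotheses; an alternative would be to invoke the classical capacity / Wiener-type lower bound for harmonic measure under a capacity density condition and transfer it to a general $L$ by comparison. Everything else — the maximum-principle comparison and the geometric iteration — is routine.
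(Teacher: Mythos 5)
This lemma is not proved in the paper at all: it is imported verbatim from \cite[Lemma 11.32]{dfm20}, so there is no internal argument to compare against. Your proposal is the classical oscillation-decay proof of boundary H\"older continuity under a thickness (capacity density) condition, and it is essentially correct: the normalization and the geometric iteration are routine, the comparison in $\mathcal{O}=\Omega\cap B(x_0,2\rho)$ is legitimate (most cleanly via the representation $u(Y)=\int_{\partial\mathcal{O}}u\,d\omega^{Y}_{L,\mathcal{O}}$, which holds because $u\in C(\overline{\mathcal{O}})$ and the boundary of $\mathcal{O}$ is everywhere thick, hence Wiener regular), and everything reduces to the non-degeneracy $(\star\star)$ exactly as you say.

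The one place where your write-up leans on something not entirely free is the claim that $\Omega\cap B(x_0,2\rho)$ is again a uniform domain with $n$-Ahlfors regular boundary, which you invoke in order to apply Lemma \ref{bour.lem} and Harnack chains \emph{inside} $\mathcal{O}$ (the latter in your case $d\approx\rho$). That fact is standard in the literature but genuinely requires an argument (the lower regularity of $\partial\mathcal{O}$ near $\partial B(x_0,2\rho)\cap\Omega$ at intermediate scales is the delicate part), and a Harnack chain of $\Omega$ joining $Y$ to $P$ need not stay inside $B(x_0,2\rho)$, so it cannot be borrowed directly from the uniformity of $\Omega$. Both issues disappear if you use the classical capacity form of Bourgain's estimate instead of the version stated for the ambient domain: since $\pom\cap B(x_0,\rho)\subset B(x_0,\rho)\setminus\mathcal{O}$ is $n$-Ahlfors regular, the complement of $\mathcal{O}$ satisfies the capacity density condition at $x_0$ and scale $\rho$ (with the usual logarithmic-capacity formulation when $n=1$), and the classical lemma then gives $\omega^{Y}_{L,\mathcal{O}}\bigl(\partial\mathcal{O}\cap B(x_0,2\rho)\bigr)\ge\eta$ for \emph{every} pole $Y\in B(x_0,\rho)\cap\mathcal{O}$ at once; as $\partial\mathcal{O}\cap B(x_0,2\rho)=\Delta$, this is $(\star\star)$ with no case analysis, no corkscrew or Harnack chain structure for $\mathcal{O}$, and it applies to $L$ with bounded measurable coefficients since regular points and capacity are the same as for the Laplacian. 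With that substitution your plan is complete, and it is in substance the same iteration argument that underlies the cited result in \cite{dfm20}.
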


\section{Set-up for Theorem \ref{theorem:approximability}}
\label{section:set-up_for_approximability}

In this section we provide some of the preliminary estimates and observations required to prove Theorem \ref{theorem:approximability}. We divide this section into two subsections. The first records Carleson measure estimates and non-tangential convergence in our setting. The second subsection contains a few lemmas, which are roughly adapted from ideas in \cite{DJK} and play a crucial role in our analysis. Throughout this section, we suppose that the assumptions of Theorem \ref{theorem:approximability} hold; that is, $\Omega$ is a uniform domain with Ahlfors regular boundary and $L$ is a not necessarily symmetric divergence form elliptic operator  such that $\omega_L\in A_\infty(\sigma)$. For the sequel, recall that $\delta(\cdot) \coloneqq \dist(\cdot,\pom)$.

\subsection{CME and non-tangential convergence}

One of the key tools in most of the constructions of $\eps$-approximators (see, for example, \cite{HMM}) is $L^2$-type Carleson measure estimates (CME), that is, Carleson properties (see Definition \ref{defin:carleson_measure_constant}) of measures $\mu_u$ such that $d\mu_u = |\nabla u|^2 \delta(Y) \, dY$ for a solution $u$ to $Lu = 0$. Under the hypotheses of Theorem \ref{theorem:approximability}, we  have the following 	``classical'' Carleson measure estimate and $L^p$-solvability of the Dirichlet problem for $L$ for some $p$ \cite{AHMT,CHMT,fp}:

\begin{lemma}[{\cite[Corollary 1.32]{fp}}]\label{realCME.lem}
Suppose that $\Omega$ is a uniform domain in $\bb R^{n+1}$, $n\geq1$, with $n$-Ahlfors regular boundary and $L$ is a divergence form elliptic operator such that $\omega_L \in A_\infty(\sigma)$. There exists a constant $C \ge 1$ such that if $u \in W^{1,2}_{\loc}(\Omega) \cap L^\infty(\Omega)$ is a solution to $Lu = 0$ then
  \begin{align*}
    \sup_{r > 0} \sup_{x \in \pom} \frac{1}{r^n} \iint_{B(x,r) \cap \Omega} |\nabla u(Y)|^2 \delta(Y) \, dY \lesssim C \|u\|_{L^\infty(\om)}^2.
  \end{align*}
  The constant $C$ depends on the structural constants and the $\omega_L\in A_\infty(\sigma)$ constants.
\end{lemma}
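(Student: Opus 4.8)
The plan is to reduce the statement to a scale‑invariant local estimate on Carleson boxes and then combine a Green's‑function integration by parts with the reverse Hölder inequality that the hypothesis $\omega_L\in A_\infty(\sigma)$ furnishes; this is exactly the implication $(a)\Rightarrow(c)$ in Corollary~\ref{corollary:characterization}, proved for $n\ge 2$ in \cite{CHMT} and, uniformly in $n\ge 1$, in \cite{fp}, so I only indicate the idea. By a standard covering/Whitney argument it suffices to show that for every cube $Q_0\in\dd$ one has $\iint_{T_{Q_0}}|\nabla u|^2\,\delta\,dX\lesssim\|u\|_{L^\infty(\Omega)}^2\,\sigma(Q_0)$, with $T_{Q_0}$ the Carleson box of Section~\ref{section:dyadic_and_whitney}; we may normalise $\|u\|_{L^\infty(\Omega)}=1$.

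Fix $Q_0$ and let $X_0$ be a corkscrew point relative to $x_{Q_0}$ at a large scale $\Lambda\ell(Q_0)$, so that $X_0$ lies far above $T_{Q_0}$, and write $\omega_0\coloneqq\omega_L^{X_0}$, $G_0(\cdot)\coloneqq G_L(X_0,\cdot)$. Since $\dv(A\nabla u)=0$ in $\Omega$, one has the distributional identity $A\nabla u\cdot\nabla u=-\tfrac12 L(u^2)$, and since $G_0$ vanishes on $\pom$, a localised integration by parts against $G_0\,\phi^2$ (for a cut‑off $\phi$ equal to $1$ near $\overline{T_{Q_0}}$, with $|\nabla\phi|\lesssim(\Lambda\ell(Q_0))^{-1}$, the pole handled by an auxiliary cut‑off contributing $O(1)$), using Cauchy--Schwarz on the cross term and the bound $u^2\le 1$, yields
\[
\iint_{T_{Q_0}}|\nabla u|^2\,G_0\,dX\;\lesssim\;\|u\|_{L^\infty(\Omega)}^2 .
\]
Now I convert the weight $G_0$ into $\delta$. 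By the CFMS estimate (Lemma~\ref{CFMSest.lem}) applied at the scale $\delta(Y)$ together with Bourgain's estimate (Lemma~\ref{bour.lem}) and the doubling of $\omega_0$ (Lemma~\ref{lm.doubling}), for $Y\in T_{Q_0}$ with nearest boundary point $\widehat Y$ one has $G_0(Y)\approx \omega_0\big(\Delta(\widehat Y,\delta(Y))\big)\,\delta(Y)^{1-n}$, i.e. $\delta(Y)\approx\frac{\sigma(\Delta(\widehat Y,\delta(Y)))}{\omega_0(\Delta(\widehat Y,\delta(Y)))}\,G_0(Y)$. The hypothesis $\omega_L\in A_\infty(\sigma)$, via the change of pole lemma (Lemma~\ref{lm.change}), gives a reverse Hölder inequality for $k^{X_0}=d\omega_0/d\sigma$ on $\Delta_{Q_0}$ with some exponent $q>1$; decomposing $T_{Q_0}=\bigcup_Q U_Q$ into its Whitney pieces (on which $\delta\approx\ell(Q)$ and the relevant averages of $k^{X_0}$ are comparable to $\langle k^{X_0}\rangle_{\Delta_Q}$), a Hölder inequality in the exponent $q$ over each $U_Q$ transfers the $G_0$‑weighted bound above into $\iint_{T_{Q_0}}|\nabla u|^2\,\delta\,dX\lesssim\sigma(Q_0)$, as desired.

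The main obstacle, and the place where $A_\infty$ is genuinely indispensable, is precisely this last transfer: without it the estimate is false (harmonic measure on, e.g., the complement of the four‑corner Cantor set is not $A_\infty$, and bounded harmonic functions there violate the Carleson measure estimate), so any proof must invoke the hypothesis in an essential way rather than only as a regularity aid. The remaining technical points are the rigorous localisation of the Green's‑function integration by parts near $\pom$ (controlling the error terms carried by $\nabla\phi$, which meet the boundary) and the passage from $\omega_L\in A_\infty(\sigma)$ to the needed reverse Hölder/solvability input for an arbitrary real divergence‑form operator on a mere uniform domain with Ahlfors regular boundary; both are carried out in \cite{CHMT} and \cite{fp}. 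Alternatively, one can bypass the hands‑on argument by quoting that $\omega_L\in A_\infty(\sigma)$ implies solvability of the $\BMO$‑Dirichlet problem for $L$, which by definition asserts the $L^2$‑type Carleson measure estimate with $\BMO$ control for solutions with $\BMO$ data; since under $A_\infty$ a bounded solution $u$ has a non‑tangential trace $\varphi_u\in L^\infty(\pom)\subset\BMO(\pom)$ with $\|\varphi_u\|_{\BMO}\lesssim\|u\|_{L^\infty(\Omega)}$ and, by the maximum principle, $u$ is the (unique bounded) solution with that data, the stated bound follows.
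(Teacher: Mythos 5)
You should note first that the paper does not actually prove Lemma \ref{realCME.lem}: it is imported verbatim from \cite[Corollary 1.32]{fp} (and \cite[Theorem 1.1]{CHMT} for $n\ge 2$), so deferring to those references is exactly what the paper does, and is legitimate. The problem is the sketch you interpose, taken as a proof, breaks at precisely the step you yourself single out as the crux. After the Green-function integration by parts and the CFMS comparison, what must be shown is, with $\omega_0=\omega_L^{X_0}$, $a_Q:=\iint_{U_Q}|\nabla u|^2G_0\,dY$, that $\sum_{Q\in\dd_{Q_0}}\tfrac{\sigma(Q)}{\omega_0(Q)}\,a_Q\lesssim\sigma(Q_0)$, knowing only that $\sum_Q a_Q\lesssim\omega_0(Q_0)\approx 1$ and (by Caccioppoli plus Lemma \ref{CFMSest.lem}) $a_Q\lesssim\omega_0(Q)$. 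The dangerous cubes here are those where $\omega_0(Q)/\sigma(Q)$ is \emph{small}, i.e.\ where the kernel $k^{X_0}$ degenerates; the reverse H\"older inequality furnished by $A_\infty$ controls the opposite regime (large values of $k^{X_0}$) and gives nothing on those cubes, and a per-region H\"older inequality does not bridge the gap. Indeed, with only the information you invoke the inequality is false: take $k^{X_0}$ comparable to a power weight $\dist(\cdot,x_0)^a$, $a>0$ (an $A_\infty$ weight), fix $\tau$ small, and at each of $m\approx\tau^{-(n+a)/n}$ generations put $a_Q=\omega_0(Q)$ on the generation-$j$ cubes inside $\Delta(x_0,\tau^{1/n}\ell(Q_0))$; then $\sum_Q a_Q\approx\omega_0(Q_0)$ while $\sum_Q\tfrac{\sigma(Q)}{\omega_0(Q)}a_Q\approx m\tau\,\sigma(Q_0)\approx\tau^{-a/n}\sigma(Q_0)\to\infty$. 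So the transfer must exploit the structure of $|\nabla u|^2G_0$ beyond this bookkeeping, and ``a H\"older inequality in the exponent $q$ over each $U_Q$'' cannot be the mechanism.

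This is why the actual proofs in \cite{CHMT}, \cite{fp} (and the antecedent \cite{DJK}) do not argue cube by cube: they run a good-$\lambda$/stopping-time argument, either through $L^p$ solvability and a square-function versus nontangential-maximal-function comparison, or through sawtooth/corona regions on which $\omega_0$ and $\sigma$ are genuinely proportional (in the spirit of Lemma \ref{Coronaellipmeasure.lem}) together with a packing estimate for the stopping cubes. Your fallback via BMO-solvability also has a gap: that solvability statement yields the Carleson estimate for solutions of the form $u_f=\int_{\pom}f\,d\omega_L^{(\cdot)}$, and to apply it to an arbitrary $u\in W^{1,2}_{\loc}(\Omega)\cap L^\infty(\Omega)$ you need a Fatou/uniqueness theorem asserting that every bounded solution is the elliptic-measure extension of its nontangential trace; in this generality that is not free (the paper's Lemma \ref{lemma:solution_bounded_data} only treats $u_f$ with $f\in L^\infty$), and in the literature it is typically derived from CME or $\eps$-approximability, so quoting it here risks circularity. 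In short: citing \cite[Corollary 1.32]{fp}, as the paper does, is fine; the interpolated argument would not survive as a self-contained proof.
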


\begin{lemma}[{\cite[Theorem 1.3]{HofPLe}}]\label{l2solv.lem}
  Suppose that $\Omega \subset \ree$, $n\geq1$, is a uniform domain with Ahlfors regular boundary and $L$ is a divergence form elliptic operator in $\Omega$ such that $\omega_L \in A_\infty(\sigma)$. There exist $1 < p < \infty$ and $C \ge 1$ such that for every $f \in L^p(\pom)$ there exists a solution $u$ to the boundary value problem
  \begin{align*}
    \begin{cases}
      Lu = 0 & \text{ in } \Omega, \\
      \|Nu \|_{L^p(\pom)} \lesssim \|f\|_{L^p(\pom)}, \\
      \lim\limits_{Y \to x, \, \nt} u(Y) = f(x), & \text{ for $\sigma$-a.e.} x \in \pom.
    \end{cases}
  \end{align*}
  Moreover, the solution is of the form
  \begin{align*}
    u(X) = u_f(X) = \int_{\pom} f(y) \, d\hm^X(y).
  \end{align*}
\end{lemma}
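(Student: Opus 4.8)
The plan is to obtain Lemma~\ref{l2solv.lem} from the classical equivalence, on uniform domains with Ahlfors regular boundary, between the $A_\infty$ property of elliptic measure and $L^p$-solvability of the Dirichlet problem for some $p\in(1,\infty)$, adapting the arguments of \cite{DJK,HofPLe,AHMT,CHMT}. The first step is to convert the hypothesis into a reverse H\"older inequality: since $\sigma$ is $n$-Ahlfors regular and $\omega_L^X$ is locally doubling (Lemma~\ref{lm.doubling}), on the space of homogeneous type $(\pom,|\cdot|,\sigma)$ the condition $\omega_L\in A_\infty(\sigma)$ of Definition~\ref{defin:a_infty} is equivalent to the existence of $q>1$ and $C\ge1$, depending only on the structural and $A_\infty$ constants, such that the densities $k^X\coloneqq d\omega_L^X/d\sigma$ satisfy
\[
\left(\fint_{\Delta}(k^X)^q\,d\sigma\right)^{1/q}\;\le\;C\,\fint_{\Delta}k^X\,d\sigma\;=\;C\,\frac{\omega_L^X(\Delta)}{\sigma(\Delta)}
\]
for all surface balls $\Delta=\Delta(y,s)$ and all poles $X\in\Omega\setminus 4B(y,s)$. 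We then fix $p\coloneqq 2q'$; any exponent $p>q'$ would serve.

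Given $f\in L^p(\pom)$, set $u_f(X)\coloneqq\int_{\pom}f\,d\omega_L^X$, which is a well-defined $L$-solution in $\Omega$ (note $f\in L^1(\omega_L^X)$ for each $X$ by the reverse H\"older inequality and the decay of $k^X$ away from the pole). By positivity of elliptic measure we may assume $f\ge 0$, and the aim is the pointwise bound $N_*u_f(x)\lesssim \big(M_\sigma(f^{q'})(x)\big)^{1/q'}$ for $x\in\pom$, where $M_\sigma$ is the Hardy--Littlewood maximal operator on $(\pom,\sigma)$. To prove it, fix $x\in\pom$ and $X\in\widetilde\Gamma(x)$, put $t\coloneqq\delta(X)$, and split $f$ into its restrictions to $\Delta_0\coloneqq\Delta(x,Ct)$ and to the dyadic annuli $A_j\coloneqq\Delta(x,2^{j+1}Ct)\setminus\Delta(x,2^jCt)$, $j\ge0$. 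For the piece on $\Delta_0$, use Bourgain's estimate (Lemma~\ref{bour.lem}), the maximum principle, and the change-of-pole and CFMS estimates (Lemmas~\ref{lm.change} and~\ref{CFMSest.lem}) to compare $\omega_L^X$ on $\Delta_0$ with $\omega_L^{X_{\Delta_0}}$, then apply H\"older's inequality together with the reverse H\"older inequality above. For each annular piece, use the analogous comparison and, crucially, the fact that the $L$-solution $X\mapsto\omega_L^X(\pom\setminus\Delta(x,2^jCt))$ vanishes continuously on $\Delta(x,2^{j-1}Ct)$ and is therefore $\lesssim 2^{-j\alpha}$ at $X$ by Lemma~\ref{Hvanish.lem}; summing the resulting geometric series in $j$, with the help of local doubling (Lemma~\ref{lm.doubling}), gives the claimed pointwise bound. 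Finally, taking $L^p(\pom)$-norms and using that $M_\sigma$ is bounded on $L^{p/q'}(\pom)$ (as $p/q'=2>1$) yields $\|N_*u_f\|_{L^p(\pom)}\lesssim\|f\|_{L^p(\pom)}$.

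It remains to check the non-tangential convergence. By construction of the elliptic measure (see~\eqref{formula:elliptic_measure}), for $g\in C_{\smallc}(\pom)$ the Perron solution $u_g$ belongs to $C(\overline{\Omega})$, so $\lim_{Y\to x,\,\nt}u_g(Y)=g(x)$ for every $x\in\pom$. Given $f\in L^p(\pom)$, choose $g_k\in C_{\smallc}(\pom)$ with $g_k\to f$ in $L^p(\pom)$; then for each $k$,
\[
\limsup_{Y\to x,\,\nt}\big|u_f(Y)-f(x)\big|\;\le\;N_*\big(u_{f-g_k}\big)(x)+\big|g_k(x)-f(x)\big|,
\]
and since $\|N_*(u_{f-g_k})\|_{L^p(\pom)}\lesssim\|f-g_k\|_{L^p(\pom)}\to0$ by the previous paragraph, a routine argument (Chebyshev's inequality and passage to a subsequence, or intersecting a decreasing family of exceptional sets whose measure tends to zero) shows that the left-hand side vanishes for $\sigma$-a.e.\ $x\in\pom$. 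Together with the $N_*$-bound, this gives all the asserted properties of $u=u_f$.

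The step I expect to be the main obstacle is the control of the far-away annular contributions in the second paragraph: for large $j$ the pole $X\in\widetilde\Gamma(x)$ sits deep inside the Carleson boxes over the annuli $A_j$, so the reverse H\"older inequality above cannot be invoked directly with this pole. Overcoming this requires first transferring each annular slice of $\omega_L^X$ to the elliptic measure with a comparable corkscrew pole, via the maximum principle and the change-of-pole/CFMS estimates, and only afterwards applying the reverse H\"older inequality, while quantifying the decay in $j$ through the H\"older vanishing rate of $L$-solutions at the boundary (Lemma~\ref{Hvanish.lem}) and the local doubling property (Lemma~\ref{lm.doubling}); making these ingredients fit together is the technical heart of the argument.
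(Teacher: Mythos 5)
Your argument is correct in outline, but note that the paper does not prove this lemma at all: it is imported verbatim as a citation of \cite[Theorem 1.3]{HofPLe}, so there is no internal proof to compare against. What you have written is essentially the classical proof of that cited theorem (going back to the harmonic-measure arguments in Kenig's CBMS notes and their extensions to uniform domains): convert the $A_\infty$ hypothesis into a reverse H\"older inequality for the Poisson kernels with suitably placed poles, prove the pointwise bound $N_*u_f\lesssim (M_\sigma(|f|^{q'}))^{1/q'}$ by splitting into a local ball and dyadic annuli — handling the annuli by change of poles/CFMS together with the H\"older decay of $Y\mapsto\omega_L^Y(\pom\setminus\Delta)$ at the boundary (Lemma \ref{Hvanish.lem}) and local doubling — and then deduce a.e.\ non-tangential convergence for $L^p$ data by density of $C_{\smallc}(\pom)$ (using Wiener regularity of the boundary, which the paper also takes for granted in \eqref{formula:elliptic_measure}) plus a Chebyshev argument with the $N_*$ bound. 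The one place where you gloss slightly is the local piece $\Delta(x,Ct)$, where the pole $X\in\widetilde\Gamma(x)$ is not admissible for the reverse H\"older inequality as you stated it; this is repaired in the standard way, e.g.\ by covering $\Delta(x,Ct)$ by boundedly many surface balls of radius comparable to $\delta(X)/8$ (for which $X$ is an admissible far pole) or by using the change-of-poles formulation of reverse H\"older with pole at the corkscrew of the ball. With that detail supplied, your reconstruction matches the argument underlying the cited reference, so there is no gap relative to what the paper actually relies on.
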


Using the $L^p$ result of Lemma \ref{l2solv.lem} gives us the following non-tangential convergence result for $L^\infty$ data:
\begin{lemma}\label{lemma:solution_bounded_data}
  Suppose that $\Omega \subset \ree$, $n\geq1$, is a uniform domain with Ahlfors regular boundary and $L$ is a divergence form elliptic operator in $\Omega$ such that $\omega_L \in A_\infty(\sigma)$. If $f \in L^\infty(d\sigma)$, then the solution
  \begin{align*}
    u_f(X) = \int_{\pom} f(y) \, d\hm^X(y)
  \end{align*}
  converges non-tangentially to $f$; that is, 
  \begin{equation}
    \label{ntlimlem1eq1.eq}\lim\limits_{Y \to x, \, \nt} u_f(Y) = f(x), \quad  \text{ for $\sigma$-a.e. } x \in \pom.
  \end{equation}
  Equivalently,
  \begin{equation}
    \label{ntlimlem1eq2.eq} \lim\limits_{Y \to x, \, \nt} u_f(Y) = f(x), \quad \text{ for $\hm^Y$-a.e. } x \in \pom.
  \end{equation}
\end{lemma}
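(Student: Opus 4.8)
The plan is to reduce the statement about $L^\infty$ data to the $L^p$ result of Lemma \ref{l2solv.lem} by a localization and exhaustion argument, together with a standard density/maximal-function argument. First I would fix $f\in L^\infty(\sigma)$ and a surface ball $\Delta_0=\Delta(x_0,R_0)$, and reduce matters to proving \eqref{ntlimlem1eq1.eq} for $\sigma$-a.e.\ $x\in\Delta_0$; since such balls exhaust $\pom$ (and since $\infty$ carries no $\sigma$-mass), the global statement follows. On $\Delta_0$ we may split $f=f\mathbbm{1}_{2\Delta_0}+f\mathbbm{1}_{\pom\setminus 2\Delta_0}=:f_1+f_2$. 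The contribution $u_{f_2}$ is a solution in $\Omega\cap B(x_0,2R_0)$ that vanishes continuously on $\pom\cap 2\Delta_0$ (it is given by integrating over the far-away part of the boundary, hence bounded and, by Lemma \ref{Hvanish.lem} or the maximum principle plus continuity of $\omega^{(\cdot)}(\pom\setminus 2\Delta_0)$, tends to $0$ as $Y\to x\in\Delta_0$); so $u_{f_2}$ contributes nothing to the non-tangential limit at $\sigma$-a.e.\ $x\in\Delta_0$ and we are reduced to $f_1\in L^\infty(\sigma)\subset L^p(\sigma)$, which has compact support.

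Next I would invoke Lemma \ref{l2solv.lem} for this $L^p$ datum: there is a solution $v$ with $\|Nv\|_{L^p(\pom)}\lesssim\|f_1\|_{L^p(\pom)}$, with $\lim_{Y\to x,\,\nt}v(Y)=f_1(x)$ for $\sigma$-a.e.\ $x$, and, crucially, $v$ is given by the Poisson-type formula $v(X)=\int_{\pom}f_1\,d\omega^X = u_{f_1}(X)$. Thus $u_{f_1}$ already has the desired non-tangential limits, and combining with the vanishing of $u_{f_2}$ gives $\lim_{Y\to x,\,\nt}u_f(Y)=f_1(x)=f(x)$ for $\sigma$-a.e.\ $x\in\Delta_0$, since $f=f_1$ on $\Delta_0$. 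Letting $R_0\to\infty$ (or taking a countable exhausting family of balls) yields \eqref{ntlimlem1eq1.eq}. For the equivalence of \eqref{ntlimlem1eq1.eq} and \eqref{ntlimlem1eq2.eq}, I would use that $\omega^Y\in A_\infty(\sigma)$ (Definition \ref{defin:a_infty}) implies mutual absolute continuity of $\omega^Y$ and $\sigma$ on each surface ball — indeed the $A_\infty$ bound gives $\omega^Y\ll\sigma$ locally, and the accompanying reverse/weak-$A_\infty$ comparison (or Lemma \ref{bour.lem} together with Lemma \ref{lm.change}) gives $\sigma\ll\omega^Y$ locally — so a property holding $\sigma$-a.e.\ holds $\omega^Y$-a.e.\ and vice versa.

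The main obstacle, and the only place requiring genuine care, is the treatment of the far-away part $u_{f_2}$ and the passage from the stated $L^p$-solvability (formulated for $L^p$ data, which on an unbounded boundary need not contain $L^\infty$) to bounded data. The clean way is precisely the truncation above: one never applies Lemma \ref{l2solv.lem} to $f$ itself but only to $f_1=f\mathbbm{1}_{2\Delta_0}$, which is genuinely in $L^p(\sigma)$ because it is bounded with compact support and $\sigma$ is Ahlfors regular. One must also check that $u_f$ is well-defined, i.e.\ $\int_{\pom}|f|\,d\omega^X<\infty$ for each $X$, which is immediate since $f\in L^\infty$ and $\omega^X$ is a probability measure; and that $u_{f_1}+u_{f_2}=u_f$ pointwise, which follows from linearity of the elliptic-measure integral. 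A minor technical point is the non-tangential vanishing of $u_{f_2}$ at boundary points of $\Delta_0$: for points $x$ with $\mathrm{dist}(x,\pom\setminus2\Delta_0)>0$ this is Lemma \ref{Hvanish.lem}, and the remaining points form a $\sigma$-null set, which suffices since we only claim an a.e.\ statement. Everything else is routine.
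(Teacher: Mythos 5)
Your overall strategy is the same as the paper's: reduce to a surface ball, split $f$ into a bounded compactly supported piece (which lies in $L^p(\sigma)$) plus a far-away tail, apply Lemma \ref{l2solv.lem} together with its Poisson-type representation to the near piece, show the tail contributes nothing to the non-tangential limit, and deduce the equivalence of \eqref{ntlimlem1eq1.eq} and \eqref{ntlimlem1eq2.eq} from mutual absolute continuity of $\omega^Y$ and $\sigma$ (the paper gets this from the $A_\infty$ condition via Coifman--Fefferman, plus Harnack to change poles). All of that part of your argument is fine and matches the paper.

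The one place where your write-up has a genuine gap is exactly the only non-trivial step: the non-tangential vanishing of $u_{f_2}$, $f_2=f\mathbbm{1}_{\pom\setminus 2\Delta_0}$, at points of $\Delta_0$. Invoking Lemma \ref{Hvanish.lem} is circular, because its hypothesis is that the solution \emph{already} vanishes continuously on $\pom\cap B(x,4r)$, which is precisely what has to be established; and the alternative you offer, ``continuity of $\omega^{(\cdot)}(\pom\setminus 2\Delta_0)$'' up to the boundary with value $0$ on $2\Delta_0$, is the assertion in question rather than a proof of it --- note that $f_2$ is only a bounded Borel function, so $u_{f_2}$ is not a Perron solution of continuous data and its boundary behaviour is not automatic. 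The paper closes this step with a barrier argument: choose $\phi\in C_{\smallc}(\Delta(x_0,100r))$ with $0\le\phi\le1$ and $\phi\equiv1$ on $\Delta(x_0,50r)$, set $v(X)=\int_{\pom}\phi\,d\omega^X$, and observe that $|u_{f_2}(X)|\le\|f\|_{L^\infty(\pom)}\,\omega^X(\pom\setminus\Delta(x_0,100r))\le\|f\|_{L^\infty(\pom)}\bigl(1-v(X)\bigr)$, where $v$ is continuous up to $\overline{\Omega}$ (Perron solution of continuous compactly supported data, using Wiener regularity built into the definition of $\omega_L$) and equals $1$ on $\Delta(x_0,50r)$, so $1-v\to0$ there; alternatively one can run the standard oscillation iteration based on Lemma \ref{bour.lem}. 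Once you insert such an argument, your proof is complete. (Two minor points: your remark that the exceptional points of $\Delta_0$ form a $\sigma$-null set is vacuous, since every $x\in\Delta_0$ is at distance at least $R_0$ from $\pom\setminus2\Delta_0$; and for $\sigma\ll\omega^Y$ the clean reference is the symmetry of the $A_\infty$ condition for doubling measures, as in the paper, rather than Lemmas \ref{bour.lem} and \ref{lm.change} alone.)
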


\begin{proof}
  Let $p$ be as in Lemma \ref{l2solv.lem}. Let us note that $\hm^Y$ and $\hm^{Y'}$ are mutually absolutely continuous for all $Y, Y' \in \Omega$ by the Harnack chain condition and the Harnack inequality (see Lemma \ref{lemma:harnack}). Moreover, $\hm^{Y}$ is mutually absolutely continuous with respect to $\sigma$ by the $A_\infty(\sigma)$ condition \cite[Lemma 5]{coifman-fefferman} and hence, \eqref{ntlimlem1eq1.eq} and \eqref{ntlimlem1eq2.eq} are equivalent. In addition, we have $f \in L^\infty(\pom,d\sigma)$ if and only if $f \in L^\infty(\pom,d\hm^Y)$ for all $Y \in \Omega$, with $\|f\|_{L^\infty(\pom,d\sigma)} = \|f\|_{L^\infty(\pom,d\hm^Y)}$.  
  
  If $\diam(\pom) < \infty$, then $f \in L^p(\pom)$ and the claim follows from Lemma \ref{l2solv.lem}. Thus, we may assume that $\diam(\pom) = \infty$. We show that the claim holds in any ball $B$ centered at $\pom$. Let us fix $x_0 \in \pom$ and $r > 0$. We write  $f = g + h$ for $g = f\mathbbm{1}_{\Delta(x_0,100r)}$, where $\Delta(x_0,100r) \coloneqq B(x_0,100r) \cap \pom$. By linearity, we have that $u_f(X) = u_g(X) + u_h(X)$.   Since $f \in L^\infty(\pom)$, we know that $g \in L^p(\pom)$ and thus, Lemma \ref{l2solv.lem} gives us
  \begin{align*}
    \lim\limits_{Y \to x, \, \nt} u_g(Y) = g(x) = f(x), \quad  \text{ for $\sigma$-a.e. } x \in \pom \cap B(x_0,r).
  \end{align*}
  Therefore it suffices to show 
  \begin{equation}\label{ntlem1hgoal.eq}
    \lim\limits_{Y \to x, \, \nt} u_h(Y) = 0, \quad  \text{ for $\sigma$-a.e. } x \in \pom \cap B(x_0,r).
  \end{equation}
  We now have 
  \begin{equation}\label{uhsimpbnd.eq}
    |u_h(X)| \le \|f\|_{L^\infty(\pom)} \hm^X(\pom \setminus \Delta(x_0,100r))
  \end{equation}
  for any $X \in \Omega$. Let $\phi \in C_{\smallc}(\Delta(x_0,100r))$ be a non-negative function such that $\phi(x) \le 1$ for every $x \in \pom$ and $\phi \equiv 1$ on $\Delta(x_0,50r)$. Then, since $\omega^X$ is a probability measure, we get
  \begin{align}
    \label{ineq:measure_of_complement} \hm^X(\pom \setminus \Delta(x_0,100r )) \le 1 - v(X) \coloneqq 1 - \int_{\pom} \phi(y) \, d\hm^X(y).
  \end{align}
  Since $\phi$ is a compactly supported continuous function on $\pom$, we know that $v$ is a continuous bounded solution to $Lv = 0$ in $\overline{\Omega}$ such that $v = \phi \equiv 1$ on $\Delta(x_0,50r)$. In particular, we have
  \begin{align}
    \label{convergence:upper_bound} \lim_{Y \to x, \, \nt} v(Y) = 1
  \end{align}
  for every $x \in \Delta(x_0,50r)$. Thus, combining \eqref{uhsimpbnd.eq} and \eqref{ineq:measure_of_complement} gives us $|u_h(X)| \le \|f\|_{L^\infty(\pom)}\left( 1 - v(X) \right)$ for every $X\in\Omega$   and \eqref{ntlem1hgoal.eq} follows then from \eqref{convergence:upper_bound}. This completes the proof.
\end{proof}

\subsection{A few important lemmas}\label{lemmas.sect}
In this subsection, we prove some lemmas that will be important for the proof of Theorem \ref{theorem:approximability}. The first three lemmas were inspired by the ideas in \cite{DJK}.

\begin{lemma}\label{prelimlem1.lem}
Let $\Omega$ be a domain in $\bb R^{n+1}$, $n\geq1$, and let $L$ be a not necessarily symmetric divergence form elliptic operator. Suppose that $u \in W^{1,2}_{\loc}(\Omega)$ is a weak solution to $Lu = 0$ in $\Omega$, that $\Omega'\subset\Omega$ is a Wiener-regular domain which is compactly contained  in $\Omega$, and fix $X_*\in\Omega'$. If $n=1$, assume in addition that $\Omega'$ is a uniform domain with $n$-Ahlfors regular boundary. Then  $|\nabla u|^2G_{L,\Omega'}(X_*,\cdot)\in L^1(\Omega')$, and 
\begin{equation}\label{eq.formula}
  \iint_{\Omega'} |\nabla u(Y)|^2 G_{L,\Omega'}(X_*, Y) \, dY \approx \int_{\partial \Omega'} (u(y) - u(X_*))^2 \, d\hm_{L, \Omega'}^{X_*}(y),
\end{equation}
where the implicit constants depend only on ellipticity.
\end{lemma}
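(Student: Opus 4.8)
The identity \eqref{eq.formula} is a weighted Caccioppoli-type estimate, and the natural approach is to test the equation $LG_{L,\Omega'}(X_*,\cdot) = \delta_{X_*}$ (property (iv) of Definition \ref{def.green}, applied in the domain $\Omega'$) against the test function $\Phi = (u - u(X_*))^2$, modulo a regularization argument. Formally, writing $g = G_{L,\Omega'}(X_*,\cdot)$ and $v = u - u(X_*)$, one has $\nabla \Phi = 2v\nabla u$, so that
\begin{align*}
  2\iint_{\Omega'} v(Y)\, A(Y)\nabla u(Y)\cdot\nabla_Y g(Y)\, dY = \Phi(X_*) = 0,
\end{align*}
since $v(X_*) = 0$. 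On the other hand, one wants to use that $u$ solves $Lu = 0$ with test function $v g$: since $\nabla(vg) = g\nabla v + v\nabla g = g\nabla u + v\nabla g$, we get
\begin{align*}
  0 = \iint_{\Omega'} A\nabla u\cdot\nabla(vg)\, dY = \iint_{\Omega'} g\, A\nabla u\cdot\nabla u\, dY + \iint_{\Omega'} v\, A\nabla u\cdot\nabla g\, dY.
\end{align*}
Combining the two displays, the cross term cancels and we obtain $\iint_{\Omega'} g\, A\nabla u\cdot\nabla u\, dY = \tfrac12\Phi(X_*) - 0$... one has to be a little careful with which matrix ($A$ or $A^T$) appears where, since $g = G_{L,\Omega'}(X_*,\cdot)$ and the distributional identity for $G_L(\cdot,X_*)$ versus $G_{L^*}(X_*,\cdot)=G_L(\cdot,X_*)$ (property (v)) must be used consistently; but after tracking this, ellipticity $\Lambda^{-1}|\nabla u|^2 \le A\nabla u\cdot\nabla u \le \Lambda|\nabla u|^2$ and $g \ge 0$ turn the resulting identity into the two-sided bound \eqref{eq.formula}, with constants depending only on $\Lambda$. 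I would also use the Riesz formula \eqref{Rieszformfin.eq} (in $\Omega'$) rather than the raw distributional identity if it makes the boundary term $\int_{\partial\Omega'} v^2\, d\omega_{L,\Omega'}^{X_*}$ appear more transparently: taking $F$ to approximate $(u-u(X_*))^2$ near $\overline{\Omega'}$, the formula reads $\int_{\partial\Omega'} F\, d\omega^{X_*} = F(X_*) - \iint_{\Omega'} A^T\nabla_Y g\cdot\nabla_Y F\, dY$, and this is essentially the identity we want once $F$ is taken to be (a mollification of) $(u-u(X_*))^2$.

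The main obstacle is \textbf{justifying these manipulations rigorously}, i.e. the integrability and approximation issues: $g = G_{L,\Omega'}(X_*,\cdot)$ has a singularity at $X_*$ (like $|Y-X_*|^{1-n}$ for $n\ge2$, logarithmic for $n=1$) and only belongs to $W^{1,2}_{\loc}(\Omega'\setminus\{X_*\})$, while $(u-u(X_*))^2$ need not be an admissible test function globally in $\Omega'$ since $u$ is only in $W^{1,2}_{\loc}(\Omega)$ and $\nabla u$ need not be square-integrable up to $\partial\Omega'$ a priori — though here we get to choose $\Omega'$ compactly contained in $\Omega$, so $u\in W^{1,2}(\Omega')\cap L^\infty_{\loc}$-type bounds on a neighborhood of $\overline{\Omega'}$ are available by Caccioppoli (Lemma \ref{lemma:caccioppoli}) applied in $\Omega$. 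The standard way around the singularity of $g$ is to excise a small ball $B(X_*,\rho)$, run the integration by parts on $\Omega'\setminus B(X_*,\rho)$, and let $\rho\to0$, controlling the boundary integral on $\partial B(X_*,\rho)$ using the size bounds \eqref{fungf1.eq}; the term on $\partial B(X_*,\rho)$ produces exactly the value $(u(X_*)-u(X_*))^2 = 0$ in the limit (this is where $v(X_*)=0$ is used, and it is the reason the pole contributes nothing). One also needs $u$ to be continuous near $X_*$ to make sense of $u(X_*)$, which follows from De Giorgi–Nash–Moser (Lemma \ref{DGN.lem}). The hypothesis that $\Omega'$ be Wiener-regular (and, when $n=1$, a uniform domain with Ahlfors regular boundary) is exactly what guarantees existence and the needed qualitative properties of $G_{L,\Omega'}$ and $\omega_{L,\Omega'}^{X_*}$ via the construction cited after Definition \ref{def.green} (\cite{dfm20}); in the generic case $n\ge2$ Wiener-regularity alone suffices, consistent with the remark in the text that Green's functions exist for arbitrary domains when $n\ge2$.

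Concretely, the steps I would carry out are: (1) fix a smooth cutoff and a mollification $u_\eps$ of $u$ on a neighborhood of $\overline{\Omega'}$, noting $\nabla u_\eps\to\nabla u$ in $L^2$ on a neighborhood of $\overline{\Omega'}$ by interior regularity; (2) for the regularized, bounded, $W^{1,2}$ function $v_\eps = u_\eps - u_\eps(X_*)$, apply the Riesz formula \eqref{Rieszformfin.eq} in $\Omega'$ with $F = v_\eps^2$ (after a further harmless cutoff to make $F$ compactly supported in $\ree$), obtaining
\begin{align*}
  \int_{\partial\Omega'} v_\eps^2\, d\omega_{L,\Omega'}^{X_*} = v_\eps(X_*)^2 - 2\iint_{\Omega'} v_\eps\, A^T\nabla_Y G_{L,\Omega'}(X_*,Y)\cdot\nabla u_\eps(Y)\, dY = -2\iint_{\Omega'} v_\eps\, A^T\nabla G\cdot\nabla u_\eps\, dY;
\end{align*}
(3) use that $u_\eps$ is an approximate solution to rewrite the right-hand side, via the weak formulation tested against $v_\eps G_{L,\Omega'}(X_*,\cdot)$ (which is a legitimate test function after the $\rho\to0$ excision around $X_*$ and because it vanishes on $\partial\Omega'$ by property (iii)), as $2\iint_{\Omega'} G_{L,\Omega'}(X_*,Y)\, A\nabla u_\eps\cdot\nabla u_\eps\, dY + o(1)$; (4) pass to the limit $\eps\to0$ — here one uses $G_{L,\Omega'}(X_*,\cdot)\in L^1(\Omega')$ together with the local boundedness of $u$ and dominated/monotone convergence (and Fatou for the lower bound) to get $\iint_{\Omega'}|\nabla u|^2 G_{L,\Omega'}(X_*,\cdot)<\infty$ and the two-sided comparison; (5) read off \eqref{eq.formula} from ellipticity and $G\ge0$. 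The only genuinely delicate point is bookkeeping the transpose matrix and the boundary/pole limits in steps (2)–(3); everything else is routine once the admissibility of the test functions is secured by the compact containment $\Omega'\Subset\Omega$ and the Wiener regularity of $\Omega'$.
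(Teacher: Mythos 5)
Your operative plan (steps (1)--(5)) is essentially the paper's own proof: the paper likewise combines the Riesz formula \eqref{Rieszformfin.eq} in $\Omega'$ (producing the boundary term $\int_{\partial\Omega'}(u-u(X_*))^2\,d\omega_{L,\Omega'}^{X_*}$ from the term involving $A^T\nabla G\cdot\nabla(u^2)$) with the weak formulation of $Lu=0$ tested against $uG\psi_M$, where $\psi_M$ is a cutoff excising the pole, and it secures all integrability near $X_*$ exactly as you propose, via Caccioppoli, the De Giorgi--Nash--Moser H\"older estimate, and the pointwise Green's function bounds \eqref{fungf1.eq}, finishing with ellipticity; your choices of working with $v=u-u(X_*)$ instead of completing the square at the end, and of mollifying $u$, are cosmetic. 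One correction, though: your opening formal display, $2\iint_{\Omega'} v\,A\nabla u\cdot\nabla g\,dY=\Phi(X_*)=0$, is false, and the reason is not the $A$ versus $A^T$ bookkeeping you worry about but that the distributional identity in Definition \ref{def.green}(iv) applies only to test functions compactly supported in $\Omega'$; since $\Phi=v^2$ does not vanish on $\partial\Omega'$, the correct statement carries the boundary term $\int_{\partial\Omega'}v^2\,d\omega_{L,\Omega'}^{X_*}$, and dropping it is exactly what makes your ``combined'' heuristic collapse to the absurd conclusion $\iint_{\Omega'}G\,A\nabla u\cdot\nabla u\,dY=0$. Your concrete steps already repair this by using the Riesz formula, which is where the boundary term (and hence the right-hand side of \eqref{eq.formula}) genuinely comes from, so the plan as executed in (2)--(5) is sound and matches the paper.
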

\begin{proof} Throughout this argument we fix $X_*\in\Omega'$, we let $r=\dist(X_*,\partial\Omega')/8$, and we write $G(Y) \coloneqq G_{\Omega'}(X_*,Y)$. First, we show the finiteness of the integral in the left-hand side of (\ref{eq.formula}). If $n=1$, then it is trivial by (\ref{fungf1.eq}) and the Meyers reverse H\"older estimate for gradients of solutions \cite{mey63}, so suppose that $n\geq2$. We write
\begin{equation}\nonumber
\iint_{\Omega'} |\nabla u|^2 G \, dY=\iint_{\Omega'\backslash B(X_*,r)} |\nabla u|^2 G\, dY\\+\iint_{B(X_*,r)} |\nabla u|^2 G \, dY \eqqcolon T_1+T_2.
\end{equation}
Since $G\in L^{\infty}(\Omega'\backslash B_*(X_*,r))$ and $\nabla u\in L^2(\Omega')$, it is clear that $T_1<\infty$. As for $T_2$, for each $k\in\bb N_0$, let $A_k \coloneqq B(X_*,2^{-k}r)\backslash B(X_*,2^{-k-1}r)$, and using (\ref{fungf1.eq}), Lemma \ref{lemma:caccioppoli}, and Lemma \ref{DGN.lem}, we see that
\begin{multline}\nonumber
T_2=\sum_{k=0}^{\infty}\iint_{A_k}|\nabla u|^2 G\,dY \lesssim\sum_{k=0}^{\infty}(2^{-k}r)^{-n+1}\iint_{B(X_*,2^{-k}r)}|\nabla(u(Y)-u(X_*))|^2\,dY\\ \lesssim\sum_{k=0}^{\infty}(2^{-k}r)^{-n-1}\iint_{B(X_*,2^{-k+1}r)}|u(Y)-u(X_*)|^2\,dY \lesssim\sum_{k=0}^{\infty}2^{-2\alpha k}\Vert u\Vert_{L^{\infty}(B(X_*,4r))}<\infty.
\end{multline}
We turn to the proof of (\ref{eq.formula}). By the ellipticity of $A$ and the product rule, we see that
\begin{equation}\label{eq.for1}
\iint_{\Omega'}|\nabla u|^2 G\,dY\approx\iint_{\Omega'}A\nabla u\cdot(\nabla u) G\,dY\\ =\iint_{\Omega'}A\nabla u\cdot\nabla(uG)\,dY-\iint_{\Omega'}A\nabla u\cdot(\nabla G)u\,dY \eqqcolon T_3+T_4,
\end{equation} 
provided that the last two integrals are finite, which we now show. First, we claim that $|u||\nabla u||\nabla G|\in L^1(\Omega')$. As before, it is enough to show that $|u||\nabla u||\nabla G|\in L^1(B(X_*,r))$. Let $A_k$ as above, and for each $k\in\bb N_0$, let $\{B_k^j\}_{j=1}^{J_k}$ be a cover of $A_k$ by balls centered on $A_k$ of radius $2^{-k-4}r$ with uniformly bounded overlap. Then $J_k\lesssim1$, and we have that
\begin{equation}\nonumber
\iint_{B(X_*,r)}|u||\nabla u||\nabla G|\,dY \leq C\sum_{k=0}^{\infty}\max_j\iint_{B_k^j}|\nabla u||\nabla G|\,dY\leq\left\{\begin{matrix}C\sum_{k=0}^{\infty}2^{-k\alpha},&\quad n\geq2,\\[1mm] C\sum_{k=0}^{\infty}k2^{-k\alpha},&\quad n=1,\end{matrix}\right. 
\end{equation}
where the last estimate follows from  using  Lemma \ref{lemma:caccioppoli}  for both $u$ and $G$, then (\ref{fungf1.eq})  and Lemma \ref{DGN.lem}. This proves the claim. By the product rule and the triangle inequality, we have also shown that $|\nabla u||\nabla(uG)|\in L^1(\Omega')$. Finally, by boundedness of $A$ we conclude that  $A\nabla u\nabla(uG)$ and $A\nabla u(\nabla G)u$ belong to $L^1(\Omega')$, as desired.

The next step is to show that $T_3=0$. For each $M\in\bb N$ large enough, let $\psi_M\in C^{\infty}(\Omega)$ satisfy $\psi_M\geq0$, $\psi_M\equiv1$ in $\Omega\backslash B(X_*,\frac2M)$, $\psi_M\equiv0$ in $B(X_*,\frac1M)$, and $|\nabla\psi_M|\lesssim M$. We claim that
\begin{equation}\label{eq.for2}
\iint_{\Omega'}A\nabla u\nabla(uG)\psi_M\,dY\ra0,\qquad\text{as }M\ra\infty.
\end{equation}
Fix $M\in\bb N$ large enough. By the product rule and the fact that $uG\psi_M\in W_0^{1,2}(\Omega')$, since $Lu=0$ in $\Omega'$, we have that 
\begin{multline}\nonumber
\Big|\iint_{\Omega'}A\nabla u\nabla(uG)\psi_M\,dY\Big|=\Big|\iint_{\Omega'}A\nabla u(\nabla\psi_M)uG\,dY\Big|\\ \leq CM\iint_{B(X_*,\frac2M)\backslash B(X_*,\frac1M)}|\nabla u|G\,dY \leq\left\{\begin{matrix}CM^{-\alpha},&\quad n\geq2,\\[1mm] CM^{-\alpha}\log M,&\quad n=1,\end{matrix}\right. 
\end{multline}
where in the last estimate we once again used (\ref{fungf1.eq}), Lemma \ref{lemma:caccioppoli}, and Lemma \ref{DGN.lem}. Thus we have shown (\ref{eq.for2}). Since it is also true that $A\nabla u\nabla(uG)\psi_M\ra A\nabla u\nabla(uG)$ pointwise a.e. in $\Omega'$ and since we have already proved that $|\nabla u||\nabla(uG)|\in L^1(\Omega')$, then by the Lebesgue Dominated Convergence Theorem we conclude that $T_3=0$.

We proceed with the proof of (\ref{eq.formula}) as follows:
\begin{equation}\nonumber
  T_4=-\frac{1}{2} \iint_{\Omega'} A^T\nabla G\nabla (u^2)\, dY   = \frac{1}{2}  \Big(\int_{\partial \Omega'} u(y)^2 \, d\hm_{L, \Omega'}^{X_*}(y)-u(X_*)^2\Big) \\
   = \frac{1}{2} \int_{\partial \Omega'} (u(y) - u(X_*))^2 \, d\hm_{L, \Omega'}^{X_*}(y),
\end{equation}
where we used the Riesz formula \eqref{Rieszformfin.eq}, and in the last identity we used that $\hm_{L, \Omega'}^{X_*}$ is a probability measure, and that $Lu= 0$ in $\Omega'$ so that
\begin{align*}
  \int_{\partial \Omega'} u(y) \, d\hm_{L, \Omega'}^{X_*}(y) = u(X_*),
\end{align*}
and hence 
\begin{align*}
  \int_{\pom'} u(X_*)u(y) \, d\hm_{L, \Omega'}^{X_*}(y) = u(X_*) \int_{\partial \Omega'} u(y) \, d\hm_{L, \Omega'}^{X_*}(y)  = u(X_*)^2.
\end{align*}
This finishes the proof.
\end{proof}

The following result is a direct consequence of the maximum principle and the DeGiorgi--Nash--Moser theory (see for instance the proof of \cite[Proposition 3.2]{adfjm}). 
\begin{lemma}\label{lem5.lem}
  Let $\Omega_1$ and $\Omega_2$ be Wiener regular domains such that $\Omega_1 \subseteq \Omega_2$. If $G_i(X,Y)$ is the Green function for $\Omega_i$, $i = 1,2$  then
  \begin{align*}
    G_1(X,Y) \le G_2(X,Y) \quad \text{ for every } (X, Y) \in \Omega_1 \times \Omega_1  \setminus \{X = Y\}.
  \end{align*}
\end{lemma}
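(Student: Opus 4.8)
The plan is to freeze the pole and argue by the maximum principle. Fix $Y\in\Omega_1$ and set
\[
h(X)\coloneqq G_2(X,Y)-G_1(X,Y),\qquad X\in\Omega_1\setminus\{Y\}.
\]
Testing property (iv) of Definition \ref{def.green} against an arbitrary $\Phi\in C_c^{\infty}(\Omega_1\setminus\{Y\})$ — once in $\Omega_1$ for $G_1(\cdot,Y)$ and once in $\Omega_2\supseteq\Omega_1$ for $G_2(\cdot,Y)$ — the two copies of $\Phi(Y)$ cancel, so $h$ is a weak solution of $Lh=0$ in $\Omega_1\setminus\{Y\}$, with $h\in W^{1,2}_{\loc}(\Omega_1\setminus\{Y\})$.

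The first real step is to show that the singularity of $h$ at $Y$ is removable, so that $h$ extends to a weak solution of $Lh=0$ in all of $\Omega_1$. Let $B\coloneqq B(Y,\dist(Y,\partial\Omega_1))$, so that $B\subseteq\Omega_1\subseteq\Omega_2$, and let $G_{L,B}(\cdot,Y)$ be the Green function of $L$ in $B$ with pole $Y$. For $i=1,2$, property (iv) applied with test functions in $C_c^{\infty}(B)\subseteq C_c^{\infty}(\Omega_i)$ shows that $G_i(\cdot,Y)-G_{L,B}(\cdot,Y)$ is a distributional solution of $L(\cdot)=0$ in $B$; by the standard structure of the Green function — the leading singular parts at $Y$ are governed by the same matrix $A$ and cancel, so the difference lies in $W^{1,2}_{\loc}(B)$, see e.g.\ the construction in \cite[Theorem 14.60]{dfm20} — DeGiorgi--Nash--Moser regularity (Lemma \ref{DGN.lem}) makes this difference continuous on $B$. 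Subtracting the $i=1$ and $i=2$ statements, $h=[G_2(\cdot,Y)-G_{L,B}(\cdot,Y)]-[G_1(\cdot,Y)-G_{L,B}(\cdot,Y)]$ is a weak solution of $L(\cdot)=0$ in $B$, continuous there; combined with the solution property in $\Omega_1\setminus\{Y\}$ this makes $h$ (after redefinition at $Y$) a weak solution of $Lh=0$ in $\Omega_1$. The same argument works for $n=1$ via the logarithmic asymptotics in \eqref{fungf1.eq}--\eqref{fungf2.eq}; alternatively one can bypass removability altogether by writing $G_i(\cdot,Y)=\lim_k u_k^{(i)}$, where $u_k^{(i)}$ solves $Lu_k^{(i)}=f_k$ in $\Omega_i$ with zero boundary data and $0\le f_k\to\delta_Y$, and noting that $u_k^{(2)}-u_k^{(1)}$ is $L$-harmonic in $\Omega_1$, nonnegative on $\partial\Omega_1$ (hence nonnegative in $\Omega_1$ by the maximum principle), then letting $k\to\infty$. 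This removable-singularity point is where the DeGiorgi--Nash--Moser theory really enters, and it is the step I would handle most carefully.

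Finally I would read off the boundary behavior of $h$ and conclude. By property (iii) of Definition \ref{def.green} in $\Omega_1$, $G_1(\cdot,Y)$ vanishes continuously on $\partial\Omega_1$; and since $\Omega_1\subseteq\Omega_2$ forces $\partial\Omega_1\subseteq\overline{\Omega_2}\setminus\{Y\}$, property (iii) in $\Omega_2$ shows that $G_2(\cdot,Y)$ is continuous and nonnegative on $\partial\Omega_1$. Hence $h$ extends continuously to $\overline{\Omega_1}$, taken in the compactification $\overline{\mathbb R}^{n+1}=\mathbb R^{n+1}\cup\{\infty\}$ so that $\infty$ is included when $\Omega_1$ is unbounded, with $h|_{\partial\Omega_1}=G_2(\cdot,Y)|_{\partial\Omega_1}\ge 0$; in particular $h$ is bounded, being continuous on the compact set $\overline{\Omega_1}$. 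Since $h$ is then a bounded weak solution of $Lh=0$ in the Wiener-regular domain $\Omega_1$ with nonnegative boundary values, the maximum principle gives $h\ge 0$ in $\Omega_1$, i.e.\ $G_1(X,Y)\le G_2(X,Y)$ for all $X\in\Omega_1\setminus\{Y\}$. As $Y\in\Omega_1$ was arbitrary, this proves the lemma.
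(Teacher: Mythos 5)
Your argument is essentially the proof the paper has in mind: the paper gives no details, saying only that the lemma is a direct consequence of the maximum principle and the DeGiorgi--Nash--Moser theory (citing the proof of \cite[Proposition 3.2]{adfjm}), and both your main argument and your sketched alternative run along exactly those lines. The one soft spot is the assertion that $G_i(\cdot,Y)-G_{L,B}(\cdot,Y)\in W^{1,2}_{\loc}(B)$ because ``the singular parts cancel'': for merely bounded measurable $A$ there is no explicit fundamental solution and a distributional solution of $L(\cdot)=0$ need not be a weak solution (Serrin's example), so this regularity must be extracted from the approximation of $\delta_Y$ by right-hand sides $f_k$ in the Green function construction -- which is precisely your alternative argument, and that version (comparing $u_k^{(2)}$ and $u_k^{(1)}$ by the maximum principle and passing to the limit) is the robust one to keep as the primary proof.
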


We will also use a corona-type decomposition of the elliptic measure $\omega_L$ \cite{HLMN,GMT,AGMT}. To formulate this decomposition, we recall the coherency and semicoherency of subcollections of $\dd$:

\begin{definition}\label{defin:coherency}
  We say that a subcollection $\Sc \subset \dd$ is \emph{coherent} if the following three conditions hold.
  \begin{enumerate}
    \item[(a)] There exists a maximal element $Q(\Sc) \in \Sc$ such that $Q \subset Q(\Sc)$ for every $Q \in \Sc$.
    \item[(b)] If $Q \in \Sc$ and $P \in \dd$ is a cube such that $Q \subset P \subset Q(\Sc)$, then also $P \in \Sc$.
    \item[(c)] If $Q \in \Sc$, then either all children of $Q$ belong to $\Sc$ or none of them do.
  \end{enumerate}
  If $\Sc$ satisfies only conditions (a) and (b), then we say that $\Sc$ is \emph{semicoherent}.
\end{definition}

We are ready to present the corona decomposition that we shall use in the sequel.   
\begin{lemma}[{\cite[Proposition 3.1]{AGMT}}]\label{Coronaellipmeasure.lem}
  Suppose that $\Omega$ is a uniform domain and $L = -\div A \nabla$ is a divergence form elliptic operator such that $\omega_L \in A_\infty(\sigma)$. Then there exist constants, $C, M > 1$ and a decomposition of the dyadic system $\mathbb{D} = \mathbb{D}(\partial \Omega)$, with the following properties. 
  \begin{enumerate}[(i)]
    \item The dyadic grid breaks into a disjoint decomposition $\mathbb{D} = \mathcal{G} \cup \mathcal{B}$, the good cubes and bad cubes respectively. 
    
    \item The family $\mathcal{G}$ has a disjoint decomposition $\mathcal{G} = \cup \Sc$ where each $\Sc$ is a coherent   stopping time regime with maximal cube $Q(\Sc)$.

    \item The maximal cubes and bad cubes satisfy a Carleson packing condition:
    \begin{align*}
      \sum_{\substack{Q \in \mathcal{B} \\ Q \subseteq R }} \sigma(Q) + \sum_{\Sc \colon Q(\Sc) \subseteq R} \sigma(Q(\Sc)) \le C \sigma(R) \quad \text{ for every } R \in \dd.
    \end{align*}

    \item On each stopping time $\Sc$, the elliptic measure `acts like surface measure' in the sense that if $Q \in \Sc$, then
    \begin{align*}
      M^{-1} \frac{\sigma(Q)}{\sigma(Q(\Sc))} \le  \frac{\omega^{X_{Q(\Sc)}}(Q)}{\omega^{X_{Q(\Sc)}}(Q(\Sc))}  \le M \frac{\sigma(Q)}{\sigma(Q(\Sc))}.
    \end{align*}
  \end{enumerate}
\end{lemma}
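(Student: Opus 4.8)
The plan is to prove Lemma~\ref{Coronaellipmeasure.lem} by the standard corona (stopping-time) construction on the dyadic grid $\dd=\dd(\pom)$: extract from the hypothesis $\omega_L\in A_\infty(\sigma)$ a \emph{dyadic} $A_\infty$ statement referenced to the poles $X_{Q(\Sc)}$ that must appear in (iv), and then run a stopping-time argument whose halting cubes obey a Carleson packing condition.

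\emph{Step 1: dyadic reformulation of $A_\infty$.} First I would record that $\omega^{X}$ and $\sigma$ are mutually absolutely continuous for every $X\in\Omega$ (Definition~\ref{defin:a_infty} together with \cite[Lemma~5]{coifman-fefferman} and the Harnack chain condition), so for each $Q_0\in\dd$ the density $k_{Q_0}\coloneqq d\omega^{X_{Q_0}}/d\sigma$ is well defined on $Q_0$; write $\langle k_{Q_0}\rangle_Q\coloneqq \omega^{X_{Q_0}}(Q)/\sigma(Q)$. Inscribing and circumscribing surface balls around dyadic cubes, applying Definition~\ref{defin:a_infty} with a pole $X_{Q_0^*}$ attached to a slightly enlarged cube $Q_0^*\supseteq Q_0$ (so that $X_{Q_0^*}$ lies far from the ball generating $Q_0$), changing poles to $X_{Q_0}$ via Lemma~\ref{lm.change}, and invoking Bourgain's estimate (Lemma~\ref{bour.lem}) and local doubling (Lemma~\ref{lm.doubling}), I would get a dyadic reverse Hölder / dyadic $A_\infty$ inequality uniform over all $Q_0$: there are $C\ge1$ and $s,s'>0$ with, for all $Q\subseteq Q_0$ and all Borel $A\subseteq Q$,
\[
\omega^{X_{Q_0}}(A)\le C\Big(\tfrac{\sigma(A)}{\sigma(Q)}\Big)^{s}\omega^{X_{Q_0}}(Q),\qquad \sigma(A)\le C\Big(\tfrac{\omega^{X_{Q_0}}(A)}{\omega^{X_{Q_0}}(Q)}\Big)^{s'}\sigma(Q),
\]
the second being the self-improvement of $A_\infty$ (equivalently $\sigma\in A_\infty(\omega^{X_{Q_0}})$ on $Q_0$), again via \cite{coifman-fefferman}.

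\emph{Step 2: the stopping-time regimes.} Fix a large constant $M$, to be chosen at the end, and work inside a fixed $R\in\dd$ (the families produced for different $R$ are consistent, giving a global decomposition). Declare $\mathcal B$ to be the family of cubes on which a mild local doubling property of $\omega$ fails --- it satisfies a Carleson packing condition by Lemma~\ref{lm.doubling}, and in many formulations one may simply take $\mathcal B=\emptyset$ --- and put $\mathcal G=\dd\setminus\mathcal B$. Build the regimes top-down: from a top cube $Q_0$ put $Q_0\in\Sc(Q_0)$, and given $Q\in\Sc(Q_0)$ add \emph{all} children of $Q$ to $\Sc(Q_0)$ provided every child $Q'$ satisfies
\[
M^{-1}\,\frac{\sigma(Q')}{\sigma(Q_0)}\;\le\;\frac{\omega^{X_{Q_0}}(Q')}{\omega^{X_{Q_0}}(Q_0)}\;\le\;M\,\frac{\sigma(Q')}{\sigma(Q_0)};
\]
otherwise add none, and make each child of $Q$ the maximal cube of a new regime. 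Iterating covers $\mathcal G$ by a disjoint family of coherent regimes: (a) and (b) are built in, and (c) holds because the decision to stop is taken simultaneously for all children of a cube. Property (iv) holds \emph{with this $M$} by construction, since every cube of $\Sc$ other than $Q(\Sc)$ was added only after its ratio was verified.

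\emph{Step 3: Carleson packing (the main point).} It remains to prove (iii). Since $\sigma(Q')\approx\sigma(Q)$ for a child $Q'$ of $Q$ (local doubling), and each maximal cube of a new regime is the child of a ``halting cube'' $Q\in\Sc(Q_0)$ with some child $Q'$ satisfying $\langle k_{Q_0}\rangle_{Q'}/\langle k_{Q_0}\rangle_{Q_0}\notin[M^{-1},M]$, it suffices to bound the total $\sigma$-measure of halting cubes inside $R$. Letting $\Sc^R$ be the regime containing $R$, one has $R\in\Sc^R$, so by (iv) $\langle k_{Q(\Sc^R)}\rangle_R\approx\langle k_{Q(\Sc^R)}\rangle_{Q(\Sc^R)}$ and ratios relative to $Q(\Sc^R)$ and to $R$ differ by a bounded factor; hence for the ``up'' type ($\langle k\rangle_{Q'}/\langle k\rangle_{Q_0}>M$) the elementary maximal-function estimate gives $\sum\sigma(Q')\le CM^{-1}\sigma(R)$, and for the ``down'' type ($<M^{-1}$) the reverse dyadic $A_\infty$ inequality of Step~1 gives $\sum\sigma(Q')\le CM^{-\theta}\sigma(R)$ for some $\theta=\theta(s')>0$. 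Thus the halting cubes of a single regime below $R$ carry $\sigma$-measure at most $\eta(M)\,\sigma(R)$ with $\eta(M)\to0$; choosing $M$ so large that $\eta(M)<\tfrac12$ and summing over generations of regimes produces a convergent geometric series, yielding $\sum_{\Sc:\,Q(\Sc)\subseteq R}\sigma(Q(\Sc))\le C\,\sigma(R)$, which is (iii). Properties (i) and (ii) are recorded along the way.

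\emph{Main obstacle.} The essential difficulty sits in Steps~1 and~3: one must convert the ball-based $A_\infty(\sigma)$ condition with far poles into a dyadic condition referenced to the specific poles $X_{Q(\Sc)}$ appearing in (iv) --- this is where the corkscrew, Harnack chain and Ahlfors regularity hypotheses are genuinely used, through change of poles, Bourgain's estimate and local doubling --- and then upgrade the resulting single-scale smallness into a true Carleson packing bound, which forces $M$ large in terms of the $A_\infty$ constants and requires the careful generational summation above. Verifying coherency is only bookkeeping.
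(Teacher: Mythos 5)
Your route is genuinely different from the paper's. The paper does not reprove the corona decomposition at all: it observes that $\omega_L\in A_\infty(\sigma)$ yields the $L^2$ Carleson measure estimate (Lemma \ref{realCME.lem}), which is exactly hypothesis (b) of \cite[Proposition 3.1]{AGMT}; that proposition is then quoted as a black box (and in fact gives $\mathcal{B}=\emptyset$), after which the only work is to upgrade the resulting \emph{semicoherent} regimes to coherent ones by the standard subdivision mechanism of \cite{DS2} and \cite{chm22}. You instead build the decomposition directly by the classical $A_\infty$ stopping-time construction: dyadize the $A_\infty$ condition with poles $X_{Q(\Sc)}$ (change of poles, Bourgain, local doubling), stop simultaneously on all children when the density ratio leaves $[M^{-1},M]$, and pack the stopping cubes. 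This buys a self-contained proof in which (iv) holds by fiat and coherency is automatic (stopping on all children at once), avoiding the semicoherent-to-coherent repair; what it costs is that you must re-derive the dyadic two-sided $A_\infty$ inequalities and the packing, which the paper outsources.

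One step is stated incorrectly, though the architecture survives. Inside a general $R\in\Sc^R$ the comparability $\omega^{X_{Q(\Sc^R)}}(R)/\omega^{X_{Q(\Sc^R)}}(Q(\Sc^R))\approx\sigma(R)/\sigma(Q(\Sc^R))$ holds only with the constant $M$ itself, not an absolute constant; feeding this into your ``up''-type maximal-function estimate, the factor $M$ cancels the gain $M^{-1}$ and you only get $\sum\sigma(Q')\le C\sigma(R)$, and similarly the ``down''-type bound degrades, so the claimed $CM^{-1}\sigma(R)$ and $CM^{-\theta}\sigma(R)$ are not available for intermediate $R$. The repair is that smallness $\eta(M)=M^{-1}+CM^{-\theta}<1$ is only needed, and does hold, when $R$ is itself a maximal cube $Q(\Sc)$ (there the density ratio of the reference cube is $1$); for an intermediate $R$ the trivial disjointness bound $\sum_{Q'\subseteq R}\sigma(Q')\le\sigma(R)$ suffices. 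Organizing the sum over $\{\Sc:Q(\Sc)\subseteq R\}$ by subtrees rooted at the first-generation tops inside $R$, each subtree contributes at most $(1-\eta)^{-1}\sigma(Q_1)$ by geometric summation, and the roots carry total mass $\lesssim\sigma(R)$, which gives (iii). With that bookkeeping fixed (and the routine verification of the dyadic reverse inequality $\sigma(A)\lesssim(\omega^{X_{Q_0}}(A)/\omega^{X_{Q_0}}(Q))^{s'}\sigma(Q)$ near the top scale, where the pole is not far from $4B_Q$), your construction is a valid alternative proof.
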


\begin{proof}
  Since $\omega_L\in A_\infty(\sigma)$, then by Lemma \ref{realCME.lem} we have that the hypothesis (b) of \cite[Proposition 3.1]{AGMT} is satisfied. By \cite[Proposition 3.1]{AGMT}, this yields a decomposition $\bb D=\mathcal G\cup\mathcal B$ like the one described above (in fact we have that $\mathcal B=\varnothing$), but with the caveat that the stopping time regimes need only be semicoherent. Then, by a standard mechanism (see for instance \cite[pp.56--57]{DS2}, and \cite[Remark 2.13]{chm22}), we can modify the stopping time regimes so that they are coherent, while the rest of the properties are still satisfied.
\end{proof}

It is straightforward to check that for each semicoherent stopping time regime $\Sc$, there exists a collection of pairwise disjoint cubes $\F_\Sc$ such that $\Sc = \mathbb{D}_{\F_\Sc, Q(\Sc)}$. By Lemma \ref{CFMSest.lem}   and the Harnack inequality, we get the following:
\begin{lemma}\label{GFdeltacomp.lem}
  Suppose that $\Omega$ is a uniform domain in $\bb R^{n+1}$, $n\geq1$, with $n$-Ahlfors regular boundary, that $L = -\div A \nabla$ is a divergence form elliptic operator such that $\omega_L \in A_\infty(\sigma)$, and that $\Sc$ is a semicoherent stopping time regime satisfying the property in Lemma \ref{Coronaellipmeasure.lem} (iv). Then 
  \begin{align*}
    G_L(X_Q, Y)  \sigma(Q) \approx \delta(Y) \qquad \text{ for every } Q \in \Sc \text{ and } Y \in \Omega^*_{\F_\Sc, Q},
  \end{align*}
where the implicit constants depend only on dimension, ellipticity,  $\kappa$, Harnack chain, corkscrew, and Ahlfors regularity constants, as well as the constant $M$ in Lemma \ref{Coronaellipmeasure.lem} (iv).
\end{lemma}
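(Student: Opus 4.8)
The plan is to combine the CFMS estimate (Lemma \ref{CFMSest.lem}) with the coherency of $\Sc$ and the key property (iv) of Lemma \ref{Coronaellipmeasure.lem}, exploiting the geometric control on the Whitney regions $U_Q^*$ relative to a cube. Fix $Q \in \Sc$ and $Y \in \Omega^*_{\F_\Sc, Q}$. By the definition of the fattened sawtooth, $Y$ lies in $U_{Q'}^*$ for some $Q' \in \dd_{\F_\Sc, Q} \subset \dd_Q$, so in particular $Q' \subseteq Q$ and $Q' \in \Sc$ (since $\Sc = \dd_{\F_\Sc, Q(\Sc)}$ is closed under passing to cubes between $Q'$ and $Q(\Sc)$). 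By \eqref{K1def.eq} applied with the parameter $\kappa$, we have $\delta(Y) \approx \ell(Q') \approx \dist(Y, Q')$, so $Y$ is a corkscrew-type point for $Q'$ at scale $\ell(Q')$; by the Harnack chain condition and the Harnack inequality, $G_L(X, Y) \approx G_L(X, X_{Q'})$ for any pole $X$ that stays sufficiently far from $Y$ — in particular $X = X_Q$, since $\ell(Q) \gtrsim \ell(Q')$ and $X_Q$ is at scale $\approx \ell(Q)$ from $x_Q$ while $Y$ is comparably far from $\pom$ at scale $\ell(Q')$. (When $\ell(Q') = \ell(Q)$ a direct Harnack-chain argument inside $\Omega$ connecting $Y$ to $X_Q$ handles the case where $Y$ is close to $X_Q$.)

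Next I would apply Lemma \ref{CFMSest.lem} with $x = x_{Q'}$, $r \approx \ell(Q')$, and pole $X = X_Q \in \Omega \setminus B(x_{Q'}, 2r)$ (valid because $x_{Q'} \in Q' \subseteq Q$ and $X_Q$ sits at distance $\approx \ell(Q)$ from $x_Q$, hence at distance $\gtrsim \ell(Q')$ from $x_{Q'}$ once one checks the scales; if $Q' = Q$ one uses instead the pole $X_{Q(\Sc)}$ or a Harnack-chain reduction). This yields
\[
  \frac{G_L(X_Q, X_{Q'})}{\ell(Q')} \approx \frac{\omega^{X_Q}(\Delta(x_{Q'}, \ell(Q')))}{\ell(Q')^n} \approx \frac{\omega^{X_Q}(Q')}{\ell(Q')^n},
\]
where the last comparison uses Ahlfors regularity ($\ell(Q')^n \approx \sigma(Q')$) together with the local doubling property (Lemma \ref{lm.doubling}) to pass between the surface ball $\Delta_{Q'}$ and the cube $Q'$. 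Combining with the previous paragraph, $G_L(X_Q, Y)\, \ell(Q')^{n} \approx \omega^{X_Q}(Q')$, and since $\delta(Y) \approx \ell(Q') \approx \sigma(Q')^{1/n}$ this rearranges to $G_L(X_Q, Y)\, \sigma(Q') \approx \delta(Y)\, \tfrac{\omega^{X_Q}(Q')}{\sigma(Q')}\cdot \sigma(Q')$; cleaning up, the task reduces to showing $\omega^{X_Q}(Q') \approx \sigma(Q')/\sigma(Q) \cdot \sigma(Q) \cdot (\text{something} \approx 1)$, i.e. to comparing $\sigma(Q)\,G_L(X_Q,Y) \approx \delta(Y)$.

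To close the loop I would use property (iv) of Lemma \ref{Coronaellipmeasure.lem} twice. Applying it to the pair $Q' \subseteq Q(\Sc)$ and to $Q \subseteq Q(\Sc)$ (both in $\Sc$) and dividing gives
\[
  \frac{\omega^{X_{Q(\Sc)}}(Q')}{\omega^{X_{Q(\Sc)}}(Q)} \approx \frac{\sigma(Q')}{\sigma(Q)},
\]
and then the Change of Poles lemma (Lemma \ref{lm.change}), together with Harnack to move the pole from $X_{Q(\Sc)}$ to $X_Q$, converts this into $\omega^{X_Q}(Q') \approx \sigma(Q')/\sigma(Q)$ (using $\omega^{X_Q}(Q) \approx 1$ from Bourgain's estimate, Lemma \ref{bour.lem}, and doubling). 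Substituting into $G_L(X_Q, Y)\, \ell(Q')^{n} \approx \omega^{X_Q}(Q')$ and using $\ell(Q')^n \approx \sigma(Q')$ and $\delta(Y) \approx \ell(Q')$ yields $G_L(X_Q, Y) \approx \sigma(Q')/(\sigma(Q)\,\sigma(Q')) \cdot \delta(Y)^{?}$ — more precisely one gets $\sigma(Q)\, G_L(X_Q, Y) \approx \sigma(Q) \cdot \omega^{X_Q}(Q')/\ell(Q')^n \cdot \ell(Q') \approx \sigma(Q)\cdot \frac{\sigma(Q')}{\sigma(Q)\,\sigma(Q')}\,\delta(Y) = \delta(Y)$, as desired. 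The main obstacle I expect is the careful bookkeeping of scales and poles in the borderline case $\ell(Q') \approx \ell(Q)$ (where $Y$ may be close to $X_Q$ so CFMS is not directly applicable with pole $X_Q$, and one must instead connect $Y$ to $X_Q$ by a bounded Harnack chain staying in $\Omega$, using that both points are at distance $\approx \ell(Q)$ from $\pom$ within a bounded number of scales); everything else is a routine assembly of the cited elliptic-measure estimates.
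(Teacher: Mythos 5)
Your argument is essentially the paper's own proof: identify the cube $Q'$ with $Y\in U_{Q'}^*$ (so $Q'\subseteq Q$, $Q'\in\Sc$, $\delta(Y)\approx\ell(Q')$), move $Y$ to a nearby corkscrew point by Harnack chains and the Harnack inequality (note this is applied to $G_L(X_Q,\cdot)$, a solution of the adjoint equation, so Harnack for $L^*$ is what is really used), apply the CFMS estimate with pole $X_Q$, and then combine Lemma \ref{lm.change} with property (iv) of Lemma \ref{Coronaellipmeasure.lem} applied to both $Q'$ and $Q$ (plus doubling and Bourgain) to get $\omega^{X_Q}(Q')\approx\sigma(Q')/\sigma(Q)$, exactly as in the paper. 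The only real difference is bookkeeping: the paper sidesteps your borderline case $\ell(Q')\approx\ell(Q)$ by running CFMS and the change of poles on shrunken surface balls $B_{Q'}'=B(x_{Q'},\eta a_0\ell(Q'))$ with $\eta$ small depending on the corkscrew constant, which guarantees $X_Q\in\Omega\setminus 2B_{Q'}'$ uniformly (including $Q'=Q$), so no separate Harnack-chain reduction or switch of pole to $X_{Q(\Sc)}$ is needed there.
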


\begin{proof} Fix $Q\in\Sc$ and\footnote{For relevant notation in this proof, see Section \ref{section:dyadic_and_whitney}.} $Y\in\Omega_{\F_\Sc,Q}^*$. By definition, there exists $P\in\bb D_{\F_S,Q}$ so that $Y\in U_P^*$, and thus $P\in\Sc$, $P\subseteq Q$, and $\delta(Y)\approx\ell(P)$. Let $B_P' \coloneqq B(x_P,\eta a_0\ell(P))$ with $\eta\in(0,1)$ small, and let $X_P'$ be the corkscrew point for $B_P'$. Define $B_Q'$ and $X_Q'$ analogously. We may guarantee that $X_Q\in\Omega\backslash2B_P'$   if $\eta$ is chosen small enough depending only on the corkscrew constant. Then, since  $\delta(Y)\approx\delta(X_P')$, and  $|Y-X_P'|\lesssim\delta(Y)$, by the Harnack inequality (for $L$ and $L^*$) and Harnack chains, Lemma \ref{CFMSest.lem}, the doubling property of   elliptic measure, and Ahlfors regularity, we have that
\begin{equation}\nonumber
\frac{G_L(X_Q,Y)}{\delta(Y)}\approx\frac{G_L(X_Q,X_P')}{\ell(P)}\approx\frac{\omega^{X_Q}(2B_P')}{\ell(P)^n} \approx \frac{\omega^{X_Q'}(B_P')}{\sigma(P)}.
\end{equation} 
Now, since $X_{Q(\Sc)}\in\Omega\backslash2B_Q'$ and $P,Q\in\Sc$, then by Lemma \ref{lm.change}, the doubling property of elliptic measure, Harnack chains, Harnack inequality, Bourgain's estimate, and the property (iv) in Lemma \ref{Coronaellipmeasure.lem}, it follows that
\begin{equation}\nonumber
\frac{\omega^{X_Q'}(B_P)}{\sigma(P)}\approx\frac{\omega^{X_{Q(\Sc)}}(B_P)}{\sigma(P)}\frac{1}{\omega^{X_{Q(\Sc)}}(B_Q')}\approx\frac{\omega^{X_{Q(\Sc)}}(P)}{\sigma(P)}\frac{1}{\omega^{X_{Q(\Sc)}}(Q)}\\ \approx\frac{1}{\sigma(Q(\Sc))}\frac{\sigma(Q(\Sc))}{\sigma(Q)}=\frac1{\sigma(Q)},
\end{equation}
which completes the proof.
\end{proof}

\section{Existence of $\eps$-approximators: proof of Theorem \ref{theorem:approximability}}
\label{section:proof_of_approximability}

In this section we prove Theorem \ref{theorem:approximability}. The key step in the proof is the construction of $\BV_{\loc}$ approximators since the existence of smooth approximators with more delicate properties follows then by using almost black box regularization arguments. More precisely, the heart of Theorem \ref{theorem:approximability} is the following result:

\begin{theorem}\label{theorem:existence_approximators}
	Let $\Omega \subset \ree$, $n \ge 1$, be a uniform domain with Ahflors regular boundary. Let $L = -\div A\nabla$ be a (not necessarily symmetric) divergence form elliptic operator satisfying that $\hm_L \in A_\infty(\sigma)$. Then, for any $\eps \in (0,1)$ there exists a constant $C_\eps$ such that if $u \in W^{1,2}_{\loc}(\Omega) \cap L^\infty(\Omega)$ is a solution to $Lu = 0$ in $\Omega$, then there exists   $\Phi = \Phi^{\eps} \in \BV_{\loc}(\Omega)$ satisfying
	\begin{align*}
		\|u - \Phi\|_{L^\infty(\Omega)} \le \eps \|u\|_{L^\infty(\Omega)} \quad \text{ and } \quad \sup_{x \in \pom, r > 0} \frac{1}{r^n} \iint_{B(x,r) \cap \Omega} |\nabla \Phi(Y)|\, dY \le C_\eps \|u\|_{L^{\infty}(\Omega)},
	\end{align*}
	where $C_\eps$ depends on $\eps$, structural constants, ellipticity and the $\omega_L\in A_{\infty}(\sigma)$ constants.
\end{theorem}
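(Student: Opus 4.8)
\emph{Strategy.} Normalize $\|u\|_{L^\infty(\Omega)}\le1$, fix $\eps\in(0,1)$, and set $\eta\coloneqq c\eps$ for a small structural constant $c$; recall solutions are continuous inside $\Omega$ by Lemma \ref{DGN.lem}. Since the asserted Carleson estimate is scale invariant, it suffices to build a single $\Phi$ on $\Omega$ from the global dyadic lattice with $\|u-\Phi\|_{L^\infty(\Omega)}\le\eps$, and then verify the Carleson bound over each Carleson box $T_{Q_0}$, $Q_0\in\dd$. The plan is to decompose $\dd$ by a stopping time into coherent "good regimes" $\{\Sc'\}$, put $\Phi\equiv u(X_{\Sc'})$ on those Whitney regions $U_P$, $P\in\Sc'$, over which $u$ has oscillation $\le\eta$, keep $\Phi=u$ on the (to be shown packed) set of Whitney regions on which $u$ oscillates more, and glue with a partition of unity subordinate to $\{U_P^{**}\}$. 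By design $\|u-\Phi\|_{L^\infty}\le\eps$; the jump of $\Phi$ across an interface is $\lesssim\eta$ between good portions and $\lesssim\|u\|_\infty$ at an interface abutting a high-oscillation region, while $|\nabla\Phi|\lesssim\ell(P)^{-1}$ on interfaces of measure $\approx\ell(P)^{n+1}$ and $|\nabla\Phi|=|\nabla u|$ on the high-oscillation regions (where $\iint_{U_P}|\nabla u|\lesssim\ell(P)^n$ by Caccioppoli). Thus the Carleson bound reduces to a single \emph{Carleson packing statement}: the tops of the good regimes together with the high-oscillation cubes form a family with $\sum_{Q\subseteq Q_0}\sigma(Q)\lesssim C_\eta\,\sigma(Q_0)$.

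\emph{The stopping time.} We cut $\dd$ by superimposing three conditions. First, take the $A_\infty$-corona decomposition $\dd=\bigcup\Sc$ of Lemma \ref{Coronaellipmeasure.lem} (available since $\omega_L\in A_\infty(\sigma)$): its tops pack by part (iii), and on each $\Sc$ the elliptic measure is comparable to $\sigma$, part (iv). Second, descending inside a fixed $\Sc$ from $Q(\Sc)$, start a new regime at every maximal subcube $Q$ with either $\osc_{U_Q^{**}}u>\eta$ (type (c)) or $|u(X_Q)-u(X_{\mathrm{top}})|>\eta$ (type (b)), $X_{\mathrm{top}}$ being the corkscrew point of the current regime's top; then recurse. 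The type-(c) cubes are precisely the "high-oscillation" cubes on whose Whitney region we keep $\Phi=u$. Each resulting regime $\Sc'$ lies in a single $A_\infty$-regime, and for $P\in\Sc'$ and $Y\in U_P^{**}$ one has $|u(Y)-u(X_{\Sc'})|\le\osc_{U_P^{**}}u+|u(X_P)-u(X_{\Sc'})|\le2\eta$, so the $L^\infty$-approximation holds. The family to pack is: $A_\infty$-tops, type-(c) cubes, and type-(b) cubes.

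\emph{Packing.} The $A_\infty$-tops pack by Lemma \ref{Coronaellipmeasure.lem}(iii). For a type-(c) cube $Q$, Poincaré on $U_Q^{**}$ (for solutions $\osc$ is comparable to the $L^2$-oscillation by interior estimates) together with $\delta\approx\ell(Q)$ there gives $\iint_{U_Q^{**}}|\nabla u|^2\delta\gtrsim\eta^2\ell(Q)^n$; summing over $Q\subseteq Q_0$ and using Lemma \ref{realCME.lem} yields $\sum\sigma(Q)\lesssim\eta^{-2}\sigma(Q_0)$ (in particular the Carleson mass being finite forbids every cube from being type (c)). The type-(b) cubes are the crux. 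Fix a regime $\Sc'$ inside an $A_\infty$-regime $\Sc$, put $X_*\coloneqq X_{Q(\Sc')}$ and $\Omega_{\Sc'}^*\coloneqq\Omega^*_{\F_{\Sc'},Q(\Sc')}$; this is a uniform domain with Ahlfors regular boundary by Lemma \ref{lemma:hm}, hence Wiener-regular, and compactly contained in $\Omega$ once $\Sc'$ is truncated to finitely many cubes (then exhaust $\Sc'$). Applying Lemma \ref{prelimlem1.lem} to $\Omega_{\Sc'}^*$ with pole $X_*$, then $G_{\Omega_{\Sc'}^*}\le G_{L,\Omega}$ from Lemma \ref{lem5.lem}, and then Lemma \ref{GFdeltacomp.lem} at $Q(\Sc')\in\Sc$ (giving $G_{L,\Omega}(X_*,Y)\,\sigma(Q(\Sc'))\approx\delta(Y)$ for $Y\in\Omega^*_{\F_\Sc,Q(\Sc')}\supseteq\Omega_{\Sc'}^*$), we obtain
\[
\int_{\partial\Omega_{\Sc'}^*}\big(u(y)-u(X_*)\big)^2\,d\omega_{\Omega_{\Sc'}^*}^{X_*}(y)\ \lesssim\ \frac{1}{\sigma(Q(\Sc'))}\iint_{\Omega_{\Sc'}^*}|\nabla u(Y)|^2\,\delta(Y)\,dY .
\]
On the other hand, the choice of $\kappa$ in Section \ref{kappachoice.subsect} (through Lemmas \ref{thetachoice.cl}--\ref{lemma:touching_point_surface_ball}) produces, for each type-(b) stopped cube $Q\in\F_{\Sc'}$, a piece $\Sigma_Q\subset\partial\Omega_{\Sc'}^*$ inside $U_Q^*$ of surface measure $\approx\ell(Q)^n$; arranging the type-(b) threshold and the portion of $U_Q^*$ used so that the interior Hölder estimate (Lemma \ref{DGN.lem}) keeps $|u-u(X_*)|\gtrsim\eta$ on $\Sigma_Q$, and bounding $\omega_{\Omega_{\Sc'}^*}^{X_*}(\Sigma_Q)\gtrsim\sigma(Q)/\sigma(Q(\Sc'))$ via Bourgain's estimate (Lemma \ref{bour.lem}), the change of pole formula (Lemma \ref{lm.change}) and Lemma \ref{Coronaellipmeasure.lem}(iv), the display above yields $\sum_{Q\in\F_{\Sc'},\,\text{type (b)}}\sigma(Q)\lesssim\eta^{-2}\iint_{\Omega_{\Sc'}^*}|\nabla u|^2\delta$. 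As the regimes partition $\dd$, the sawtooths $\{\Omega_{\Sc'}^*\}$ have bounded overlap, so summing over all $\Sc'$ with $Q(\Sc')\subseteq Q_0$ and invoking Lemma \ref{realCME.lem} once more bounds the total by $\eta^{-2}\sigma(Q_0)$; but the left-hand side is exactly $\sum\sigma(Q(\Sc''))$ over type-(b) regime tops. Combining the three cases, the family packs with constant $\lesssim\eta^{-2}$, and the Carleson bound of Theorem \ref{theorem:existence_approximators} follows with $C_\eps$ a power of $1/\eps$.

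\emph{Main obstacle.} The delicate case is type (b): the oscillation of $u$ over the Whitney region of such a cube may be negligible, so neither the Poincaré-plus-CME argument used for type (c) nor any scale-by-scale iteration works (the latter diverges geometrically). The resolution above — and the reason the $A_\infty$-corona is indispensable — is that Lemma \ref{prelimlem1.lem} converts the boundary oscillation of $u$ over the \emph{entire} sawtooth $\Omega_{\Sc'}^*$ into an interior energy which, thanks to $\omega_L\approx\sigma$ and $G_{L,\Omega}\approx\delta/\sigma$ on the regime, is dominated by the fixed Carleson measure $|\nabla u|^2\delta\,dY$; bounded overlap of the sawtooths then permits summing these contributions only once. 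Carrying this out rigorously requires care with the geometry of the sawtooths near $\partial\Omega$ (where the $\kappa$-construction of Section \ref{kappachoice.subsect} guarantees each type-(b) cube contributes a genuine boundary piece of the right size) and with the truncation-and-limit for Lemma \ref{prelimlem1.lem}; the remaining steps — gluing into a global $\Phi$ by the partition of unity, verifying $\Phi\in\BV_{\loc}(\Omega)$, and the jump bookkeeping in the Carleson sum — are routine.
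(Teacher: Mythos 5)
Your overall architecture (corona decomposition, refined stopping regimes inside each $\Sc$, constant value $u(X_{\mathrm{top}})$ on low-oscillation sawtooths, $\Phi=u$ on high-oscillation Whitney regions, packing reduces everything) matches the paper, and your treatment of the type-(c) cubes and of the $A_\infty$-tops is the paper's argument. The genuine gap is at your crux, the type-(b) packing: you claim, for each individual stopped cube $Q\in\F_{\Sc'}$, a boundary piece $\Sigma_Q\subset\partial\Omega^*_{\Sc'}$ with $\omega^{X_*}_{L,\Omega^*_{\Sc'}}(\Sigma_Q)\gtrsim\sigma(Q)/\sigma(Q(\Sc'))$, ``via Bourgain's estimate, the change of pole formula and Lemma \ref{Coronaellipmeasure.lem}(iv)''. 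Those tools give $\omega^{X_*}_{L,\Omega}(Q)\approx\sigma(Q)/\sigma(Q(\Sc'))$ for the elliptic measure of the \emph{original} domain, and Bourgain plus change of poles inside the sawtooth only reduce your claim to a lower bound for $\omega_{L,\Omega^*_{\Sc'}}$ of a surface ball of $\partial\Omega^*_{\Sc'}$ at scale $\ell(Q)$ seen from the far-away pole $X_*$; there is no mechanism to transfer the corona information to the sawtooth in that direction. The Green function comparison of Lemma \ref{lem5.lem} goes the wrong way ($G_{\Omega_*}\le G_{\Omega}$), so combined with CFMS it yields only the \emph{upper} bound $\omega_*(\Delta^*_Q)\lesssim\sigma(Q)/\sigma(Q(\Sc'))$ --- which is exactly what the paper proves (\eqref{hmprojest1.eq}) and all it uses. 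A matching lower bound would essentially assert quantitative absolute continuity of the sawtooth's own elliptic measure with respect to $\H^n|_{\partial\Omega_*}$, which is not a hypothesis and is precisely the kind of statement that fails in general uniform domains with Ahlfors regular boundary (this is the whole difficulty of the subject); the elliptic measure of the sawtooth seen from $X_*$ can concentrate on some stopped cubes and essentially ignore others, so a per-cube bound $\sum_{Q\,\mathrm{type\,(b)}}\sigma(Q)\lesssim\eta^{-2}\iint_{\Omega^*_{\Sc'}}|\nabla u|^2\delta$ cannot be extracted this way.

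The paper circumvents exactly this obstruction by packing only the regime \emph{tops}, not the stopped cubes, and by introducing the Types 1--4 with the small parameter $\lambda$: Bourgain is applied once, at the top scale, to a single surface ball $\Delta_*\subset\partial\Omega_*$ whose pole $X_*$ is at comparable distance (Harnack then fixes the constant $c_*$); $\Delta_*$ is covered by the dilated balls $M\Delta^*_Q$ over \emph{all} stopped cubes; and the upper bound $\omega_*(\Delta^*_Q)\lesssim\sigma(Q)/\sigma(Q(\sbf_*))$ together with $\sigma\big(\cup_{Q\in\F^{\bigO}}Q\big)\le4\lambda\,\sigma(Q(\sbf_*))$ (available only because the regime is Type 4) shows the non-stopping-blue balls carry $\omega_*$-mass $\le\hat C\lambda<c_*/2$, so the stopping-blue balls carry mass $\gtrsim1$ and $\int_{\partial\Omega_*}(u-u(X_*))^2\,d\omega_*\gtrsim\eps^2$. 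Regimes of Types 1--3 are packed by the disjointness of the leftover sets, the red packing, and the yellow packing, respectively. If you want to salvage your scheme you must restructure the type-(b) step along these lines (you cannot avoid the $\lambda$-classification, since without it the stopping-blue balls need not carry a definite fraction of the $\omega_*$-mass). A secondary, fixable issue: your lower bound $|u-u(X_*)|\gtrsim\eta$ on $\Sigma_Q$ does not follow from H\"older continuity when the type-(b) cube also has large oscillation on $U^*_Q$; the paper gets it by placing $\Delta^*_Q$ inside $U^*_{Q'}$ for a \emph{blue} sibling $Q'$ and by using the smaller blueness threshold $\eps/1000$ against the stopping threshold $\eps/100$.
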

 
 As mentioned in the introduction (see the paragraph after Theorem \ref{theorem:approximability}), the proof of Theorem \ref{theorem:existence_approximators} is based on the construction of $\eps$-approximators in \cite[Theorem 1.3]{HMM}, with important differences. We will point out when we have to diverge from the strategy in \cite{HMM}.

\subsection{Set-up for the proof of Theorem \ref{theorem:existence_approximators}}
We start by making some preliminary considerations and fixing some notation and terminology. Let $u$ be a bounded solution to $Lu = 0$, and let $\eps \in (0,1)$. Without loss of generality, we may assume that $\|u\|_{L^\infty(\Omega)} = 1$, since otherwise we have $u = 0$ or we can replace $u$ with $u/\|u\|_{L^\infty}$. We fix a stopping time regime $\Sc$ from Lemma \ref{Coronaellipmeasure.lem}, with maximal cube $Q(\Sc)$. Following \cite{Gar-BAF,HMM}, we label each cube $Q \in \dd$ depending on how much the the function $u$ oscillates in the corresponding fattened Whitney region $U_Q^*$. To be more precise, we say that  
\begin{align*}
Q\in\bb D \text{ is {\bf red} if }\quad	\sup_{X,Y\in U_Q^*} |u(X) - u(Y)| \ge \frac{\eps}{1000},
\end{align*}
and 
\begin{align*}
Q\in\bb D \text{ is {\bf blue} if }\quad	\sup_{X,Y\in U_Q^*} |u(X) - u(Y)| < \frac{\eps}{1000}.
\end{align*}
We note that these conditions differ from the, ones in \cite{HMM} in two ways: in \cite{HMM}, the oscillation threshold was $\eps/10$ and the level of oscillation was measured in the unfattened region $U_Q$. With our conditions we have a little bit more control on the blue cubes, which will be useful for us later. In \cite{HMM}, it was enough to use these two labels but for our analysis it is important to take into consideration the minimal cubes of $\Sc$, which we label as follows:
\begin{align*}
Q\in\Sc \text{ is {\bf yellow} if }\quad	Q \text{ has a child $Q'$ such that $Q' \not \in \Sc$}.
\end{align*}
Recall that $Q'$ is a child of $Q$ if $Q' \subset Q$ and $\ell(Q') =   \ell(Q)/2$. Notice that yellow cubes are not a separate collection from the red and blue cubes but each yellow cube is also red or blue. We denote the collections of red and yellow cubes by
\begin{equation*}
	\mathcal{R} \coloneqq \{Q \in \dd \colon Q \text{ is red}\},\qquad
	\mathcal{Y} = \mathcal{Y}(\Sc) \coloneqq \{Q \in \Sc \colon Q \text{ is yellow}\}.
\end{equation*}
The rough idea of the construction of the $\eps$-approximator $\Phi$ of $u$ is to first construct $\Phi$ inside a Carleson box $T_{Q_0}$ for a fixed cube $Q_0$ and then use a ``local to global''-type argument to define a global approximator. Working in a Carleson box allows us to reduce many of the challenging estimates to working with just one stopping time regime $\Sc$ given by Lemma \ref{Coronaellipmeasure.lem}. We then set $\Phi \equiv u$ in the regions $U_Q$ such that $Q \in \Rc \cup \Yc$ and break up the blue cubes into smaller stopping time regimes where $u$ does not vary by more than $\eps/100$. In these new regimes, we set $\Phi = u(X_0)$ in the union of $U_Q$, where $X_0$ is any point in the union. The $L^\infty$ approximation property follows then just from the way we defined $\Phi$ but verifying the $L^1$-type Carleson measure estimate for $\Phi$ is more challenging. For this, one of the key steps is to show that the collections of red, yellow and maximal cubes from the new stopping time regimes satisfy Carleson packing conditions. For the first two collections, this follows in a straightforward way:

\begin{lemma}\label{lemma:packing_of_red_and_yellow}
	The collections $\Rc$ and $\bigcup_{\Sc} \Yc(\Sc)$ satisfy the following Carleson packing conditions: for any $P \in \dd$ we have
	\begin{align*}
		\sum_{Q \in \Rc, Q \subset P} \sigma(Q) \lesssim \frac{1}{\eps^2} \sigma(P)
	\end{align*}
	and
	\begin{align*}
		\sum_{\Sc} \sum_{Q \in \Yc(\Sc), Q \subset P} \sigma(Q) \le (C+1)\sigma(P).
	\end{align*}
	The constant $C$ is the same constant as $C$ in Lemma \ref{Coronaellipmeasure.lem}.
\end{lemma}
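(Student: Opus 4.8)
The plan is to prove the two packing conditions by genuinely different mechanisms. The red cubes are handled by the classical Carleson measure estimate of Lemma~\ref{realCME.lem}. Normalizing $\|u\|_{L^\infty(\Omega)}=1$, I would first record the standard interior bound
\begin{equation*}
  \Big(\sup_{X,Y\in U_Q^*}|u(X)-u(Y)|\Big)^2 \lesssim \ell(Q)^{1-n}\iint_{U_Q^{**}}|\nabla u(Y)|^2\,dY ,
\end{equation*}
which holds because $U_Q^*$ is connected with diameter $\approx\ell(Q)$ while $\delta\approx\ell(Q)$ on $U_Q^{**}$: one covers $U_Q^*$ by a bounded number of balls $B_j$ of radius $\approx\ell(Q)$ with $2B_j\subset U_Q^{**}$, telescopes $|u(X)-u(Y)|$ along a Harnack chain, and on each $B_j$ uses Caccioppoli's inequality together with the interior (De Giorgi--Nash--Moser) estimates applied to $u$ minus its average over $2B_j$ to get $\osc_{B_j}u\lesssim\ell(Q)\,(\dashiint_{2B_j}|\nabla u|^2)^{1/2}$. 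Since $Q$ being red means the left-hand side is $\gtrsim\eps^2$, and since $\delta\approx\ell(Q)$ on $U_Q^{**}$ and $\ell(Q)^n\approx\sigma(Q)$, this rearranges to $\eps^2\sigma(Q)\lesssim\iint_{U_Q^{**}}|\nabla u|^2\delta\,dY$. Summing over $Q\in\Rc$ with $Q\subset P$, using the bounded overlap of the regions $U_Q^{**}$ together with the fact that $\bigcup_{Q\subset P}U_Q^{**}\subset B(x_P,C\ell(P))\cap\Omega$, and then invoking Lemma~\ref{realCME.lem}, gives $\eps^2\sum_{Q\in\Rc,\,Q\subset P}\sigma(Q)\lesssim\ell(P)^n\approx\sigma(P)$, which is the first claim.

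For the yellow cubes, fix a stopping time regime $\Sc$ and write $\Sc=\dd_{\F_\Sc,Q(\Sc)}$ for the associated family $\F_\Sc$ of pairwise disjoint stopping cubes. The crucial structural point is that coherency (Definition~\ref{defin:coherency}(c)) forces a yellow cube $Q\in\Yc(\Sc)$ --- which by definition has \emph{some} child outside $\Sc$ --- to have \emph{all} of its children outside $\Sc$; hence $\Yc(\Sc)$ is precisely the collection of minimal cubes of $\Sc$, and the children of the cubes of $\Yc(\Sc)$ form exactly $\F_\Sc$. Since a dyadic cube is partitioned by its children, $\sum_{Q\in\Yc(\Sc),\,Q\subset P}\sigma(Q)=\sum_{R\in\F_\Sc,\,\widehat R\subset P}\sigma(R)$, where $\widehat R$ is the dyadic parent. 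I would then check, using coherency~(b) and the mutual disjointness of the regimes, that every $R\in\bigcup_\Sc\F_\Sc$ is either a bad cube or the maximal cube of another regime (its parent lies in $\Sc$, so if $R$ were a good cube that is not the top cube of the regime $\Sc'$ containing it, coherency~(b) would put $\widehat R\in\Sc'$, forcing $\Sc'=\Sc$ and contradicting $R\notin\Sc$), and that the families $\F_\Sc$ are pairwise disjoint across $\Sc$ (a cube determines the regime containing its parent). Combining these observations,
\begin{equation*}
  \sum_\Sc\sum_{Q\in\Yc(\Sc),\,Q\subset P}\sigma(Q)\le\sum_{R\in\B,\,R\subset P}\sigma(R)+\sum_{\Sc'\colon Q(\Sc')\subset P}\sigma(Q(\Sc'))\le C\sigma(P)
\end{equation*}
by Lemma~\ref{Coronaellipmeasure.lem}(iii) applied with $R:=P$; allowing for a possible trivial extra contribution of $P$ itself yields the stated bound $(C+1)\sigma(P)$.

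I do not expect either half to be the real difficulty: the red-cube estimate is the familiar ``oscillation is dominated by the square function, which is a Carleson measure'' argument, and the yellow-cube estimate is bookkeeping with the corona structure. The one point that needs genuine care is the combinatorial identification behind the yellow-cube bound --- that $\Yc(\Sc)$ is exactly the set of minimal cubes of $\Sc$ and that $\bigcup_\Sc\F_\Sc$ injects into the union of the maximal cubes and the bad cubes --- which must be read off carefully from the coherency properties as they are guaranteed (after the coherency modification) in the proof of Lemma~\ref{Coronaellipmeasure.lem}. Once that is pinned down, the packing follows immediately.
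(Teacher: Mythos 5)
Your proposal is correct. The red-cube half is essentially the paper's own argument: oscillation on $U_Q^*$ controlled by the averaged gradient on $U_Q^{**}$ via a bounded Harnack chain, giving $\eps^2\sigma(Q)\lesssim\iint_{U_Q^{**}}|\nabla u|^2\delta\,dY$, then bounded overlap of the regions $U_Q^{**}$, containment of $T_P^{**}$ in a ball of radius $\approx\ell(P)$, and Lemma \ref{realCME.lem}. One small slip: Caccioppoli's inequality is not the tool that converts $\|u-\langle u\rangle\|_{L^2}$ into $\ell(Q)\|\nabla u\|_{L^2}$ (it goes the other way); what is needed, and what the paper uses, is the interior H\"older estimate of Lemma \ref{DGN.lem} together with the Poincar\'e inequality — the local oscillation bound you state is nonetheless correct, so nothing breaks. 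The yellow-cube half is where you genuinely diverge. The paper argues directly: coherency makes the cubes of $\Yc(\Sc)$ pairwise disjoint, and for any yellow $Q\subset P$ with $Q\in\Sc$ either $Q(\Sc)\subset P$ or (by coherency (b)) $P\in\Sc$; summing the first alternative with the packing of the maximal cubes from Lemma \ref{Coronaellipmeasure.lem}(iii) gives $C\sigma(P)$, and the second alternative contributes at most $\sigma(P)$ since $P$ lies in at most one regime, whence $(C+1)\sigma(P)$. You instead pass to the children of the yellow cubes, identify them with the stopping family $\F_\Sc$ (using coherency (c) to see that a yellow cube loses \emph{all} its children), show each such child is either a bad cube or the top cube $Q(\Sc')$ of another regime (coherency (b) plus disjointness of the regimes), check that the families $\F_\Sc$ are disjoint across $\Sc$, and then invoke the combined bad-plus-maximal packing of Lemma \ref{Coronaellipmeasure.lem}(iii). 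All of these structural claims are verified correctly, and your route in fact yields the slightly cleaner bound $C\sigma(P)$ (the ``extra contribution of $P$'' you hedge with is not needed), at the price of a more involved combinatorial identification; the paper's route avoids mentioning $\F_\Sc$ and the bad cubes altogether and is shorter, but the two arguments rest on the same corona packing and are equally valid.
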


\begin{proof}
	The packing condition for $\bigcup_{\Sc} \Yc(\Sc)$ follows from the definition and the facts that the stopping time regimes are coherent and their maximal cubes satisfy a Carleson packing condition. Indeed, each yellow cube $Q \in \Yc(\Sc)$ is contained in $Q(\Sc)$ and by the coherency of $\Sc$, no yellow cube can contain a smaller yellow cube $\widetilde Q\in \Yc(\Sc)$. Thus, the cubes in $\Yc(\Sc)$ are disjoint and we get
	\begin{equation*}
		\sum_{\Sc} \sum_{Q \in \Yc(\Sc), Q \subset P} \sigma(Q)
		\le \sum_{\substack{\Sc: Q \in \Yc(\Sc), Q \subset P, \\ Q(\Sc) \subset P}} \sigma(Q)
		+ \sum_{\substack{\Sc: Q \in \Yc(\Sc), Q \subset P, \\ P \in \Sc}} \sigma(Q) \\
		\le \sum_{\substack{\Sc: Q(\Sc) \subset P}} \sigma(Q(\Sc))
		+ \sum_{\substack{\Sc: P \in \Sc}} \sigma(P).
	\end{equation*}
	Since the collection $\{Q(\Sc)\}_{\Sc}$ satisfies a Carleson packing condition by Lemma \ref{Coronaellipmeasure.lem}, we know that the first sum on the right-hand side is bounded by $C\sigma(P)$. In addition, since the stopping time regimes $\sbf$ are disjoint, we have $P \in \Sc$ for at most one stopping time regime $\Sc$. Thus, the second sum on the right-hand side is bounded by $\sigma(P)$. The desired bound follows from combining these two estimates.
	
	Let us then prove the Carleson packing condition for $\Rc$. If $Q \in \Rc$, then there exist points $X_1, X_2 \in U_Q^*$ such that $|u(X_1) - u(X_2)| > \eps$. 
	Since $X_1, X_2 \in U_Q^*$, there exist Whitney cubes $I_1, I_2 \subset U_Q^{*}$ such that $X_1 \in (1+\tau)I_1$, $X_2 \in (1+\tau)I_2$ and $\ell(Q) \approx \ell(I_1) \approx \ell(I_2) \approx \delta(X_1) \approx \delta(X_2)$. In particular, we have $\dist(X_1,\partial U_Q^{**}) \approx \dist(X_2,\partial U_Q^{**}) \approx \ell(Q)$ for the twice-fattened region $U_Q^{**}$ (see Section \ref{section:dyadic_and_whitney}), where the implicit constants depend on the dilation parameter $\tau$. Thus, since $U_Q^{**}$ satisfies the Harnack chain condition by Lemma \ref{lemma:hm}, there exists a uniformly bounded number of balls $B_1, B_2, \ldots, B_N$ and points $Y_1, Y_2, \ldots, Y_{N-1}$ such that
	\begin{enumerate}
		\item[$\bullet$] $X_1 \in B_1$, $X_2 \in B_N$ and $Y_i \in B_{i} \cap B_{i+1}$ for every $i = 1,2,\ldots,N-1$,
		
		\item[$\bullet$] for a constant $\lambda > 1$ (depending on $\tau$), we have $2\lambda B_i \subset U_Q^{**}$ for every $i = 1,2,\ldots,N$, and
		
		\item[$\bullet$] $|2\lambda B_i| \approx |U_Q^{**}| \approx \ell(Q)^{n+1}$ for every $i = 1,2,\ldots,N$.
	\end{enumerate}
	These properties combined with the triangle inequality, the local H\"older continuity (that is, Lemma \ref{DGN.lem}) and the Poincar\'e inequality applied for solution $v$, $v(X) \coloneqq u(X) - \fint_{U_Q^{**}} u(Z) \, dZ$, then give us
	\begin{multline*}
		\eps 
		\le |v(X_1) - v(X_2)| 		\le |v(X_1) - v(Y_1)| + \sum_{i=1}^{N-2} |v(Y_i) - v(Y_{i+1})| + |v(Y_N) - v(X_2)| \\
		\lesssim \dashiint_{2\lambda B_1} |v| \, dX + \sum_{i=1}^{N-2} \dashiint_{2\lambda B_{i+1}} |v| \, dX + \dashiint_{2\lambda B_N} |v| \, dX\lesssim \dashiint_{U_Q^{**}} |v| \, dX 
		\lesssim \ell(Q) \Big( \dashiint_{U_Q^{**}} |\nabla u|^2 \, dX \Big)^{1/2}.
	\end{multline*}
	Thus, since $\delta(X) \approx \ell(Q)$ for every $X \in U_Q^{**}$, we have
	\begin{equation*}
		\eps^2 \sigma(Q)
		\approx \ell(Q)^n \eps^2 \lesssim \ell(Q)^n \ell(Q)^2 \dashiint_{U_Q^{**}} |\nabla u(X)|^2 \, dX \\
		\lesssim \iint_{U_Q^{**}} |\nabla u(X)|^2 \ell(Q) \, dX 
		\approx \iint_{U_Q^{**}} |\nabla u(X)|^2 \delta(X) \, dX.
	\end{equation*}
	By construction, we know that the regions $U_P^{**}$ have bounded overlaps and any twice-fattened Carleson box $T_P^{**}$ satisfies $T_P^{**} \subset B(x_P,R_P) \cap \Omega$ the center $x_P$ of $P$ and for a radius $R_P \approx \ell(P)$. Hence, for any $Q_0 \in \dd$, these facts, the previous estimate and the Carleson measure estimate \eqref{realCME.lem} give us
	\begin{equation*}
		\sum_{\substack{P \in \mathcal{R} \\ P \subset Q_0}} \sigma(P)
		\lesssim \eps^{-2}\sum_{\substack{P \in \mathcal{R} \\ P \subset Q_0}}\iint_{U_P^{**}} |\nabla u(X)|^2 \delta(X) \, dX \\
		\lesssim \eps^{-2} \iint_{T^{**}_{Q_0}} |\nabla u(X)|^2 \delta(X) \, dX \lesssim \eps^{-2}\ell(Q_0)^n \approx \eps^{-2}\sigma(Q_0),
	\end{equation*}
	which proves the claim.
\end{proof}
 
\subsection{A stopping time decomposition for the family of blue cubes} We now move to decomposing the collection of blue cubes into more manageable subcollections using a stopping time procedure. A similar idea is utilized  in \cite[p. 2360]{HMM} but, due to our geometry, we use different stopping time conditions and different analysis of the subcollections. Set $\Lc = \Lc(\Sc)$ to be the collection of blue cubes in $\Sc$. We first take the largest blue cube in $\Sc$ with respect to side length (if there is more than one such cube, we just pick one) and denote this cube by $Q(\sbf_1)$. The cube $Q(\sbf_1)$ will be the maximal cube in our first refined stopping time regime $\sbf_1$. We let $\F_{\sbf_1}$ be the collection of cubes $Q \in \Sc \cap \dd_{Q(\sbf_1)} \setminus \{Q(\sbf_1)\}$ such that $Q$ is a maximal cube with respect to having one of the following three properties:
\begin{enumerate}
	\item[(1)] $Q$ or one of its siblings\footnote{Here $Q'$ is a sibling of $Q$ if $Q, Q' \in \dd_k$ and $Q, Q' \subset Q^* \in \dd_{k-1}$, that is, $Q$ and $Q'$ have the same parent. For simplicity, we consider $Q$ to be its own sibling.} is red.
	
	\item[(2)] $Q$ and all of its siblings are blue, but for some $Q'$ that is either $Q$ or a sibling of $Q$ it holds that 
	\[
	|u(X_{Q'}) - u(X_{Q(\sbf_1)})| > \eps/100.
	\] 
	
	\item[(3)] $Q$ is yellow.
\end{enumerate}
Recall that for every $P \in \dd$, the point $X_P$ is a corkscrew point relative to the center $x_P$ at scale $10^{-5} a_0 \ell(P)$, as defined in Section \ref{section:dyadic_and_whitney}. We now set $\sbf_1 \coloneqq \dd_{\F_{\sbf_1}, Q(\sbf_1)}$. By  construction, $\sbf_1$ is a coherent stopping time regime in the sense of Definition \ref{defin:coherency}. Since the cubes in $\sbf_1$ are blue and none of them satisfy the stopping condition (2) above, we know that
\begin{equation}\label{closeinstoptime.eq}
	|u(X) - u(X_{Q(\sbf_1)})| \le \eps/50 \quad \text{ for every } X \in \Omega_{\F_{\sbf_1}, Q(\sbf_1)}^*.
\end{equation}
We now express $\F_{\sbf_1}$ as a union of three collections, 
\begin{align*}
	\F_{\sbf_1} = \F_{\sbf_1}^{\bigR} \cup \F_{\sbf_1}^{\bigSB} \cup \F_{\sbf_1}^{\bigY},
\end{align*}
where $\F_{\sbf_1}^{\bigR}$ contains the cubes for which (1) holds, $\F_{\sbf_1}^{\bigSB}$ contains the cubes for which (2) holds and $\F_{\sbf_1}^{\bigY}$ contains the cubes for which (3) holds. The superscripts stand for ``red'', ``stopping blue'' and ``yellow''. We note that the collections $\F_{\sbf_1}^{\bigR}$ and $\F_{\sbf_1}^{\bigSB}$ are disjoint but $\F_{\sbf_1}^{\bigY}$ may overlap with both of them. 

We now continue this way: we let $Q(\sbf_2)$ be the largest blue cube in $\Sc \setminus \sbf_1$ with respect to side length, we extract the collection of maximal stopping cubes $\F_{Q(\sbf_2)}$ (with an updated stopping condition (2)), we define the coherent stopping regime $\sbf_2$ and the collections $\F_{\sbf_2}^{\bigR}$, $\F_{\sbf_2}^{\bigSB}$ and $\F_{\sbf_2}^{\bigY}$, choose the largest blue cube $Q(\sbf_3)$ in $\Sc \setminus \sbf_1 \cup \sbf_2$, and so on. Since each $\sbf_i$ contains at least the cube $Q(\sbf_i)$, we know that this procedure exhausts $\Lc$ and gives us a disjoint decomposition $\Lc = \cup_j \sbf_j$ where each $\sbf_j$ is a coherent stopping time regime. Just like \eqref{closeinstoptime.eq}, we have the oscillation estimate
\begin{equation}\label{estimate:oscillation_stopping_j}
	|u(X) - u(X_{Q(\sbf_j)})| \le \eps/50 \quad \text{ for every } X \in \Omega_{\F_{\sbf_j}, Q(\sbf_j)}^*
\end{equation}
for every $j$. We also get the collections $\F_{\sbf_j}^{\bigR}$, $\F_{\sbf_j}^{\bigSB}$ and $\F_{\sbf_j}^{\bigY}$ for each $j$.

Our next goal is to show that the maximal cubes $\{Q(\sbf_j)\}_j$ satisfy a Carleson packing condition. This goal is an analog of \cite[Lemma 5.16]{HMM} but since the proof of this lemma is based on the use of an ``$N \lesssim S$'' estimate in sawtooth regions (which is possible in the presence of uniform rectifiability of $\pom)$, this is the part where we significantly depart from \cite{HMM}. Following an idea in \cite{DS1}, we let $\lambda \in (0,10^{-10})$  be a small parameter (to be chosen) and break the stopping times into four groups. We say that $\sbf_j$ is of
\begin{enumerate}
	\item[$\bullet$] \emph{Type 1 (T1)} if  $\sigma(Q(\sbf_j) \setminus \cup_{Q \in \F_{\sbf_j}} Q) \ge \lambda \sigma(Q(\sbf_j))$.\\
	
	\item[$\bullet$] \emph{Type 2 (T2)} if $\sigma(\cup_{Q \in \F_{\sbf_j}^{\bigR}} Q) \ge \lambda \sigma(Q(\sbf_j))$.\\
	
	\item[$\bullet$] \emph{Type 3 (T3)} if $\sigma(\cup_{Q \in \F_{\sbf_j}^{\bigY}} Q) \ge \lambda \sigma(Q(\sbf_j))$.\\
	
	\item[$\bullet$] \emph{Type 4 (T4)} if $\sbf_j$ is not type 1, 2 or 3. 
\end{enumerate}
The Carleson packing condition for the cubes $Q(\sbf_j)$ follows in a straightforward way when $\sbf_j$ is of Type 1, 2 or 3:

\begin{lemma}\label{lemma:packing_for_types_1-3}
	We have the following Carleson packing conditions for the maximal cubes of the subregimes $\sbf_j$ in the decomposition $\Lc(\Sc) = \cup_j \sbf_j$: for any $P \in \dd$, we have
	\begin{align*}
		\sum_{\substack{j \colon \sbf_j \text{ is T1} \\ Q(\sbf_j) \subset P}} \sigma(Q(\sbf_j)) + \sum_{\substack{j \colon \sbf_j \text{ is T3} \\ Q(\sbf_j) \subset P}} \sigma(Q(\sbf_j)) &\lesssim	 \frac{1}{\lambda} \sigma(P),\qquad\text{and,}\\
		\sum_{\substack{j \colon \sbf_j \text{ is T2} \\ Q(\sbf_j) \subset P}} \sigma(Q(\sbf_j)) &\lesssim \frac{1}{\lambda\eps^2} \sigma(P).
	\end{align*}
\end{lemma}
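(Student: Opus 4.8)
The plan is to dispose of the three types T1, T2, T3 one at a time; in each case the idea is to bound $\sigma(Q(\sbf_j))$ from below, up to the factor $\lambda$, by the $\sigma$-measure of a set attached to $Q(\sbf_j)$ that belongs to a family whose Carleson packing is already known, and then to sum over $j$ using a disjointness property of the sub-regimes.

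First I would treat Type 1. For such a $\sbf_j$ put $Z_j := Q(\sbf_j)\setminus\bigcup_{Q\in\F_{\sbf_j}}Q$, so that $\sigma(Z_j)\ge\lambda\,\sigma(Q(\sbf_j))$ by the definition of T1. The point is that the sets $Z_j$ are pairwise disjoint: by the definition $\sbf_j=\dd_{\F_{\sbf_j},Q(\sbf_j)}$, a point of $Z_j$ is one every dyadic ancestor of which inside $Q(\sbf_j)$ lies in $\sbf_j$; hence if a point lay in both $Z_j$ and $Z_k$ with, say, $Q(\sbf_j)\subsetneq Q(\sbf_k)$, the cube $Q(\sbf_j)$ would belong to both $\sbf_j$ and $\sbf_k$, contradicting the disjointness of the sub-regimes. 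Disjointness then gives, for any $P\in\dd$,
\[ \sum_{\substack{j:\ \sbf_j\text{ is T1}\\ Q(\sbf_j)\subset P}}\sigma(Q(\sbf_j))\ \le\ \frac1\lambda\sum_{\substack{j:\ \sbf_j\text{ is T1}\\ Q(\sbf_j)\subset P}}\sigma(Z_j)\ \le\ \frac1\lambda\,\sigma(P). \]

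For Types 2 and 3 the mechanism is $\lambda\,\sigma(Q(\sbf_j))\le\sum_{Q\in\F_{\sbf_j}^{\bigR}}\sigma(Q)$ (resp. $\le\sum_{Q\in\F_{\sbf_j}^{\bigY}}\sigma(Q)$), after which I want to sum these inner sums over $j$ and recognise a packing sum. For T3 this is direct: every $Q\in\F_{\sbf_j}^{\bigY}$ is a yellow cube, hence lies in $\Yc(\Sc)$, and $\bigcup_\Sc\Yc(\Sc)$ satisfies a Carleson packing condition by Lemma \ref{lemma:packing_of_red_and_yellow}; for T2, a cube $Q\in\F_{\sbf_j}^{\bigR}$ is only guaranteed to have a red sibling $\rho(Q)$, but a cube has a uniformly bounded number of siblings (by Ahlfors regularity) so $\sigma(Q)\approx\sigma(\rho(Q))$, and $\Rc$ packs (again Lemma \ref{lemma:packing_of_red_and_yellow}, with constant $\lesssim\eps^{-2}$). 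The step that turns the double sum over $(j,Q)$ into a single packing sum is the combinatorial claim that a fixed dyadic cube $Q$ can belong to $\F_{\sbf_j}$ for at most one index $j$; I would prove this from the construction, noting that once $Q\in\F_{\sbf_{j_0}}$ the whole subtree $\dd_Q$ is removed from $\sbf_{j_0}$, and that, because the maximal blue cubes $Q(\sbf_j)$ are extracted in order of decreasing side length among the cubes not yet assigned, no later $Q(\sbf_j)$ can strictly contain $Q$ (a candidate would either sit strictly between $Q$ and $Q(\sbf_{j_0})$, hence already belong to $\sbf_{j_0}$, or strictly contain $Q(\sbf_{j_0})$, hence have been available and preferred at the earlier stage). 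Granting this, each red (resp. yellow) cube of $\dd_P$ is charged boundedly often, so
\[ \sum_{\substack{j:\ \sbf_j\text{ is T3}\\ Q(\sbf_j)\subset P}}\sigma(Q(\sbf_j))\ \lesssim\ \frac1\lambda\sum_{\substack{Q\in\Yc(\Sc)\\ Q\subset P}}\sigma(Q)\ \lesssim\ \frac1\lambda\,\sigma(P), \qquad \sum_{\substack{j:\ \sbf_j\text{ is T2}\\ Q(\sbf_j)\subset P}}\sigma(Q(\sbf_j))\ \lesssim\ \frac1\lambda\sum_{\substack{Q\in\Rc\\ Q\subset P}}\sigma(Q)\ \lesssim\ \frac1{\lambda\eps^2}\,\sigma(P), \]
and adding the three bounds proves the lemma.

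I expect the only real difficulty to be the two pieces of bookkeeping underlying the construction: the pairwise disjointness of the residual sets $Z_j$, and the fact that a dyadic cube is a stopping cube of at most one sub-regime $\sbf_j$. Everything analytic is already available — the packing of $\Rc$ and of $\bigcup_\Sc\Yc(\Sc)$ in Lemma \ref{lemma:packing_of_red_and_yellow}, and the bounded number of children/siblings — and both combinatorial statements reduce to the coherency of the $\sbf_j$ together with the ``largest available cube first'' rule by which their maximal cubes were chosen.
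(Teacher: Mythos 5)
Your proposal is correct and follows essentially the same route as the paper: Type 1 via pairwise disjointness of the residual sets $Q(\sbf_j)\setminus\bigcup_{Q\in\F_{\sbf_j}}Q$, Type 2 via red siblings and the $\eps^{-2}$-packing of $\Rc$, and Type 3 via the yellow cubes of $\Yc(\Sc)$. The only (harmless) differences are that you invoke the packing of $\bigcup_{\Sc}\Yc(\Sc)$ where the paper uses the disjointness of yellow cubes directly, and that you make explicit the combinatorial fact that a cube lies in at most one $\F_{\sbf_j}$, which the paper leaves implicit.
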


\begin{proof}
	For the regimes of Type 1, we first notice that if $Q(\sbf_i) \subsetneq Q(\sbf_j)$, then $Q(\sbf_i) \subset Q$ for some $Q \in \F_{\sbf_j}$. In particular, the sets $Q(\sbf_j) \setminus \cup_{Q \in \F_{\sbf_j}} Q$ are pairwise disjoint. Thus, by the definition of Type 1, we get
	\begin{align*}
		\sum_{\substack{j \colon \sbf_j \text{ is T1} \\ Q(\sbf_j) \subset P}} \sigma(Q(\sbf_j))
		\le \frac{1}{\lambda} \sum_{\substack{j \colon \sbf_j \text{ is T1} \\ Q(\sbf_j) \subset P}} \sigma(Q(\sbf_j) \setminus \cup_{Q \in \F_{\sbf_j}} Q)
		\le \frac{1}{\lambda} \sigma(P).
	\end{align*}

	For Type 3 regimes, since the cubes $Q \in \F_{\sbf_j}^{\bigY} \subset \Yc(\Sc)$ are yellow cubes in $\Sc$, they are disjoint. Thus, the claim for the regimes of Type 3 follows immediately from definition.
	
	For the regimes of Type 2, we recall that if $Q \in \F_{\sbf_j}^{\bigR}$, then $Q$ is red or one of its siblings is red. In particular, each $Q \in \F_{\sbf_j}^{\bigR}$ has approximately the same measure as some red sibling $R_Q \subset Q(\sbf_j)$ of $Q$. If there is more than one red sibling, we just choose one of them for each $Q \in \F_{\sbf_j}^{\bigR}$. On the other hand, since each cube has only a uniformly bounded number of siblings, for each $Q \in \bigcup_j \F_{\sbf_j}^{\bigR}$ we can have $R_Q = R_{Q'}$ only for a uniformly bounded number of cubes $Q'$.
	 Thus, we get
	\begin{equation*}
		\sum_{\substack{j \colon \sbf_j \text{ is T2} \\ Q(\sbf_j) \subset P}} \sigma(Q(\sbf_j))
		\le \frac{1}{\lambda} \sum_{\substack{j \colon \sbf_j \text{ is T2} \\ Q(\sbf_j) \subset P}} \sum_{Q \in \F_{\sbf_j}^{\bigR}} \sigma(Q)
		\approx \frac{1}{\lambda} \sum_{\substack{j \colon \sbf_j \text{ is T2} \\ Q(\sbf_j) \subset P}} \sum_{Q \in \F_{\sbf_j}^{\bigR}} \sigma(R_Q)\\
		\lesssim \frac{1}{\lambda} \sum_{R \in \Rc, R \subset P} \sigma(R) 
		\lesssim \frac{1}{\lambda\eps^2} \sigma(P),
	\end{equation*}
	where we used Lemma \ref{lemma:packing_of_red_and_yellow} in the final estimate.
\end{proof}

By Lemma \ref{lemma:packing_for_types_1-3}, to show the Carleson packing condition for the collection $\{Q(\sbf_j)\}_j$ it remains only to consider the regimes $\sbf_j$ of Type 4.
\begin{lemma}\label{lemma:packing_for_type_4} There exists $\lambda_0 > 0$ depending only on structural constants, ellipticity, and the $\omega_{L,\Omega}\in A_\infty(\sigma)$ constants, such that for any $\lambda\in(0,\lambda_0)$, there exist constants $C_1, C_2\geq1$ depending only on structural constants, ellipticity, and the $\omega_{L,\Omega}\in A_\infty(\sigma)$ constants (and independent of $\eps$, $\lambda$, and $\Sc$), so that
\begin{align*}
\sum_{\substack{j \colon \sbf_j \text{ is T4} \\ Q(\sbf_j) \subset P}} \sigma(Q(\sbf_j)) &\le\frac{C_1}{\eps^2}\sigma(P).
\end{align*}
for every $P \in \dd$.  In particular, we have
\begin{align*}
\sum_{j \colon Q(\sbf_j) \subset P} \sigma(Q(\sbf_j)) &\le \frac{C_2}{\lambda\eps^2}\sigma(P).
\end{align*}
\end{lemma}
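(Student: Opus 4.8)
The plan is to exploit the defining feature of a Type~4 regime: almost all the mass of $Q(\sbf_j)$ is accounted for by the ``stopping blue'' cubes $\F_{\sbf_j}^{\bigSB}$, and near each such cube the solution $u$ is pinned a definite distance $\gtrsim\eps$ away from the value $u(X_{Q(\sbf_j)})$. Converting these separations into a lower bound for a Green-weighted energy of $u$ over the sawtooth $\Omega^*_{\F_{\sbf_j},Q(\sbf_j)}$ via Lemma~\ref{prelimlem1.lem}, and bounding that energy from above by the classical Carleson measure estimate, will give $\sigma(Q(\sbf_j))\lesssim\eps^{-2}\iint_{\Omega^*_{\F_{\sbf_j},Q(\sbf_j)}}|\nabla u|^2\,\delta\,dX$; summing over Type~4 regimes using bounded overlap of the sawtooths and the classical estimate once more yields the first displayed inequality, and the ``in particular'' assertion follows by adding the Type~1--3 bounds of Lemma~\ref{lemma:packing_for_types_1-3}.

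\emph{Step 1 (mass concentration and pinned oscillation).} If $\lambda<\lambda_0$ (we take $\lambda_0\le\tfrac16$ and shrink it further in Step~2), then for a Type~4 regime $\sbf_j$ the failures of Types~1--3 give $\sigma\bigl(Q(\sbf_j)\setminus\bigcup_{Q\in\F_{\sbf_j}}Q\bigr)$, $\sigma\bigl(\bigcup_{\F_{\sbf_j}^{\bigR}}Q\bigr)$ and $\sigma\bigl(\bigcup_{\F_{\sbf_j}^{\bigY}}Q\bigr)$ each $<\lambda\sigma(Q(\sbf_j))$; since $\F_{\sbf_j}=\F_{\sbf_j}^{\bigR}\cup\F_{\sbf_j}^{\bigSB}\cup\F_{\sbf_j}^{\bigY}$, it follows that $\sigma\bigl(\bigcup_{Q\in\F_{\sbf_j}^{\bigSB}}Q\bigr)\ge(1-3\lambda)\sigma(Q(\sbf_j))\ge\tfrac12\sigma(Q(\sbf_j))$. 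Fix $Q\in\F_{\sbf_j}^{\bigSB}$; its parent $\widehat Q$ lies in $\sbf_j$ (a non-stopping cube inside $Q(\sbf_j)$), hence is blue, and the sibling $Q'$ of $Q$ furnished by condition~(2) with $|u(X_{Q'})-u(X_{Q(\sbf_j)})|>\eps/100$ is a child of $\widehat Q$, so $X_{Q'}$, $X_{\widehat Q}$, $X_Q\in U_{\widehat Q}\subset U_{\widehat Q}^*$ by the construction of the Whitney regions in Section~\ref{section:dyadic_and_whitney}. Since $\widehat Q$ and $Q$ are blue, $\osc_{U_{\widehat Q}^*}u<\eps/1000$ and $\osc_{U_Q^*}u<\eps/1000$, and chaining through $X_{\widehat Q}$ gives $|u(X_Q)-u(X_{Q(\sbf_j)})|>\eps/125$, hence $|u-u(X_{Q(\sbf_j)})|\gtrsim\eps$ throughout $U_Q^*$. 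Using the $\kappa$-machinery of Section~\ref{kappachoice.subsect} (the cylinders $\Xi_Q$, Lemma~\ref{lemma:cylinder_inclusion}, and the fact recorded there that $U_Q^*$ meets $\partial\Omega_{\F,Q_0}$ on a portion of measure $\approx\sigma(Q)$), we choose for each such $Q$ a ``window'' $\Sigma_Q\subset\partial\Omega^*_{\F_{\sbf_j},Q(\sbf_j)}\cap\overline{U_Q^*}$ lying over $Q$, with $\sigma_{\partial\Omega^*_{\F_{\sbf_j},Q(\sbf_j)}}(\Sigma_Q)\approx\sigma(Q)$, with the $\Sigma_Q$ of bounded overlap, and with $|u-u(X_{Q(\sbf_j)})|\gtrsim\eps$ on each $\Sigma_Q$.

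\emph{Step 2 (harmonic measure of the windows and the energy lower bound).} Write $\Omega'\coloneqq\Omega^*_{\F_{\sbf_j},Q(\sbf_j)}$, a uniform domain with Ahlfors regular boundary by Lemma~\ref{lemma:hm}. We claim $\omega^{X_{Q(\sbf_j)}}_{L,\Omega'}(\Sigma_Q)\gtrsim\sigma(Q)/\sigma(Q(\sbf_j))$ for $Q\in\F_{\sbf_j}^{\bigSB}$. Picking a corkscrew point $A_Q$ of $\Omega'$ over $Q$ at scale $\approx\ell(Q)$ (so $\delta_{\Omega'}(A_Q)\approx\delta(A_Q)\approx\ell(Q)$, e.g.\ a point near $X_{\widehat Q}$), Bourgain's estimate and the change-of-pole formula \emph{in} $\Omega'$ (Lemmas~\ref{bour.lem} and~\ref{lm.change}) together with the CFMS estimate in $\Omega'$ (Lemma~\ref{CFMSest.lem}) reduce the claim to $G_{L,\Omega'}(X_{Q(\sbf_j)},A_Q)\gtrsim\delta(A_Q)/\sigma(Q(\sbf_j))$; and this follows by comparing $G_{L,\Omega'}$ with $G_L$, since $A_Q\in U^*_{\widehat Q}\subset\Omega^*_{\F_\Sc,Q(\sbf_j)}$ (because $\widehat Q\in\sbf_j\subset\Sc$ and $\sbf_j\subset\dd_{\F_\Sc,Q(\sbf_j)}$), so Lemma~\ref{GFdeltacomp.lem} gives $G_L(X_{Q(\sbf_j)},A_Q)\approx\delta(A_Q)/\sigma(Q(\sbf_j))$, and a boundary-Harnack comparison of the two Green functions at points lying at distance $\approx\delta(\cdot)$ from $\partial\Omega'$ upgrades this to $G_{L,\Omega'}(X_{Q(\sbf_j)},A_Q)\approx G_L(X_{Q(\sbf_j)},A_Q)$. (Alternatively, transfer $\omega_L\in A_\infty(\sigma)$ to $\Omega'$ and compare $\omega_{L,\Omega'}$ directly with $\sigma_{\partial\Omega'}$, using that for $\lambda$ small the part of $\partial\Omega'$ over $\bigcup_{\F_{\sbf_j}^{\bigSB}}Q$ carries a fixed fraction of $\sigma_{\partial\Omega'}(\partial\Omega')$.) Applying Lemma~\ref{prelimlem1.lem} with this $\Omega'$ and $X_*=X_{Q(\sbf_j)}$ (for $n\ge2$ through an exhaustion of $\Omega'$ by Wiener-regular domains compactly contained in $\Omega$, and for $n=1$ directly since $\Omega'$ is uniform with Ahlfors regular boundary), and combining with Step~1 and the bounded overlap of the $\Sigma_Q$,
\[
\iint_{\Omega'}|\nabla u|^2\,G_{L,\Omega'}(X_{Q(\sbf_j)},Y)\,dY\ \approx\ \int_{\partial\Omega'}\bigl(u-u(X_{Q(\sbf_j)})\bigr)^2\,d\omega^{X_{Q(\sbf_j)}}_{L,\Omega'}\ \gtrsim\ \eps^2\sum_{Q\in\F_{\sbf_j}^{\bigSB}}\frac{\sigma(Q)}{\sigma(Q(\sbf_j))}\ \gtrsim\ \eps^2 .
\]

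\emph{Step 3 (energy upper bound and summation; main obstacle).} By Lemma~\ref{lem5.lem}, $G_{L,\Omega'}\le G_L$, and since $\Omega'\subset\Omega^*_{\F_\Sc,Q(\sbf_j)}$ with $Q(\sbf_j)\in\Sc$, Lemma~\ref{GFdeltacomp.lem} gives $G_L(X_{Q(\sbf_j)},Y)\approx\delta(Y)/\sigma(Q(\sbf_j))$ on $\Omega'$; hence the left-hand side above is $\lesssim\sigma(Q(\sbf_j))^{-1}\iint_{\Omega'}|\nabla u|^2\,\delta\,dX$, so $\sigma(Q(\sbf_j))\lesssim\eps^{-2}\iint_{\Omega^*_{\F_{\sbf_j},Q(\sbf_j)}}|\nabla u|^2\,\delta\,dX$. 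Fix $P\in\dd$. The sawtooths $\Omega^*_{\F_{\sbf_j},Q(\sbf_j)}$ have bounded overlap, being covered by the bounded-overlap family $\{U^*_{Q'}\}_{Q'\in\Lc(\Sc)}$ with the $Q'$ distributed among the pairwise disjoint subfamilies $\sbf_j$; and each is contained in $B(x_P,C\ell(P))\cap\Omega$ when $Q(\sbf_j)\subset P$. Summing and invoking Lemma~\ref{realCME.lem},
\[
\sum_{\substack{j:\ \sbf_j\text{ is T4}\\ Q(\sbf_j)\subset P}}\sigma(Q(\sbf_j))\ \lesssim\ \frac1{\eps^2}\iint_{B(x_P,C\ell(P))\cap\Omega}|\nabla u|^2\,\delta\,dX\ \lesssim\ \frac1{\eps^2}\,\ell(P)^n\ \approx\ \frac{C_1}{\eps^2}\sigma(P),
\]
with $C_1$ structural and $\lambda_0$ the least of the thresholds used in Steps~1--2; adding the Type~1--3 estimates of Lemma~\ref{lemma:packing_for_types_1-3} yields $\sum_{j:\,Q(\sbf_j)\subset P}\sigma(Q(\sbf_j))\lesssim(\lambda^{-1}+\lambda^{-1}\eps^{-2}+C_1\eps^{-2})\sigma(P)\le C_2\lambda^{-1}\eps^{-2}\sigma(P)$. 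The hard part is Step~2: producing over each stopping blue cube $Q$ a boundary window of the sawtooth $\Omega^*_{\F_{\sbf_j},Q(\sbf_j)}$ on which $u$ is pinned away from $u(X_{Q(\sbf_j)})$ and whose harmonic measure (for that sawtooth, from the pole $X_{Q(\sbf_j)}$) is comparable to $\sigma(Q)/\sigma(Q(\sbf_j))$ — this is exactly where the $\kappa$-choice of Section~\ref{kappachoice.subsect}, the corona decomposition (Lemma~\ref{Coronaellipmeasure.lem}) via Lemma~\ref{GFdeltacomp.lem}, and the transfer of Bourgain/CFMS/change-of-pole estimates from $\Omega$ to the sawtooth (plus a Green function comparison) enter decisively, and where one must be careful to apply Lemma~\ref{prelimlem1.lem} to a domain reaching $\pom$ when $n\ge2$.
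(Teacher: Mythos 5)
Your overall architecture (reduce the packing of Type 4 maximal cubes to a Green-energy lower bound over a sawtooth via Lemma \ref{prelimlem1.lem}, bound the energy above by Lemma \ref{GFdeltacomp.lem}, Lemma \ref{lem5.lem} and Lemma \ref{realCME.lem}, then sum using bounded overlap) matches the paper, but Step 2 contains a genuine gap at exactly the point you flag as ``the hard part''. You need, for \emph{each} stopping blue cube $Q$, the lower bound $\omega^{X_{Q(\sbf_j)}}_{L,\Omega'}(\Sigma_Q)\gtrsim\sigma(Q)/\sigma(Q(\sbf_j))$, which after Bourgain, change of poles and CFMS in $\Omega'$ amounts to $G_{L,\Omega'}(X_{Q(\sbf_j)},A_Q)\gtrsim G_{L,\Omega}(X_{Q(\sbf_j)},A_Q)$. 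No tool in the paper gives this: Lemma \ref{lem5.lem} gives only $G_{L,\Omega'}\le G_{L,\Omega}$, and a ``boundary-Harnack comparison of the two Green functions'' does not apply, since $G_{L,\Omega}(X_{Q(\sbf_j)},\cdot)$ does not vanish on $\partial\Omega'\cap\Omega$ (it is comparable to $\delta(\cdot)/\sigma(Q(\sbf_j))$ there), so the two functions are not both solutions vanishing on a common boundary portion. Uniformity of $\Omega'$ only yields a Harnack-chain lower bound with an uncontrolled power of $\ell(Q)/\ell(Q(\sbf_j))$, not the sharp linear rate; the sharp per-window lower bound is a Dahlberg--Jerison--Kenig-type sawtooth statement (equivalently, your parenthetical ``transfer $\omega_L\in A_\infty(\sigma)$ to $\Omega'$''), which is precisely the kind of machinery the paper does not have for arbitrary $L$ on possibly unrectifiable boundaries and deliberately avoids.

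The paper's proof of Lemma \ref{lemma:packing_for_type_4} sidesteps any lower bound on individual windows. It fixes one large surface ball $\Delta_*\subset\partial\Omega_*$ at the top scale and gets $\hm_*(\Delta_*)\ge c_*$ from Bourgain's estimate alone; it covers $\Delta_*$ by dilated windows $M\Delta^*_Q$ over \emph{all} cubes of $\F$ (Lemma \ref{lemma:covering_argument}); and it controls the contribution of the non-SB cubes using only the \emph{upper} bound $\hm_*(\Delta^*_Q)\lesssim\sigma(Q)/\sigma(Q(\sbf_*))$ (which does follow from $G_{L,\Omega_*}\le G_{L,\Omega}$ plus CFMS and the corona property) together with $\sigma(\cup_{\F^{\bigO}}Q)\le4\lambda\,\sigma(Q(\sbf_j))$; choosing $\lambda$ small so that $\hat C\lambda<c_*/2$ leaves $\hm_*$-mass $\ge c_*/2$ on the SB windows, where $|u-u(X_*)|\gtrsim\eps$, and a $5r$-covering argument finishes. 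This is where the threshold $\lambda_0$ really comes from; your choice $\lambda_0\le\tfrac16$ (only used to get $1-3\lambda\ge\tfrac12$) shows the mechanism by which smallness of $\lambda$ enters was not captured. Two further points: the paper works with the \emph{truncated, unfattened} sawtooth $\Omega_{\F_{N,j},Q(\sbf_j)}$ precisely so that it is compactly contained in $\Omega$ and its boundary avoids $\partial\Omega$, as required by Lemma \ref{prelimlem1.lem} (your exhaustion remark does not address that $u$ has no pointwise trace on $\partial\Omega\cap\partial\Omega'$); and the windows are constructed on $\partial\Omega_*$ of the unfattened sawtooth via the points $P_Q(\theta)$ and Lemmas \ref{thetachoice.cl}--\ref{lemma:cylinder_inclusion}, with the $\kappa$-fattening used only to place $\Delta^*_Q$ inside $U^*_{Q'}$ for a sibling $Q'$, so the oscillation estimate uses only blueness of that sibling.
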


Proving Lemma \ref{lemma:packing_for_type_4} is much more delicate than proving Lemma \ref{lemma:packing_for_types_1-3} and we do this in several steps. The key idea is to reduce the proof to proving estimates for which we can use lemmas from Section \ref{section:set-up_for_approximability}.
 
Let us fix a regime $\sbf_j$ that is of Type 4. Since $\sbf_j$ is not of Type 1, 2 or 3, we know that, roughly speaking, at the ``bottom'' of the sawtooth domain $\Omega_{\F_{\sbf_j}, Q(\sbf_j)}$ there is a large region where $u$ has some (uniform) oscillation from the value $u(X_{Q(\sbf_j)})$. Let us be more precise. Since $\sbf_j$ is not of Type 1, we know that $\sigma(Q(\sbf_j) \setminus \cup_{Q \in \F_{\sbf_j}} Q) < \lambda \sigma(Q(\sbf_j)$, and since $\sbf_j$ is not of Type 2 or 3, we have
\begin{align*}
	\sigma\big((\cup_{Q \in \F_{\sbf_j}^{\bigR}} Q)\cup (\cup_{Q \in \F_{\sbf_j}^{\bigY}}Q)\big) < 2\lambda \sigma(Q(\sbf_j)).
\end{align*}
Thus, since $\F_{\sbf_j} = \F_{\sbf_j}^{\bigR} \cup \F_{\sbf_j}^{\bigSB} \cup \F_{\sbf_j}^{\bigY}$, it holds that
\begin{align*}
	\sigma\big(\cup_{Q \in \F_{\sbf_j}^{\bigSB}} Q\big) \ge (1- 3\lambda) \sigma(Q(\sbf_j)).
\end{align*}
Let $N = N(j)$ be so large that the subcollection
\begin{align*}
	\F_{N,j}^{\bigSB} \coloneqq \{Q \in \F_{\sbf_j}^{\bigSB} \colon \ell(Q) > 2^{-N}\ell(Q(\sbf_j))\}
\end{align*}
satisfies 
\begin{equation}\label{BBNbd.eq}
	\sigma\big( \cup_{Q \in\F_{N,j}^{\bigSB}} Q \big) \ge (1- 4\lambda) \sigma(Q(\sbf_j)).
\end{equation}
Our estimates will not depend on $N$ but we work with the subcollection $\F_{N,j}^{\bigSB}$   to avoid dealing with estimates on the boundary. We let $\F_{N,j}$ denote the collection of maximal cubes in the collection $\F_{\sbf_j} \cup \dd_{N, Q(\sbf_j)}$, where $\dd_{N,Q} = \{Q' \in \dd_Q \colon \ell(Q') = 2^{-N}\ell(Q)\}$ as earlier. We note that  $\Omega_{\F_{N,j}, Q(\sbf_j)} \subseteq \Omega_{\F_{\sbf_j},Q(\sbf_j)}$. Then 
\begin{align*}
	\F_{N,j} = \F_{N,j}^{\bigSB} \cup \F_{N,j}^{\bigO}, \ \text{ where } \  \F_{N,j}^{\bigO} \coloneqq \F_{N,j} \setminus \F_{N,j}^{\bigSB},
\end{align*}
where the superscript O in $\F_{N,j}^{\bigO}$ stands for ``other'' cubes. By \eqref{BBNbd.eq} we have 
\begin{align}\label{noBBNbd.eq}
	\sigma\big(\cup_{Q \in\F_{N,j}^{\bigO}} Q\big) = \sum_{Q \in\F_{N,j}^{\bigO}} \sigma(Q)  \le (4\lambda) \sigma(Q(\sbf_j)).
\end{align} 

With the notation above, we can formulate our key estimate for the proof of Lemma \ref{lemma:packing_for_type_4}:

\begin{lemma}\label{lemma:sawtooth_estimate}
	Suppose that $\sbf_j$   is a stopping time regime of Type 4. 	There exists $\lambda_0 > 0$ depending only on structural constants, ellipticity, and the $\omega_{L,\Omega}\in A_\infty(\sigma)$ constants, such that for any $\lambda\in(0,\lambda_0)$, there exists a constant $C_3\geq1$ depending only on structural constants, ellipticity, and the $\omega_{L,\Omega}\in A_\infty(\sigma)$ constants (and independent of $\eps$, $\lambda$, $N$, $j$, and $\Sc$), so that the following estimate holds:
	\begin{align}\label{type3goal.eq}
		\sigma(Q(\sbf_j)) \le \frac{C_3}{\eps^2} \iint_{\Omega_{\F_{N,j}, Q(\sbf_j)}} |\nabla u(Y)|^2 \delta(Y) \, dY.
	\end{align}
\end{lemma}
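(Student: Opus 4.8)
The plan is to apply Lemma \ref{prelimlem1.lem} to the truncated sawtooth $\Omega':=\Omega_{\F_{N,j},Q(\sbf_j)}$ with distinguished point $X_*:=X_{Q(\sbf_j)}$, and then estimate the two sides of the resulting identity. Since every cube in $\dd_{\F_{N,j},Q(\sbf_j)}$ has sidelength at least $2^{-N}\ell(Q(\sbf_j))$, by \eqref{K1def.eq} the domain $\Omega'$ is compactly contained in $\Omega$, and by Lemma \ref{lemma:hm} it is a uniform (hence Wiener-regular) domain with Ahlfors regular boundary; also $X_*\in\Omega'$. Thus Lemma \ref{prelimlem1.lem} gives
\[
\iint_{\Omega'}|\nabla u(Y)|^2\,G_{L,\Omega'}(X_*,Y)\,dY\;\approx\;\int_{\partial\Omega'}\big(u(y)-u(X_*)\big)^2\,d\omega^{X_*}_{L,\Omega'}(y).
\]
For the left-hand side I would combine Lemma \ref{lem5.lem} ($G_{L,\Omega'}(X_*,\cdot)\le G_{L,\Omega}(X_*,\cdot)$ on $\Omega'$) with Lemma \ref{GFdeltacomp.lem}: $\sbf_j$ is a coherent stopping-time regime which inherits property (iv) of Lemma \ref{Coronaellipmeasure.lem} with respect to its own maximal cube $Q(\sbf_j)$ from $\Sc$ via change of poles (Lemma \ref{lm.change}), and $\Omega'\subseteq\Omega^*_{\F_{\sbf_j},Q(\sbf_j)}$, so $G_{L,\Omega}(X_*,Y)\,\sigma(Q(\sbf_j))\approx\delta(Y)$ for all $Y\in\Omega'$. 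Hence the left-hand side is $\lesssim\sigma(Q(\sbf_j))^{-1}\iint_{\Omega_{\F_{N,j},Q(\sbf_j)}}|\nabla u|^2\delta\,dY$, so \eqref{type3goal.eq} follows once we show $\int_{\partial\Omega'}(u-u(X_*))^2\,d\omega^{X_*}_{L,\Omega'}\gtrsim\eps^2$.

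To exhibit oscillation of $u$ on $\partial\Omega'$: since $\sbf_j$ is of Type 4, \eqref{BBNbd.eq} gives $\sigma\big(\bigcup_{Q\in\F_{N,j}^{\bigSB}}Q\big)\ge(1-4\lambda)\sigma(Q(\sbf_j))$, and by the stopping rule (2) each $Q\in\F_{N,j}^{\bigSB}$ has a sibling lying in $\widetilde\F:=\{Q'\in\F_{N,j}^{\bigSB}:|u(X_{Q'})-u(X_{Q(\sbf_j)})|>\eps/100\}$; since siblings are boundedly many with comparable surface measure, $\sigma\big(\bigcup_{Q'\in\widetilde\F}Q'\big)\gtrsim(1-4\lambda)\sigma(Q(\sbf_j))\gtrsim\sigma(Q(\sbf_j))$ once $\lambda$ is small. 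Fix $Q'\in\widetilde\F$ and let $\widehat{Q'}$ be its dyadic parent; then $\widehat{Q'}\in\sbf_j\subset\dd_{\F_{N,j},Q(\sbf_j)}$, so $X_{Q'}\in U_{\widehat{Q'}}\subset\Omega'$ by construction, whereas the bottom of the cylinder $\Xi_{Q'}$ lies outside $\overline{\Omega'}$. Since $\Xi_{Q'}\subset U_{Q'}^*$ by Lemmas \ref{thetachoice.cl} and \ref{lemma:cylinder_inclusion}, the connected set $\Xi_{Q'}$ meets $\partial\Omega'$, so I would pick $y_{Q'}\in\partial\Omega'\cap\Xi_{Q'}$ and set $\Sigma_{Q'}:=\partial\Omega'\cap U_{Q'}^*$, which contains $B(y_{Q'},t_{Q'})\cap\partial\Omega'$ for some $t_{Q'}\approx\ell(Q')$. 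As $Q'$ is blue, $|u-u(X_{Q'})|<\eps/1000$ on all of $U_{Q'}^*$, hence $|u-u(X_*)|>\eps/200$ on $\Sigma_{Q'}$; combining this with the bounded overlap of the regions $\{U_{Q'}^{**}\}$ reduces the problem to proving $\sum_{Q'\in\widetilde\F}\omega^{X_*}_{L,\Omega'}(\Sigma_{Q'})\gtrsim1$.

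This last harmonic-measure bound is where $\Omega'$ must be compared with $\Omega$, and it is the main obstacle. The strategy is to establish, for each stopping cube $Q\in\F_{N,j}$, the comparison $\omega^{X_*}_{L,\Omega'}(\Sigma_Q)\gtrsim\omega^{X_*}_{L,\Omega}(Q)$; granting it, since $\widehat{Q'}\in\sbf_j$, the corona property (iv) together with the doubling property and change of poles gives $\omega^{X_*}_{L,\Omega}(Q')\approx\omega^{X_*}_{L,\Omega}(\widehat{Q'})\approx\sigma(Q')/\sigma(Q(\sbf_j))$, whence $\sum_{Q'\in\widetilde\F}\omega^{X_*}_{L,\Omega'}(\Sigma_{Q'})\gtrsim\sigma\big(\bigcup_{Q'\in\widetilde\F}Q'\big)/\sigma(Q(\sbf_j))\gtrsim1$, as needed (this also finishes Lemma \ref{lemma:packing_for_type_4}, and then Theorem \ref{theorem:existence_approximators} via the Carleson measure estimate of Lemma \ref{realCME.lem}). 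For the comparison itself, applying the CFMS estimate (Lemma \ref{CFMSest.lem}) in both $\Omega$ and $\Omega'$ together with Lemma \ref{GFdeltacomp.lem} should reduce $\omega^{X_*}_{L,\Omega'}(\Sigma_Q)\gtrsim\omega^{X_*}_{L,\Omega}(Q)$ to a lower bound $G_{L,\Omega'}(X_*,Z_Q)\gtrsim G_{L,\Omega}(X_*,Z_Q)$ at a corkscrew point $Z_Q$ of $\Omega'$ lying in the ``window'' of $\partial\Omega'$ over $Q$, a point at distance $\approx\ell(Q)$ from both $\partial\Omega$ and $\partial\Omega'$; since $G_{L,\Omega'}\le G_{L,\Omega}$ always (Lemma \ref{lem5.lem}), the content is that passing from $\Omega$ to the sawtooth $\Omega'$ does not cost a definite fraction of the Green function at $Z_Q$ — which should follow from a maximum-principle argument exploiting that $\partial\Omega'$ near $Z_Q$ sits at height $\approx\ell(Q)$, so that $Z_Q$ only ``sees'' scales $\gtrsim\ell(Q)$, on which $\Omega$ and $\Omega'$ agree — or, alternatively, by transferring the corona structure to $\Omega'$ and invoking the $\Omega'$-analogue of Lemma \ref{GFdeltacomp.lem}. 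I expect this Green-function comparison to be the crux of the whole argument.
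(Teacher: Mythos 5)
Your first reduction is exactly the paper's: apply Lemma \ref{prelimlem1.lem} in the truncated sawtooth $\Omega_*=\Omega_{\F_{N,j},Q(\sbf_j)}$ with pole $X_*=X_{Q(\sbf_j)}$, use Lemma \ref{lem5.lem} and Lemma \ref{GFdeltacomp.lem} (applied with $\Sc$ and $Q=Q(\sbf_j)$, or, as you do, with $\sbf_j$ after transferring property (iv) by Lemma \ref{lm.change} -- both are fine) to replace $G_{L,\Omega}(X_*,\cdot)\sigma(Q(\sbf_j))$ by $\delta(\cdot)$, so that \eqref{type3goal.eq} reduces to the oscillation bound $\int_{\partial\Omega_*}(u-u(X_*))^2\,d\hm_*\gtrsim\eps^2$. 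Your identification of the oscillating boundary pieces inside $U_{Q'}^*$ for the stopping-blue cubes (via Lemmas \ref{thetachoice.cl} and \ref{lemma:cylinder_inclusion} and blueness) also matches \eqref{oscondeltstrq.eq}.

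The gap is in how you propose to give these pieces $\hm_*$-mass. You reduce everything to the per-cube \emph{lower} bound $\hm_*(\Sigma_Q)\gtrsim\omega^{X_*}_{L,\Omega}(Q)$, i.e.\ to a lower Green function comparison $G_{L,\Omega_*}(X_*,Z_Q)\gtrsim G_{L,\Omega}(X_*,Z_Q)$ at a corkscrew point of the window. Lemma \ref{lem5.lem} only gives the opposite inequality, and the heuristic you offer (that $Z_Q$ ``only sees scales $\gtrsim\ell(Q)$, on which $\Omega$ and $\Omega_*$ agree'') does not work: $G_{L,\Omega_*}(X_*,\cdot)$ near $Z_Q$ is a global quantity depending on all of $\Omega_*$ between the distant pole $X_*$ and $Z_Q$, and carving out the regions below the \emph{other} stopping cubes could a priori deplete it by an uncontrolled factor. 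What you are asking for is a Dahlberg--Jerison--Kenig sawtooth-type comparison between $\hm_*$ and the projection of $\omega_{L,\Omega}^{X_*}$; proving such a statement is a substantial argument in its own right, and the whole design of the paper's proof is to \emph{avoid} it. The paper instead gets the total mass from Bourgain's estimate in the uniform domain $\Omega_*$ (Lemmas \ref{lemma:hm}, \ref{bour.lem}, plus Harnack), namely $\hm_*(\Delta_*)\geq c_*$ for a fixed surface ball $\Delta_*\subset\partial\Omega_*$ above $Q(\sbf_j)$, covers $\Delta_*$ by the windows $M\Delta_Q^*$, $Q\in\F_{N,j}$ (Lemma \ref{lemma:covering_argument}), and then only needs the \emph{upper} bound \eqref{hmprojest1.eq}, $\hm_*(\Delta_Q^*)\lesssim\sigma(Q)/\sigma(Q(\sbf_j))$ -- which does follow from the easy monotonicity $G_{L,\Omega_*}\leq G_{L,\Omega}$ together with CFMS and the corona property -- applied to the small family $\F^{\bigO}$, whose total measure is $\leq4\lambda\sigma(Q(\sbf_j))$ by \eqref{noBBNbd.eq}. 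Choosing $\lambda$ small (this is where $\lambda_0$ in the statement comes from) forces at least $c_*/2$ of the mass of $\Delta_*$ onto windows of $\F^{\bigSB}$ cubes, where the $\eps/200$ oscillation holds. The fact that in your plan the smallness of $\lambda$ plays no role at this stage is a symptom that this key idea is missing; as written, the crucial lower bound is asserted, not proved, so the argument is incomplete.
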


Taking Lemma \ref{lemma:sawtooth_estimate} for granted momentarily, we can prove Lemma \ref{lemma:packing_for_type_4} in a straightforward way:

\begin{proof}[Proof of Lemma \ref{lemma:packing_for_type_4}]
Fix $P \in \dd$. Then we have  
\begin{equation*}
\sum_{\substack{j \colon \sbf_j \text{ is T4} \\ Q(\sbf_j) \subset P}} \sigma(Q(\sbf_j))
\le\frac{C_3}{\eps^2} \sum_{\substack{j \colon \sbf_j \text{ is T4} \\ Q(\sbf_j) \subset P}}  \iint_{\Omega_{\F_{N,j}, Q(\sbf_j)}} |\nabla u(Y)|^2 \delta(Y) \, dY \\
\lesssim \frac{C_3}{\eps^2} \iint_{T_P} |\nabla u(Y)|^2 \delta(Y) \, dY
\lesssim \frac{C_3}{\eps^2} \sigma(P),
\end{equation*}
where we used  Lemma \ref{lemma:sawtooth_estimate},  the fact that the bounded overlap of the regions $U_Q$ and the disjointness of the collections $\sbf_j$ imply that the regions $\Omega_{\F_{N(j),j}, Q(\sbf_j)}$ have bounded overlaps,   the fact that $T_P \subset B(x_P, C\ell(P))$ and Lemma \ref{realCME.lem}. The rest of the claim follows now from Lemma \ref{lemma:packing_for_types_1-3}.
\end{proof}

\subsection{Proof of Lemma \ref{lemma:sawtooth_estimate}: A high oscillation estimate}
Let us then start processing the estimate \eqref{type3goal.eq}. Let $\sbf_j$ be a fixed stopping time regime of Type 4. To relax the notation, we denote
\begin{gather*}
	\sbf_* \coloneqq \dd_{\F_{N,j},Q(\sbf_j)} \subseteq \sbf_j,\qquad
	Q(\sbf_*) \coloneqq Q(\sbf_j),\qquad
	X_* \coloneqq X_{Q(\sbf_j)},\\[2mm]
	\Omega_* \coloneqq \Omega_{\F_{N,j}, Q(\sbf_j)},\qquad
	\F \coloneqq \F_{N,j},\qquad
	\F^{\bigSB} \coloneqq \F_{N,j}^{\bigSB}, \quad \text{ and}\quad
	\F^{\bigO} \coloneqq \F_{N,j}^{\bigO}. 
\end{gather*}
Recall that   $Q(\sbf_*) \in \Sc$ and that $\Sc$ is a coherent stopping time regime satisfying the property (iv) in Lemma \ref{Coronaellipmeasure.lem}.
Thus, by using Lemma \ref{GFdeltacomp.lem}, Lemma \ref{lem5.lem} and Lemma \ref{prelimlem1.lem} in this order, we get
\begin{multline}\label{t3reduc1.eq}
\iint_{\Omega_*} |\nabla u(Y)|^2 \delta(Y) \, dY
		\approx \sigma(Q(\sbf_*))  \iint_{\Omega_*} |\nabla u(Y)|^2 G_{L,\Omega}(X_*, Y) \, dY \\
		\ge \sigma(Q(\sbf_*)) \iint_{\Omega_*} |\nabla u(Y)|^2 G_{L,\Omega_*}(X_*, Y)\, dY 
		\approx \sigma(Q(\sbf_*)) \int_{\pom_*} (u(y) - u(X_*))^2 \, d\hm_*(y),
\end{multline}
where  
\begin{center}
	$\hm_*$ is the elliptic measure for  $L$ in  $\Omega_*$ with pole at $X_*$.
\end{center}
Thus, Lemma \ref{lemma:sawtooth_estimate} follows immediately from the following estimate. Recall that $\lambda$ is the parameter we used when we defined the Types 1--4 for the stopping time regimes.
\begin{lemma}\label{lotsofosc.cl}
	There exists $\lambda_0 > 0$ depending on structural constants, ellipticity, and the $\omega_{L,\Omega}\in A_\infty(\sigma)$ constants, such that for any $\lambda\in(0,\lambda_0)$, there exists a constant $c_4>0$ depending only on   structural constants, ellipticity, and the $\omega_{L,\Omega}\in A_\infty(\sigma)$ constants (and independent of $\eps$, $\lambda$, $N$, $j$, and $\Sc$), so that the following estimate holds:
	\begin{equation}\label{lotsofosceq.eq}\nonumber
		\int_{\pom_*} (u(y) - u(X_*))^2 \, d\hm_*(y) \geq c_4 \eps^2.
	\end{equation}
\end{lemma}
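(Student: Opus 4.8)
The plan is to exploit the fact that $\sbf_j$ is of Type 4, which means that the "stopping blue" cubes $\F^{\bigSB}$ account for almost all of the mass of $Q(\sbf_*)$, while the "other" cubes $\F^{\bigO}$ carry at most $4\lambda\,\sigma(Q(\sbf_*))$ of it. The key point is that on each cube $Q\in\F^{\bigSB}$ there is, by the stopping condition (2), a point where $u$ differs from $u(X_*)$ by more than $\eps/100$; concretely, there is a sibling $Q'$ of $Q$ (possibly $Q$ itself) with $|u(X_{Q'})-u(X_{Q(\sbf_*)})|>\eps/100$, and by the oscillation control \eqref{estimate:oscillation_stopping_j} together with Harnack / De Giorgi--Nash--Moser estimates, $u$ stays $\gtrsim\eps$ away from $u(X_*)$ on a Whitney-sized piece of $\Omega_*$ sitting just above a fixed proportion of $\partial Q$. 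Thus the integrand $(u(y)-u(X_*))^2$ in $\int_{\pom_*}(u(y)-u(X_*))^2\,d\hm_*$ is $\gtrsim \eps^2$ on the part of $\pom_*$ that "faces" $\cup_{Q\in\F^{\bigSB}}Q$. It then remains to show that this part of $\pom_*$ carries a definite fraction of the harmonic measure $\hm_*$; this is where Type 4-ness and the choice of small $\lambda$ enter.

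First I would make the geometric reduction precise. For each $Q\in\F^{\bigSB}_{N,j}$, recall $Q\in\Sc$ and $Q\subseteq Q(\sbf_*)$, and by construction of $\F_{N,j}$ the cube $Q$ is a boundary cube of the sawtooth $\Omega_*$ in the sense that a fattened Whitney region $U_Q^*$ (with $\kappa$ as chosen in Section \ref{kappachoice.subsect}) meets $\pom_*$ on a portion of size $\approx\sigma(Q)$; this is exactly the property for which $\kappa$ was chosen large (Lemmas \ref{thetachoice.cl}, \ref{lemma:cylinder_inclusion}, \ref{lemma:touching_point_surface_ball}). Using the cylinder $\Xi_Q$ and the point $P_Q(\theta_0)$, one identifies a surface ball $\Delta_Q^*\subset\pom_*$ with $\sigma_*(\Delta_Q^*)\approx\ell(Q)^n\approx\sigma(Q)$, lying within a bounded Harnack-chain distance of $X_{Q'}$ (the sibling witnessing the oscillation) and of the interior of $\Omega_*$. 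On such a piece, since $u$ is a solution, $|u-u(X_*)|\ge \eps/200$ say, by combining $|u(X_{Q'})-u(X_*)|>\eps/100$ with the interior oscillation bound \eqref{estimate:oscillation_stopping_j} (which controls $u$ on $\Omega^*_{\F_{\sbf_j},Q(\sbf_*)}\supseteq\Omega_*^*$) and the continuity of $u$ up to $\pom_*$ away from the original boundary — note all these cubes have $\ell(Q)>2^{-N}\ell(Q(\sbf_*))$, so we are uniformly away from $\pom$ and $u$ is genuinely continuous there. Hence
\begin{align*}
\int_{\pom_*}(u(y)-u(X_*))^2\,d\hm_* \gtrsim \eps^2\,\hm_*\Big(\bigcup_{Q\in\F^{\bigSB}_{N,j}}\Delta_Q^*\Big),
\end{align*}
and the task is reduced to bounding $\hm_*\big(\cup_Q\Delta_Q^*\big)$ from below by a structural constant.

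For the harmonic-measure lower bound I would argue by complementation. The boundary $\pom_*$ decomposes, up to the part coming from $\pom$ itself (which is irrelevant because those cubes have side length $\le 2^{-N}\ell(Q(\sbf_*))$ and contribute total $\sigma$-mass $\lesssim \sigma(\cup_{Q\in\F^{\bigO}}Q)$ plus the tiny interior part $Q(\sbf_*)\setminus\cup_{\F_{N,j}}Q$ which by Type-4 is $<\lambda\sigma(Q(\sbf_*))$), into the "good" faces over $\F^{\bigSB}_{N,j}$ and the "bad" faces over $\F^{\bigO}_{N,j}$ together with the top face over $Q(\sbf_*)$. Using Bourgain's estimate (Lemma \ref{bour.lem}) for $\Omega_*$ together with the doubling property (Lemma \ref{lm.doubling}) and the $A_\infty(\sigma)$ property of $\omega_L$ transferred to $\Omega_*$ — note $\Omega_*$ is itself a uniform domain with Ahlfors regular boundary by Lemma \ref{lemma:hm}, and a key subtlety is that we need comparability of $\hm_*$ (the $L$-elliptic measure of the subdomain $\Omega_*$) with $\sigma_*$, which should follow from the corona property (iv) in Lemma \ref{Coronaellipmeasure.lem} since $Q(\sbf_*)\in\Sc$, much as in Lemma \ref{GFdeltacomp.lem} — one shows $\hm_*(\text{bad faces})\lesssim \lambda^{\beta}$ for some $\beta>0$ while $\hm_*(\text{top face})$ is bounded away from $1$. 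The complement must then have $\hm_*$-measure $\ge c>0$; choosing $\lambda_0$ so that $C\lambda_0^\beta<c/2$ gives the result for all $\lambda<\lambda_0$.

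The main obstacle I anticipate is precisely the transfer of the $A_\infty(\sigma)$ / doubling information from $\Omega$ to the subdomain $\Omega_*$ and then its use to estimate $\hm_*$ of the union of the "bad" faces: one cannot simply invoke $\omega_L\in A_\infty(\sigma)$ on $\Omega$ since $\hm_*$ is the elliptic measure of a different domain. The right way around this is to use the comparison $G_{L,\Omega_*}(X_*,\cdot)\le G_{L,\Omega}(X_*,\cdot)$ (Lemma \ref{lem5.lem}) together with Lemma \ref{GFdeltacomp.lem} and the CFMS-type estimate (Lemma \ref{CFMSest.lem}) to express $\hm_*$ of a surface ball on $\pom_*$ in terms of Green's function values, hence in terms of $\sigma$-quantities on $\pom$ via the corona property — but keeping track of all constants and making sure nothing blows up as $N\to\infty$ or as $\sbf_*$ varies will require care. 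Once the harmonic-measure estimate on $\Omega_*$ is in hand, everything else is a matter of combining the pieces.
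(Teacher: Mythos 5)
Your overall architecture matches the paper's: you localize the oscillation $\gtrsim\eps$ on surface balls $\Delta_Q^*\subset\pom_*$ built from the $P_Q(\theta_0)$/$\Xi_Q$ geometry of Section \ref{kappachoice.subsect}, you correctly identify that the crucial estimate is a bound of the form $\hm_*(\Delta_Q^*)\lesssim\sigma(Q)/\sigma(Q(\sbf_*))$ obtained from $G_{L,\Omega_*}\le G_{L,\Omega}$ (Lemma \ref{lem5.lem}), the CFMS estimate in both $\Omega$ and $\Omega_*$, change of poles, and the corona property (iv) of Lemma \ref{Coronaellipmeasure.lem}, and you use the Type-4 smallness of $\F^{\bigO}$ to make the bad contribution $O(\lambda)$ and then choose $\lambda_0$. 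Two small corrections along the way: the pointwise bound $|u-u(X_*)|\gtrsim\eps$ on $\Delta_Q^*$ should come from the blueness of the sibling $Q'$ (since $\Delta_Q^*\subset U_{Q'}^*$ by \eqref{deltstrqinuqstr.eq}, and $U_{Q'}^*$ is \emph{not} part of the sawtooth $\Omega^*_{\F_{\sbf_j},Q(\sbf_j)}$ because $Q'$ is itself a stopping cube), not from \eqref{estimate:oscillation_stopping_j}; and because of the truncation at level $N$ one in fact has $\pom\cap\pom_*=\emptyset$, which is how the paper dispenses with the original boundary.

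The genuine gap is in your ``complementation'' step. You assert that $\hm_*(\text{top face})$ is bounded away from $1$, but this is precisely the nondegeneracy that has to be proved, and nothing in your sketch produces it: the pole $X_*$ sits at scale $\approx\diam(\Omega_*)$, so a priori most of the $\hm_*$-mass could live on the top and lateral parts of $\pom_*$. The paper's way around this is to place a surface ball $\Delta_*=B(X_{**},r_*)\cap\pom_*$ at the point $X_{**}\in\pom_*$ on the segment from the touching point $\hat x_*$ to $X_*$, apply Bourgain's estimate in $\Omega_*$ (legitimate by Lemma \ref{lemma:hm}) plus a Harnack chain to get $\hm_*(\Delta_*)\ge c_*$, and then prove a covering lemma (Lemma \ref{lemma:covering_argument}) showing $\Delta_*\subset\bigcup_{Q\in\F}M\Delta_Q^*$; this is what forces a definite fraction of the mass onto balls associated with $\F$, after which the $\F^{\bigO}$ part is removed by the $O(\lambda)$ bound. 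Moreover, even once the mass is known to sit on $\bigcup_{Q\in\F^{\bigSB}}M\Delta_Q^*$, the oscillation is only known on the undilated balls $\Delta_Q^*$, and the dilates $M\Delta_Q^*$ overlap, so you still need doubling of $\hm_*$ together with a $5R$-covering argument to pass to a disjoint subfamily of $\Delta_Q^*$'s before integrating. Without an argument of this type (or some substitute for it), the lower bound $\hm_*\big(\bigcup_{Q\in\F^{\bigSB}}\Delta_Q^*\big)\gtrsim1$ — and hence the lemma — does not follow from what you have written.
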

For the the proof of Lemma \ref{lotsofosc.cl}, we need some auxiliary constructions and estimates. Recall that $X_* = X_{Q(\sbf_*)}$ is a corkscrew point relative to $Q(\sbf_*)$ at scale $r_* \coloneqq 10^{-5} a_0 \ell(Q(\sbf_*))$ (see Section \ref{kappachoice.subsect}). Let $\hat{x}_* \in \pom$ be a touching point for $X_*$ on $\pom$, that is, $|\hat{x}_* - X_*| = \dist(X_*, \pom)$. For $\xi \in [0,1]$, consider the points $X(\xi) \coloneqq \hat{x}_* + \xi(X_*- \hat{x}_*)$ which lie on the line segment from $\hat{x}_*$ to $X_*$. Since we know that $\hat{x}_* \not \in\overline{\Omega}_*$ and $X_* \in \Omega_*$, there exists $\xi_0 \in (0,1)$ such that  $X(\xi_0) \in \partial \Omega_*$.  Now we set	$X_{**}=X(\xi_0)$, and $\Delta_* \coloneqq B(X_{**}, r_*) \cap \partial\Omega_*$. Since we are working with the truncated collection of stopping cubes $\F = \F_{N,j}$, we know that $\pom \cap \partial\Omega_* = \emptyset$. In particular, $\Delta_* \subset \Omega$. By Lemma \ref{lemma:touching_point_surface_ball}, we know that
\begin{align}\label{deltastartouchpteq.eq}
	\text{if } y \in \Delta_*, \ \ \text{ then } \hat{y} \in \Delta_{Q(\sbf_*)} \subseteq Q(\sbf_*),
\end{align}
where $\hat{y}$ is a touching point for $y$ in  $\Omega$, that is, $|y - \hat{y}| = \dist(y, \pom)$. 

By Lemma \ref{lemma:hm}, we know that $\Omega_*$ is also a uniform domain with Ahlfors regular boundary.\footnote{By Lemma \ref{lemma:hm}, the structural constants of $\Omega_* = \Omega_{\F_{N,j}, Q(\sbf_j)}$ do not depend on $j$ or the truncation parameter $N = N(j)$.} Thus, by Lemma \ref{bour.lem}, we have
\begin{align}\label{estimate:elliptic_measure_omegastar}
	\hm^{\widetilde{X}}_{L, \Omega_*}(\Delta_* ) \gtrsim 1,
\end{align}
where $\widetilde{X}$ is a corkscrew point relative to $X_{**}$ at scale $r_*$ in the domain $\Omega_*$. Recall that by the construction in Section \ref{section:dyadic_and_whitney}, we know that $B(X_*, \delta(X_*)/2) \subset \Omega_*$ and $\diam(\Omega_*) \approx \ell(Q(\sbf_*))$, and we have $\delta(X_*) \approx \ell(Q(\sbf_*)) \approx r_* \approx \delta(\widetilde{X})$ by the definition of $X_*$ and $\widetilde{X}$. Thus, by the Harnack chain property of $\Omega_*$, there exists a Harnack chain of uniformly bounded length from $\widetilde{X}$ to $X_*$ inside $\Omega_*$. Thus, by \eqref{estimate:elliptic_measure_omegastar}, formula \eqref{formula:elliptic_measure} and Lemma \ref{lemma:harnack} (that is, Harnack inequality), there exists a constant $c_* > 0$ that depends only on structural constants such that
\begin{align}\label{hmxstarnondegen.eq}
	\hm_*(\Delta_* ) = \hm^{X_*}_{L, \Omega_*}(\Delta_* ) > c_*.
\end{align}

Next, we will construct a cover of $\Delta_*$ that consists of dilated surface balls on $\pom_*$ associated to the cubes $Q \in \F$. We will construct the cover in such a way that there is oscillation of $u$ on the balls associated to cubes in $\F^{\bigSB}$ and the balls associated to cubes in $\F^{\bigO}$ do not have much $\hm_*$-mass, provided $\lambda$ is sufficiently small. Given $Q\in\bb D$, denote by $\hat x_Q$ a touching point for the corkscrew point $X_Q$. For $\theta \in [0,1]$, recall that we denote $	P_Q(\theta) = \hat{x}_Q + \theta(X_Q - \hat{x}_Q)$, and
by Lemma \ref{thetachoice.cl} we showed that there exists $\theta_0 \in (0,1)$ such that if for some $Q'\in\bb D$ we have $B(P_Q(\theta_0), \tfrac{\gamma\theta_0}{10} r_Q) \cap U_{Q'} \neq \emptyset$,  then $Q' \subset Q$ and $\ell(Q') < \ell(Q)$, where $\gamma$ is the corkscrew constant in Definition \ref{CS.def}.

Fix $Q \in \mathcal{F}$. Then its parent $\widetilde{Q}$ satisfies $\widetilde{Q} \in \dd_{\F, Q(\sbf_*)}$ and hence, by the construction of the Whitney regions in Section \ref{section:dyadic_and_whitney}, we have $P_Q(1) = X_Q \in U_{\widetilde{Q}} \subset \Omega_*$. By Lemma \ref{thetachoice.cl}, we also know that $P_Q(\theta_0) \not \in \Omega_*$. Indeed, otherwise  $P_Q(\theta_0) \in U_{Q'}$ for a cube $Q' \in \dd_{\F, Q(\sbf_*)}$, but by Lemma \ref{thetachoice.cl} we would then have $Q' \subset Q$ with $\ell(Q') < \ell(Q)$. This is impossible since $Q \in \mathcal{F}$. Thus, there exists $\theta' \in (\theta_0, 1)$ such that 
\begin{align*}
  X^*_Q \coloneqq P_Q(\theta') \in \partial \Omega_*.
\end{align*}
We set
\begin{align*}
	\Delta^*_Q \coloneqq B(X^*_Q, \tfrac{\lambda\theta_0}{2} r_Q) \cap \pom_*
	\quad \text{ and } \quad
	M\Delta^*_Q \coloneqq B(X^*_Q, M \tfrac{\lambda\theta_0}{2} r_Q) \cap \pom_*
\end{align*}
for a constant $M \ge 1$ to be chosen momentarily.

Let us describe some of the properties of $\Delta^*_Q$. First, by Lemma \ref{lemma:cylinder_inclusion} and  definition of $\Delta_Q^*$, it holds that 
\begin{equation}\label{deltstrqinuqstr.eq}
	\Delta^*_Q \subseteq \Xi_Q \subseteq U_{Q'}^*
\end{equation}
whenever $Q'$ is a sibling of $Q$, where
\begin{align*}
  \Xi_{Q'} = \bigcup_{\theta \in [\theta_0,1]} B\big(P_{Q'}(\theta), \tfrac{\gamma\theta_0}{10} r_{Q'} \big).
\end{align*}
Next, let us observe that if $Q \in \F^{\bigSB}$, then there exists a sibling $Q'$ of $Q$ such that $Q'$ is blue and $|u(X_{Q'}) - u(X_{Q(\sbf_*)})| > \eps/100$. Thus, for every $Q \in \F^{\bigSB}$, it holds that
\begin{multline}\label{oscondeltstrq.eq}
|u(X) - u(X_*)| \ge |u(X_{Q'}) - u(X_{Q(\sbf_*)})| - |u(X_{Q'}) - u(X)| 
\\  \ge \eps/100 - \eps/1000 \ge \eps/200,
\end{multline}
for every $X \in \Delta^*_Q$ since $Q'$ is blue and $\Delta^*_Q \subset U_{Q'}^*$ by \eqref{deltstrqinuqstr.eq}.

\begin{lemma}\label{lemma:covering_argument}
There exists $M \ge 1$, depending only on structural constants,   such that
\begin{align*}
\Delta_* = B(X_{**}, r_*) \cap \partial\Omega_* \subset \bigcup_{Q \in \mathcal{F}} M\Delta^*_Q.
\end{align*}
\end{lemma}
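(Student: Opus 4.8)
The plan is to fix $y\in\Delta_*$ and exhibit a single cube $Q^\circ\in\F$ with $y\in M\Delta^*_{Q^\circ}$; the cube $Q^\circ$ will be the cube of $\F$ lying ``directly below'' $y$, and $M$ a large constant depending only on the structural constants (and on the by now fixed $\lambda$, as is customary in this part of the argument). The argument is purely geometric and rests on three ingredients: a partition property of $\F$, a confinement property of $\Delta_*$, and a localization of $y$ over $\F$.

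First I would record that $\F=\F_{N,j}$ is a partition of $Q(\sbf_*)$. Indeed, the cubes of $\dd_{N,Q(\sbf_j)}$ tile $Q(\sbf_j)=Q(\sbf_*)$; each such cube is either maximal in $\F_{\sbf_j}\cup\dd_{N,Q(\sbf_j)}$, hence belongs to $\F$, or is contained in some cube of $\F_{\sbf_j}$, which (the cubes of $\F_{\sbf_j}$ being pairwise disjoint) is then itself maximal and so belongs to $\F$; maximality also forces the cubes of $\F$ to be pairwise disjoint. Thus $\bigcup_{Q\in\F}Q=Q(\sbf_*)$. Next, since $X_{**}=P_{Q(\sbf_*)}(\xi_0)$ and $r_*=r_{Q(\sbf_*)}$ we have $\Delta_*\subseteq B\big(P_{Q(\sbf_*)}(\xi_0),r_{Q(\sbf_*)}\big)$, so Lemma \ref{lemma:touching_point_surface_ball} applies: the touching point $\hat y$ of any $y\in\Delta_*$ lies in $Q(\sbf_*)$, and moreover $\delta(y)\le|y-X_{**}|+\delta(X_{**})\lesssim r_*$ and $|y-x_{Q(\sbf_*)}|\lesssim r_*$.

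The main step is to locate $Q^\circ$. Because we work with the truncated family $\F=\F_{N,j}$, we have $\pom\cap\partial\Omega_*=\emptyset$, so $y$ lies on the part of $\partial\Omega_*$ interior to $\Omega$. Since $r_*=10^{-5}a_0\ell(Q(\sbf_*))$ is a small multiple of $\ell(Q(\sbf_*))\approx\diam(\Omega_*)$, the confinement estimates above show that $y$ reaches neither the ``top'' of $\Omega_*$ (which sits at height $\approx\ell(Q(\sbf_*))$) nor the outer lateral boundary of $\Omega_*$ (at lateral distance $\approx\ell(Q(\sbf_*))$ from $x_{Q(\sbf_*)}$). Consequently, using the Whitney decomposition of $\Omega$ and condition (1) in the choice of $K_0$, the point $y$ lies in $\overline{U_Q}$ for some $Q\in\dd_{\F,Q(\sbf_*)}$ with $\delta(y)\approx\ell(Q)$; and $y$ can be a boundary point of $\Omega_*$ there only because some child of $Q$ fails to belong to $\dd_{\F,Q(\sbf_*)}$ (otherwise the Whitney regions of the children of $Q$ would fill a neighborhood of $y$ inside $\Omega_*$), with $y$ lying near that child. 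By the partition property, such a child is an element $Q^\circ\in\F$, and $\ell(Q^\circ)=\tfrac12\ell(Q)\approx\delta(y)$, $\dist(y,Q^\circ)\lesssim\ell(Q)\approx\delta(y)$.

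It remains to bound $|y-X^*_{Q^\circ}|$. Recall $X^*_{Q^\circ}=P_{Q^\circ}(\theta')$ with $\theta'\in(\theta_0,1)$, so $|X^*_{Q^\circ}-\hat x_{Q^\circ}|\le|X_{Q^\circ}-\hat x_{Q^\circ}|=\delta(X_{Q^\circ})\le r_{Q^\circ}$; combining this with $|\hat x_{Q^\circ}-x_{Q^\circ}|<2r_{Q^\circ}$ and $|y-x_{Q^\circ}|\le\dist(y,Q^\circ)+\diam(Q^\circ)\lesssim\ell(Q^\circ)$ yields $|y-X^*_{Q^\circ}|\le C_0\ell(Q^\circ)=\tfrac{C_0}{10^{-5}a_0}\,r_{Q^\circ}$ for a structural constant $C_0$. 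Choosing $M$ large enough that $M\tfrac{\lambda\theta_0}{2}\ge\tfrac{C_0}{10^{-5}a_0}$ and recalling $y\in\partial\Omega_*$, we get $y\in B\big(X^*_{Q^\circ},M\tfrac{\lambda\theta_0}{2}r_{Q^\circ}\big)\cap\partial\Omega_*=M\Delta^*_{Q^\circ}$, which proves the lemma. The delicate point is the localization carried out in the previous paragraph: one must use \emph{simultaneously} that $\Delta_*$ is trapped in a small neighborhood of $\pom$ near the center of $Q(\sbf_*)$ (so the ``outer'' faces of $\partial\Omega_*$ play no role), the Whitney-region conventions governing $\Omega_*$, and the partition property of $\F_{N,j}$, in order to guarantee that every $y\in\Delta_*$ genuinely sits as a cap over some cube of $\F$ at the comparable scale.
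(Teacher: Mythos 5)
Your preliminary steps (that $\F=\F_{N,j}$ partitions $Q(\sbf_*)$, and that by Lemma \ref{lemma:touching_point_surface_ball} the touching point $\hat y$ of any $y\in\Delta_*$ lies in $Q(\sbf_*)$) agree with the paper, but the localization of your cube $Q^\circ$ contains a genuine gap. You pick a sawtooth cube $Q\in\dd_{\F,Q(\sbf_*)}$ with $y\in\overline{U_Q}$ and $\ell(Q)\approx\delta(y)$, and assert that $y$ can lie on $\partial\Omega_*$ only because some child of $Q$ is missing from $\dd_{\F,Q(\sbf_*)}$, ``otherwise the Whitney regions of the children of $Q$ would fill a neighborhood of $y$ inside $\Omega_*$.'' Nothing in the construction supports this: $U_Q\cup\bigcup_{Q'}U_{Q'}$ (union over the children $Q'$ of $Q$) need not contain a neighborhood of every point of $\overline{U_Q}$ (the paper explicitly warns in Section \ref{kappachoice.subsect} that the interaction of Whitney regions with the sawtooth boundary is delicate), and, more importantly, the cube whose absence makes $y$ a boundary point need not be a child of $Q$, nor have side length comparable to $\delta(y)$. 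Concretely, $y$ can sit near the lateral edge of a large stopping cube $R\in\F$ with $\ell(R)\gg\delta(y)$: then the cube $Q$ with $y\in\overline{U_Q}$ is a small cube disjoint from $R$, all of whose children may perfectly well belong to $\dd_{\F,Q(\sbf_*)}$, and $y\in\partial\Omega_*$ because the Whitney regions over $R$ below scale $\ell(R)$ are absent. In that situation there is no cube of $\F$ with $\ell\approx\delta(y)$ within distance $\lesssim\delta(y)$ of $y$, so your $Q^\circ$ does not exist and your final distance estimate has nothing to apply to.

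The paper's proof chooses the cube differently and only needs a one-sided size bound: it takes $Q_{\hat y}\in\F$ to be the cube of the partition containing the touching point $\hat y$ (legitimate by \eqref{deltastartouchpteq.eq}), and shows $|y-\hat y|\le C_\tau K_0\,\ell(Q_{\hat y})$ via \eqref{whereisy.eq}: if $\ell(Q_{\hat y})$ were much smaller than $|y-\hat y|$, a strict ancestor $Q'$ of $Q_{\hat y}$ with $\ell(Q')\approx|y-\hat y|$ and $Q'\subseteq Q(\sbf_*)$ would still belong to $\dd_{\F,Q(\sbf_*)}$ and would give $y\in\interior(U_{Q'})\subset\Omega_*$, contradicting $y\in\partial\Omega_*$. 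No upper bound on $\ell(Q_{\hat y})$ is needed, since $\hat y,\hat x_{Q_{\hat y}}\in Q_{\hat y}$ then yield $|y-X^*_{Q_{\hat y}}|\lesssim r_{Q_{\hat y}}$ in all cases, hence $y\in M\Delta^*_{Q_{\hat y}}$; in the scenario above this is exactly the big cube $R$. Your argument can be repaired by replacing ``a missing child of $Q$'' with ``the cube of $\F$ containing $\hat y$'' and running the ancestor argument through \eqref{whereisy.eq}; your closing distance computation then goes through essentially as written (note also that, since $\Delta^*_Q$ has radius $\tfrac{\lambda\theta_0}{2}r_Q$, the dilation $M$ you produce carries the same implicit dependence on $\lambda$ as in the paper, so its claimed purely structural character requires the same care).
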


\begin{proof}
  Fix $y \in \Delta_* \subset \partial \Omega_*$ and a touching point $\hat{y} \in \pom$ for $y$. Then by \eqref{deltastartouchpteq.eq} we have $\hat{y} \in \Delta_{Q(\sbf_*)}$. Since $X_{**}$ lies on the line segment from the corkscrew point $X_*$ relative to $x_{Q(\sbf_*)}$ at scale $r_*$ to its touching point $\hat{x}_*$, we have $|\hat{x}_{*} - X_{**}| =  \dist(X_{**}, \pom) \le \dist(X_*, \pom) \le r_* = 10^{-5}a_0 \ell(Q(\sbf_*))$. Thus, $y \in B(\hat{x}_{*}, 2r_*) = B(\hat{x}_{Q(\sbf_*)}, 2(10)^{-5}a_0 \ell(Q(\sbf_*)))$. By \eqref{deltastartouchpteq.eq}, we have
  \begin{equation}\label{yclosetoqsbf.eq}
	|y-\hat{y}| = \dist(y, \pom) = \dist(y,Q(\sbf)) \le \ell(Q(\sbf_*)),
  \end{equation}
  and by the definition of $\W_{Q'}(K_0)$ (which we used in the construction of the Whitney regions), for any cube $Q'$ it holds that
  \begin{align}\label{whereisy.eq}
	\text{if } \hat{y} \in Q' \text{ and } C_\tau^{-1} K_0^{-1}|y-\hat{y}| \leq \ell(Q') \leq C_\tau K_0|y-\hat{y}|, \quad \text{ then } y \in \interior(U_{Q'}),
  \end{align}
  where $\tau$ is the dilation parameter in the definition of $U_Q$. Since $\sbf_*$ and $\F$ are the truncated collections, we have $Q(\sbf_*) = \cup_{Q \in \F} Q$. In particular, by Lemma \ref{lemma:touching_point_surface_ball}, we have $\hat{y} \in Q(\sbf_*) = \cup_{Q \in \F} Q$. Let $Q_{\hat{y}} \in \F$ be the cube such that $\hat{y} \in Q_{\hat{y}}$. We now have $\ell(Q_{\hat{y}}) \geq C_\tau^{-1} K_0^{-1} |y - \hat{y}|$   for the same constant $C_\tau$ as in \eqref{whereisy.eq}, since 
  otherwise there exists a cube $Q'$ such that $Q_{\hat{y}} \subset Q' \subseteq Q(\sbf_*)$ with $C^{-1}_\tau K_0^{-1}|y-\hat{y}| \leq \ell(Q') \leq C_\tau K_0|y-\hat{y}|$. This is not possible since \eqref{whereisy.eq} and the fact that $Q_{\hat{y}} \in \F$ would then imply that $y \in \text{int} \, (U_{Q'}) \subset \Omega_*$, but we know that $y \in \partial \Omega_*$. Thus, it holds that
  \begin{align}\label{thereisy.eq}
	\dist(Q_{\hat{y}}, y) = |\hat{y} - y| \leq C_\tau K_0 \ell(Q_{\hat{y}}).
  \end{align}
  Recall that $x_{Q_{\hat{y}}}$ is the center of $Q_{\hat{y}}$ and $X_{Q_{\hat{y}}}^*$ lies on a line segment from a corkscrew point $X_{Q_{\hat{y}}}$ to its touching point $\hat{x}_{Q_{\hat{y}}}$. By Lemma \ref{lemma:touching_point_surface_ball}, we know that $\hat{x}_{Q_{\hat{y}}} \in Q_{\hat{y}}$. Thus, \eqref{thereisy.eq}, the definitions of the points and the fact that $\hat{y},\hat{x}_{Q_{\hat{y}}} \in Q_{\hat{y}}$ give us
  \begin{align*}
    |y - X^*_{Q_{\hat{y}}}| 
    \le |y - \hat{y}| + |\hat{y} - \hat{x}_{Q_{\hat{y}}}| + |\hat{x}_{Q_{\hat{y}}} - X^*_{Q_{\hat{y}}}|
    \lesssim
    K_0 \ell(Q_{\hat{y}}) + \diam(Q_{\hat{y}}) + r_{Q_{\hat{y}}}
    \approx r_{Q_{\hat{y}}}.
  \end{align*}
  In particular, there exists $M \ge 1$ such that
  \begin{align*}
    y \in  M\Delta^*_{Q_{\hat{y}}} \subseteq  \bigcup_{Q \in \mathcal{F}} M\Delta^*_Q,
  \end{align*}
  which is what we wanted.
\end{proof}

Let us then fix $M \ge 1$ as in Lemma \ref{lemma:covering_argument}. By \eqref{hmxstarnondegen.eq}, it holds that
\begin{align}\label{nondegenonMdeltstar.eq}
	\hm_*\Big(\bigcup_{Q \in \mathcal{F}} M\Delta^*_Q \Big) \ge c_*.
\end{align}
Our next goal is to analyze how much the cubes $Q \in \F^{\bigO}$ contribute to \eqref{nondegenonMdeltstar.eq} and then limit this contribution by choosing $\lambda$ in a suitable way. For this, we prove the following bound:
\begin{lemma}
  For any $Q \in \F$, we have
  \begin{align}\label{hmprojest1.eq}
	\hm_*(\Delta^*_Q) \lesssim \frac{\sigma(Q)}{\sigma(Q(\sbf_*))}
  \end{align}
  for an implicit constant depending only on structural constants, ellipticity, and the $\omega_{L,\Omega}\in A_\infty(\sigma)$ constants (and independent of $\lambda$, $\eps$, $N$, $j$, and $\Sc$).
\end{lemma}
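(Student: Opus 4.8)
I would argue as follows. First I reduce to the case $\ell(Q)\le c_0\ell(Q(\sbf_*))$ for a small structural constant $c_0$: otherwise $\sigma(Q)\approx\sigma(Q(\sbf_*))$ by Ahlfors regularity, and the claim is immediate since $\hm_*$ is a probability measure. The plan is then to dominate $\hm_*(\Delta^*_Q)=\hm^{X_*}_{L,\Omega_*}(\Delta^*_Q)$ by the elliptic measure $\hm^{X_*}_{L,\Omega}(Q)$ of the ambient domain, and finally to convert the latter into the surface ratio via the corona structure.

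\textbf{Step 1 (passing to Green's functions).} Recall $\Delta^*_Q=B(X^*_Q,s)\cap\pom_*$ with $s=\tfrac{\lambda\theta_0}{2}r_Q\approx\lambda\ell(Q)$ and $X^*_Q=P_Q(\theta')\in\pom_*$, $\theta'\in(\theta_0,1)$; by \eqref{Pqdist.eq} one has $\dist(X^*_Q,\pom)\approx\ell(Q)$, while $B(X_*,\delta(X_*)/2)\subset\Omega_*$ with $\delta(X_*)\approx\ell(Q(\sbf_*))$, so $X_*\in\Omega_*\setminus B(X^*_Q,2s)$. Since $\Omega_*$ is a uniform domain with Ahlfors regular boundary (Lemma \ref{lemma:hm}) and $s\ll\diam(\pom_*)\approx\ell(Q(\sbf_*))$, I would pick a corkscrew point $\widetilde X$ for $X^*_Q$ at scale $s$ in $\Omega_*$ and combine the CFMS estimate (Lemma \ref{CFMSest.lem}) applied in $\Omega_*$ with the monotonicity of Green's functions (Lemma \ref{lem5.lem}, since $\Omega_*\subseteq\Omega$) to obtain
\[
\hm_*(\Delta^*_Q)\approx s^{n-1}\,G_{L,\Omega_*}(X_*,\widetilde X)\le s^{n-1}\,G_{L,\Omega}(X_*,\widetilde X).
\]

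\textbf{Step 2 (transferring to $X_Q$ and to $\Omega$).} Since $|\widetilde X-X^*_Q|<s\ll\ell(Q)$, one has $\dist(\widetilde X,\pom)\approx\ell(Q)$ and $|\widetilde X-x_Q|\lesssim\ell(Q)$; as $X_Q$ is a corkscrew point for $x_Q$ at scale $10^{-5}a_0\ell(Q)$, the Harnack chain condition in $\Omega$ gives a bounded-length Harnack chain from $\widetilde X$ to $X_Q$ inside a ball $B(x_Q,C\ell(Q))$, which avoids the pole $X_*$ thanks to the reduction $\ell(Q)\le c_0\ell(Q(\sbf_*))$. Harnack's inequality (Lemma \ref{lemma:harnack}, applied to $L^*$) along the chain, for the positive $L^*$-solution $Y\mapsto G_{L,\Omega}(X_*,Y)$, then yields $G_{L,\Omega}(X_*,\widetilde X)\approx G_{L,\Omega}(X_*,X_Q)$. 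Next I would use CFMS in $\Omega$ (again $X_*\notin B(x_Q,2\cdot10^{-5}a_0\ell(Q))$) together with finitely many applications of the local doubling property (Lemma \ref{lm.doubling}), using $\Delta(x_Q,10^{-5}a_0\ell(Q))\subseteq\Delta_Q\subseteq Q\subseteq\Delta(x_Q,a_1\ell(Q))$, to get $G_{L,\Omega}(X_*,X_Q)\approx\ell(Q)^{1-n}\hm^{X_*}_{L,\Omega}(Q)$. Since $(s/\ell(Q))^{n-1}\lesssim1$ with a structural constant independent of $\lambda$, combining with Step 1 gives $\hm_*(\Delta^*_Q)\lesssim\hm^{X_*}_{L,\Omega}(Q)$.

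\textbf{Step 3 (elliptic measure to surface measure).} Because $X_*=X_{Q(\sbf_*)}$ and $Q\subseteq Q(\sbf_*)\subseteq\Delta(x_{Q(\sbf_*)},a_1\ell(Q(\sbf_*)))$, the change of poles (Lemma \ref{lm.change}), combined with Bourgain's estimate (Lemma \ref{bour.lem}) to see $\hm^{X_*}_{L,\Omega}(Q(\sbf_*))\gtrsim1$ and the doubling property, yields $\hm^{X_*}_{L,\Omega}(Q)\approx\hm^{X_{Q(\Sc)}}_{L,\Omega}(Q)\big/\hm^{X_{Q(\Sc)}}_{L,\Omega}(Q(\sbf_*))$ (when $\ell(Q(\sbf_*))\approx\ell(Q(\Sc))$ this comparison follows instead directly from the Harnack chain condition, which also gives $\sigma(Q(\sbf_*))\approx\sigma(Q(\Sc))$). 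Since $Q,Q(\sbf_*)\in\Sc$, property (iv) of Lemma \ref{Coronaellipmeasure.lem} converts the right-hand side into $\sigma(Q)/\sigma(Q(\sbf_*))$, which finishes the proof. The hard part is Step 2: one has to track the three scales $s\approx\lambda\ell(Q)$, $\dist(X^*_Q,\pom)\approx\ell(Q)$ and $\delta(X_*)\approx\ell(Q(\sbf_*))$ precisely enough to legitimately invoke CFMS and the Harnack chain condition in both $\Omega_*$ and $\Omega$, and to ensure the resulting constants depend only on structural and ellipticity data — the $A_\infty$ dependence entering only through the constant $M$ of the corona decomposition used in Step 3.
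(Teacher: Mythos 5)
Your proposal is correct and follows essentially the same route as the paper: CFMS in $\Omega_*$ at a corkscrew point for $X^*_Q$, the monotonicity $G_{L,\Omega_*}\le G_{L,\Omega}$, a bounded Harnack chain to $X_Q$ plus CFMS in $\Omega$, and finally change of poles combined with property (iv) of the corona decomposition. The only differences are cosmetic: you keep the chain away from the pole $X_*$ via the preliminary reduction $\ell(Q)\le c_0\ell(Q(\sbf_*))$ (the paper instead insists $\theta_0$ be smaller), and you track the harmless factor $(s/\ell(Q))^{n-1}\lesssim 1$ explicitly to stress $\lambda$-independence.
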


\begin{proof}
  Let   $Q \in \F$. Recall that $\Delta^*_Q = B(X^*_Q, \tfrac{\lambda\theta_0}{2} r_Q) \cap \pom_*$ is a surface ball on $\pom_*$ with $X^*_Q \in \pom_*$. Since $\Omega_*$ is a uniform domain, it satisfies the corkscrew condition. Let $\widetilde{X}_Q$ be a corkscrew point in $\Omega_*$ relative to $X^*_Q$ at scale approximately $r_Q' = \frac{\lambda\theta_0}{2} r_Q \approx r_Q$. By perhaps insisting that $\theta_0$ is smaller we have that this corkscrew point $\widetilde{X}_Q$ is far from $X_*$. Then by connecting $\widetilde{X}_Q$ to $X_Q$ with a Harnack chain\footnote{Here we use that $\dist(X^*_Q, \pom) \approx \dist(X^*_Q, Q) \approx \ell(Q) \approx r_Q$; with very crude bounds this can be seen from \eqref{deltstrqinuqstr.eq}. Then we also use that $\Omega_* \subset \Omega$, so that $\ell(Q) \approx r_Q' \approx \dist(\widetilde{X}_Q, \pom_*) \le  \dist(\widetilde{X}_Q, \pom)$.}  (of uniformly bounded length) in $\Omega$ and using   Lemma \ref{CFMSest.lem},
\[
G_{L,\Omega}(X_*, \widetilde{X}_Q) \approx G_{L,\Omega}(X_*, X_Q) \lesssim \ell(Q) \frac{\hm^{X_*}_{L,\Omega}(Q)}{\ell(Q)^n}.
\]
Now, by \ref{lem5.lem} we have that $G_{L,\Omega_*}(X_*, \widetilde{X}_Q) \le G_{L,\Omega}(X_*, \widetilde{X}_Q)$, and then by  Lemma \ref{CFMSest.lem} in $\Omega_*$ we conclude that 
\[
\frac{\hm_*(\Delta^*_Q)}{(r'_Q)^n} r'_Q = \frac{\hm^{X_*}_{L, \Omega_*}(\Delta^*_Q)}{(r'_Q)^n} r'_Q \lesssim  G_{L,\Omega_*}(X_*, \widetilde{X}_Q) \le G_{L,\Omega}(X_*, \widetilde{X}_Q).
\]
Combining the two previously displayed inequalities and using that $\ell(Q) \approx r'_Q$ we have $\hm_*(\Delta^*_Q) \lesssim \hm^{X_*}_{L,\Omega}(Q)$. By Lemma \ref{lm.change} and Lemma \ref{Coronaellipmeasure.lem} applied twice for both $Q,Q(\sbf_*)\in\Sc$,    it holds that 
\[
\hm^{X_*}_{L,\Omega}(Q) = \hm^{X_{Q(\sbf_*)}}_{L,\Omega}(Q) \approx\frac{\omega_{L,\Omega}^{X_{Q(\Sc)}}(Q)}{\omega_{L,\Omega}^{X_{Q(\Sc)}}(Q(\sbf_*))}\approx\frac{\sigma(Q)}{\sigma(Q(\Sc))}\frac{\sigma(Q(\Sc))}{\sigma(Q(\sbf_*))}=\frac{\sigma(Q)}{\sigma(Q(\sbf_*))},
\]
which ends the proof of \eqref{hmprojest1.eq}.
\end{proof}

Now we are  ready to conclude the proof of Lemma \ref{lotsofosc.cl}. Using \eqref{hmprojest1.eq} and the doubling property of $\hm_*$ we find that
\begin{equation}\label{FOsmall.eq}
\sum_{Q \in \mathcal{F}^{\bigO}} \hm_*\left( M\Delta^*_Q \right) \leq C \sum_{Q \in \mathcal{F}^{\bigO}} \hm_*\left(\Delta^*_Q \right) \leq C \sum_{Q \in \mathcal{F}^{\bigO}} \frac{\sigma(Q)}{\sigma(Q(\sbf_*))}  \leq \hat C  \lambda, 
\end{equation}
where we used \eqref{noBBNbd.eq} in the last inequality.
Now we choose $\lambda> 0$ so that $\hat C\lambda < c_*/2$ and use \eqref{FOsmall.eq} and \eqref{nondegenonMdeltstar.eq} to deduce
\begin{equation}\label{nondegenonBBMdeltstar.eq}
	\hm_*\Big(\bigcup_{Q \in \mathcal{F}^{\bigSB}} M\Delta^*_Q \Big) \ge c_*/2,
\end{equation}
where we used $\F = \F^{\bigSB} \cup \F^O$.  Now we use the $5R$-covering lemma \cite{Mat} to produce a countable collection of disjoint surface balls $\{M\Delta^*_k\}\coloneqq \{M\Delta^*_{Q_k}\}$ where each $Q_k$ is in  $\F^{\bigSB}$ and such that
\[\bigcup_{Q \in \mathcal{F}^{\bigSB}} M\Delta^*_Q\subset \cup_{k} 5M\Delta^*_k.\]
Then using \eqref{nondegenonBBMdeltstar.eq} and the doubling property of $\hm_*$ it holds
\[\hm_*(\cup_k \Delta^*_k) = \sum_k \hm_*(\Delta^*_k) \gtrsim \sum_{k} \hm_*(5M\Delta^*_k)  \gtrsim \hm_*\Big(\bigcup_{Q \in \mathcal{F}^{\bigSB}} M\Delta^*_Q \Big) \ge c_*/2,\]
where we used that $\Delta^*_k$ are disjoint. To summarize we have produced  a sequence of surface balls $\Delta^*_k = \Delta^*_{Q_k}$ with $Q_k \in \F^{\bigSB}$ such that
\begin{equation}\label{nondegendeltak.eq}
	\hm_*(\cup_k \Delta^*_k)  \ge c_{**},
\end{equation}
where $c_{**}$ depends on dimension, ellipticity, the Ahlfors regularity constant for $\pom$, the corkscrew and Harnack Chain constants for $\Omega$, and the  $\hm_L \in A_\infty(\sigma)$ constants. Thus, using \eqref{oscondeltstrq.eq} we have
\begin{equation*}
	\int_{\pom_*} (u(y) - u(X_*))^2 \, d\hm_*(y) \ge \hm_*(\cup_k \Delta^*_k)\inf\limits_{y \in \cup_k \Delta^*_k}(u(y) - u(X_*))^2
	\\  \ge  c_{**}(\eps/200)^2 \eqqcolon c_4 \eps^2,
\end{equation*}
which proves Lemma \ref{lotsofosc.cl}. 

As we had reduced the proof of the packing of the Type 4 maximal cubes to Lemma \ref{lotsofosc.cl}, this completes the proof of Lemma \ref{lemma:packing_for_type_4}.

\subsection{Construction of $\eps$-approximators} With the help of the previous constructions and estimates, we can prove the existence of $\BV_{\loc}$ $\eps$-approximators in a similar way as in \cite{HMM}. For the convenience of the reader, we recall the key steps of the construction below. For some of the details, we follow the construction of $L^p$-type approximators in \cite{HT1} which are an adaptation of the arguments in \cite{HMM}. Recall that we denote the collection of blue cubes in the disjoint stopping time regimes $\Sc$ in Lemma \ref{Coronaellipmeasure.lem} by $\Lc = \Lc(\Sc)$, and each of these collections has a decomposition $\Lc = \cup_j \sbf_j$.

\begin{proof}[Proof of Theorem \ref{theorem:existence_approximators}]
  Let us fix a dyadic cube $Q_0 \in \dd(\pom)$ and construct an $\eps$-approximator $\Phi_{Q_0} = \Phi_{Q_0}^\eps$ first in the Carleson box $T_{Q_0}$. We start by dividing the Carleson box $T_{Q_0}$ into a few types of different regions where we define the approximator differently. Let us choose the largest good (in the sense of the corona decomposition from Lemma \ref{Coronaellipmeasure.lem}), blue subcube $Q_1 \subset Q_0$ which may be $Q_0$ itself; if there are several such cubes with the largest side length, we choose just one of them. Since $Q_1$ is a good blue cube, there exists a stopping time regime $\Sc_{Q_1}$ in Lemma  \ref{Coronaellipmeasure.lem} and subregime $\sbf_{Q_1} \subset \Lc(\Sc_{Q_1})$ such that $Q_1$ is the maximal element of the regime $\sbf^1 \coloneqq \sbf_{Q_1} \cap \dd_{Q_0}$. We then choose the largest good blue cube $Q_2$ from the collection $\dd_{Q_0} \setminus \sbf^1$. Similarly, $Q_2$ is the maximal element of the regime $\sbf^2 \coloneqq \sbf_{Q_2} \cap \dd_{Q_0}$. We then choose the largest good blue cube $Q_3 \in \dd_{Q_0} \setminus (\sbf^1 \cup \sbf^2)$, and continue like this. This gives us a sequence of good blue cubes $Q_1, Q_2, \ldots$ such that $\ell(Q_1) \ge \ell(Q_2) \ge \ldots$, each cube $Q_i$ is a maximal element of a regime $\sbf^i$ and the collection $\cup_i \sbf^i$ contains all the good blue cubes in $\dd_{Q_0}$. The cubes $Q_i$ are ``mostly'' of the form $Q(\sbf)$ as in the decomposition of the collections $\Lc(\Sc)$ earlier in the sense that there exists a collection of pairwise disjoint cubes $\{P_k\}_k \subset \dd_{Q_0}$ (that may be empty) such that every cube in the collection $\{Q_i\}_i \setminus \{P_k\}_k$ is of the form of $Q(\sbf)$ for some $\sbf$. This is because the cube $Q_0$ is arbitrary and hence, it may be a bad cube or a red cube. For each $i$, we define the ``bottom'' cubes of $\sbf^i$ in the obvious way: we set $\F_{\sbf^i} \coloneqq \F_{\sbf_{Q_i}}$, that is, $\F_{\sbf^i}$ is the collection of the stopping cubes associated to the unique regime $\sbf_{Q_i}$ that contains $Q_i$.
  
  For each $i$, we define the regions $A_i$ recursively the following way:
  \begin{align*}
    A_1 \coloneqq \Omega_{\F_{\sbf^1},Q_1}, \qquad A_i \coloneqq \Omega_{\F_{\sbf^i,Q_i}} \setminus \bigcup_{k=1}^{i-1} A_k \quad \text{ for } i \ge 2.
  \end{align*}
  By construction, the regions $A_i$ are pairwise disjoint. We also set $\Omega_0 \coloneqq \bigcup_i A_i$,   and we define the function $\Phi_0$ on $\Omega_0$ as
  \begin{align*}
    \Phi_0 \coloneqq \sum_i u(X_{Q(\sbf_{Q_i})}) \mathbbm{1}_{A_i},
  \end{align*}
  where $X_{Q(\sbf_{Q_i})}$ is the corkscrew point we used in the stopping conditions in the definition of $\sbf_{Q_i}$. In particular, for any $X \in A_i$ we have $|u(X) - u(X_{Q(\sbf_{Q_i})})| \le \eps/100 < \eps$. Furthermore, by the disjointness of the regions $A_i$, we have $\|u - \Phi_0\|_{L^\infty(\Omega_0)} < \eps$.
  
  Let us then consider the cubes in $\dd_{Q_0} \setminus \cup_i \sbf^i$. Let us fix some enumeration $\{R_j\}_j$ for the cubes $\dd_{Q_0} \setminus \cup_i \sbf^i$. The cubes $R_j$ are red cubes or bad blue cubes. For each $j$, we define the regions $V_j$ recursively the following way:
  \begin{align*}
    V_1 \coloneqq U_{R_1}, \qquad V_j \coloneqq U_{R_j} \setminus \bigcup_{k=1}^{j-1} V_k \quad \text{ for } j \ge 2.
  \end{align*}
  By construction, the regions $V_j$ are pairwise disjoint. We also set $\Omega_1 \coloneqq \bigcup_j V_j$,  and we define the function $\Phi_1$ on $\Omega_1$ as
  \begin{align*}
    \Phi_1(X) \coloneqq \left\{ \begin{array}{cl}
                                  u(X), &\text{ if } X \in V_k \text{ for a red cube } R_k,\\
                                  u(X_k), &\text{ if } X \in V_k \text{ for a blue cube } R_k,
                                \end{array} \right.
  \end{align*}
  where $X_k$ is any fixed point on $U_{R_k}$. By the definitions, we have $\|u - \Phi_1\|_{L^\infty(\Omega_1)} < \eps / 1000 < \eps$.
  
  We define the $\eps$-approximator $\Phi_{Q_0}$ of $u$ in the Carleson box $T_{Q_0}$ as
  \begin{align*}
    \Phi_{Q_0}(X) \coloneqq \left\{ \begin{array}{cl}
                                  \Phi_0(X), &\text{ if } X \in \Omega_0,\\
                                  \Phi_1(X), &\text{ if } X \in T_{Q_0} \setminus \Omega_0.
                                \end{array} \right.
  \end{align*}
  By the construction, we have $\|u - \Phi_{Q_0}\|_{L^\infty(T_{Q_0})} < \eps$. The $L^1$-type Carleson measure estimate for $\Phi_{Q_0}$ in $T_{Q_0}$ can be proven as in \cite{HMM} with small but quite obvious changes. Using a covering argument, the claim can be reduced to proving the estimate on Carleson boxes $T_{Q'}$, and since $u \in L^\infty(\Omega)$, the core challenge is to handle the jumps across the boundaries of the sets $A_i$ and $V_j$ that contribute to the total variation of $\Phi_{Q_0}$ inside $T_{Q_0}$. Since the boundaries of the sawtooth regions, Whitney regions and Carleson boxes are Ahlfors regular by Lemma \ref{lemma:hm}, the estimates reduce to using the Carleson packing conditions in Lemma \ref{Coronaellipmeasure.lem}, Lemma \ref{lemma:packing_of_red_and_yellow}, Lemma \ref{lemma:packing_for_types_1-3} and Lemma \ref{lemma:packing_for_type_4}. The Carleson norm of the measure $\mu_{\Phi_{Q_0}}$ such that $d\mu_{\Phi_{Q_0}}(Y) = |\nabla \Phi_{Q_0}(Y)| \, dY$ is given (up to a structural constant) by the sizes of the Carleson packing norms in these results. We omit the details.
  
  Using these kinds of local approximators, we build the global approximator of $u$. If $\diam(\Omega) < \infty$, it is enough to build a local approximator for a Carleson box that covers the whole space $\Omega$. Thus, we may assume that $\diam(\Omega) = \infty$. Suppose first that $\diam(\pom) < \infty$. Then there exists a dyadic cube $Q_0$ that covers the whole boundary $\pom$. We build the local approximator $\Phi_{Q_0}$ on $T_{Q_0}$, extend it to whole $\Omega$ by setting it to be $0$ outside $T_{Q_0}$ and define the global approximator as $\Phi = \mathbbm{1}_{T_{Q_0}} \Phi_{Q_0} + \mathbbm{1}_{\Omega \setminus T_{Q_0}} u$. The $L^1$-type Carleson measure estimate follows from the same arguments as with the local approximators.
  
  Finally, suppose that $\diam(\pom) = \infty$. Fix a sequence of dyadic cubes $P_k$ such that $P_1 \subset P_2 \subset \cdots$, $\ell(P_1) < \ell(P_2) < \cdots$ and $\pom = \cup_k P_k$. This type of sequence of cubes does not exist in every dyadic system, but we can always construct a system where it exists (see, for example, \cite{hyt-tap}). We build a local approximator $\Phi_{P_k}$ in $T_{P_k}$ for every $k$ and extend the approximators to whole $\Omega$ by setting each of them to be $0$ outside $T_{P_k}$. We then define the global approximator as $\Phi = \mathbbm{1}_{T_{P_1}} \Phi_{P_1} + \sum_{k=2}^\infty \mathbbm{1}_{T_{P_k} \setminus T_{P_{k-1}}} \Phi_{P_k}$. The $L^1$-type Carleson measure estimate follows from the Carleson measure estimates of the local approximators and the fact that the collection $\{P_k\}_k$ satisfies a Carleson packing condition with a uniformly bounded Carleson packing norm depending only on structural constants. Again, we omit the details.
\end{proof}

To finish the proof of Theorem \ref{theorem:approximability}, we regularize the approximators in Theorem \ref{theorem:existence_approximators}. This regularization makes the constant $C_\eps$ significantly larger but since the size of this constant is not important for our results, we do not track its size.

\begin{lemma}\label{lemma:smooth_approximators}
	Let $\eps \in (0,1)$. There exists a unifomly bounded constant $\widetilde{C}_\eps \ge 1$ such that we can choose the $\eps$-approximator $\Phi = \Phi^\eps$ for the solution $u \in W^{1,2}(\Omega) \cap L^\infty(\Omega)$ to $Lu = 0$ in Theorem \ref{theorem:existence_approximators} so that
	\begin{enumerate}
		\item[i)] $\|u - \Phi\|_{L^\infty(\Omega)} \le 2 \eps \|u\|_{L^\infty(\Omega)}$,
		\item[ii)] $\sup_{x \in \pom, r > 0} \frac{1}{r^n} \iint_{B(x,r) \cap \Omega} |\nabla \Phi(Y)|\, dY \le \widetilde{C}_\eps \|u\|_{L^{\infty}(\Omega)},$
		\item[iii)] $\Phi \in C^\infty(\Omega)$,
		\item[iv)] $|\nabla \Phi(Y)| \le \tfrac{\widetilde{C}_\eps}{\delta(Y)}$ for every $Y \in \Omega$,
		\item[v)] if $|X-Y| \ll \delta(X)$, then $|\Phi(X) - \Phi(Y)| \le \tfrac{\widetilde{C}_\eps |X-Y|}{\delta(X)}$,
		\item[vi)] there exists a function $\varphi \in L^\infty(\pom)$ such that
		\begin{align*}
			\lim_{Y \to x, \, \nt} \Phi(Y) = \varphi(x) \text{ for } \sigma\text{-a.e. } x \in \pom.
		\end{align*}
	\end{enumerate}
	The constant $\widetilde{C}_\eps$ depends on $\eps$, the structural constants of $\Omega$, the constant $C_\eps$ in Theorem \ref{theorem:existence_approximators} and the H\"older continuity constants $C$ and $\alpha$ in Lemma \ref{DGN.lem}.
\end{lemma}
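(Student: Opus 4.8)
The plan is to obtain $\Phi$ by regularizing, at the Whitney scale, the $\BV_{\loc}$ approximator supplied by Theorem~\ref{theorem:existence_approximators}. We may assume $\|u\|_{L^\infty(\Omega)}=1$. Let $\Phi_0\in\BV_{\loc}(\Omega)$ be that approximator, so that $\|u-\Phi_0\|_{L^\infty(\Omega)}\le\eps$ and $|\nabla\Phi_0(Y)|\,dY$ is a Carleson measure with norm $\lesssim C_\eps$; in particular $\|\Phi_0\|_{L^\infty(\Omega)}\le1+\eps$. Fix the Whitney decomposition $\W$ of $\Omega$ and a smooth partition of unity $\{\eta_I\}_{I\in\W}$ subordinate to a mild dilation of $\W$, with $|\nabla\eta_I|\lesssim\ell(I)^{-1}$; fix a radial $\phi\in C_c^\infty(B(0,1))$ with $\iint_{\ree}\phi=1$, set $\phi_t(Z)\coloneqq t^{-(n+1)}\phi(Z/t)$, and, for a small parameter $c_0=c_0(\eps)$ to be chosen, define
\[
\Phi(X)\coloneqq\sum_{I\in\W}\eta_I(X)\,(\Phi_0*\phi_{c_0\ell(I)})(X).
\]
On $\supp\eta_I$ the scale $\ell(I)$ is comparable to $\delta(X)$, so each convolution only uses values of $\Phi_0$ in a ball $B(X,Cc_0\delta(X))\subset\Omega$; hence $\Phi$ is well defined and, since it depends smoothly on $X$, we get $\Phi\in C^\infty(\Omega)$, that is, (iii).

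For (i): for fixed $X$ the value $\Phi(X)$ is a convex combination of averages of $\Phi_0$ over balls contained in $B(X,Cc_0\delta(X))$, so $|\Phi(X)-u(X)|\le\|\Phi_0-u\|_{L^\infty(\Omega)}+\sup_{|Z-X|\le Cc_0\delta(X)}|u(Z)-u(X)|\le\eps+C'c_0^\alpha$, where the second term is estimated by the interior Hölder continuity of solutions (Lemma~\ref{DGN.lem}, applied to $u$); choosing $c_0=c_0(\eps,\alpha)$ small yields $\|u-\Phi\|_{L^\infty(\Omega)}\le2\eps$. For (iv)--(v): differentiating in $X$ and using $\sum_I\nabla\eta_I\equiv0$ together with $\iint_{\ree}\nabla_X\!\big[\phi_{c_0\ell(I)}(X-Z)\big]\,dZ=0$, one expresses $\nabla\Phi(X)$ as a sum over the boundedly many cubes $I\ni X$ of terms comparable to $\ell(I)^{-1}$ times the average of $|\Phi_0-\langle\Phi_0\rangle|$ over a ball of radius $\approx\delta(X)$; since $\|\Phi_0\|_{L^\infty(\Omega)}\lesssim1$ this gives $|\nabla\Phi(X)|\lesssim\delta(X)^{-1}$, which is (iv), and (v) follows by integrating (iv) along the segment $[X,Y]$, on which $\delta(\cdot)\approx\delta(X)$ when $|X-Y|\ll\delta(X)$.

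For (ii): the standard mollification estimate for $\BV$ functions, combined with the $\BV$ Poincaré inequality used to control the error coming from $\sum_I\nabla\eta_I$, gives $\iint_I|\nabla\Phi|\lesssim\iint_{CI}|\nabla\Phi_0|$ for each $I\in\W$, where $CI$ is a fixed concentric dilate of $I$; summing over the Whitney cubes that meet $B(x,r)$, using their bounded overlap and the Carleson estimate of Theorem~\ref{theorem:existence_approximators}, yields $\iint_{B(x,r)\cap\Omega}|\nabla\Phi|\lesssim\iint_{B(x,Cr)\cap\Omega}|\nabla\Phi_0|\lesssim C_\eps r^n$, which is (ii) with $\widetilde C_\eps\approx C_\eps$. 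For (vi): $\Phi$ is globally bounded, so $N_*\Phi\in L^\infty(\pom)$; rewriting (ii) in dyadic-cone form via Fubini, namely $\int_{\Delta(x,r)}\big(\iint_{\Gamma(x)}|\nabla\Phi(Y)|\,\delta(Y)^{-n}\,dY\big)\,d\sigma(x)\approx\iint_{B(x,Cr)\cap\Omega}|\nabla\Phi(Y)|\,dY\lesssim C_\eps r^n$ (each $Y$ belongs to $\Gamma(x)$ for $x$ ranging over a surface ball of radius $\approx\delta(Y)$), we obtain $\iint_{\Gamma(x)}|\nabla\Phi(Y)|\,\delta(Y)^{-n}\,dY<\infty$ for $\sigma$-a.e.\ $x\in\pom$. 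By a Cauchy-criterion argument --- joining two points deep in the cone $\Gamma(x)$ by a chain of boundedly many Whitney-scale balls $B$ inside $\Gamma(x)$ (a uniform domain, by Lemma~\ref{lemma:hm}) and summing the oscillations $\delta(Z_B)\fint_B|\nabla\Phi|\lesssim\delta(Z_B)^{-n}\iint_B|\nabla\Phi|$ along the chain --- this finiteness forces $\Phi$ to converge non-tangentially at $\sigma$-a.e.\ $x$ to some $\varphi(x)$, and $|\varphi|\le N_*\Phi$ gives $\varphi\in L^\infty(\pom)$; this is the classical argument of \cite{Var2,Gar-BAF}, adapted as in \cite{HMM,HT1}. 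The constant $\widetilde C_\eps$ produced this way depends only on the quantities listed in the statement.

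The main obstacle is (vi): the existence of non-tangential limits, which requires the Carleson-to-cone Fubini reduction and the chaining/Cauchy argument above, and is the only genuinely non-routine point. The estimate (ii) is also slightly delicate because of the partition-of-unity error terms, but the required ingredients ($\BV$ Poincaré and bounded overlap of Whitney cubes) are standard; and (i), (iii), (iv), (v) are direct consequences of mollifying at the Whitney scale.
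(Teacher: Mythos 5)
Your proposal is correct and follows essentially the same route as the paper: smooth the $\BV_{\loc}$ approximator of Theorem \ref{theorem:existence_approximators} at scale comparable to a small multiple of $\delta(X)$ (you via a Whitney partition of unity with cube-scale mollifiers, the paper via a single mollifier whose scale is $\xi_\eps\beta(X)$ for a regularized distance $\beta\approx\delta$), obtain i) from the interior H\"older continuity of $u$ (Lemma \ref{DGN.lem}) by choosing the mollification parameter small, get ii), iv), v) from the mollification/partition-of-unity cancellations, and deduce vi) from the $L^1$ Carleson bound through the Fubini reduction to a.e.\ finiteness of the cone integral plus the chaining argument in the (uniform, by Lemma \ref{lemma:hm}) dyadic cones, exactly the mechanism the paper imports from \cite{HT2}. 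The only differences are cosmetic: you prove the regularization estimates directly rather than citing \cite[Section 3]{HT2} and \cite[Lemma 4.14]{HT2}, and your sketch of vi) should, in the chaining step, also invoke the pointwise bound iv) to compare point values with ball averages and note that the number of chain balls grows with the ratio of depths (the sum being controlled by the tail of the convergent cone integral), but these are details already contained in the cited classical argument.
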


\begin{proof}
	The proof uses tweaked mollifier techniques combined with a regularized distance function. The properties follow mostly from \cite[Section 3]{HT2} but for the convenience of the reader, we define the core objects and give some explicit details below.
	
	Let $\beta$ be a regularized version of the distance function $\delta = \dist(\cdot,\pom)$, that is, a smooth function in $\Omega$ such that $\beta \approx \delta$ (see \cite[Theorem 2, p. 171]{stein-SIOs}). Let $\zeta \ge 0$ be a smooth non-negative function supported on $B(0,\tfrac{1}{m})$ for a suitable constant $m > 0$ (depending on the implicit constants in $\delta \approx \beta$), satisfying $\zeta \le 1$ and $\int \zeta = 1$.  For a constant $\xi_\eps > 0$ to be chosen momentarily, we set
	\begin{align*}
		\Lambda_{\xi_\eps}(X,Y) \coloneqq \zeta_{\xi_\eps \beta(X)}(X-Y) = \frac{1}{(\xi_\eps \beta(X))^{n+1}} \zeta\Big( \frac{X-Y}{\xi_\eps \beta(X)}\Big).
	\end{align*}
	For a suitable choice of $m$, we have $\text{supp} \, \Lambda(X,\cdot) \subset  B(X,\xi_\eps\delta(X)/2)$. Given the non-smooth $\eps$-approximator $\Phi_0$ of the solution $u \in W^{1,2}_{\loc}(\Omega) \cap L^\infty(\Omega)$ to $Lu = 0$ in Theorem \ref{theorem:existence_approximators}, we set
	\begin{align*}
		\Phi(X) \coloneqq \iint \Lambda_\eps(X,Y) \Phi_0(Y) \, dY.
	\end{align*}
	The property iii) follows from a standard modification of the case $\Omega = \ree$ (for example, see \cite[Theorem 1, p. 123]{EG}) and properties ii), iv) and v) are formulated explicitly in \cite[Section 3]{HT2}. Property i) follows from the local H\"older continuity of $u$ (that is, Lemma \ref{DGN.lem}) and the fact that $\Phi_0$ is an $\eps$-approximator of $u$: for almost every $X \in \Omega$, we get
	\begin{align*}
		|\Phi(X) - u(X)|
		&= \Big| \iint \Lambda_\eps(X,Y) (\Phi_0(Y) - u(X)) \, dY \Big| \\
		&< \eps\|u\|_{L^\infty(\Omega)} + \iint \Lambda_\eps(X,Y) \left| u(Y) - u(X) \right| \, dY \\
		&\le \eps\|u\|_{L^\infty(\Omega)} + \iint \Lambda_\eps(X,Y) \, C \Big(\frac{|X-Y|}{\tfrac{1}{4}\delta(X)} \Big)^\alpha \Big( \fint_{B(X,\tfrac{1}{2}\delta(X))} |u(Z)|^2 \, dZ \Big)^{1/2} \, dY \\
		&\le \eps\|u\|_{L^\infty(\Omega)} + C \left(2\xi_\eps\right)^\alpha \|u\|_{L^\infty(\Omega)} \iint \Lambda_\eps(X,Y) \, dY \le 2\eps \|u\|_{L^\infty(\Omega)}
	\end{align*}
	as long as we choose $\xi_\eps \le \tfrac{1}{2} \big(\tfrac{\eps}{C}\big)^{\frac1{\alpha}}$, where $C$ and $\alpha$ are the H\"older continuity constants in Lemma \ref{DGN.lem}.
	
	Property vi) follows from the same argument that is used in the proof of \cite[Lemma 4.14]{HT2} after some small additional considerations. The proof of \cite[Lemma 4.14]{HT2} is based on showing that almost every cone on a codimension $1$ uniformly rectifiable set has locally exactly two components, these local components satisfy the Harnack chain condition and the Harnack chain condition combined with the $L^1$-type Carleson measure estimate ii) implies the existence of the a.e. non-tangential trace $\varphi$. We do not assume that $\pom$ is uniformly rectifiable but by the definition of dyadic cones \eqref{defin:dyadic_cone} and Lemma \ref{lemma:hm} we know that any truncated cone on $\pom$ has exactly one component inside $\Omega$ and this component satisfies the Harnack chain condition. Thus, the argument in the proof of \cite[Lemma 4.14]{HT2} works also for us. In particular, we can choose $\Phi$ in such a way that all the properties i) -- vi) hold. This completes the proofs of Lemma \ref{lemma:smooth_approximators} and Theorem \ref{theorem:approximability}.
\end{proof}

\section{Proof of Theorem \ref{theorem:converse}}
\label{section:proof_of_converse}

In this section, we prove Theorem \ref{theorem:converse}, that is, we prove that $\eps$-approximability of solutions $u$ to $Lu = 0$ implies that $\omega_L \in A_\infty(\sigma)$. To be more precise, we prove the following seemingly stronger result: 
\begin{theorem}\label{theorem:converse_indicator_data}
	Let $\Omega \subset \ree$, $n\geq1$, be a uniform domain with Ahflors regular boundary, and let $L$ be a divergence form elliptic operator $L = -\div A\nabla$ in $\Omega$. Suppose also that for every bounded Borel set $S \subset \pom$ the solution $u=u_S$ to $Lu = 0$ such that $u(X) = \omega_L^X(S)$ is $\eps$-approximable for every $\eps \in (0,1)$ in the sense of Theorem \ref{theorem:approximability} with the $\eps$-approximability constants depending only on structural constants and $\eps$. Then $\omega_L \in A_\infty(\sigma)$.
\end{theorem}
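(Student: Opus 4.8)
The plan is to deduce from the hypothesis the $L^{2}$-type Carleson measure estimate for the solutions $u_{S}$, and then to invoke the known equivalence of that estimate with the $A_{\infty}$ property. Precisely, by \cite[Theorem~1.1]{CHMT} when $n\ge 2$ (and by the analogous result in \cite{fp} when $n=1$; see the discussion after Corollary \ref{corollary:characterization}), in order to conclude that $\omega_{L}\in A_{\infty}(\sigma)$ it is enough to produce a constant $C\ge 1$, depending only on the structural constants and ellipticity, such that for every bounded Borel set $S\subset\pom$ the solution $u=u_{S}$, $u(X)=\omega_{L}^{X}(S)$, obeys
\begin{equation*}
\sup_{x\in\pom,\ r>0}\ \frac{1}{r^{n}}\iint_{B(x,r)\cap\Omega}|\nabla u(X)|^{2}\,\delta(X)\,dX\ \le\ C ;
\end{equation*}
indeed, the proof of the implication ``CME $\Rightarrow A_{\infty}$'' in \cite{CHMT}/\cite{fp} is carried out precisely by controlling solutions of this special form. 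So fix a small $\eps\in(0,1)$ (any fixed value will do, say $\eps=\tfrac12$, so that $C_{\eps}$ becomes a fixed structural constant), fix such an $S$, put $u=u_{S}$, and let $\Phi=\Phi^{\eps}$ be the corresponding $\eps$-approximator given by the hypothesis, with all the properties (i)--(v) of Theorem \ref{theorem:approximability}.

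The easy step is to observe that $\Phi$ itself already satisfies the $L^{2}$-type Carleson estimate. This follows at once from the $L^{1}$-type estimate for $|\nabla\Phi|$ together with the pointwise bound $|\nabla\Phi|\le C_{\eps}/\delta$: indeed $|\nabla\Phi|^{2}\delta=|\nabla\Phi|\cdot\big(|\nabla\Phi|\,\delta\big)\le C_{\eps}\,|\nabla\Phi|$ pointwise, so $\iint_{B(x,r)\cap\Omega}|\nabla\Phi|^{2}\delta\lesssim C_{\eps}^{2}r^{n}$. (Moreover, by property (v) $\Phi$ has a nontangential trace $\varphi$ $\sigma$-a.e., and comparing with the trace $\mathbbm 1_{S}$ of $u$, which exists $\sigma$-a.e. by Lemma \ref{lemma:solution_bounded_data}, gives $|\varphi-\mathbbm 1_{S}|\le\eps$ $\sigma$-a.e.)

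It remains to transfer the Carleson estimate from $\Phi$ to $u$. After a routine covering argument it is enough to bound $\iint_{T_{Q}}|\nabla u|^{2}\delta$ by $C_{\eps}\sigma(Q)$ for every $Q\in\dd$. Writing $w\coloneqq u-\Phi$ (so $\|w\|_{L^{\infty}}\le\eps$), one works first in truncated boxes $T_{Q}^{(\rho)}\coloneqq\{X\in T_{Q}:\delta(X)>\rho\}$, where all the integrals below are finite, uses $w\,\delta\,\eta^{2}$ as a test function against $Lu=0$ for an appropriate Carleson-box cutoff $\eta$, expands, and lets $\rho\to 0$. The resulting identity expresses $\iint A\nabla u\cdot\nabla u\,\delta\eta^{2}$ as a sum of: a term $\iint A\nabla u\cdot\nabla\Phi\,\delta\eta^{2}$, which by Cauchy--Schwarz and the easy step is $\le\epsilon_{1}\iint_{2T_{Q}}|\nabla u|^{2}\delta+C(\epsilon_{1})C_{\eps}^{2}\sigma(Q)$ and is absorbed; terms carrying factors $\nabla\eta$ or $\nabla\Phi$, which (using $\|w\|_{\infty}\le\eps$, the $L^{1}$-estimate for $\Phi$ and the localization of $\nabla\eta$) are $\lesssim C_{\eps}\sigma(Q)$; boundary contributions at $\{\delta=\rho\}$ that vanish as $\rho\to 0$; and one problematic term of the form $\iint A\nabla u\cdot w\,\eta^{2}\nabla\delta$.

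This last term is where I expect the main difficulty to lie. Here $\nabla\delta$ is only bounded, and since $A$ is merely bounded and measurable no further integration by parts is available to move the derivative off $u$ — indeed one checks that any such attempt is circular. Estimating crudely by $|w|\le\eps$ and $|\nabla\delta|\le 1$ gives only $\eps\iint_{T_{Q}}|\nabla u|$, and this integral genuinely diverges for generic bounded solutions — already for the harmonic extension of the indicator of a half-circle in the unit disk — which is exactly the failure that $\eps$-approximability was designed to circumvent. So the crude bound is useless, and the core of the argument must be a genuine estimate for this term: I expect this is achieved by not treating it in isolation but by re-introducing the fine structure of the problem — running, for $u$ and in comparison with the given $\Phi$, a stopping-time / corona-type decomposition in the spirit of Section \ref{section:proof_of_approximability} — so that the offending integral is reorganized into pieces, each dominated either by an absorbable square-function quantity $\big(\iint|\nabla u|^{2}\delta\big)^{1/2}$ (obtained by pairing $|\nabla u|$ with a $\delta^{1/2}$ weight via Cauchy--Schwarz) or by a Carleson-packing sum, and never by the bare $L^{1}$-norm of $|\nabla u|$. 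Once this is in place, the bound $\iint_{T_{Q}}|\nabla u|^{2}\delta\lesssim C_{\eps}\sigma(Q)$ follows by absorption, and Theorem \ref{theorem:converse_indicator_data} is proved via the reduction of the first paragraph.
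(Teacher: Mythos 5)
Your reduction to an $L^2$-type Carleson measure estimate for the solutions $u_S$ is legitimate in principle (the proof of ``CME $\implies A_\infty$'' in \cite{CHMT}/\cite{fp} only tests the CME on solutions of exactly this form), and your ``easy step'' — that $\Phi$ itself satisfies the $L^2$-CME, via $|\nabla\Phi|^2\delta\le C_\eps|\nabla\Phi|$ — is correct. But the entire content of the theorem is then the transfer from $\Phi$ to $u$, and that is precisely the step you do not prove. After testing $Lu=0$ against $w\,\delta\,\eta^2$ with $w=u-\Phi$, the term $\iint A\nabla u\cdot\nabla\delta\,w\,\eta^2$ survives; as you note, the crude bound gives $\eps\iint_{T_Q}|\nabla u|$, which is infinite for generic bounded solutions, and Cauchy--Schwarz against $\delta^{-1}$ fails because $\iint_{T_Q}\delta^{-1}\,dX$ diverges. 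Your proposed repair — re-running a corona/stopping-time decomposition so that ``the offending integral is reorganized into absorbable pieces'' — is a statement of hope, not an argument: $w$ has no smallness in any derivative sense, no mechanism is offered for why pairing $\nabla u$ against it should produce either a square-function quantity or a Carleson-packing sum, and for merely bounded measurable $A$ there is no known way to make such a direct integration by parts close. So the proposal, as written, has a genuine gap at its central step.

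The paper avoids this transfer entirely and proves the (dyadic) $A_\infty$ condition directly, in the KKoPT/KKiPT style, using the $L^1$ Carleson bound for $\nabla\Phi$ as an \emph{upper} bound and an oscillation construction as a \emph{lower} bound. Fix $Q_0$ and $F\subset Q_0$ with $\omega_L^{X_{Q_0}}(F)\le\alpha\,\omega_L^{X_{Q_0}}(Q_0)$. By the good $\epsilon_0$-cover lemma (Lemma \ref{lemma:existence_eps_covers}) one builds a Borel set $S\subset Q_0$ (a union of the differences $\widetilde{\oo}_{j-1}\setminus\oo_j$) so that $u=\omega_L^{\,\cdot}(S)$ oscillates by a fixed structural constant $c_0$ between consecutive scales of the cover at every $y\in F$, for roughly $\log\alpha^{-1}$ scales (Lemma \ref{lemma:oscillation_lower_bound}). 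Taking the single fixed value $\eps=c_0/4$, the approximator $\Phi$ inherits oscillation at least $c_0/2$ between the corresponding corkscrew points, and Harnack chains inside the Whitney regions together with the Poincar\'e inequality convert this into the lower bound $\iint_{\Gamma_{Q_0}^{\eta}(y)}|\nabla\Phi(Y)|\,\delta(Y)^{-n}\,dY\gtrsim_\eta\log\alpha^{-1}$ for every $y\in F$ (Lemma \ref{lemma:lower_bound_sq}). Integrating this over $F$ and using Fubini with property ii) of Theorem \ref{theorem:approximability} gives the upper bound $\lesssim\eta^{-3n}\sigma(Q_0)$, whence $\sigma(F)/\sigma(Q_0)\lesssim_\eta(\log\alpha^{-1})^{-1}$, which is the dyadic $A_\infty$ condition \eqref{eq.show}. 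If you want to salvage your plan, this is the route to take: the $L^1$ Carleson estimate for $\nabla\Phi$ is used as is, and the telescoping oscillation lower bound replaces the transfer step that your integration by parts cannot deliver.
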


In particular, by Theorem \ref{theorem:approximability} and Theorem \ref{theorem:converse_indicator_data}, $\eps$-approximability of the subclass of solutions $u$ to $Lu = 0$ in Theorem \ref{theorem:converse_indicator_data} is equivalent with $\eps$-approximability of all solutions $u$ to $Lu = 0$ (and hence, it is equivalent with the other conditions in Corollary \ref{corollary:characterization}).

The proof of Theorem \ref{theorem:converse_indicator_data} is based on the proof of \cite[Theorem 1.1]{CHMT}, which itself is based on the techniques used in \cite{KKoPT} and \cite{kkipt}. The key idea is the following. We fix a cube $Q_0$ and a Borel set $F \subset Q_0$ and we build a suitable solution $u = u_F$ associated to $F$ such that $u$ oscillates a significant amount. We then use this oscillation to control the $L^1$ norm of the gradient of an $\eps$-approximator of $u$ from below near $Q_0$ for a very small $\eps$. The $L^1$-type Carleson measure estimate of the $\eps$-approximator then allows us to verify the  $\omega_L\in A_\infty(\sigma)$ condition.

For the proof of Theorem \ref{theorem:converse_indicator_data}, we need some definitions and notation from \cite{CHMT}. For clarity, we adopt most of the notation as it is from \cite{CHMT}. We define the following fattened version of the Whitney regions $U_Q$ and a ``wider'' version of the truncated dyadic cone:
\begin{align*}
	U_{Q,\eta^3} \coloneqq \bigcup_{\substack{Q' \in \dd_Q \\ \ell(Q') > \eta^3 \ell(Q)}}U_{Q'},\qquad \text{ and } \qquad
	\Gamma_{Q_0}^{\eta}(x) \coloneqq \bigcup_{\substack{Q \in \dd_{Q_0} \\ Q \ni x}} U_{Q,\eta^3}.
\end{align*}

The main difference between our approach and the proof of the implication ``CME $\implies$ $A_\infty$'' in \cite{CHMT} is the following lemma which is a modification of \cite[Lemma 3.10]{CHMT}:

\begin{lemma}\label{lemma:lower_bound_sq} There exist $\eps\in(0,1)$, $0<\eta\ll1$, depending only on structural constants, and $\alpha_0\in(0,1)$, $C_\eta\geq1$, both depending on structural constants and on $\eta$, such that for each $Q_0\in\bb D$, for every $\alpha\in(0,\alpha_0)$, and for every   $F\subset Q_0$ satisfying $\omega_L^{X_{Q_0}}(F)\leq\alpha\omega_L^{X_{Q_0}}(Q_0)$, there exists a Borel set $S\subset Q_0$ such that if $u(X)=\omega_L^X(S)$ and $\Phi=\Phi^{\eps}$ is an $\eps$-approximator of $u$, then
\begin{align*}
\iint_{\Gamma_{Q_0}^\eta(y)} |\nabla \Phi(Y)| \delta(Y)^{-n} \, dY \ge C_\eta^{-1} \log(\alpha^{-1}),\qquad\text{for each }y\in F.
\end{align*} 
\end{lemma}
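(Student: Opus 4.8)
The plan is to follow the scheme of \cite[Lemma 3.10]{CHMT} (whose oscillation mechanism goes back to the lacunary‐series examples of \cite{KKoPT}): one first constructs a Borel set $S\subset Q_0$ — by a Bourgain--Wolff‑type stopping time on $\dd_{Q_0}$ adapted to $\omega_L^{X_{Q_0}}$ and to $F$ — for which the solution $u\coloneqq\omega_L^{\cdot}(S)$ oscillates a definite amount at a $\log(\alpha^{-1})$‑long sequence of scales along every cone issuing from a point of $F$; one then transfers this oscillation to the approximator $\Phi$ and integrates it. Note that $u$ is a solution of $Lu=0$ with $0\le u\le 1$, so Theorem~\ref{theorem:approximability} produces a smooth $\eps$‑approximator $\Phi=\Phi^{\eps}$ with constants depending only on $\eps$ and structural constants; throughout I fix $\eps$ to be a small \emph{structural} constant.

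First I would quote the construction from (the proof of) \cite[Lemma 3.10]{CHMT}, run with $\omega_L^{X_{Q_0}}$ in place of harmonic measure — the only ingredients are Bourgain's estimate (Lemma~\ref{bour.lem}), local doubling (Lemma~\ref{lm.doubling}) and change of poles (Lemma~\ref{lm.change}), all available here. This yields, for $\alpha<\alpha_0$ with $\alpha_0$ depending on structural constants and on $\eta$, a set $S\subset Q_0$ and structural constants $c_0\in(0,1)$, $M_0\in\N$ such that for each $y\in F$ there exist an integer $N(y)\ge c_0\log(\alpha^{-1})$ and generations $0\le k_0(y)<k_1(y)<\cdots<k_{N(y)}(y)$ with $k_{i+1}(y)-k_i(y)\le M_0$, so that, writing $Q_i^y\in\dd_{Q_0}$ for the cube with $y\in Q_i^y$ and $\ell(Q_i^y)=2^{-k_i(y)}\ell(Q_0)$, and $X_i^y\coloneqq X_{Q_i^y}$ for its corkscrew point, one has $|u(X_i^y)-u(X_{i+1}^y)|\ge c_0$ for all $i<N(y)$. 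Reproducing this is routine given \cite{CHMT} and is not where the novelty lies.

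Next, since $u$ is continuous and $\|u-\Phi\|_{L^{\infty}(\Omega)}\le\eps<c_0/8$, the pointwise bound $|u-\Phi|\le\eps$ holds on $\Omega$, hence $|\Phi(X_i^y)-\Phi(X_{i+1}^y)|\ge c_0/2$ for every $y\in F$, $i<N(y)$. Fix such $y,i$ and put $\ell\coloneqq\ell(Q_i^y)$. Let $W_i^y$ be the union of the boundedly dilated Whitney cubes meeting a Harnack chain joining $X_i^y$ to $X_{i+1}^y$; since $Q_{i+1}^y\subset Q_i^y$ and $k_{i+1}-k_i\le M_0$, the set $W_i^y$ is a connected union of boundedly many Whitney cubes of side length $\approx\ell$, with $\delta(\cdot)\approx\ell$ on it (using Lemma~\ref{lemma:hm}), and once the structural parameter $\eta$ is small enough one has $W_i^y\subset\Gamma_{Q_0}^{\eta}(y)$ — this is where the smallness of $\eta$ is used. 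The key estimate is a tube‑averaging: pick a path $\gamma$ of length $L\lesssim\ell$ running $c\ell$‑deep inside $W_i^y$ from $X_i^y$ to $X_{i+1}^y$ (with $c$ a small structural constant), and for $|\xi|<c\ell$ let $\gamma_\xi=\gamma+\xi\subset W_i^y$, which joins $X_i^y+\xi$ to $X_{i+1}^y+\xi$. By property~v) of $\Phi$ in Theorem~\ref{theorem:approximability}, $|\Phi(X_i^y+\xi)-\Phi(X_i^y)|\lesssim C_\eps c$ and likewise at $X_{i+1}^y$, so choosing $c$ small enough (depending only on $\eps$, hence structural) gives $\int_{\gamma_\xi}|\nabla\Phi|\ge|\Phi(X_i^y+\xi)-\Phi(X_{i+1}^y+\xi)|\ge c_0/4$. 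Integrating over $|\xi|<c\ell$ and applying Fubini along the tube — the left side equals $\int_0^{L}\big(\iint_{B(\gamma(t),c\ell)}|\nabla\Phi|\big)\,dt\lesssim\ell\iint_{W_i^y}|\nabla\Phi|$ — yields $(c\ell)^{n+1}c_0\lesssim\ell\iint_{W_i^y}|\nabla\Phi|$, and since $\delta\approx\ell$ on $W_i^y$,
\[
\iint_{W_i^y}|\nabla\Phi(Y)|\,\delta(Y)^{-n}\,dY\ \gtrsim\ c_0\,c^{\,n+1},
\]
a positive structural constant.

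Finally, the regions $W_i^y$ ($i<N(y)$) live at the scales $2^{-k_i(y)}\ell(Q_0)$ with $k_{i+1}-k_i\le M_0$, so they have overlap bounded by a structural $C(M_0)$, and all lie in $\Gamma_{Q_0}^{\eta}(y)$; summing the last display,
\[
\iint_{\Gamma_{Q_0}^{\eta}(y)}|\nabla\Phi|\,\delta^{-n}\ \ge\ C(M_0)^{-1}\sum_{i=0}^{N(y)-1}\iint_{W_i^y}|\nabla\Phi|\,\delta^{-n}\ \gtrsim\ \frac{c_0\,c^{\,n+1}}{C(M_0)}\,N(y)\ \ge\ C_\eta^{-1}\log(\alpha^{-1})
\]
for every $y\in F$, which is the claim. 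The main obstacle is the tube estimate: because $\Phi$ is only Lipschitz at the scale of $\delta$ with constant $\approx C_\eps/\delta$ (property~v)), a point value $\Phi(X_i^y)$ may differ by $\approx C_\eps\gg c_0$ from its Whitney‑scale average, so a naive chain‑of‑balls/Poincaré comparison is useless; one must exploit property~v) on \emph{small} balls around the corkscrew points together with the averaging over translated paths above. A secondary bookkeeping point is the choice of the structural threshold $\eta$ (ensuring $W_i^y\subset\Gamma_{Q_0}^{\eta}(y)$) and of $\alpha_0$ (ensuring the Step~1 construction runs for $\gtrsim\log(\alpha^{-1})$ generations).
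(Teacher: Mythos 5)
Your proposal is correct in substance and follows essentially the same route as the paper: the set $S$ is built from a good $\epsilon_0$-cover of $F$ for $\omega_L^{X_{Q_0}}$ exactly as in \cite{CHMT} (Lemmas \ref{lemma:existence_eps_covers} and \ref{lemma:oscillation_lower_bound} here), the oscillation of $u$ is transferred to $\Phi$ by fixing $\eps$ a small multiple of the structural oscillation constant $c_0$, and the cone integral is bounded below by summing Whitney-scale contributions with bounded overlap, producing the factor $\approx_\eta \log(\alpha^{-1})$. Two remarks. First, what \cite[Lemma 3.24]{CHMT} (Lemma \ref{lemma:oscillation_lower_bound}) actually provides is, for each cover level $l$, an oscillation pair of corkscrew points at heights $\approx \eta\,\ell(Q_i^l)$ and $\approx \eta^2\ell(Q_i^l)$ lying in $U_{Q_i^l,\eta^3}$ --- not a sequence of generations $k_0<k_1<\cdots$ with a \emph{structurally} bounded gap $M_0$; the gaps between consecutive cover levels can be arbitrarily large. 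This does not damage your argument: within each pair the two heights are comparable up to an $\eta$-dependent factor, the connecting tube should be taken inside $U_{Q_i^l,\eta^3}$ (a uniform domain by Lemma \ref{lemma:hm}) rather than via a Harnack chain in $\Omega$, which also guarantees the tube stays in $\Gamma^{\eta}_{Q_0}(y)$ even when $y$ is near $\partial Q_0$, and the overlap constant becomes $C(\eta)$ instead of $C(M_0)$ --- harmless since the conclusion allows $C_\eta$. Second, your single-scale step (averaging the fundamental theorem of calculus over translated paths, with the interior gradient/Lipschitz bounds, properties iii)--iv) of Theorem \ref{theorem:approximability}, controlling the endpoint perturbations) is a valid alternative to the paper's mechanism, which connects the two points by a chain of balls and uses the Poincar\'e inequality on connecting cylinders; both convert the pointwise oscillation $|\Phi(\hat X)-\Phi(\hat X')|\gtrsim c_0$ into $\iint_{U_{Q_i^l,\eta^3}}|\nabla \Phi|\,\delta^{-n}\,dY\gtrsim_\eta 1$. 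Finally, within the lemma you should not invoke Theorem \ref{theorem:approximability} to \emph{produce} $\Phi$ (that would be circular in the converse direction); the approximator and its interior bounds are supplied by the hypothesis of Theorem \ref{theorem:converse_indicator_data}, and the lemma is conditional on it.
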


Before proving Lemma \ref{lemma:lower_bound_sq}, let us see how it gives us Theorem \ref{theorem:converse_indicator_data}.

\begin{proof}[Proof of Theorem \ref{theorem:converse_indicator_data}] Following \cite{CHMT}\footnote{Although the results of \cite{CHMT} are stated only for $n\geq2$, the proof of ``(a) $\implies$ (b)'' in \cite[Theorem 1.1]{CHMT} works for a large part also for $n=1$; see \cite{fp}.}, we show that for each $\beta\in(0,1)$, there exists $\alpha\in(0,1)$ such that for every $Q_0\in\bb D$ and every Borel set $F\subset Q_0$, we have that
\begin{equation}\label{eq.show}
\frac{\omega_L^{X_{Q_0}}(F)}{\omega_L^{X_{Q_0}}(Q_0)}\leq\alpha\quad\implies\quad\frac{\sigma(F)}{\sigma(Q_0)}\leq\beta,
\end{equation}
where the constants $\alpha$ and $\beta$ are independent of the choice of the dyadic system $\dd$. This is a dyadic version of the $A_\infty(\sigma)$ condition. Although it looks different than Definition \ref{defin:a_infty}, the conditions are equivalent since we consider doubling measures (see, for example, \cite[Chapter IV, Theorem 2.11]{GCRDF}) and the constants are independent of the system $\dd$ (see \cite[pp. 16]{CHMT} or use adjacent dyadic techniques \cite{hyt-k,hyt-tap}).

Fix $\beta\in(0,1)$ and $Q_0\in\bb D$. Moreover, fix $\eta\in(0,1)$ small enough, and constants $\eps$, $\alpha_0$, and $C_\eta$ as in Lemma \ref{lemma:lower_bound_sq}. Let $F\subset Q_0$ satisfy $\omega_L^{X_0}(F) \le \alpha \omega_L^{X_0}(Q_0)$ with $\alpha\in(0,\alpha_0)$. We now use Lemma \ref{lemma:lower_bound_sq} to see that there exists $S\subset Q_0$ such that if $\Phi^{\eps}$ is an $\eps$-approximator of $u(X)=\omega_L^X(S)$, then
\[
C_{\eta}^{-1}\log(\alpha^{-1})\sigma(F)\leq\int_F\iint_{\Gamma_{Q_0}^\eta(y)} |\nabla \Phi^{\eps}(Y)| \delta(Y)^{-n} \, dY\,d\sigma(y)\leq C\eta^{-3n} \sigma(Q_0),
\]
where the last estimate follows by Fubini's theorem and property ii) of $\eps$-approximability in Theorem \ref{theorem:approximability} (see \cite[pp. 15--16]{CHMT} for more details). We may then choose $\alpha$ small enough depending on $\beta$ so that (\ref{eq.show}) holds.
\end{proof}

We turn to the proof of Lemma \ref{lemma:lower_bound_sq}. For this, we need the following machinery:

\begin{definition}
	Let $Q_0 \in \dd$ be a dyadic cube, $\mu$ a regular Borel measure on $Q_0$, $F \subset Q_0$ be a Borel set, $\epsilon_0 > 0$, and $k \in\bb N$ be fixed. We say that a collection of nested Borel subsets $\{\mathcal{O}_\ell\}_{\ell = 1}^k$ of $Q_0$ is a \emph{good $\epsilon_0$-cover of $F$ of length $k$} for $\mu$ if
	\begin{enumerate}
		\item[(a)] $F \subset \oo_k \subset \oo_{k-1} \subset \cdots \subset \oo_2 \subset \oo_1 \subset Q_0$,
		\item[(b)] $\oo_{\ell} = \bigcup_i Q_i^{\ell}$ for disjoint subcubes $Q_i^\ell \in \dd_{Q_0}$,
		\item[(c)] $\mu(\oo_\ell \cap Q_i^{\ell-1}) \le \epsilon_0 \mu(Q_i^{\ell-1})$ for every $i$ and every $2 \le \ell \le k$.
	\end{enumerate}
\end{definition}

\begin{lemma}[{\cite[Lemma 3.5]{CHMT}}]\label{lemma:existence_eps_covers} Fix $Q_0\in\bb D$ and suppose that $\epsilon_0\in(0,\tfrac{1}{e})$. If $F \subset Q_0$ is a Borel set such that $\omega_L^{X_{Q_0}}(F) \le \alpha \omega_L^{X_{Q_0}}(Q_0)$ for $\alpha\in(0,\epsilon_0^2/C')$, then there exists a good $\epsilon_0$-cover of $F$ of length $k \approx \tfrac{\log \alpha^{-1}}{\log \epsilon_0^{-1}}$ for $\omega_L^{X_{Q_0}}$. Here $C'$ depends only on the constant $C$ of Lemma \ref{lm.doubling}. 
\end{lemma}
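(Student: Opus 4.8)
The plan is to construct the good $\epsilon_0$-cover by a dyadic stopping-time argument on the $\omega$-density of $F$, where $\omega := \omega_L^{X_{Q_0}}$, using a geometrically increasing sequence of density thresholds; two elementary facts drive everything. The first is a \emph{dyadic doubling} estimate for $\omega$ on $Q_0$: if $Q \subsetneq Q_0$ is a dyadic cube with dyadic parent $\widehat Q$, then $\omega(\widehat Q) \le C_D\,\omega(Q)$, with $C_D$ depending only on structural constants and the constant $C$ of Lemma \ref{lm.doubling}. This follows by sandwiching $Q \supseteq \Delta(x_Q, c\,\ell(Q))$ and $\widehat Q \subseteq \Delta(x_Q, C\,\ell(Q))$ for structural $c, C$ (side lengths are comparable and the centers are close) and iterating Lemma \ref{lm.doubling} a bounded, structural number of times up to scale $\approx \diam Q_0$; the pole $X_{Q_0}$, being a corkscrew point for $Q_0$, lies outside $B(x_Q, 4c\,\ell(Q))$, which is enough to start the iteration at the smallest scale. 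The second fact is the dyadic Lebesgue differentiation theorem for $\omega$, i.e. the martingale convergence theorem along the filtration generated by the dyadic subcubes of $Q_0$: for $\omega$-a.e. $x \in Q_0$, $\omega(F \cap Q^j(x))/\omega(Q^j(x)) \to \mathbf 1_F(x)$, where $Q^j(x)$ is the unique dyadic cube of generation $j$ below $Q_0$ containing $x$. Since $\omega \in A_\infty(\sigma)$ forces $\omega$ and $\sigma$ to be mutually absolutely continuous, we may discard from $F$ an $\omega$-null — hence $\sigma$-null — set and thereby assume every point of $F$ is an $\omega$-density point of $F$; this reduction does not affect the ratio $\sigma(F)/\sigma(Q_0)$ relevant to the intended application of the lemma.

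Next I would introduce the stopping sets. For $t \in (\alpha, 1)$ let $\mathcal B(t)$ be the family of maximal dyadic cubes $Q \subsetneq Q_0$ with $\omega(F \cap Q) > t\,\omega(Q)$, and $\mathcal O(t) := \bigcup_{Q \in \mathcal B(t)} Q$. Then the cubes of $\mathcal B(t)$ are pairwise disjoint; $F \subseteq \mathcal O(t)$ (each point of $F$ has density $1 > t$, while $Q_0$ itself has density $\le \alpha < t$, so the maximal cube of density $> t$ containing a given $x \in F$ is a proper subcube); for any dyadic $R$ with $\omega(F\cap R) \le t\,\omega(R)$ one has the relative weak-$(1,1)$ bound $\omega(\mathcal O(t)\cap R) = \sum_{Q \in \mathcal B(t),\, Q \subsetneq R}\omega(Q) \le t^{-1}\omega(F\cap R)$; $\mathcal O(t') \subseteq \mathcal O(t)$ whenever $t \le t'$; and, applying the dyadic doubling estimate to the parent of a stopping cube (which has density $\le t$), $\omega(F\cap Q) \le C_D t\,\omega(Q)$ for every $Q \in \mathcal B(t)$. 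Now fix $s_2 := \alpha/\epsilon_0$ and $s_\ell := (C_D/\epsilon_0)\,s_{\ell-1}$ for $\ell \ge 3$, so $s_\ell = (C_D/\epsilon_0)^{\ell-2}(\alpha/\epsilon_0)$; let $k$ be the largest integer with $s_k < 1$; and set $\mathcal O_1 := Q_0$ and $\mathcal O_\ell := \mathcal O(s_\ell)$ for $2 \le \ell \le k$. Condition (b) is immediate, (a) follows from monotonicity and $F \subseteq \mathcal O(s_k)$, and for (c): when $\ell = 2$, $\omega(\mathcal O_2\cap Q_0) = \omega(\mathcal O(s_2)) \le s_2^{-1}\omega(F) \le s_2^{-1}\alpha\,\omega(Q_0) = \epsilon_0\,\omega(Q_0)$; when $\ell \ge 3$ and $Q_i^{\ell-1} \in \mathcal B(s_{\ell-1})$, the parent bound gives $\omega(F\cap Q_i^{\ell-1}) \le C_D s_{\ell-1}\,\omega(Q_i^{\ell-1}) = \epsilon_0 s_\ell\,\omega(Q_i^{\ell-1}) < s_\ell\,\omega(Q_i^{\ell-1})$, so $Q_i^{\ell-1} \notin \mathcal B(s_\ell)$, whence $\mathcal O_\ell \cap Q_i^{\ell-1}$ is the union of the cubes of $\mathcal B(s_\ell)$ strictly inside $Q_i^{\ell-1}$ and the relative weak-$(1,1)$ bound yields $\omega(\mathcal O_\ell\cap Q_i^{\ell-1}) \le s_\ell^{-1}\omega(F\cap Q_i^{\ell-1}) \le \epsilon_0\,\omega(Q_i^{\ell-1})$.

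It then remains to count the length. From $s_k < 1 \le s_{k+1}$ one gets $k = 2 + \lfloor \log(\epsilon_0/\alpha)/\log(C_D/\epsilon_0)\rfloor$ up to a bounded additive error; using $\alpha < \epsilon_0^2/C'$ with $C'$ a large enough power of $C_D$ (so that in particular $s_3 < 1$ and $\log\alpha^{-1} > 2\log\epsilon_0^{-1}$, hence $\log(\epsilon_0/\alpha) \ge \tfrac12\log\alpha^{-1}$) and $\epsilon_0 < 1/e$ (so $\log(C_D/\epsilon_0) = \log C_D + \log\epsilon_0^{-1} \le (1+\log C_D)\log\epsilon_0^{-1}$), we conclude $k \approx \log\alpha^{-1}/\log\epsilon_0^{-1}$, the implicit constants depending only on structural constants and $C_D$, i.e. only on the constant $C$ of Lemma \ref{lm.doubling}.

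The only genuinely delicate point — and where I expect to spend care — is calibrating the threshold recursion: condition (c) forces the thresholds to jump by the full factor $C_D/\epsilon_0$ at each level rather than merely $1/\epsilon_0$, because a maximal stopping cube can carry $\omega$-density as large as $C_D$ times its stopping threshold, and keeping $s_\ell$ below $1$ for $\approx \log\alpha^{-1}/\log\epsilon_0^{-1}$ levels is precisely what dictates the smallness hypothesis $\alpha < \epsilon_0^2/C'$ and the dependence of $C'$ on the doubling constant. The remaining ingredients — the weak-$(1,1)$ bound for the dyadic maximal function, the parent-density estimate, and the martingale convergence input — are standard.
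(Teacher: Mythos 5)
Your construction is, in substance, the proof of the cited source: this paper does not prove the lemma itself but quotes \cite[Lemma 3.5]{CHMT}, whose argument (going back to \cite{KKoPT}) is exactly the stopping-time scheme you use — maximal cubes at geometrically increasing density thresholds, dyadic doubling of $\omega\coloneqq\omega_L^{X_{Q_0}}$ to bound the density of a stopping cube through its parent, disjointness plus the relative weak-$(1,1)$ bound for condition (c), and the count $k\approx\log\alpha^{-1}/\log\epsilon_0^{-1}$. Your calibration $s_\ell=(C_D/\epsilon_0)\,s_{\ell-1}$, the verification that $Q_i^{\ell-1}\notin\mathcal B(s_\ell)$, and the role of $\alpha<\epsilon_0^2/C'$ in keeping $s_k<1$ for $\gtrsim\log\alpha^{-1}/\log\epsilon_0^{-1}$ levels are all correct. (One small technical gloss: the dyadic doubling $\omega(\widehat Q)\le C_D\,\omega(Q)$ needs a separate, standard argument for the boundedly many generations just below $Q_0$, where the pole $X_{Q_0}$ need not lie outside $B(x_Q,4r)$ at the largest radii in the iteration; there one uses Lemma \ref{bour.lem} and Harnack chains to get $\omega(Q)\gtrsim 1$ directly.)

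The genuine gap is in condition (a). The stopping time only captures the $\omega$-density points of $F$, and your patch — delete an $\omega$-null set $N$ and call it harmless because $\omega\in A_\infty(\sigma)$ makes $N$ also $\sigma$-null — is not available: $A_\infty(\sigma)$ is not a hypothesis of the lemma, and in this paper the lemma is used precisely to \emph{prove} $A_\infty(\sigma)$ (Theorem \ref{theorem:converse_indicator_data} via Lemma \ref{lemma:lower_bound_sq}), so invoking it is circular; without it, an $\omega$-null subset of $F$ may carry positive $\sigma$-measure, and the final integration over $F$ against $\sigma$ would then only control $\sigma(F\setminus N)$, not $\sigma(F)$. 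The repair is short and avoids differentiation theory altogether: by outer regularity of the finite Borel measure $\omega$, choose a relatively open set $U\supset F$ with $\omega(U)\le 2\alpha\,\omega(Q_0)$ and run your stopping time with $U$ in place of $F$. Every dyadic cube has positive $\omega$-measure (Lemma \ref{bour.lem} plus Harnack), and the cubes of $\dd_{Q_0}$ containing a fixed $x\in U$ shrink into $U$, so every point of $U$ eventually lies in cubes of $U$-density $1>s_k$; hence $F\subset U\subset\oo_k$ pointwise, while (b) and (c) do not involve $F$ and go through verbatim (up to the harmless factor $2$ at the first level). Note also that the $\omega$-a.e.\ differentiation you quoted is plain martingale convergence and never required $A_\infty$; only the removal of the exceptional set did, and the open-set argument removes the need for it.
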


Let $Q_0 \in \dd$ be a fixed cube and $F \subset Q_0$ be a fixed subset such that $\omega_L^{X_0}(F) \le \alpha \omega_L^{X_0}(Q_0)$ for $\alpha$ small enough.  For each $Q \in \dd$, we let $\widetilde{Q} \in \dd$ be the unique dyadic cube such that $x_Q \in \widetilde{Q}$ and $\ell(\widetilde{Q}) = \eta \ell(Q)$ for $\eta > 0$ a small enough parameter to be determined later. Fix $\epsilon_0>0$ small enough. Let $\{\oo_l\}_{l=1}^k$ be a good $\epsilon_0$-cover of $F$ given by Lemma \ref{lemma:existence_eps_covers}. We set
\begin{align*}
	\widetilde{\oo}_j \coloneqq \bigcup_{i} \widetilde{Q}_i^l \quad \text{ and } \quad S \coloneqq \bigcup_{j=2}^k \widetilde{\oo}_{j-1} \setminus \oo_j
\end{align*}
By the nestedness of the sets $\oo_j$, we have $\mathbbm{1}_S = \sum_{j=2}^k \mathbbm{1}_{\widetilde{\oo}_{j-1} \setminus \oo_j}$. We define the nonnegative solution $u \coloneqq u_F \colon \Omega \to \mathbb{R}$ to $Lu = 0$ as
\begin{align*}
	u(X) = \int_{\pom} \mathbbm{1}_S(y) \, d\omega_L^X(y) = \omega_L^X(S) = \sum_{j=2}^k \omega_L^X(\widetilde{\oo}_{j-1} \setminus \oo_j).
\end{align*}

We have the following lower oscillation bound:

\begin{lemma}[{\cite[Lemma 3.24]{CHMT}}]\label{lemma:oscillation_lower_bound}
	There exists a structural constant $c_0 > 0$ such that if $\eta$ and $\epsilon_0 = \epsilon_0(\eta,c_0)$ are small enough, then for any $y \in F$ and any $1 \le l \le k-1$, there exist dyadic cubes $Q_i^l$ and $P_i^l$ such that
	\begin{align*}
		\big| u(\hat{X}_{\widetilde{Q}_i^l}) - u(\hat{X}_{\widetilde{P}_i^l}) \big| \ge c_0,
	\end{align*}
	where $Q_i^l$ is the unique cube (in $\oo_l$) such that $y \in Q_i^l$, $P_i^l \in \dd_{Q_i^l}$ is the unique cube such that $y \in P_i^l$ and $\ell(P_i^l) = \eta \ell(Q_i^l)$, $\widetilde{Q}_i^l$ and $\widetilde{P}_i^l$ are defined as we did after Lemma \ref{lemma:existence_eps_covers} and $\hat{X}_{\widetilde{Q}_i^l}$ and $\hat{X}_{\widetilde{P}_i^l}$ are corkscrew points relative to $x_{\widetilde{Q}_i^l}$ at scale $a_1 \ell(\widetilde{Q}_i^l)$ and relative to $x_{\widetilde{P}_i^l}$ at scale $a_1 \ell(\widetilde{P}_i^l)$, respectively.
\end{lemma}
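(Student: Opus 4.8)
The set $S$ is built exactly as in \cite{KKoPT} and \cite[Lemma 3.24]{CHMT}, so that $u=\omega_L^{\cdot}(S)$ behaves like a lacunary dyadic martingale: descending from scale $\eta\ell(Q_i^l)$ to scale $\eta^2\ell(Q_i^l)$ over a neighbourhood of $y$ should move $u$ by one martingale increment of size $\gtrsim1$, and that increment is precisely $u(\hat X_{\widetilde Q_i^l})-u(\hat X_{\widetilde P_i^l})$. The plan is therefore to follow the proof of \cite[Lemma 3.24]{CHMT}, isolating two ingredients and fixing the parameters in the right order: first $\eta$ small (depending only on structural constants), then $\epsilon_0=\epsilon_0(\eta,c_0)$ small.

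The first ingredient is a \emph{uniform lower bound at the coarse point}, $u(\hat X_{\widetilde Q_i^l})\ge c_1$ for a structural constant $c_1>0$, valid once $\epsilon_0$ is small relative to a power of $\eta$. Since $1\le l\le k-1$, the $j=l+1$ piece of $S$ is $\widetilde{\oo}_l\setminus\oo_{l+1}$ and $\widetilde Q_i^l\subseteq\widetilde{\oo}_l$, so $\widetilde Q_i^l\setminus\oo_{l+1}\subseteq S$, whence $u(\hat X_{\widetilde Q_i^l})\ge\omega_L^{\hat X_{\widetilde Q_i^l}}(\widetilde Q_i^l)-\omega_L^{\hat X_{\widetilde Q_i^l}}(\oo_{l+1}\cap\widetilde Q_i^l)$. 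Bourgain's estimate (Lemma \ref{bour.lem}) and the Harnack inequality give $\omega_L^{\hat X_{\widetilde Q_i^l}}(\widetilde Q_i^l)\gtrsim1$; and since $\omega_L^{X_{Q_0}}(\oo_{l+1}\cap Q_i^l)\le\epsilon_0\,\omega_L^{X_{Q_0}}(Q_i^l)$ by the defining property of the good $\epsilon_0$-cover, the change-of-pole lemma (Lemma \ref{lm.change}), the local doubling property (Lemma \ref{lm.doubling}) and an iteration of the latter (which bounds $\omega_L^{X_{Q_i^l}}(\widetilde Q_i^l)$ from below by a structural power of $\eta$) yield $\omega_L^{\hat X_{\widetilde Q_i^l}}(\oo_{l+1}\cap\widetilde Q_i^l)\le\tfrac34\,\omega_L^{\hat X_{\widetilde Q_i^l}}(\widetilde Q_i^l)$ provided $\epsilon_0$ is sufficiently small in terms of $\eta$. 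Combining these gives $u(\hat X_{\widetilde Q_i^l})\gtrsim1$.

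The second, and main, ingredient is the \emph{separation at the fine point}: $\bigl|u(\hat X_{\widetilde Q_i^l})-u(\hat X_{\widetilde P_i^l})\bigr|\ge c_0$ for a structural $c_0\le c_1/2$. Here the order of the choices is essential: one fixes $\eta$ small so that the ``gap'' $Q_i^l\setminus\widetilde Q_i^l$ is a fixed non-trivial fraction of $Q_i^l$, and then takes $\epsilon_0=\epsilon_0(\eta,c_1)$ so small that the iterated-doubling estimate forces every level-$(l+1)$ cover cube meeting $Q_i^l$ to have side length $\ll\eta^2\ell(Q_i^l)$; consequently $Q^{l+1}(y)\subsetneq P_i^l$ and the entire part of $S$ produced by cover-levels $\ge l+1$ that lies near $y$ is confined to a cube much smaller than the height $\approx\eta^2\ell(Q_i^l)$ of $\hat X_{\widetilde P_i^l}$. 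Using this confinement together with Lemma \ref{lm.doubling}, Lemma \ref{bour.lem}, the CFMS estimate (Lemma \ref{CFMSest.lem}) and the Green function bounds \eqref{fungf1.eq} (for $n\ge2$; the case $n=1$ being handled by the argument of \cite{fp}), and the Hölder-type decay of $L$-harmonic functions vanishing on a boundary ball (Lemma \ref{Hvanish.lem}), one estimates each term in $u(\hat X_{\widetilde P_i^l})=\sum_{j=2}^{k}\omega_L^{\hat X_{\widetilde P_i^l}}(\widetilde{\oo}_{j-1}\setminus\oo_j)$, treating separately the case in which $\widetilde P_i^l$ is off-centre in $Q_i^l$ — so $\widetilde P_i^l$ misses every $\widetilde{\oo}_{l'}$ with $l'\le l$ and $u(\hat X_{\widetilde P_i^l})$ is small — from the case $\widetilde P_i^l\subseteq\widetilde Q_i^l$ (i.e. $y$ near the centre of $Q_i^l$), in which one instead uses $\omega_L^{\hat X_{\widetilde P_i^l}}(\widetilde Q_i^l\setminus S)\le\omega_L^{\hat X_{\widetilde P_i^l}}(\oo_{l+1}\cap\widetilde Q_i^l)$ and descends one further level. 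In either case one obtains the required jump, which together with the first ingredient proves the lemma.

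The step I expect to be the main obstacle is this second ingredient — showing that $u$ at the fine corkscrew point is separated from its value at the coarse one. The delicate point is to rule out that $\hat X_{\widetilde P_i^l}$ also ``sees $S$ with full mass'' (which would destroy the oscillation); the mechanism that prevents this is the built-in lacunarity, namely the choice of $\epsilon_0$ tiny relative to a power of $\eta$, which guarantees that the cover refines much faster than the shrinking factor $\eta$ and hence that the relevant pieces of $S$ at the next scale are genuinely out of reach of $\hat X_{\widetilde P_i^l}$. Once this is set up, the remaining estimates are the standard change-of-pole, doubling, Bourgain and Hölder-decay estimates recorded in Section \ref{section:elliptic_pde}, applied in the dyadic-martingale bookkeeping of \cite{CHMT}.
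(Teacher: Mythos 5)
First, a remark on the comparison itself: the paper does not prove this lemma — it quotes it verbatim from \cite[Lemma 3.24]{CHMT} — so your sketch has to be measured against that cited argument, whose overall architecture (good $\epsilon_0$-cover, change of pole via Lemma \ref{lm.change}, iterated doubling/Harnack chains costing a power of $\eta$, and hence the requirement $\epsilon_0\ll\eta^{C}$) you have reproduced correctly. Your ``first ingredient'' and your off-centre case (where $P_i^l\neq\widetilde Q_i^l$, so that $S\cap P_i^l\subset\oo_{l+1}$ and $u(\hat X_{\widetilde P_i^l})$ is small while $u(\hat X_{\widetilde Q_i^l})\gtrsim1$) are essentially the right argument.

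The genuine gap is in your treatment of the central case $P_i^l=\widetilde Q_i^l$. There the observation $\omega_L^{\hat X_{\widetilde P_i^l}}(\widetilde Q_i^l\setminus S)\le\omega_L^{\hat X_{\widetilde P_i^l}}(\oo_{l+1}\cap \widetilde Q_i^l)$ only shows that the \emph{fine} value is essentially maximal: since $\hat X_{\widetilde P_i^l}$ sits at height $\approx\eta^2\ell(Q_i^l)$ over the central part of $\widetilde Q_i^l$ and $\dist(\widetilde P_i^l,\partial\Omega\setminus\widetilde Q_i^l)\gtrsim\eta\ell(Q_i^l)$, Lemma \ref{Hvanish.lem} gives $u(\hat X_{\widetilde P_i^l})\ge1-C\eta^\alpha-C\eta^{-C}\epsilon_0$. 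This cannot be prevented by any choice of $\epsilon_0$: the piece $\widetilde{\oo}_l\setminus\oo_{l+1}\supset\widetilde Q_i^l\setminus\oo_{l+1}$ of $S$ fills the boundary near the fine pole, so your stated ``mechanism'' (lacunarity ruling out that $\hat X_{\widetilde P_i^l}$ sees $S$ with full mass) is simply the wrong mechanism here. To get the jump in this case one needs the complementary \emph{upper} bound at the coarse point, $u(\hat X_{\widetilde Q_i^l})\le 1-c_2+C\eta^\alpha+C\eta^{-C}\epsilon_0$ with $c_2$ structural, and your sketch never produces it (your first ingredient is only a lower bound). The missing estimate is that a definite proportion of $\omega_L^{\hat X_{\widetilde Q_i^l}}$ lands on $Q_i^l\setminus(\widetilde Q_i^l\cup\oo_{l+1})$, a set disjoint from $S$ up to the $\oo_{l+1}$ correction: since $S\cap Q_i^l\subset\widetilde Q_i^l\cup\oo_{l+1}$, it suffices to find, using Ahlfors regularity (which forbids annular gaps of large ratio), a boundary point $z$ with $|z-x_{\widetilde Q_i^l}|\approx\eta\ell(Q_i^l)$ and a surface ball $\Delta(z,c\eta\ell(Q_i^l))\subset Q_i^l\setminus\widetilde Q_i^l$, and then apply Lemma \ref{bour.lem} together with a Harnack chain of bounded length from $\hat X_{\widetilde Q_i^l}$ to the corkscrew for that ball. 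Without this step the phrase ``in either case one obtains the required jump'' is unjustified, because in the central case both of the bounds you actually establish are lower bounds and are compatible with $u(\hat X_{\widetilde Q_i^l})=u(\hat X_{\widetilde P_i^l})$.
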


With the help of Lemma \ref{lemma:oscillation_lower_bound}, we can now prove Lemma \ref{lemma:lower_bound_sq}:

\begin{proof}[Proof of Lemma \ref{lemma:lower_bound_sq}] The proof is a straightforward modification of the proofs of \cite[Lemma 3.10]{CHMT} and \cite[Lemma 4.14]{HT2}. Let us fix $y \in F$ and $l \in \{1,2,\ldots,k-1\}$. We borrow the notation from Lemma \ref{lemma:oscillation_lower_bound}, and set $\eps=c_0/4$. By adjusting the construction parameters for the Whitney regions in Section \ref{section:dyadic_and_whitney}, we may assume that there exist Whitney cubes $I_{\widetilde{Q}_i^l}$ and $I_{\widetilde{P}_i^l}$ such that\footnote{In Section \ref{section:dyadic_and_whitney}, we constructed the Whitney regions $U_Q$ in such a way that we know that a corkscrew point relative to $x_Q$ at scale $10^{-5}a_0\ell(Q)$ belongs to $U_Q$. Corkscrew points of the type $\hat{X}_Q$ are also relative to $x_Q$ but they are at scale $a_1 \ell(Q)$.}
\begin{align}\label{whitney_region_corkscrews}
	\hat{X}_{\widetilde{Q}_i^l} \in I_{\widetilde{Q}_i^l} \subset (1+\tau)I_{\widetilde{Q}_i^l} \subset U_{\widetilde{Q}_i^l} \quad \text{ and } \quad \hat{X}_{\widetilde{P}_i^l} \in I_{\widetilde{P}_i^l} \subset (1+\tau)I_{\widetilde{P}_i^l} \subset U_{\widetilde{P}_i^l}.
\end{align}  
Since $\ell(\widetilde{Q}_i^l) = \eta \ell(Q_i^l)$ and $\ell(\widetilde{P}_i^l) = \eta \ell(P_i^l) = \eta^2 \ell(Q_i^l)$. In particular, since $\eta < 1$, we have $U_{\widetilde{Q}_i^l}, U_{\widetilde{P}_i^l} \subset U_{Q_i^l,\eta^3}$.

Since $\Phi$ is a $\tfrac{c_0}{4}$-approximator of $u$, Lemma \ref{lemma:oscillation_lower_bound} gives us
\begin{multline*}
	c_0
	\le  | u(\hat{X}_{\widetilde{Q}_i^l}) - u(\hat{X}_{\widetilde{P}_i^l})  | 
	\le  | u(\hat{X}_{\widetilde{Q}_i^l}) - \Phi(\hat{X}_{\widetilde{Q}_i^l})  | + 
	 | \Phi(\hat{X}_{\widetilde{Q}_i^l}) - \Phi(\hat{X}_{\widetilde{P}_i^l})  | +
	 | \Phi(\hat{X}_{\widetilde{P}_i^l}) - u(\hat{X}_{\widetilde{P}_i^l})  | \\
	\le \frac{c_0}{2} +  | \Phi(\hat{X}_{\widetilde{Q}_i^l}) - \Phi(\hat{X}_{\widetilde{P}_i^l})  |.
\end{multline*}
By \eqref{whitney_region_corkscrews}, we know that the points $\hat{X}_{\widetilde{Q}_i^l}$ and $\hat{X}_{\widetilde{P}_i^l}$ are well inside $U_{Q_i^l,\eta^3}$ in the sense that $\dist(\hat{X}_{\widetilde{P}_i^l}, \partial U_{Q_i^l,\eta^3}) \approx \dist(\hat{X}_{\widetilde{Q}_i^l}, \partial U_{Q_i^l,\eta^3}) \approx \ell(Q_i^l)$. Since $\diam(U_{Q_i^l,\eta^3}) \approx_\eta \ell(Q_i^l)$ and $U_{Q_i^l,\eta^3}$ satisfies the Harnack chain condition by Lemma \ref{lemma:hm}, there exists a chain of $N = N(\eta)$ balls $B_1, B_2, \ldots, B_N$ such that
\begin{enumerate}
	\item[(i)] $\hat{X}_{\widetilde{Q}_i^l} \in B_1$, $\hat{X}_{\widetilde{P}_i^l} \in B_N$ and $B_{j} \cap B_{j+1} \neq \emptyset$ for every $j = 1,2,\ldots,N-1$,
	
	\item[(ii)] the radii of the balls $B_j$ are comparable to $\ell(Q_i^l)$, depending on $\eta$ and $c_0$,
	
	\item[(iii)] $|\Phi(\hat{X}_{\widetilde{Q}_i^l}) - \Phi(X)| < \tfrac{c_0}{8}$ for every $X \in B_1$ and $|\Phi(\hat{X}_{\widetilde{P}_i^l}) - \Phi(Y)| < \tfrac{c_0}{8}$ for every $Y \in B_N$,
	
	\item[(iv)] for each $j = 1,2,\ldots,N-1$, there exists a cylinder $\Cfk_j$ connecting $B_j$ to $B_{j+1}$ such that $B_j \cup B_{j+1} \subset \Cfk_j \subset U_{Q_i^l,\eta^3}$, the cylinders have bounded overlaps and $|\Cfk_j| \approx_\eta \ell(Q_i^l)^{n+1}$,
\end{enumerate}
where the bound (iii) follows from  properties of $\eps$-approximators (see property iii) in Theorem \ref{theorem:approximability}). Then
\begin{multline*}
	 | \Phi(\hat{X}_{\widetilde{Q}_i^l}) - \Phi(\hat{X}_{\widetilde{P}_i^l}) | \\
	\le \dashiint_{B_1}  | \Phi(\hat{X}_{\widetilde{Q}_i^l}) - \Phi(X) | \, dX +
	\sum_{j=1}^{N-1} \Big| \dashiint_{B_j} \Phi(X) \, dX - \dashiint_{B_{j+1}} \Phi(X) \, dX \Big|  + \dashiint_{B_N}  | \Phi(X) - \Phi(\hat{X}_{\widetilde{P}_i^l})  | \, dX \\
	\le \frac{c_0}{4} + \sum_{j=1}^{N-1} \Big| \dashiint_{B_j} \Phi(X) \, dX - \dashiint_{B_{j+1}} \Phi(X) \, dX \Big|
\end{multline*}
and furthermore, by Poincar\'e inequality and properties of the Whitney regions and cylinders $\Cfk_j$, we have
\begin{multline*}
	\Big| \dashiint_{B_j} \Phi(X) \, dX - \dashiint_{B_{j+1}} \Phi(X) \, dX \Big| 
	= \Big| \dashiint_{B_j} \Phi(X) \, dX - \dashiint_{\Cfk_j} \Phi(X) \, dX + \dashiint_{\Cfk_j} \Phi(X) \, dX - \dashiint_{B_{j+1}} \Phi(X) \, dX \Big| \\
	\lesssim_\eta \dashiint_{\Cfk_j} \Big| \Phi(X) - \dashiint_{\Cfk_j} \Phi(Y) \, dY \Big| \, dX 
	\lesssim_\eta \frac{\ell(Q_i^l)}{|\Cfk_j|} \iint_{\Cfk_j} |\nabla \Phi(X)| \, dX 
	\lesssim_\eta \iint_{\Cfk_j} |\nabla \Phi(X)| \delta(X)^{-n} \, dX
\end{multline*}
for every $j = 1,2,\ldots,N-1$. Combining the previous estimates and using the bounded overlaps of the cylinders then gives us
\begin{align*}
	\frac{c_0}{4}
	\le \sum_{j=1}^{N-1} \Big| \dashiint_{B_j} \Phi(X) \, dX - \dashiint_{B_{j+1}} \Phi(X) \, dX \Big|
	&\lesssim_\eta \iint_{U_{Q_i^l,\eta^3}} |\nabla \Phi(X)| \delta(X)^{-n} \, dX.
\end{align*}
Finally, we sum over $l$ and use Lemma \ref{lemma:existence_eps_covers}, the bounded overlaps of the regions $U_{Q_i^l,\eta^3}$ and the structure of $\Gamma_{Q_i^l}^\eta$ to get
\begin{equation*}
	\frac{c_0}{4} \frac{\log \alpha^{-1}}{\log \epsilon_0^{-1}}
	\approx \frac{c_0}{4} (k-1)
	\lesssim_\eta \sum_{l=1}^{k-1} \iint_{U_{Q_i^l,\eta^3}} |\nabla \Phi(X)| \delta(X)^{-n} \, dX
	\lesssim_\eta \iint_{\Gamma_{Q_i^l}^{\eta}(y)} |\nabla \Phi(X)| \delta(X)^{-n} \, dX,
\end{equation*}
which proves the claim.
\end{proof}

\section{Proof of Theorem \ref{main.thrm}}
\label{section:proof_of_varopoulos}

With the help of the tools from the previous sections, we can now prove Theorem \ref{main.thrm}. The proof is an adaptation of the corresponding proof from \cite{HT2} (which itself uses some core ideas of Varopoulos \cite{Var1,Var2} and Garnett \cite{Gar-BAF}).

\begin{proof}[Proof of Theorem {\ref{main.thrm}}]
  Let $f \in \BMO_{\smallc}(\pom)$ and $\dd$ be a dyadic system on $\pom$. By Lemma 10.1 and Remark 10.3 in \cite{HT2}, we know that
  \begin{align}
    \label{decomp:data} f = f_0 + g,
  \end{align}
  where $f_0 \in L^\infty(\pom)$ with $\|f_0\|_{L^\infty(\pom)} \lesssim \|f\|_{\BMO}$ and $g = \sum_j \alpha_j\mathbbm{1}_{Q_j}$, for a collection of dyadic cubes $\Ac \coloneqq \{Q_j\}_j \subset \dd$ satisfying a Carleson packing condition with $\C_{\Ac} \lesssim 1$ and coefficients $\alpha_j$ satisfying $\sup_j |\alpha_j| \lesssim \|f\|_{\BMO(\pom)}$. By \cite[Proposition 1.3]{HT2}, there exists an extension $G$ of $g$ in $\Omega$ that satisfies the corresponding versions of the properties (1) -- (3) in Theorem \ref{main.thrm}. Thus, it is enough to construct the extension for the function $f_0$ in the decomposition \eqref{decomp:data}.
  
  Construction of the extension for $f_0$ follows the proof of \cite[Theorem 1.2]{HT2}. We repeatedly use Lemma \ref{lemma:solution_bounded_data} to solve boundary value problems for $L$ with updated data and Theorem \ref{theorem:approximability} to take smooth $\tfrac{1}{2}$-approximators (that is, $\eps$-approximators for $\eps = \tfrac{1}{2}$) for the corresponding solutions and to take their a.e.\ non-tangential boundary traces. The idea is that a suitable approximator is a Varopoulos-type extension plus an error term and we can keep on halving the size of the error term through iteration.
  
  We start by taking the solution $u_0$ to the boundary value problem with data $f_0$, satisfying $u_0(X) = \int_{\pom} f_0 \, d\omega^X$. We then take the $\tfrac{1}{2}$-approximator $\Phi_0$ to $u_0$. This approximator has an a.e.\ non-tangential boundary trace $\varphi_0$. By Theorem \ref{theorem:approximability}, these functions satisfy $\|u_0 - \Phi_0\|_{L^\infty(\Omega)} \le \tfrac{1}{2}\|u_0\|_{L^\infty(\Omega)} \le \tfrac{1}{2} \|f_0\|_{L^\infty(\pom)}$, and
  \begin{align*}
    \sup_{x \in \pom, r > 0} \frac{1}{r^n} \iint_{B(x,r) \cap \Omega} |\nabla \Phi_0(Y)|\, dY
    \le \widetilde{C} \|u_0\|_{L^{\infty}(\Omega)}
    \le \widetilde{C} \|f_0\|_{L^\infty(\pom)}.
  \end{align*}
  We set $f_1 \coloneqq f_0 - \varphi_0$ which is the a.e.\ non-tangential boundary trace of $u_0 - \Phi_0$. We take the solution $u_1$ with the data $f_1$, satisfying $u_1(X) = \int_{\pom} f_1 \, d\omega^X$. This solution satisfies
  \begin{align*}
    \|u_1\|_{L^\infty(\Omega)} \le \|f_1\|_{L^\infty(\pom)} \le \|u_0 - \Phi_0\|_{L^\infty(\Omega)} \le \tfrac{1}{2}\|u_0\|_{L^\infty(\Omega)} \le \tfrac{1}{2} \|f_0\|_{L^\infty(\pom)},
  \end{align*}
  and thus, we can take a $\tfrac{1}{2}$-approximator $\Phi_1$ of $u_1$, with boundary trace $\varphi_1$. We get
  \begin{align*}
    \|u_1 - \Phi_1\|_{L^\infty(\Omega)} \le \tfrac{1}{2} \|u_1\|_{L^\infty(\Omega)} \le \tfrac{1}{4} \|u_0\|_{L^\infty(\Omega)}
  \end{align*}
  and
  \begin{align*}
    \sup_{x \in \pom, r > 0} \frac{1}{r^n} \iint_{B(x,r) \cap \Omega} |\nabla \Phi_1(Y)|\, dY
    \le \widetilde{C} \|u_1\|_{L^{\infty}(\Omega)}
    \le \frac{\widetilde{C}}{2} \|f_0\|_{L^\infty(\pom)}.
  \end{align*}
  We set $f_2 \coloneqq f_1 - \varphi_1 = f_0 - \varphi_0 - \varphi_1$, and continue in the previous way. This gives us a sequences of solutions $u_k$ and their $\tfrac{1}{2}$-approximators $\Phi_k$. The functions $u_k$ and $\Phi_k$ have a.e.\ non-tangential boundary traces $f_k$ and $\varphi_k$, respectively, and we have
  \begin{enumerate}
  \item[(i)] $f_{k+1} = f_0 - \sum_{i=0}^k \varphi_i$,
  
  \item[(ii)] $\|f_{k+1}\|_{L^\infty(\pom)} \le \|u_k - \Phi_k\|_{L^\infty(\Omega)} \le 2^{-k-1}\|u_0\|_{L^\infty(\Omega)} \le 2^{-k-1} \|f_0\|_{L^\infty(\pom)}$,
  
  \item[(iii)] $\|u_k\|_{L^\infty(\Omega)} \le \|f_{k}\|_{L^\infty(\pom)} \le 2^{-k} \|u_0\|_{L^\infty(\Omega)}$,
  
  \item[(iv)] $\sup_{x \in \pom, r > 0}  \frac{1}{r^n} \iint_{B(x,r) \cap \Omega} |\nabla \Phi_k(Y)| \, dY \le
  \widetilde{C} \|u_k\|_{L^\infty(\Omega)} \le  \widetilde{C}2^{-k} \|u_0\|_{L^\infty(\Omega)}$, and
  
  \item[(v)] $ \| \Phi_k\|_{L^\infty(\Omega)} \lesssim  2^{-k} \|u_0\|_{L^\infty(\Omega)}$.
\end{enumerate}
Property (v) follows from the properties (ii) and (iii) combined with the fact that $\Phi_k$ is a $\tfrac{1}{2}$-approximator of $u_k$. By this property, we can define a uniformly convergent series
\begin{align*}
  \Phi(X) \coloneqq \sum_{k=0}^\infty \Phi_k(X)
\end{align*}
for $X \in \Omega$. Let $F_0$ be a version of $\Phi$ that has been smoothened using similar convolution techniques as in the proof of Lemma \ref{lemma:smooth_approximators} (see \cite[Section 3]{HT2} for details). By the arguments in the proof of \cite[Theorem 1.1]{HT2}, $F_0$ is an extension of $f_0$ that satisfies the properties (1)--(3) in Theorem \ref{main.thrm}. Thus, by the decomposition \eqref{decomp:data}, we may define the extensions $F$ in Theorem \ref{main.thrm} by setting $F \coloneqq F_0 + G$. This completes the proof.
\end{proof}

\section{An example in $\re^3$}
\label{section:example}

In this section we construct a three-dimensional version of the example provided in Corollary \ref{corollary:main}. That is, we
show there is
a domain $\Omega$ in $\re^3$ whose boundary is not rectifiable and such that every function $f \in \BMO(\pom)$ has a Varopoulos extension in $\Omega$. 

To define $\Omega$, denote by $E$ the $4$-corner Cantor set in $\re^2$. That is $E=\bigcap_{k=0}^\infty E_k$, where $E_k$ equals the union of $4^k$ closed squares $Q_i^k$ of side length $4^{-k}$ located in the corners of the squares $Q_j^{k-1}$ of the previous generation (see \cite[Section 3]{DM-4cc} for the precise definition).
We assume that the center of $E_0=Q_1^0$ coincides with the origin in $\re^2$.
We consider the half-plane $\Pi=\{(x,y)\in\re^2:y>-2\}$ and we set $V=\Pi\setminus E$. We also write $\bb L=\partial\Pi=\{(x,y)\in\re^2:y=-2\}$.
Then we define $\Omega= V\times \re$. It is straightforward to check that both $V$ and $\Omega$ are uniform domains.

Notice that $\partial V= E\cup\bb L$ is $1$-Ahlfors regular, while $\partial \Omega= (E\cup\bb L)\times \re$ is $2$-Ahlfors regular. In fact, the purpose of introducing the half plane $\Pi$ and the line $\bb L$ in this previous construction is to ensure the $2$-Ahlfors 
regularity of $\partial \Omega$. It is also clear that $E\times \re$ is purely  $2$-unrectifiable (since this set has no approximate tangent planes at any point). 

Let $A$ be the $2\times 2$ matrix in the David--Mayboroda example in Theorem \ref{DMthrm.thrm}, let $L=-\dv A\nabla$ be the associated elliptic operator in $\bb R^2$, and let
$$\hat A =\left( \begin{array}{ll} A & 0\\ 0 &1\end{array}\right).$$ 
Set $\hat L =  -\div \hat A\,\nabla$ in $\re^3$.
Below we will show that   $\omega_{\hat L,\Omega}\in A_\infty(\sigma)$, where $\sigma$ is the surface measure on $\partial\Omega$. Consequently, by Theorem \ref{main.thrm}, every function $f \in \BMO(\pom)$ has a Varopoulos extension in $\Omega$. 

First, we prove the following.

\begin{lemma}\label{lem**1} We have that $\omega_{L,V}\in A_{\infty}(\H^1|_{\partial V})$. Further, there is a constant $C>0$ such that
for any surface ball $\Delta\subset \partial V$ and any corkscrew point $p\in V$ for $\Delta$, 
\begin{equation}\label{eq**0.5}
\frac{d\omega_{L,V}^p}{d\H^1|_{\partial V}}(x)\leq \frac C{\H^1(\Delta)},\qquad\text{for each }x\in\Delta.
\end{equation}  
\end{lemma}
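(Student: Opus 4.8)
The plan is to transfer the $A_\infty$ property from the David--Mayboroda example in $\mathbb{R}^2$ (Theorem \ref{DMthrm.thrm}), where the operator $L$ lives in $\mathbb{R}^2$ with the $4$-corner Cantor set $E$ as the ``boundary'', to the uniform domain $V = \Pi \setminus E$ whose boundary is $E \cup \mathbb{L}$. The key observation is that $\mathbb{L}$ is a flat piece of boundary far from $E$ (separated by a fixed distance $\geq 2$), and near $\mathbb{L}$ the matrix $A$ equals the identity, so the operator is just the Laplacian there; hence the harmonic measure estimate near $\mathbb{L}$ is classical. The work is all near $E$.

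First I would recall that Theorem \ref{DMthrm.thrm} gives an operator $L$ in $\mathbb{R}^2$ for which the $L$-elliptic measure $\omega_{L,\mathbb{R}^2 \setminus E}^\infty$ with pole at infinity equals $\sigma_E/\sigma_E(E)$ where $\sigma_E = \mathcal{H}^1|_E$; in particular $\omega_{L,\mathbb{R}^2\setminus E}^\infty \in A_\infty(\sigma_E)$ trivially, and indeed its density is bounded above and below. I would then observe that $V \subset \mathbb{R}^2 \setminus E$, so by the maximum principle (Lemma \ref{lem5.lem} and the comparison of Green functions, plus standard comparisons of harmonic/elliptic measures of subdomains) the elliptic measure $\omega_{L,V}$ on the common portion $E$ of the boundary is controlled by $\omega_{L,\mathbb{R}^2\setminus E}$. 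More precisely, for a surface ball $\Delta = \Delta(x,r)$ with $x \in E$ and $r$ small compared to $\operatorname{dist}(x,\mathbb{L})$, a corkscrew point $p$ for $\Delta$ in $V$ is also a corkscrew point (at a comparable scale) in $\mathbb{R}^2 \setminus E$; using the CFMS estimate (Lemma \ref{CFMSest.lem}) in both domains together with $G_{L,V} \leq G_{L,\mathbb{R}^2\setminus E}$ (Lemma \ref{lem5.lem}), one gets $\omega_{L,V}^p(\Delta') \lesssim \omega_{L,\mathbb{R}^2\setminus E}^p(\Delta') \approx \sigma_E(\Delta')/r$ for subsets $\Delta' \subset \Delta$. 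Combined with Bourgain's estimate (Lemma \ref{bour.lem}) giving the lower bound $\omega_{L,V}^p(\Delta) \gtrsim 1$, this yields the upper density bound \eqref{eq**0.5} for scales $r \lesssim \operatorname{dist}(x,\mathbb{L})$, which by the change of poles lemma (Lemma \ref{lm.change}) and the dyadic structure is exactly what is needed for the $A_\infty$ statement at those scales. For $x \in \mathbb{L}$, or for large scales near $E$, the boundary is flat (a line) and $A = \mathrm{Id}$ there, so the density of harmonic measure is the Poisson kernel, which satisfies the same bound directly; a partition of the boundary into the piece near $E$ and the piece near $\mathbb{L}$, glued via Harnack chains and the local doubling property (Lemma \ref{lm.doubling}), handles intermediate scales.

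To package this as $\omega_{L,V} \in A_\infty(\mathcal{H}^1|_{\partial V})$: the pointwise density bound \eqref{eq**0.5} at corkscrew poles, together with the change-of-poles lemma and the fact that $\mathcal{H}^1|_{\partial V}$ is doubling (since $\partial V = E \cup \mathbb{L}$ is $1$-Ahlfors regular), immediately gives the $A_\infty$ estimate of Definition \ref{defin:a_infty}: for a Borel set $F \subset \Delta$, $\omega_{L,V}^Y(F)/\omega_{L,V}^Y(\Delta) \approx \omega_{L,V}^{Y_{x,r}}(F) \lesssim \mathcal{H}^1(F)/\mathcal{H}^1(\Delta) = (\mathcal{H}^1(F)/\mathcal{H}^1(\Delta))^1$, which is the $A_\infty$ bound with exponent $s = 1$. (One could also phrase this as: $\omega_{L,V}$ has an $A_\infty$-weight density with respect to surface measure because its density at a corkscrew pole is bounded above, and the Bourgain lower bound plus doubling upgrade the one-sided bound to genuine $A_\infty$.)

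The main obstacle I anticipate is the careful handling of the comparison $\omega_{L,V} \lesssim \omega_{L,\mathbb{R}^2 \setminus E}$ on the shared boundary portion $E$ and the bookkeeping of which scales are ``near $E$'' versus ``near $\mathbb{L}$''. The domain $\mathbb{R}^2\setminus E$ is \emph{not} a uniform domain globally in the two-sided sense one might want (though it is a corkscrew domain, and the relevant one-sided estimates hold), so one must be careful that the CFMS estimate and the Green function comparison are being applied in settings where they are valid --- this is why the construction introduces the half-plane $\Pi$ and the line $\mathbb{L}$: it makes $V$ a bona fide uniform domain with Ahlfors regular boundary. A secondary technical point is verifying that the pole-at-infinity normalization in Theorem \ref{DMthrm.thrm} transfers correctly to finite-pole statements via the CFMS and change-of-poles machinery, but this is routine. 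Everything else reduces to the standard elliptic PDE toolbox recorded in Section \ref{section:elliptic_pde}.
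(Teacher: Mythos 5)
Your handling of the Cantor--set portion at small scales is essentially the paper's own argument: dominate $\omega_{L,V}^p$ on $E$ by $\omega_{L,\re^2\setminus E}^p$ (maximum principle, or equivalently the Green function comparison of Lemma \ref{lem5.lem} plus Lemma \ref{CFMSest.lem}), then use the David--Mayboroda density $d\omega_{L,\re^2\setminus E}^{X_E}/d\H^1|_E\approx 1$ and change of poles (Lemma \ref{lm.change}); incidentally, your worry about $\re^2\setminus E$ is unnecessary, since it \emph{is} a uniform domain with $1$-Ahlfors regular boundary, so those lemmas apply there verbatim. The genuine gap is your treatment of the line $\bb L$ and of large scales. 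You assert that because $A=\mathrm{Id}$ near $\bb L$ the density of $\omega_{L,V}$ there ``is the Poisson kernel'' and the estimate is ``classical''. That does not follow: $\omega_{L,V}$ (and $\omega_{L,\Pi}$) restricted to $\bb L$ is the boundary distribution of an operator whose coefficients are genuinely non-trivial inside $B_1$, so the solution is not harmonic in all of $\Pi$ and the half-plane Poisson kernel cannot be invoked directly. What is needed is a comparison such as $d\omega_{L,\Pi}^p/d\H^1|_{\bb L}\lesssim d\omega_{-\Delta,\Pi}^p/d\H^1|_{\bb L}$, i.e.\ \eqref{eq**1}, and this is exactly where the paper's proof does real work: it passes to the auxiliary domain $U=\Pi\setminus\overline{B_1}$ (where $L=-\Delta$), chooses $q\in\partial B_{3/2}$ maximizing $\omega_{L,\Pi}^{\cdot}(F)$, represents $\omega_{L,\Pi}^q(F)$ by averaging over $\partial U$ against $\omega_{-\Delta,U}^q$, and absorbs the $\partial B_1$ contribution using $\omega_{-\Delta,U}^q(\partial B_1)\le c_B<1$. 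Without this step (or an equivalent compact-perturbation comparison), the $\bb L$ part of \eqref{eq**0.5} is simply unproven.

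The same missing inequality is also what drives the large-scale case for the $E$-portion, which your sketch leaves to ``gluing via Harnack chains and doubling''. When $r(B)>1$, the required factor $\H^1(\Delta)^{-1}$ cannot come from the Cantor-set analysis alone (which only yields $\omega_{L,E^c}^{X_E}(F\cap E)\approx\H^1(F\cap E)$); in the paper it comes from the bound $\omega_{L,V}^p(B_1)\lesssim \H^1(\Delta)^{-1}$, obtained by replacing $B_1$ with a unit ball $B_1'$ centered on $\bb L$ (doubling, Lemma \ref{lm.doubling}, plus Harnack chains) and then applying \eqref{eq**1} together with the classical Poisson kernel bound for $\omega_{-\Delta,\Pi}$. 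Harnack chains and doubling by themselves cannot produce this decay in $r(B)$; some comparison with an operator and domain whose kernel on $\bb L$ is actually known (here, the Laplacian on $\Pi$) is indispensable. So the concrete missing idea in your proposal is the paper's inequality \eqref{eq**1} and its double use: for $F\cap\bb L$ at all scales, and for the case $r(B)>1$ of $F\cap E$.
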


\begin{proof}
It is clear that the estimate \eqref{eq**0.5} implies the local $A_\infty$ condition of $\omega_L\equiv\omega_{L,V}$.
First we will show that, for any $p\in \Pi$, 
\begin{equation}\label{eq**1}
\frac{d\omega_{L,\Pi}^p}{d\H^1|_{\bb L}}(x)\leq C\,\frac{d\omega_{-\Delta,\Pi}^p}{d\H^1|_{\bb L}}(x),
\end{equation}
where $\omega_{L,\Pi}$ stands for the $L$-elliptic measure for the domain $\Pi$, and $\omega_{-\Delta,\Pi}$ is the harmonic measure (i.e.\ for the Laplacian) for the domain $\Pi$. To this end, we consider the auxiliary domain $U= \Pi \setminus \overline{B_1}$, where we denoted $B_r=B(0,r)$. 
Observe that $A$ is the identity matrix on $U$, and thus $\omega_{L,U}^X\equiv \omega_{-\Delta,U}^X$ for all $X\in U$.
Let $F\subset\bb L$ be an arbitrary
closed subset, and let $q\in\partial B_{3/2}$ be such that
$$\omega_{L,\Pi}^q(F) = \max_{X\in\partial B_{3/2}}\omega_{L,\Pi}^X(F).$$
Since $\omega_{L,\Pi}^x(F)$ is a harmonic function of $x$ in $U$, we have
\begin{multline}\nonumber
\omega_{L,\Pi}^q(F)  = \int_{\partial U} \omega^X_{L,\Pi}(F)\,d\omega^q_{-\Delta,U}(X) 
= 
\int_{\bb L} \omega_{L,\Pi}^X(F)\,d\omega^q_{-\Delta,U}(X) + \int_{\partial B_1} \omega_{L,\Pi}^X(F)\,d\omega^q_{-\Delta,U}(X)\\
\leq \omega_{-\Delta,U}^q(F) + \sup_{X\in\partial B_1}\omega_{L,\Pi}^X(F)\,\omega_{-\Delta,U}^q(\partial B_1)
.
\end{multline}
By the maximum principle and the definition of $q$, we have
$$\sup_{X\in\partial B_1}\omega_{L,\Pi}^X(F) \leq \sup_{X\in\partial B_{3/2}}\omega_{L,\Pi}^X(F) = \omega_{L,\Pi}^q(F).$$
Also, it is immediate that $c_B \coloneqq \omega_{-\Delta,U}^q(\partial B_1)<1$. Hence,
$$\omega_{L,\Pi}^q(F) \leq \omega_{-\Delta,U}^q(F) + c_B\,\omega_{L,\Pi}^q(F),$$
or equivalently, $\omega_{L,\Pi}^q(F) \leq (1-c_B)^{-1}\omega_{-\Delta,U}^q(F)$. By a Harnack chain argument we deduce that
$\omega_{L,\Pi}^p(F) \lesssim \omega_{-\Delta,U}^p(F) = \omega_{L,U}^p(F)
$ for all $p\in \partial B_{3/2}$, and then by the maximum principle, it follows that the same estimate is valid for all $p\in
\Pi\setminus \overline B_{3/2}$.
Using again the maximum principle, we get, for all $p\in \Pi\setminus\overline B_{3/2}$,
$$\omega_{L,\Pi}^p(F) \lesssim \omega_{-\Delta,U}^p(F) \leq \omega_{-\Delta,\Pi}^p(F).$$
Since $\omega_{L,\Pi}^p(F)$ and $\omega_{-\Delta,U}^p(F)$ are, respectively, elliptic and harmonic in $B_{3/2}$,  by a Harnack chain argument it follows that the estimate above also holds for all $p\in B_{3/2}$, possibly with a different implicit constant.
This is equivalent to \eqref{eq**1}.

To prove \eqref{eq**0.5}, let $B$ be a ball centered in $\partial V$ such that $\Delta=B \cap\pom$ and consider an arbitrary closed
set $F\subset \Delta$. Let $p\in B\cap V$ be a corkscrew point for $\Delta$. By the maximum principle and by \eqref{eq**1},
$$\omega_{L,V}^p (F\cap\bb L) \leq \omega_{L,\Pi}^p (F\cap\bb L) \lesssim  \omega_{-\Delta,\Pi}^p (F\cap\bb L)\lesssim\frac{\H^1(F\cap\bb L)}{\H^1(\Delta)},$$
where in the last estimate we used that $\frac{d\omega_{-\Delta,\Pi}^p}{d\H^1|_{\bb L}}(x)\lesssim \sigma(\Delta)^{-1}$ for all $x\in \Delta$. Indeed, if $F\cap\bb L=\varnothing$, then there is nothing to show; if $B$ is centered on $\bb L$, this follows from Ahlfors regularity of $\partial V$ and classical properties of the harmonic measure, and if $B$ is centered on $E$ and there exists $z\in F\cap\bb L$, then if $B'=B(z,\operatorname{rad}(B))$ and $p'$ is a corkscrew point for $B'$ in $V$, then by Harnack chains we have that $\omega^p_{-\Delta,\Pi}(F\cap\bb L)\approx\omega^{p'}_{-\Delta,\Pi}(F\cap\bb L)$, and the claim follows as in the previous case.

To estimate $\omega_{L,V}^p (F\cap E)$, we assume that $B\cap E\neq\varnothing$ and we
distinguish two cases. Suppose first that $r(B)$, the radius of $B$, satisfies $r(B)\leq 1$. Denote $X_E=(0,2)$ and let $B'$ be a ball
centered in $E$ containing $B$ with radius at most $2r(B)$.
By the maximum principle, the change of poles formula, 
and the fact that $\frac{d\omega_{L,E^c}^{X_E}}{d\H^1|_{E}}(x)\approx 1$ (by Theorem \ref{DMthrm.thrm}),
we obtain
$$\omega_{L,V}^p(F\cap E) \leq \omega_{L,E^c}^p(F\cap E) \approx \frac{\omega_{L,E^c}^{X_E}(F\cap E)}{\omega_{L,E^c}^{X_E}(B')}
\approx \frac{\H^1(F\cap E)}{\H^1(B'\cap E)} \approx \frac{\H^1(F\cap E)}{\H^1(\Delta)}.
$$

In the case $r(B)>1$, recall that $B_1=B(0,1)$, and then using the change of poles formula and the maximum principle, we write
$$\omega_{L,V}^p(F\cap E) \approx \omega_{L,V}^{X_E}(F\cap E)\,\omega_{L,V}^{p}(B_1) \leq \omega_{L,E^c}^{X_E}(F\cap E)\,\omega_{L,V}^{p}(B_1) \approx \H^1(F\cap E)\,\omega_{L,V}^{p}(B_1).$$
To estimate $\omega_{L,V}^{p}(B_1)$,
consider a ball $B_1'$ of radius $1$ centered in $\bb L$, disjoint from $E$, and contained in $4B_1$. We claim that
\begin{equation}\label{eq.b1}
\omega_{L,V}^{p}(B_1) \approx \omega_{L,V}^{p}(B_1').
\end{equation}
Indeed, if $p\notin 8B_1$, then (\ref{eq.b1}) follows directly from Lemma \ref{lm.doubling}. On the other hand, if $p\in 8B_1$, then denote $p'=(0,16)$ and note that $\delta(p)\gtrsim1$, $\delta(p')\geq8$, and $|p-p'|\leq24$. Then, by the Harnack inequality, Harnack chains, and Lemma \ref{lm.doubling}, the estimate (\ref{eq.b1}) follows. Next, by the maximum principle, \eqref{eq**1}, and Ahlfors regularity, we get
$$\omega_{L,V}^{p}(B_1') \leq \omega_{L,\Pi}^{p}(B_1') \lesssim \omega_{-\Delta,\Pi}^{p}(B_1')
\lesssim \frac{\H^1(B_1'\cap\bb L)}{\H^1(\Delta)}\approx \frac1{\H^1(\Delta)}.
$$
Therefore, again we derive
$$\omega_{L,V}^p(F\cap E)\lesssim \frac{\H^1(F\cap E)}{\H^1(\Delta)}.$$

Altogether, we deduce that
$$\omega_{L,V}^p (F) = \omega_{L,V}^p (F\cap\bb L) + \omega_{L,V}^p (F\cap E)\lesssim \frac{\H^1(F)}{\H^1(\Delta)}$$
for any closed set $F\subset\Delta$, which is equivalent to the statement in the lemma, by the Lebesgue-Radon-Nykodim Theorem.
\end{proof}

Now we are ready to prove the $A_\infty$ property of the elliptic measure $\omega_{\hat L}$ for the three-dimensional domain $\Omega$:

\begin{proposition}\label{proposition:3-dimensional_example}
The elliptic measure $\omega_{\hat L}$ for the domain $\Omega\subset \re^3$ defined above satisfies the $A_\infty$ condition with respect to $\H^2|_{\pom}$.
Further, there is a constant $C>0$ such that
for any surface ball $\Delta\subset \pom$ and any corkscrew point $p\in \Omega$ for $\Delta$, 
\begin{equation}\label{eq**3}
\frac{d\omega_{\hat L}^p}{d\H^2|_{\pom}}(x)\leq \frac C{\H^2(\Delta)},\qquad\text{for each }x\in\Delta.
\end{equation}
\end{proposition}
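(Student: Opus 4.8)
The plan is to reduce the three‑dimensional statement to the two‑dimensional statement of Lemma \ref{lem**1} by exploiting the product structure $\Omega = V \times \re$ and $\hat A = \operatorname{diag}(A,1)$. First I would record the key separation‑of‑variables fact: since the extra coefficient in the third direction is $1$ and independent of the vertical variable, the $\hat L$‑elliptic measure of $\Omega$ with pole $p = (p', p_3)$ is the ``tensor product'' of the $L$‑elliptic measure of $V$ with pole $p'$ and the one‑dimensional (Brownian / harmonic) exit distribution in the $\re$‑direction. More precisely, for $g$ a nice function on $\partial\Omega = \partial V \times \re$, the solution $\hat u(X', x_3) = \iint g\, d\omega_{\hat L,\Omega}^{(X',x_3)}$ can be obtained by first solving, for each frozen $x_3$, the heat‑type/Poisson representation in $V$ and then integrating; this follows because $G_{\hat L, \Omega}((X',x_3),(Y',y_3)) = G_{L,V}(X',Y')\, p_{?}(\dots)$ up to the standard product formula for Green's functions of a sum of operators acting in separate variables, together with the Riesz formula \eqref{Rieszformfin.eq}. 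The cleanest way to set this up rigorously is via the probabilistic interpretation: the diffusion generated by $\hat L$ on $\Omega$ is the diffusion generated by $L$ on $V$ in the first two coordinates, run independently of a one‑dimensional Brownian motion in the third coordinate, and $\Omega$ is exited exactly when the $V$‑component exits $V$ (the third coordinate never leaves $\re$). Hence $\omega_{\hat L,\Omega}^{(p',p_3)}(F' \times I) = \int_{\re}\omega_{L,V}^{p'}\big(F' \cap \{x' : \text{exit at time the }\re\text{-motion is in }I\}\big)$, and after disintegrating this reads as a genuine product measure in a localized sense.

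The main step is then the pointwise density bound \eqref{eq**3}; the $A_\infty$ statement follows from it exactly as in the proof of Lemma \ref{lem**1} (a pointwise upper bound $\tfrac{d\omega^p}{d\H^2}(x)\le C/\H^2(\Delta)$ on surface balls with corkscrew pole immediately yields the local $A_\infty$ condition of Definition \ref{defin:a_infty}, using the doubling property Lemma \ref{lm.doubling} and the change of poles Lemma \ref{lm.change} to pass from corkscrew poles to arbitrary far‑away poles). To prove \eqref{eq**3}, fix a surface ball $\Delta = B(x,r)\cap\partial\Omega$ with $x = (x', x_3)$ and a corkscrew point $p = (p', p_3)$. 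Write a point of $\partial\Omega$ near $x$ as $(y', y_3)$ with $y' \in \partial V$, $y_3\in\re$. By the product structure, the density factors: $\tfrac{d\omega_{\hat L,\Omega}^p}{d\H^2}(y',y_3)$ is comparable to $\tfrac{d\omega_{L,V}^{q'}}{d\H^1|_{\partial V}}(y')$ times the density of the one‑dimensional harmonic measure / Poisson kernel for the slab in the $y_3$‑direction, where $q'$ is a corkscrew point for the surface ball $\Delta' = B(x',r)\cap\partial V$ — this uses a Harnack chain argument in $\Omega$ together with the fact that $\delta_\Omega(p) \approx \delta_V(p')$ for a corkscrew point. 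The first factor is bounded by $C/\H^1(\Delta') \approx C/r$ by Lemma \ref{lem**1}. The one‑dimensional factor is the Poisson‑type density for hitting $\{y_3\}$ starting from height $\approx r$ over a window of size $\approx r$, which is $\lesssim 1/r$ by the classical one‑dimensional estimate. Multiplying, $\tfrac{d\omega_{\hat L,\Omega}^p}{d\H^2}(y) \lesssim 1/r^2 \approx 1/\H^2(\Delta)$ for $y\in\Delta$, which is \eqref{eq**3}.

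The hardest part will be making the product‑structure heuristic into a clean quantitative statement without appealing to probability if one wants a purely PDE argument: one must justify the factorization of the Green function and elliptic measure for $\hat L$ on the \emph{unbounded} domain $\Omega = V\times\re$, including the behavior at $\infty$ (recall $\infty\in\partial\Omega$ in the compactified picture), and one must be careful that the constants in Lemma \ref{lem**1} are uniform across all scales so that the factorization bound holds at every scale $r$, both $r\le 1$ and $r>1$ — this is why Lemma \ref{lem**1} was stated with the explicit density bound \eqref{eq**0.5} rather than just $A_\infty$. I expect the cleanest route is: (i) establish the Green's function factorization $G_{\hat L,\Omega}((X',x_3),(Y',y_3)) = \int_0^\infty G_{L,V}^{(t)}(X',Y')\,\mathsf{p}_t(x_3-y_3)\,dt$ type identity (heat‑kernel subordination), or more simply verify directly that the right‑hand product object satisfies the defining properties (i)–(v) of Definition \ref{def.green} and invoke uniqueness; (ii) feed this into \eqref{Rieszformfin.eq} to read off the factorization of $\omega_{\hat L,\Omega}$; (iii) run the scale‑by‑scale density estimate above. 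Steps (ii) and (iii) are routine given (i) and Lemma \ref{lem**1}; step (i), or its probabilistic surrogate, is where the real work lies, though it is ``soft'' work (no delicate estimates, just careful bookkeeping with the product structure and uniqueness of Green's functions on uniform domains from \cite{dfm20}).
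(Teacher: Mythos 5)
There is a genuine gap, and it sits precisely where you declare the work to be ``routine'': the claimed factorization of the elliptic measure is not correct. Since $\Omega=V\times\re$ has no boundary in the vertical direction, the third coordinate of the $\hat L$-diffusion never exits anything; the exit law of $\Omega$ is therefore \emph{not} the product of $\omega_{L,V}$ with a one-dimensional Poisson kernel ``for the slab'' (there is no slab). The correct identity coming from your own subordination formula is
\begin{align*}
\omega_{\hat L,\Omega}^{(p',p_3)}(dy'\,dy_3)\;=\;\int_0^\infty \mathbb{P}^{p'}\bigl(X'_\tau\in dy',\,\tau\in dt\bigr)\,\frac{e^{-|y_3-p_3|^2/4t}}{\sqrt{4\pi t}}\,dy_3,
\end{align*}
i.e.\ it involves the \emph{joint} law of exit position and exit time (the caloric/parabolic measure of $L$ in $V$), subordinated by the Gaussian in the vertical variable. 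Lemma \ref{lem**1} and \eqref{eq**0.5} only control the time-integrated exit density $\tfrac{d\omega_{L,V}^{p'}}{d\H^1}$; to get \eqref{eq**3} from the display above you would need to control the exit density restricted to short times (weighted by $t^{-1/2}$, which is not integrable at $t=0$ without extra decay), i.e.\ genuine parabolic estimates for the caloric measure of a rough-coefficient operator in $V$. Nothing of the sort is established by Lemma \ref{lem**1}, and it does not follow from it; so the step ``multiply the two factors, each $\lesssim 1/r$'' does not go through, and your step (iii) — not step (i) — is where the argument breaks.

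The paper's proof avoids parabolic theory entirely and stays elliptic. Using symmetry of $\hat L$ and the CFMS estimate (Lemma \ref{CFMSest.lem}), \eqref{eq**3} is reduced to the Green's function bound $G_{\hat L}(X,p)\lesssim \dist(X,\pom)/r(B)^2$ on $B\cap\Omega$ away from the pole. The key observation is that $u(X):=G_{L,V}(P(X),P(p))$, the two-dimensional Green's function lifted to be constant in the vertical variable ($P$ the projection onto $\re^2$), is an $\hat L$-solution in $\Omega\setminus P^{-1}(\{P(p)\})$ vanishing continuously on $\pom$; the boundary Harnack principle (Lemma \ref{bharn.lem}) then compares $G_{\hat L}(\cdot,p)$ with $u$, and one concludes with the size estimates \eqref{fungf1.eq}--\eqref{fungf2.eq}, CFMS in $V$, and the pointwise density bound \eqref{eq**0.5}. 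If you want to salvage your route, you would either have to prove Gaussian-type upper bounds for the exit position/time density of $L$ in $V$ (a nontrivial parabolic project), or replace the product heuristic by exactly this kind of comparison of $G_{\hat L}$ with the lifted $G_{L,V}$ — which is the paper's argument.
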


\begin{proof}
Let $B$ be a ball with radius $r(B)$ centered in $\pom$ such that $\Delta= B\cap \pom$. Clearly, to prove the  $A_\infty$ condition for $\omega_{\hat L}$, it suffices to show \eqref{eq**3}. In turn, since $\hat L$ is symmetric, by a direct application of Lemma \ref{CFMSest.lem}     and the Lebesgue-Radon-Nykodim Theorem, it is enough to prove that 
\begin{equation}\label{eqfac10}
G_{\hat L}(X,p) \lesssim \frac{\dist(X,\pom)}{r(B)^2}\quad \mbox{ for all $X\in B\cap \Omega \setminus B(p,\frac12 \dist(p,\pom))$,}
\end{equation}
where $G_{\hat L}$ is the $\hat L$-Green function for $\Omega$.

Denote by $P$ the orthogonal projection of $\re^3$ onto $\re^2\equiv \re^2\times\{0\}$.
Let $p_0 = P(p)$ and consider the function $u:\overline\Omega\setminus P^{-1}(\{p_0\})\to \re$ defined by
$$u(X) = G_L(P(X),P(p)) = G_L(P(X),p_0),$$
where $G_L$ is the  Green's function for $L$ in the domain $V=P(\Omega)$.
It is immediate to check that $u$ is $\hat L$-elliptic in $\Omega\setminus P^{-1}(\{p_0\})$, and clearly it can be extended continuously
by zero to the whole $\pom$. Thus, by the boundary Harnack principle, choosing $p'\in \partial B(p,\frac12 \dist(p,\pom))$ such that
$P(p') \in \partial B(p_0,\frac12 \dist(p,\pom))\cap \re^2$, we have
$$\frac{G_{\hat L}(X,p)}{G_{\hat L}(p',p)} \approx  \frac{u(X)}{u(p')}
= \frac{G_L(P(X),p_0)}{G_L(P(p'),p_0)}
 \quad \mbox{ for all $X\in B\cap \Omega \setminus B(p,\frac12 \dist(p,\pom))$.}
$$
Thus, for such points $X$ and by \eqref{fungf1.eq} and \eqref{fungf2.eq} applied both to $G_{\hat L}$ and $G_{L}$,
$$\frac{G_{\hat L}(X,p)}{|p'-p|^{-1}} \approx  
\frac{G_L(P(X),p_0)}{1}.$$
Thus, by Lemma \ref{CFMSest.lem} applied to $G_{L,V}$, Harnack chains, and the Harnack inequality, we see that
\begin{equation}\label{eq.g1}
G_{\hat L}(X,p) \approx  \frac{G_{L,V}(P(X),p_0)}{r(B)} \approx \frac{\omega_{L,V}^{p_0}(\Delta_{V,X})}{r(B)},
\end{equation}
where $\Delta_{V,X}= B(P(X),2\dist(P(X),\partial V))$.
From \eqref{eq**0.5} we infer that 
\begin{equation}\label{eq.g2}
\omega_{L,V}^{p_0}(\Delta_{V,X})\lesssim  \frac{\H^1(\Delta_{V,X})}{\H^1(P(B)\cap \partial V)} \approx
\frac{\dist(X,\pom)}{r(B)}.
\end{equation}
From (\ref{eq.g1}) and (\ref{eq.g2}), we deduce \eqref{eqfac10}, which concludes the proof.
\end{proof}

\bibliography{BPTTrefs}

\begin{thebibliography}{AHMMT20}

\bibitem[AHMT19]{AHMT}
M.~Akman, S.~Hofmann, J.~M. Martell, and T.~Toro.
\newblock Perturbation of elliptic operators in 1-sided {NTA} domains
  satisfying the capacity density condition, 2019.
\newblock Preprint, arxiv.1901.08261.

\bibitem[ADFJM19]{adfjm}
D.~N. Arnold, G.~David, M.~Filoche, D.~Jerison, and S.~Mayboroda.
\newblock Localization of eigenfunctions via an effective potential.
\newblock {\em Comm. Partial Differential Equations}, 44(11):1186--1216, 2019.

\bibitem[AGMT22]{AGMT}
J.~Azzam, J.~Garnett, M.~Mourgoglou, and X.~Tolsa.
\newblock Uniform rectifiability, elliptic measure, square functions, and
  {$\varepsilon$}-approximability via an acf monotonicity formula.
\newblock {\em Int. Math. Res. Not. IMRN}, 06 2022.
\newblock rnab095.

\bibitem[AHMMT20]{AHMMT}
J.~Azzam, S.~Hofmann, J.~M. Martell, M.~Mourgoglou, and X.~Tolsa.
\newblock Harmonic measure and quantitative connectivity: geometric
  characterization of the {$L^p$}-solvability of the {D}irichlet problem.
\newblock {\em Invent. Math.}, 222(3):881--993, 2020.

\bibitem[BH20]{BH_fatou}
S.~Bortz and S.~Hofmann.
\newblock Quantitative {F}atou theorems and uniform rectifiability.
\newblock {\em Potential Anal.}, 53(1):329--355, 2020.

\bibitem[BT19]{BT}
S.~Bortz and O.~Tapiola.
\newblock {$\varepsilon$}-approximability of harmonic functions in {$L^p$}
  implies uniform rectifiability.
\newblock {\em Proc. Amer. Math. Soc.}, 147(5):2107--2121, 2019.

\bibitem[CFMS81]{CFMS}
L.~Caffarelli, E.~Fabes, S.~Mortola, and S.~Salsa.
\newblock Boundary behavior of nonnegative solutions of elliptic operators in
  divergence form.
\newblock {\em Indiana Univ. Math. J.}, 30(4):621--640, 1981.

\bibitem[CDMT22]{cdmt22}
M.~Cao, O.~Dom\'{\i}nguez, J.~M. Martell, and P.~Tradacete.
\newblock On the {$A_\infty$} condition for elliptic operators in 1-sided
  nontangentially accessible domains satisfying the capacity density condition.
\newblock {\em Forum Math. Sigma}, 10:Paper No. e59, 57, 2022.

\bibitem[CHM]{chm22}
M.~Cao, P.~{Hidalgo-Palencia}, and J.~Martell.
\newblock Carleson measure estimates, corona decompositions, and perturbation
  of elliptic operators without connectivity.
\newblock Preprint. arXiv:2202.06363.

\bibitem[Car62]{carleson}
L.~Carleson.
\newblock Interpolations by bounded analytic functions and the corona problem.
\newblock {\em Ann. of Math. (2)}, 76:547--559, 1962.

\bibitem[CHMT20]{CHMT}
J.~Cavero, S.~Hofmann, J.~M. Martell, and T.~Toro.
\newblock Perturbations of elliptic operators in 1-sided chord-arc domains.
  {P}art {II}: non-symmetric operators and {C}arleson measure estimates.
\newblock {\em Trans. Amer. Math. Soc.}, 373(11):7901--7935, 2020.

\bibitem[Chr90]{Ch}
M.~Christ.
\newblock A {$T(b)$} theorem with remarks on analytic capacity and the {C}auchy
  integral.
\newblock {\em Colloq. Math.}, 60/61(2):601--628, 1990.

\bibitem[CF74]{coifman-fefferman}
R.~R. Coifman and C.~Fefferman.
\newblock Weighted norm inequalities for maximal functions and singular
  integrals.
\newblock {\em Studia Math.}, 51:241--250, 1974.

\bibitem[CW71]{coifmanweiss}
R.~R. Coifman and G.~Weiss.
\newblock Analyse harmonique non-commutative sur certains espaces homog\`enes.
\newblock Lecture Notes in Mathematics, Vol. 242. Springer-Verlag, Berlin-New
  York, 1971.
\newblock \'{E}tude de certaines int\'{e}grales singuli\`eres.

\bibitem[Dah80]{dahlberg_approximation}
B.~E.~J. Dahlberg.
\newblock Approximation of harmonic functions.
\newblock {\em Ann. Inst. Fourier (Grenoble)}, 30(2):vi, 97--107, 1980.

\bibitem[DJK84]{DJK}
B.~E.~J. Dahlberg, D.~S. Jerison, and C.~E. Kenig.
\newblock Area integral estimates for elliptic differential operators with
  nonsmooth coefficients.
\newblock {\em Ark. Mat.}, 22(1):97--108, 1984.

\bibitem[DS91]{DS1}
G.~David and S.~Semmes.
\newblock Singular integrals and rectifiable sets in {${\bf R}^n$}: {B}eyond
  {L}ipschitz graphs.
\newblock {\em Ast\'{e}risque}, 193:152, 1991.

\bibitem[DFM]{dfm20}
G.~David, J.~Feneuil, and S.~Mayboroda.
\newblock Elliptic theory in domains with boundaries of mixed dimension.
\newblock Preprint. May 2020. arXiv:2003.09037.

\bibitem[DM21]{DM-4cc}
G.~David and S.~Mayboroda.
\newblock Good elliptic operators on {C}antor sets.
\newblock {\em Adv. Math.}, 383:Paper No. 107687, 21, 2021.

\bibitem[DS93]{DS2}
G.~David and S.~Semmes.
\newblock Analysis of and on uniformly rectifiable sets, volume~38 of {\em
  Mathematical Surveys and Monographs}.
\newblock American Mathematical Society, Providence, RI, 1993.

\bibitem[DG57]{DeG}
E.~De~Giorgi.
\newblock Sulla differenziabilit\`a e l'analiticit\`a delle estremali degli
  integrali multipli regolari.
\newblock {\em Mem. Accad. Sci. Torino. Cl. Sci. Fis. Mat. Nat. (3)}, 3:25--43,
  1957.

\bibitem[DM95]{dm95}
G.~Dolzmann and S.~M\"{u}ller.
\newblock Estimates for {G}reen's matrices of elliptic systems by {$L^p$}
  theory.
\newblock {\em Manuscripta Math.}, 88(2):261--273, 1995.

\bibitem[DK09]{dk09}
H.~Dong and S.~Kim.
\newblock Green's matrices of second order elliptic systems with measurable
  coefficients in two dimensional domains.
\newblock {\em Trans. Amer. Math. Soc.}, 361(6):3303--3323, 2009.

\bibitem[EG15]{EG}
L.~C. Evans and R.~F. Gariepy.
\newblock Measure theory and fine properties of functions.
\newblock Textbooks in Mathematics. CRC Press, Boca Raton, FL, revised edition,
  2015.

\bibitem[FS72]{FefSt}
C.~Fefferman and E.~M. Stein.
\newblock {$H^{p}$} spaces of several variables.
\newblock {\em Acta Math.}, 129(3-4):137--193, 1972.

\bibitem[FP22]{fp}
J.~Feneuil and B.~Poggi.
\newblock Generalized {C}arleson perturbations of elliptic operators and
  applications.
\newblock {\em Trans. Amer. Math. Soc.}, 375(11):7553--7599, 2022.

\bibitem[GR85]{GCRDF}
J.~{Garc\'{\i}a-Cuerva} and J.~L. {Rubio de Francia}.
\newblock Weighted norm inequalities and related topics, volume 116 of {\em
  North-Holland Mathematics Studies}.
\newblock North-Holland Publishing Co., Amsterdam, 1985.
\newblock Notas de Matem\'{a}tica [Mathematical Notes], 104.

\bibitem[GMT18]{GMT}
J.~Garnett, M.~Mourgoglou, and X.~Tolsa.
\newblock Uniform rectifiability from {C}arleson measure estimates and
  {$\varepsilon$}-approximability of bounded harmonic functions.
\newblock {\em Duke Math. J.}, 167(8):1473--1524, 2018.

\bibitem[Gar07]{Gar-BAF}
J.~B. Garnett.
\newblock Bounded analytic functions, volume 236 of {\em Graduate Texts in
  Mathematics}.
\newblock Springer, New York, first edition, 2007.

\bibitem[Gar22]{Gar-eps}
J.~B. Garnett.
\newblock Carleson measure estimates and {$\varepsilon$}-approximation for
  bounded harmonic functions, without {A}hlfors regularity assumptions.
\newblock {\em Rev. Mat. Iberoam.}, 38(1):323--351, 2022.

\bibitem[GW82]{GW}
M.~Gr\"{u}ter and K.-O. Widman.
\newblock The {G}reen function for uniformly elliptic equations.
\newblock {\em Manuscripta Math.}, 37(3):303--342, 1982.

\bibitem[HKM93]{hkm93}
J.~Heinonen, T.~Kilpel\"{a}inen, and O.~Martio.
\newblock Nonlinear potential theory of degenerate elliptic equations.
\newblock Oxford Mathematical Monographs. The Clarendon Press, Oxford
  University Press, New York, 1993.
\newblock Oxford Science Publications.

\bibitem[HKMP15]{HKMP}
S.~Hofmann, C.~Kenig, S.~Mayboroda, and J.~Pipher.
\newblock Square function/non-tangential maximal function estimates and the
  {D}irichlet problem for non-symmetric elliptic operators.
\newblock {\em J. Amer. Math. Soc.}, 28(2):483--529, 2015.

\bibitem[HK07]{HK}
S.~Hofmann and S.~Kim.
\newblock The {G}reen function estimates for strongly elliptic systems of
  second order.
\newblock {\em Manuscripta Math.}, 124(2):139--172, 2007.

\bibitem[HL18]{HofPLe}
S.~Hofmann and P.~Le.
\newblock B{MO} solvability and absolute continuity of harmonic measure.
\newblock {\em J. Geom. Anal.}, 28(4):3278--3299, 2018.

\bibitem[HLMN17]{HLMN}
S.~Hofmann, P.~Le, J.~M. Martell, and K.~Nystr\"{o}m.
\newblock The weak-{$A_\infty$} property of harmonic and {$p$}-harmonic
  measures implies uniform rectifiability.
\newblock {\em Anal. PDE}, 10(3):513--558, 2017.

\bibitem[HMMTZ21]{HMMTZ}
S.~Hofmann, J.~M. Martell, , S.~Mayboroda, T.~Toro, and Z.~Zhao.
\newblock Uniform rectifiability and elliptic operators satisfying a carleson
  measure condition.
\newblock {\em Geom. Funct. Anal.}, 31:325--401, 2021.

\bibitem[HM14]{HMUR1}
S.~Hofmann and J.~M. Martell.
\newblock Uniform rectifiability and harmonic measure {I}: {U}niform
  rectifiability implies {P}oisson kernels in {$L^p$}.
\newblock {\em Ann. Sci. \'{E}c. Norm. Sup\'{e}r. (4)}, 47(3):577--654, 2014.

\bibitem[HMM16]{HMM}
S.~Hofmann, J.~M. Martell, and S.~Mayboroda.
\newblock Uniform rectifiability, {C}arleson measure estimates, and
  approximation of harmonic functions.
\newblock {\em Duke Math. J.}, 165(12):2331--2389, 2016.

\bibitem[HMT17]{HMT}
S.~Hofmann, J.~M. Martell, and T.~Toro.
\newblock {$A_\infty$} implies {NTA} for a class of variable coefficient
  elliptic operators.
\newblock {\em J. Differ. Equations}, 263(10):6147--6188, 2017.

\bibitem[HMU14]{HMUUR2}
S.~Hofmann, J.~M. Martell, and I.~{Uriarte-Tuero}.
\newblock Uniform rectifiability and harmonic measure, {II}: {P}oisson kernels
  in {$L^p$} imply uniform rectifiability.
\newblock {\em Duke Math. J.}, 163(8):1601--1654, 2014.

\bibitem[HT20]{HT1}
S.~Hofmann and O.~Tapiola.
\newblock Uniform rectifability and {$\varepsilon$}-approximability of harmonic
  functions in {$L^p$}.
\newblock {\em Ann. Inst. Fourier (Grenoble)}, 70(4):1595--1638, 2020.

\bibitem[HT21]{HT2}
S.~Hofmann and O.~Tapiola.
\newblock Uniform rectifiability implies {V}aropoulos extensions.
\newblock {\em Adv. Math.}, 390:Paper No. 107961, 53, 2021.

\bibitem[HK12]{hyt-k}
T.~Hyt\"{o}nen and A.~Kairema.
\newblock Systems of dyadic cubes in a doubling metric space.
\newblock {\em Colloq. Math.}, 126(1):1--33, 2012.

\bibitem[HR18]{hyt-r}
T.~Hyt\"{o}nen and A.~Ros\'{e}n.
\newblock Bounded variation approximation of {$L_p$} dyadic martingales and
  solutions to elliptic equations.
\newblock {\em J. Eur. Math. Soc. (JEMS)}, 20(8):1819--1850, 2018.

\bibitem[HT14]{hyt-tap}
T.~Hyt\"{o}nen and O.~Tapiola.
\newblock Almost {L}ipschitz-continuous wavelets in metric spaces via a new
  randomization of dyadic cubes.
\newblock {\em J. Approx. Theory}, 185:12--30, 2014.

\bibitem[KKPT16]{kkipt}
C.~Kenig, B.~Kirchheim, J.~Pipher, and T.~Toro.
\newblock Square functions and the {$A_\infty$} property of elliptic measures.
\newblock {\em J. Geom. Anal.}, 26(3):2383--2410, 2016.

\bibitem[KKPT00]{KKoPT}
C.~Kenig, H.~Koch, J.~Pipher, and T.~Toro.
\newblock A new approach to absolute continuity of elliptic measure, with
  applications to non-symmetric equations.
\newblock {\em Adv. Math.}, 153(2):231--298, 2000.

\bibitem[KN85]{kn85}
C.~E. Kenig and W.-M. Ni.
\newblock On the elliptic equation {$Lu-k+K\,{\rm exp}[2u]=0$}.
\newblock {\em Ann. Scuola Norm. Sup. Pisa Cl. Sci. (4)}, 12(2):191--224, 1985.

\bibitem[Mat95]{Mat}
P.~Mattila.
\newblock Geometry of sets and measures in {E}uclidean spaces, volume~44 of
  {\em Cambridge Studies in Advanced Mathematics}.
\newblock Cambridge University Press, Cambridge, 1995.
\newblock Fractals and rectifiability.

\bibitem[{Mat}21]{mattila}
P.~{Mattila}.
\newblock Rectifiability; a survey.
\newblock {\em preprint, arXiv:2112.00540}, 2021.

\bibitem[Mey63]{mey63}
N.~G. Meyers.
\newblock An {$L^{p}$}-estimate for the gradient of solutions of second order
  elliptic divergence equations.
\newblock {\em Ann. Scuola Norm. Sup. Pisa Cl. Sci. (3)}, 17:189--206, 1963.

\bibitem[Mos61]{Moser}
J.~Moser.
\newblock On {H}arnack's theorem for elliptic differential equations.
\newblock {\em Comm. Pure Appl. Math.}, 14:577--591, 1961.

\bibitem[MPT]{mpt22}
M.~Mourgoglou, B.~Poggi, and X.~Tolsa.
\newblock Solvability of the {P}oisson-{D}irichlet problem with interior data
  in ${L}^{p'}$-carleson spaces and its applications to the ${L}^p$-regularity
  problem.
\newblock Preprint. January 2023 (v2). July 2022 (v1). arXiv: 2207.10554.

\bibitem[Muc72]{muckenhoupt}
B.~Muckenhoupt.
\newblock Weighted norm inequalities for the {H}ardy maximal function.
\newblock {\em Trans. Amer. Math. Soc.}, 165:207--226, 1972.

\bibitem[Nas58]{nash}
J.~Nash.
\newblock Continuity of solutions of parabolic and elliptic equations.
\newblock {\em Amer. J. Math.}, 80:931--954, 1958.

\bibitem[Ste70]{stein-SIOs}
E.~M. Stein.
\newblock Singular integrals and differentiability properties of functions.
\newblock Princeton Mathematical Series, No. 30. Princeton University Press,
  Princeton, N.J., 1970.

\bibitem[Var77]{Var1}
N.~T. Varopoulos.
\newblock B{MO} functions and the {$\overline \partial $}-equation.
\newblock {\em Pacific J. Math.}, 71(1):221--273, 1977.

\bibitem[Var78]{Var2}
N.~T. Varopoulos.
\newblock A remark on functions of bounded mean oscillation and bounded
  harmonic functions. {A}ddendum to: ``{BMO} functions and the {$\overline
  \partial $}-equation'' ({P}acific {J}. {M}ath. {\bf 71} (1977), no. 1,
  221--273).
\newblock {\em Pacific J. Math.}, 74(1):257--259, 1978.

\end{thebibliography}
\bibliographystyle{alpha-sort-max}

\end{document}